\DeclareMathOperator\C{\mathbb C}
\DeclareMathOperator\Z{\mathbb Z}
\newcommand{\Om}{\Omega}
\newcommand{\ber}{\operatorname{ber}}
\newcommand{\G}{{\mathbb G}}
\newcommand{\sSch}{\operatorname{sSch}}
\newtheorem{theorem}{Theorem}[section]
\newtheorem{lemma}[theorem]{Lemma}
\newtheorem{cor}[theorem]{Corollary}
\newtheorem{prop}[theorem]{Proposition}
\theoremstyle{definition}
\newtheorem{definition}[theorem]{Definition}
\newtheorem{example}[theorem]{Example}
\theoremstyle{remark}
\newtheorem{remark}[theorem]{Remark}
\newcommand{\dontprint}[1]\relax
\newcommand{\osp}{\operatorname{osp}}
\newcommand{\bos}{{\operatorname{bos}}}
\newcommand{\Proj}{\operatorname{Proj}}
\newcommand{\Conf}{\operatorname{Conf}}
\renewcommand{\P}{{\mathbb P}}
\newcommand{\A}{{\mathbb A}}
\newcommand{\wt}{\widetilde}
\newcommand{\ot}{\otimes}
\newcommand{\Hom}{\operatorname{Hom}}
\newcommand{\Isom}{\operatorname{Isom}}
\newcommand{\Aut}{\operatorname{Aut}}
\newcommand{\Ext}{\operatorname{Ext}}
\renewcommand{\AA}{{\mathcal A}}
\newcommand{\BB}{{\mathcal B}}
\newcommand{\TT}{{\mathcal T}}
\newcommand{\DD}{{\mathcal D}}
\newcommand{\CC}{{\mathcal C}}
\newcommand{\EE}{{\mathcal E}}
\newcommand{\FF}{{\mathcal F}}
\newcommand{\GG}{{\mathcal G}}
\newcommand{\II}{{\mathcal I}}
\newcommand{\KK}{{\mathcal K}}
\newcommand{\LL}{{\mathcal L}}
\newcommand{\MM}{{\mathcal M}}
\newcommand{\NN}{{\mathcal N}}
\newcommand{\HH}{{\mathcal H}}
\newcommand{\OO}{{\mathcal O}}
\newcommand{\UU}{{\mathcal U}}
\newcommand{\PP}{{\mathcal P}}
\renewcommand{\SS}{{\mathcal S}}
\newcommand{\VV}{{\mathcal V}}
\newcommand{\XX}{{\mathcal X}}
\newcommand{\si}{\sigma}
\newcommand{\de}{\delta}
\newcommand{\sub}{\subset}
\newcommand{\Spec}{\operatorname{Spec}}
\newcommand{\Spf}{\operatorname{Spf}}
\newcommand{\Res}{\operatorname{Res}}
\newcommand{\Ann}{\operatorname{Ann}}
\newcommand{\Det}{\operatorname{Det}}
\newcommand{\Ber}{\operatorname{Ber}}
\newcommand{\lan}{\langle}
\newcommand{\ran}{\rangle}
\newcommand{\ov}{\overline}
\newcommand{\im}{\operatorname{im}}
\newcommand{\om}{\omega}
\renewcommand{\a}{\alpha}
\renewcommand{\b}{\beta}
\newcommand{\ga}{\gamma}
\newcommand{\id}{\operatorname{id}}
\newcommand{\und}{\underline}
\renewcommand{\th}{\theta}
\newcommand{\coker}{\operatorname{coker}}
\newcommand{\red}{{\operatorname{red}}}
\newcommand{\hra}{\hookrightarrow}
\newcommand{\De}{\Delta}
\newcommand{\fm}{{\frak m}}
\newcommand{\fg}{{\frak g}}
\newcommand{\eps}{\epsilon}
\newcommand{\bb}{{\mathbf b}}
\newcommand{\pa}{\partial}
\newcommand{\Art}{\operatorname{Art}}
\newcommand{\Def}{\operatorname{Def}}
\newcommand{\Sh}{\operatorname{Sh}}
\renewcommand{\div}{\operatorname{div}}
\newcommand{\tot}{\operatorname{tot}}
\newcommand{\reg}{\operatorname{reg}}
\newcommand{\Th}{\Theta}
\numberwithin{equation}{section}
\title{The moduli space of stable supercurves and its canonical line bundle}
\author{Giovanni Felder}
\address{Department of mathematics,
ETH Zurich, 8092 Zurich, Switzerland}
\email{felder@math.ethz.ch}
\author{David Kazhdan}
\address{Einstein Institute of Mathematics,
The Hebrew University of Jerusalem,
Jerusalem 91904, Israel}
\email{kazhdan@math.huji.ac.il}
\author{Alexander Polishchuk}
\address{
    Department of Mathematics, 
    University of Oregon, 
    Eugene, OR 97403, USA; National Research University Higher School of Economics, Moscow, Russia
  }
  \email{apolish@uoregon.edu}
\begin{document}

\begin{abstract}
We prove that the moduli of stable supercurves with punctures is a smooth proper DM stack and study an analog of the Mumford's isomorphism for its canonical line bundle. 
\end{abstract}


\maketitle

\section{Introduction}

The moduli space $\SS_g$ of supercurves of genus $g$ (aka super Riemann surfaces, aka SUSY curves) has been around in mathematics and physics
since the 80's. It plays an important role in superstring theory and has been studied using the language of algebraic geometry in \cite{LR}, \cite{CV}, \cite{BHR}, \cite{DW} and
other works.
One long standing gap in the mathematical side of the story has been the study of the analog of Deligne-Mumford compactification by stable supercurves 
(which is a proper Deligne-Mumford superstack).
One of the goals of the present paper is to contribute to filling this gap. This compactification seems to be necessary for continuing the study of superstring supermeasure (see 
\cite{Witten}, \cite{FKP}).

The definition of a stable supercurve and a sketch of a proof that this gives a smooth and proper DM-stack is contained Deligne's letter
to Manin \cite{Deligne} (supernodes were also introduced independently in physics literature, see \cite{Cohn}). The main part of Deligne's letter is
devoted to the infinitesimal part of the theory. In particular, he describes miniversal deformations of two types fo supernode singularities (see 
Sec.\ \ref{nodes-def-sec}). The idea of the rest of the proof is to use the superanalog of the Artin's criterion for proving algebraicity of a stack.

In the present paper we revisit Deligne's letter and generalize its results to the case of stable supercurves with punctures. 
One of the special features of the super-case is that instead of ``marked points" we consider supercurves with {\it Neveu-Schwarz (NS) and Ramond punctures}.
Of these, NS punctures are like marked points: for a family of (stable) supercurves $X\to S$ they are given by sections $S\to X$. However, Ramond punctures have a different
nature: they are given by relative Cartier divisors $R\sub X$ such that the projection $R\to S$ is smooth of dimension $0|1$, and also the definition of being a supercurve
is modified near $R$ (see Sec.\ \ref{def-stable-supercurve-sec} for details). Note that the nodes of stable supercurves can also be of two types, NS and Ramond.

\bigskip

\noindent
{\bf Theorem A}. {\it There exists a smooth and proper DM-stack $\ov{\SS}_{g,n_{NS},n_R}$ over $\C$ representing the functor of families of stable supercurves of genus $g$ with
$n_{NS}$ NS punctures and $n_R$ Ramond punctures.}

\medskip

After we finished this work we learned about the work of Moosavian and Zhou \cite{MZ}, where Theorem A is also proved.
Their work also contains a lot of useful foundational results in algebraic supergeometry, e.g., proves the existence of Hilbert superschemes.
Another work with some foundational results, including the existence of Hilbert and Picard superschemes, is the forthcoming paper \cite{Hilbert-schemes}.


We mostly follow \cite{Deligne} in the part concerning deformation theory (extending it to the case of stable supercurves with punctures). 
In the rest of the proof we avoid using the Artin's criterion and give a more direct proof based on the existing solution of the corresponding purely even moduli problem 
due to Cornalba and Jarvis. Namely, the functor of families of stable supercurves
restricted to even bases is precisely the functor of generalized spin structures considered in \cite{Cornalba}, \cite{Jarvis}. 

In the course of proof of Theorem A we find a natural relatively ample line bundle on any family of stable supercurves with punctures (see Theorem \ref{ample-thm}).
Although this is not strictly necessary for Theorem A (where one could use an algebraic space to replace the Hilbert scheme), this can be viewed as a super-analog 
of the well known fact that the relative log canonical bundle is relatively ample for a family of stable curves.
The important difference is that in the case of a family of stable supercurves $X\to S$ the relative canonical bundle on the smooth part does not extend to a line bundle on the entire
family (the extension is not locally free at NS nodes). However, we prove that its square does extend to a line bundle which we still denote as $\om_{X/S}^2$.
This line bundle plays an important technical role in the rest of the paper. In the hindsight, this is not too surprizing since the restriction of $\om_{X/S}^2$ to the reduced part
of moduli of supercurves recovers the relative dualizing sheaf of the corresponding usual family of stable curves.

One point which is not highlighted in \cite{Deligne} is that there is a canonical effective Cartier divisor $\De$ supported on the boundary of the compactification $\ov{\SS}_g$
(in the presence of odd variables, which are nilpotent, such a structure is not unique).
Essentially this structure is already seen from the study of deformations of the nodes of supercurves which can be of two types, NS and Ramond.
We also define a decomposition $\De=\De_{NS}+\De_R$ corresponding to these two types of nodes
and find two global expressions for the line bundle $\OO(\De)$ using Berezinians of certain natural morphisms of sheaves associated with the universal curve
(see Sec.\ \ref{boundary-str-sec}).

The second goal of this work is to study the canonical line bundle $K_{\ov{\SS}_g}$
over $\ov{\SS}_g$, i.e., the Berezinian of the cotangent vector bundle. 
The super analog of Mumford isomorphism for smooth supercurves,
expressing the canonical line bundle of $\SS_g$ in terms of natural Berezinian line bundles was considered in
\cite{Voronov}, \cite{RSV} (see also \cite{Diroff} where some work is done in the punctured case).
Namely, it states that
$$K_{\SS_g}\simeq\Ber_1^5,$$
where
$$\Ber_1:= \Ber(R\pi_*\OO_X)\simeq \Ber(R\pi_*\om_{X/\ov{\SS}_g}),$$
where $\pi:X\to \SS_g$ is the
universal stable supercurve.

In the case of the moduli of stable supercurves (and in the presence of punctures)
we still have the line bundle $\Ber_1$ defined as above, however
the expression for the canonical bundle has to be corrected. 

\bigskip

\noindent
{\bf Theorem B}. {\it Let $\ov{\SS}= \ov{\SS}_{g,n_{NS},n_R}$.
There exists a canonical isomorphism
$$K_{\ov{\SS}}\rTo{SM_{\ov{\SS}}}
\Ber_1^5\otimes \bigotimes_{i=1}^{n_{NS}}\Psi_i(-2\De_{NS}-\De_R),$$
where $\De_{NS}$ and $\De_R$ are the components of the boundary divisor corresponding to nodes of NS and Ramond type,
and $\Psi_i$ is the line bundle associated with the $i$th NS puncture $P_i$:
$$\Psi_i:=P_i^*\om_{X/\ov{\SS}}.$$
}

\medskip

In the case when there are no punctures the isomorphism becomes
$$K_{\ov{\SS}_g}\simeq \Ber_1^5(-2\De_{NS}-\De_R).$$

Finally, we study the restriction of the super-Mumford isomorphism $SM_{\ov{\SS}_g}$ to the NS boundary divisor, and show that it is related to the similar isomorphisms for lower genus. As in the classical case, each component of $\De_{NS}$ is an image of a natural gluing map $B\to \ov{\SS}_g$,
where $B$ is some lower genus moduli space with more punctures or a product of two such moduli spaces (see Sec.\ \ref{NS-boundary-gluing-sec}).

\bigskip

\noindent
{\bf Theorem C}. {\it Let $B\to \ov{\SS}_g$ be the gluing map to one of the components of $\De_{NS}$ from a lower genus (uncompactified) moduli space $B$.
There exists a natural isomorphism (see below) of the normal bundle
$$N_B:=\OO(\De)|_B\simeq \Psi_1^{-1}\ot \Psi_2^{-1},$$
such that the following diagram is commutative up to a sign
\begin{diagram}
K_{\ov{\SS}_g}(2\De)|_B&\rTo{SM_{\ov{\SS}_g}|_B}& \Ber_1^5|_B\\
\dTo{}&&\dTo{}\\
K_B\ot N_B&\rTo{SM_B}&(\Ber_1^B)^5
\end{diagram}
}

The identification of $N_B$ in Theorem C comes from the identification of $\De$ as the vanishing
locus of a morphism induced on Berezinians by the morphism
$$R\pi_*(\Om_{X/\ov{\SS}_g})\to R\pi_*j_*(\Om_{U/\ov{\SS}_g}),$$
where $j:U\to X$ is the embedding of the smooth locus (see Sec.\ \eqref{boundary-equation-sec}).

The components of the Ramond boundary divisor $\De_R$ have a more complicated relation to lower genus moduli spaces (see Sec.\
\ref{R-boundary-gluing-sec}), and in this case we formulate a conjectural analog of Theorem C in Sec.\ \ref{R-splitting-sec}.

The paper is organized as follows.
In Section \ref{basics-sec} we review the basics on stable supercurves starting from the definitions.
In particular, in \ref{local-descr-sec} we discuss standard local coordinates of smooth supercurves (near an ordinary point and near a Ramond
puncture). 
Then in \ref{NS-correspondence-sec} we discuss a well-known correspondence
between the NS-punctures and divisors.


In Section \ref{def-sec} we study the deformation functor of a stable supercurve $X_0$ with punctures. 
The main result is that this functor is smooth (more precisely,
it is smooth over the product of deformation functors of the singularities of $X_0$). We start with infinitesimal results, reproducing Deligne's calculation 
of infinitesimal deformations of two types of super node singularities (see Theorem \ref{miniversal-Deligne-thm}) and identifying the sheaves of infinitesimal automorphisms of stable supercurves with punctures.
Then we study local deformations: of affine neighborhoods of a smooth point,
of a Ramond puncture, and of a singular point (see \ref{no-R-punct-def-sec}, \ref{R-punct-def-sec} and Lemma \ref{aff-formal-node-smooth-lem}). 
Finally we prove the smoothness result for global deformations, Proposition \ref{global-def-prop}.

In Section \ref{square-sec} we prove that an extension of the square of the relative canonical bundle $\om_{U/S}^2$ from the smooth locus $U\sub X$
of a sufficiently nice family $X\to S$ of stable supercurves is a line bundle, and that after some corrections at the punctures, it becomes relatively ample
(see Theorems \ref{can-square-thm} and \ref{ample-thm}).
We then show that the inverse of this line bundle shows up in the computation of the sheaf of infinitesimal automorphisms (see Theorem \ref{inf-aut-thm}).

In Section \ref{proof-thm} we prove Theorem A. Mostly we use the previous results on deformation theory and the known results on the moduli of
generalized spin curves.

In Section \ref{KS-map-sec} we study the behavior of the Kodaira-Spencer map for a family of smooth supercurves degenerating to a stable supercurve.
We consider the classical case of (even) curves in \ref{KS-class-sec}, then the case of an NS node in \ref{super-KS-NS-sec} and the case of a Ramond node
in \ref{super-KS-R-sec}. The main observation is that the Kodaira-Spencer map has a natural extension over the entire base involving
the subsheaf of the tangent space to the base consisting of vector fields preserving the degeneration divisor.

In Section \ref{boundary-str-sec} we study the boundary divisor $\De$ of the moduli of stable supercurves. We give a definition of the boundary divisor as a Cartier divisor
(which is not automatic since we work with nonreduced spaces). We compute the corresponding line bundle in terms of the complex $[\Om_{X/S}\to j_*\Om_{U/S}]$, where $U\sub X$ is the smooth locus of a family $X\to S$ (see \ref{boundary-equation-sec}). We prove
in \ref{normal-cross-sec} that $\De$ is a normal crossing divisor and define in \ref{De-NS-R-sec}
the subdivisors $\De_{NS}$ and $\De_R$ corresponding to the NS and Ramond type nodes such that $\De=\De_{NS}+\De_R$.
Then we discuss in \ref{NS-boundary-gluing-sec} and
\ref{R-boundary-gluing-sec} the gluing maps from lower genus moduli spaces to the boundary components.
Note that in the case of a Ramond node there is an extra odd parameter involved in the gluing.

In Section \ref{canonical-bundle-sec} we study the canonical line bundle over the moduli of stable supercurves with the goal of proving Theorem B.
The identification of the canonical line bundle for the moduli of smooth supercurves is a combination of the isomorphism coming from the Kodaira-Spencer map and of an analog of
the Mumford's isomorphism between different Berezinian line bundles. We have to investigate what happens with both these ingredients near the boundary divisor.
For the Kodaira-Spencer map this was done in Section \ref{KS-map-sec}.
The main new nontrivial computation is that of the behavior of the super Mumford's isomorphism as the supercurve degenerates (see \ref{super-Mum-NS-sec}).
Extending this picture to supercurves with punctures is relatively easy and is done in \ref{can-line-bun-punctures-sec}.

In Section \ref{Splitting-sec} we prove Theorem C. Again the bulk of the argument is the study of the restriction of super Mumford's isomorphism to the boundary divisor
in \ref{NS-splitting-sec}. Then in \ref{KS-splitting-sec} we do the same for the isomorphism coming from the Kodaira-Spencer map. In \ref{R-splitting-sec}
we describe a conjectural picture for the case of a Ramond boundary component.

In Appendix \ref{ample-sec} we prove a relative ampleness criterion for flat morphisms of superschemes (see Proposition \ref{ample-crit-prop}), which we
use to prove Theorem \ref{ample-thm}.

\subsection*{Conventions} 
All the rings are assumed to be $\Z_2$-graded supercommutative with $1$.
We say that a ring (resp., a superscheme) is {\it even} if its odd component (resp., the odd component of the structure sheaf) is zero.
We often say ``subscheme" for brevity where we should say ``sub-superscheme". 
For a sheaf $F$ of $\OO_X$-modules on a superscheme $X$ we denote by $F^+$ and $F^-$ its even and odd parts.
By a ``bundle" on a superscheme we mean a ($\Z_2$-graded) locally free $\OO_X$-module of finite rank. By a subbundle $F\sub E$ in a bundle
$E$ we mean a ($\Z_2$-graded) locally free $\OO_X$-submodule, which is locally a direct summand.
On any superscheme $X$ we denote by $\NN_X\sub \OO_X$ the ideal generated by odd functions.
We denote by $X_{\bos}$ the usual scheme with the same underlying topological space as $X$ and with the structure sheaf 
$\OO_X/\NN_X$. We work over $\C$. 

\subsection*{Acknowledgements}
We thank Ugo Bruzzo and Daniel Hern\'andez Ruip\'erez for useful discussions and
for telling us about the work \cite{MZ}. We thank the anonymous referee for many useful comments.
G.F. is partially supported by the National Centre of
Competence in Research ``SwissMAP --- The Mathematics of Physics'' of the
Swiss National Science Foundation. 
D.K. is partially supported by the ERC under grant agreement 669655.
A.P. is partially supported by the NSF grant DMS-2001224, 
and within the framework of the HSE University Basic Research Program and by the Russian Academic Excellence Project `5-100'.

\section{Stable supercurves}\label{basics-sec}

In this section we discuss some basic facts about stable supercurves and their families, starting with definitions. In particular,
we discuss local descriptions of nodes and punctures, the correspondence between the NS punctures and divisors, and the
connection with generalized spin structures.

\subsection{Superstacks}

We refer to \cite{CCF} and \cite[Sec.\ 2]{BHR} for some basics on superschemes.
The notion of an algebraic (super)stack is developed similarly to the classical case (see \cite[Def.\ (5.1)]{Artin}):
one considers a category fibered in groupoids $\XX$ over the category of Noetherian superalgebras over $\C$,
which is a limit preserving stack (see \cite[(1.1)]{Artin}), such that the diagonal $\XX\to \XX\times \XX$ is
representable and there exists a smooth surjective morphism from a scheme to $\XX$ (see \cite[Sec.\ 3]{CV}).
If the latter morphism can be chosen to be \'etale, then one gets the notion of Deligne-Mumford (DM) stack.

For example, it is proved in \cite{CV} that the moduli stack of (smooth) supercurves is a smooth separated DM stack over $\C$.

\subsection{Definition of families of supercurves}\label{def-stable-supercurve-sec}

\begin{definition}
Let $S$ be a superscheme. A family of {\it smooth supercurves with Ramond punctures} is a smooth
superscheme $X$ over $S$ of relative dimension $1|1$, equipped with a subbundle $\DD\sub \TT_{X/S}$ of rank $0|1$,
such that the map given by the commutator of vector fields 
\begin{equation}\label{commutator-map}
\DD^{\ot 2}\to \TT_{X/S}/\DD
\end{equation}
(which is a map of line bundles of rank $1|0$) is injective and its divisor of vanishing is the disjoint union 
$$R=\sqcup_{i=1}^{n_R} R_i,$$ 
where each $R_i$ is smooth of dimension $0|1$ over $S$. The components $R_i$ are called {\it Ramond punctures}.
\end{definition}

A distribution $\DD$ as above is often referred to as a {\it superconformal structure} on $X$.
An isomorphism of superschemes $X\simeq X'$ as above is called {\it superconformal} if it preserves the superconformal structures,
i.e., sends the distribution on $X$ to the distribution on $X'$.

In the above situation the map $D_1\ot D_2\mapsto \frac{1}{2}[D_1,D_2]$ induces an isomorphism
\begin{equation}\label{Ramond-D2-isom}
\DD^{\ot 2}\rTo{\sim} \TT_{X/S}/\DD(-R).
\end{equation}
Thus, we have an exact sequence
$$0\to \DD\to \TT_{X/S}\to \DD^{\ot 2}(R)\to 0,$$
or dually,
$$0\to \DD^{-2}(-R)\to \Om_{X/S}\to \DD^{-1}\to 0.$$
From this we get an isomorphism $\om_{X/S}:=\Ber(\Om_{X/S})\simeq \DD^{-1}(-R)$, or equivalently,
\begin{equation}\label{Ramond-D-om-isom}
\DD\simeq \om_{X/S}^{-1}(-R).
\end{equation}
Hence, taking the dual of the embedding $\DD\to \TT_{X/S}$, we get a surjective morphism
$$\de:\Om_{X/S}\to \DD^{-1}\simeq \om_{X/S}(R),$$
whose kernel is exactly the orthogonal to $\DD\sub \TT_{X/S}$.
Equivalently, we can view $\de$ as a derivation $\OO_X\to \om_{X/S}(R)$, trivial on $\OO_S$.
Note that $\DD$ is recovered from $\de$ as the orthogonal to $\ker(\de)$.

\begin{example}\label{de-comp-ex}
Suppose $X$ is an open subscheme in $\A^{1|1}_S$ with relative coordinates $(z,\th)$ and
$\DD$ is generated by $D=\pa_\th+f\th\pa_z$, for some even function $f$ on $X$.
Then 
$$\frac{1}{2}[D,D]=f\pa_z,$$
so the condition that $\DD$ defines a structure of a supercurve on $X$ (with no punctures) 
is that $f$ is invertible. 
Thus, the canonical isomorphism 
$$\DD^{\ot 2}\to \TT_{X/S}/\DD$$ 
sends $D\ot D$ to $f\pa_z \mod\DD$.
Hence, the isomorphism
$$\DD\rTo{\sim}\Ber(\TT_{X/S})\simeq \om_{X/S}^{-1}$$
sends $D$ to $f[dz|d\th]^{-1}$, where $[dz|d\th]$ is a generator of $\om_{X/S}$ associated with the basis $(dz,d\th)$ of $\Om_{X/S}$.  
Thus, we can compute the dual of the embedding $\DD\to \TT_{X/S},$
$$\de:\Om_{X/S}\to \DD^{-1}\simeq \om_{X/S}.$$
Namely, $\de$ sends $dz$ to $\th[dz|d\th]$ and $d\th$ to $f^{-1}[dz|d\th]$. Thus, if we view $\de$ as a derivation,
we have
$$\de(\phi)=D(\phi)\cdot f^{-1}[dz|d\th].$$
\end{example}

\begin{remark}\label{deltaplus-rem}
Assume that $S$ is even. Then for a smooth supercurve $(X,\DD)$ over $S$, the bosonic truncation $C=X_{\bos}=(|X|,\OO_X^+)$ is a family of smooth curves over $S$,
and we have a natural projection $\pi:X\to C$. Furthermore, we can view $L=\OO_X^-$ as a line bundle on $C$. Since $\pi$ is a finite morphism, we have a natural identification of $\Z_2$-graded coherent sheaves on $C$,
$$\om_X=\pi^!\om_C=\und{\Hom}(\OO_X,\om_C)=\om_C\oplus \und{\Hom}(L,\om_C),$$
so the odd part of the derivation $\de$ induces an isomorphism $L\rTo{\sim} \und{\Hom}(L,\om_C)$, or equivalently $L^2\rTo{\sim} \om_C$. 
It is well known that under the above identification of $\om_X^+$ with $\om_C$ the even part of $\de$ is given by the de Rham differential $d:\OO_C\to \om_C$
(e.g., this can be deduced from the local standard structure of smooth supercurves, see \cite{Deligne} or Sec.\ \ref{local-descr-sec} below).
\end{remark}

\begin{definition}
(i) A {\it supercurve with punctures} over $S$, is a superscheme $X$ of finite type over $S$, flat and relatively Cohen-Macaulay, together with 
\begin{itemize}
\item
a collection of disjoint closed subschemes called {\it NS-punctures} and {\it R-punctures}
$$P_i\sub X, \ i=1,\ldots,n_{NS}; \ \ R_i\sub X, i=1,\ldots,n_R,$$
such that the projection $P_i\to S$ is an isomorphism, and each $R_i$ is a Cartier divisor (flat over $S$);
\item
a derivation
$$\de:\OO_X\to \om_{X/S}(R),$$
trivial on pull-backs of functions on $S$, where $R=\sum R_i$. Here $\om_{X/S}$ is the relative dualizing sheaf
so that $\pi^!\OO_S=\om_{X/S}[1]$ (the fact that it is a sheaf follows from the Cohen-Macaulay property).
\end{itemize}
We impose the following additional properties.
\begin{itemize}
\item There is an open fiberwise dense subset $U\sub X$ such that $U/S$ is smooth of dimension $1|1$,
and $P_i\sub U$, $R_i\sub U$;
\item the derivation $\de$ corresponds to a structure of a smooth supercurve with Ramond punctures on $(U,(R_i))$ over $S$;
\item on every geometric fiber $X_s$, $\de^-$ induces an isomorphism 
\begin{equation}\label{de-odd-parts-isom}
\OO_{X_s}^-\rTo{\sim}\om_{X_s}(\sum R_{is})^-;
\end{equation}
\end{itemize} 
By an {\it isomorphism} of supercurves with punctures over $S$ we mean an isomorphism of superschemes over $S$ compatible with all the structures.
We say that a relative vector field on $X$ (i.e., a derivation of $\OO_X$, trivial on $\pi^{-1}\OO_S$) is {\it superconformal} if it preserves $\de$. 

\noindent
(ii) A supercurve with punctures $(X/S,P_\bullet,R_\bullet)$ is called {\it stable (resp., prestable)} if $X$ is proper over $S$ and
for every geometric fiber $X_s$, passing to the bosonic truncation $(X_s)_{\bos}$ with the marked points induced by $(P_i)$ and $(R_i)$
one gets a usual stable (resp., prestable) pointed curve.
\end{definition}

The role of the isomorphism \eqref{de-odd-parts-isom} can be understood by looking at the case of even $S$ (see Lemma \ref{even-case-rem} below).
As observed in \cite{Deligne}, if $U\sub X$ is the maximal open subset which is smooth over $S$, then
$\de$ induces a structure of a smooth supercurve on $U\setminus \cup_i R_i$ over $S$. 

Since Cohen-Macaulay property appears in the definition of supercurves, we recall some results involving it that will be useful for us.

\begin{lemma}\label{CM-str-sh-lem}
(i) Let $X$ be a Cohen-Macaulay locally Noetherian superscheme, $j:U\to X$ an open embedding such that $Z=X\setminus U$ has codimension $\ge 2$ (resp., $\ge 1$) in $X$.
Then the natural map $\OO_X\to j_*\OO_U$ is an isomorphism (resp., injective). Similarly, if $\FF$ is a Cohen-Macaulay coherent sheaf on $X$ with full support, then the natural map
$\FF\to j_*j^*\FF$ is an isomorphism (resp., injective).

\noindent
(ii) Let $X\to S$ be flat, relatively Cohen-Macaulay, with fibers of pure dimension $1$, and let $j:U\hra X$ be a fiberwise dense open subscheme. Then the canonical map
$$\om_{X/S}\to j_*j^*\om_{X/S}$$
is injective.

\noindent
(iii) Let $X$ be a superscheme over a field $k$. Then $X$ is Cohen-Macaulay with a dense open which is smooth of dimension $1|1$ if and only $X_{\bos}$ is a reduced curve,
$\OO_X^-$ is a Cohen-Macaulay sheaf with full support on $X_{\bos}$, locally free of rank $1$ on a dense open, and $\OO_X^-\cdot \OO_X^-=0$.
\end{lemma}

\begin{proof}
(i) It is enough to check the vanishing of the corresponding local cohomology $\und{H}^i_Z(\OO_X)=\und{H}^i_Z(\FF)$ for $i=0,1$ (resp., for $i=0$). But this follows from \cite[Exp.\ VII,Cor.\ 1.4]{SGA2}.

\noindent
(ii) Otherwise, in some neighborhood of a point in $X$ the kernel of this morphism
would be a nonzero subsheaf of $\om_{X/S}$ with finite support over $S$. So we would get a nonzero morphism 
$$\OO_Z\to \om_{X/S}=\pi^!\OO_S[-1],$$
with $Z$ finite over $S$. By adjunction of the pair $(\pi_*,\pi^!)$, it would correspond to a nonzero morphism $\pi_*\OO_Z\to \OO_S[-1]$
which is impossible.

\noindent
(iii) This follows easily from the interpretation of the Cohen-Macaulay condition for a superring in terms of the even and odd components given in \cite[Lem.\ 7.5]{MZ}.
\end{proof}

Supercurves with punctures over even bases can be described in purely even terms as follows (cf.\ \cite[1.5]{Deligne}).

\begin{lemma}\label{even-case-rem}
Assume that $S$ is even. 
Then the data of a supercurve $(X,P_\bullet,R_\bullet)$ over $S$ is equivalent to the following data:
\begin{itemize}
\item a flat family of pointed curves $(C,p_\bullet,r_\bullet)$ over $S$ with reduced geometric fibers, smooth near the marked points;
\item a coherent sheaf $L$ on $C$, flat and relatively Cohen-Macaulay over $S$, locally free of rank $1$ over a fiberwise
dense open in $C$;
\item an isomorphism
\begin{equation}\label{gen-spin-str-eq}
L\rTo{\sim}\und{\Hom}(L,\om_C(\sum r_i)).
\end{equation}
\end{itemize}    
Under this correspondence one has $|C|=|X|$, 
$$\OO_X^+=\OO_C, \ \OO_X^-=L, \ \om_X^+=\om_C, \ \om_X^-=\und{\Hom}(L,\om_C)\simeq L(-\sum r_i),$$
where $\OO_X^-\cdot \OO_X^-=0$, $\OO_X^-\cdot \om_X^+=0$, and the action map $\OO_X^-\ot \om_X^-\to \om_X^+$ is identified with
the natural evaluation map.
The puncture $P_i$ is the image of the composition $S\rTo{p_i}C\to X$; and $R_i$ is the schematic preimage of $r_i$ under the projection
$X\to C$. The isomorphism \eqref{gen-spin-str-eq} corresponds to $\de^-$.
\end{lemma}

\begin{proof}
Let us consider the decomposition $\OO_X=\OO_X^+\oplus \OO_X^-$ into the even and odd components.
Then the bosonic truncation $C=X_{\bos}=(|X|,\OO_X^+)$ is a family of curves over $S$. Every geometric fiber $C_s$ is a Cohen-Macaulay
curve, with a dense smooth open. Equivalently, $C_s$ is a reduced curve. 
Furthermore, $L=\OO_X^-$ is a coherent sheaf on $C$ with the claimed properties, by Lemma \ref{CM-str-sh-lem}(iii). Conversely, using the same lemma,
starting with $(C,L)$ we get a superscheme over $S$ with required properties by setting $\OO_X=\OO_C\oplus L$ (where the product $L\cdot L$ is zero in $\OO_X$).

The marked points $P_i:S\to X$ factor through marked points $p_i:S\to C$. On the other hand, 
the Cartier divisors $R_i\hra X$ induce Cartier divisors $r_i\hra C$. By assumption, the latter are also marked points on $C$.
We claim that $R_i$ i exactly the preimage of $r_i$ under the canonical projection $\pi:X\to C$. Indeed, this immediately follows from the fact that
locally $R_i$ is given by an even equation $f\in \OO_X^+=\OO_C$, and $r_i$ is given by the same equation $f$ in $C$.

We have a natural identification of $\OO_X$-modules, compatible with the $\Z_2$-grading
$$\om_X=\pi^!\om_C=\und{\Hom}(\OO_X,\om_C).$$
Taking the odd parts, we get an isomorphism
$$\om^-_X\simeq \und{\Hom}(L,\om_C).$$
Thus, $\de^-$ induces an isomorphism \eqref{gen-spin-str-eq}. On the other hand, $\de^+:\OO_C\to \om_C(\sum r_i)$ is induced by the usual de Rham differential
(since this is so on a dense open subset, see Remark \eqref{deltaplus-rem}). Hence, we can recover $\de$ from the isomorphism \eqref{gen-spin-str-eq}.
\end{proof}

A coherent sheaf $L$ on a prestable curve $C/S$ equipped with an isomorphism \eqref{gen-spin-str-eq} is called a {\it generalized spin structure}.

\begin{lemma}\label{isom-stable-lem} 
Let $(X,P_\bullet,R_\bullet)$ and $(X',P'_\bullet,R'_\bullet$) be a pair of supercurves over $S$,
and let $\varphi:X\rTo{\sim} X'$ be an isomorphism of superschemes over $S$, sending $P_i$ to $P'_i$ and 
$R_j$ to $R'_j$. Assume that the restriction of $\varphi$ to a fiberwise dense open subscheme $U\sub X$, which is smooth over $S$, is an isomorphism of smooth
supercurves. Then $\varphi$ is an isomorphism of supercurves.
\end{lemma}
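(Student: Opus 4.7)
The plan is to verify that $\varphi$ is superconformal, i.e., intertwines the derivations $\de$ and $\de'$; this is the only structure additional to the $S$-isomorphism of pointed superschemes required for $\varphi$ to be an isomorphism of (pre)stable supercurves. Since $\varphi$ carries $R=\sum R_j$ onto $R'=\sum R'_j$, functoriality of the relative dualizing sheaf gives a canonical identification $\varphi^{*}\om_{X'/S}(R')\simeq \om_{X/S}(R)$. Under this identification, $\de$ and $\varphi^{*}\de'$ are both $\OO_S$-linear derivations $\OO_X\to \om_{X/S}(R)$. Their difference $\eta:=\de-\varphi^{*}\de'$ vanishes on $U$ by hypothesis, since $\varphi|_U$ is an isomorphism of smooth supercurves and is therefore already compatible with the corresponding subbundles $\DD,\DD'$, equivalently with the derivations. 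It remains to deduce $\eta=0$ on all of $X$.

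For this it suffices to show that the restriction map $\om_{X/S}(R)\to j_{*}\om_{U/S}(R)$ is injective, where $j\colon U\hra X$: granted this, every local section $\eta(f)$ of $\om_{X/S}(R)$ vanishes on $U$ and is hence zero, so $\eta=0$. Equivalently, the claim is that $\om_{X/S}(R)$ has no nonzero local sections supported on $X\setminus U$.

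The injectivity is a consequence of the relative Cohen-Macaulay assumption on $X/S$: the identity $\pi^{!}\OO_S=\om_{X/S}[1]$ used in the definition of a prestable supercurve forces $\om_{X/S}$ to be fiberwise $S_1$, since on each geometric fiber it is the dualizing sheaf of a one-dimensional Cohen-Macaulay superscheme, which has no embedded associated primes in the even direction; and twisting by the Cartier divisor $R\sub U$, disjoint from the non-smooth locus, preserves this property. If one prefers an explicit argument, the local normal forms for NS and Ramond nodes recalled in Section \ref{infinit-sec} allow the inclusion $\om_{X/S}\hra j_{*}\om_{U/S}$ to be read off directly in coordinates on each node-type chart. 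This $S_1$-style extension property for the relative dualizing sheaf in the super setting is the main technical point; once it is in hand, everything else is a formal consequence of the functoriality of $\om$ under $S$-isomorphisms and the fiberwise density of $U$.
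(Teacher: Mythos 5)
Your proposal is correct and follows essentially the same route as the paper: reduce to showing $\de_X=\varphi^*\de_{X'}$ as derivations into $\om_{X/S}(R)$, observe they agree on $U$, and conclude via injectivity of $\om_{X/S}(R)\to j_*j^*\om_{X/S}(R)$, which is where the Cohen--Macaulay hypothesis enters. The only difference is in how that injectivity is justified: the paper argues by Grothendieck duality (a nonzero subsheaf supported on $Z$ finite over $S$ would give a nonzero map $\OO_Z\to\pi^!\OO_S[-1]$, hence $\pi_*\OO_Z\to\OO_S[-1]$, which is impossible), whereas you invoke a fiberwise depth/$S_1$ property of the dualizing sheaf --- a somewhat looser but repairable version of the same point.
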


\begin{proof}
Let $j:U\to X$ be the inclusion.
By assumption, we have $\varphi^*\om_{X'/S}(R')\simeq \om_{X/S}(R)$.
Thus, we can view $\varphi^*\de_{X'}$ as a derivation $\OO_X\to \om_{X/S}(R)$, and we have
to prove the equality $\varphi^*\de_{X'}=\de_X$. Since we know that this equality holds over $U$,
the assertion follows from Lemma \ref{CM-str-sh-lem}(ii).
\end{proof}

We will later need the following result about infinitesimal deformations of supercurves. 

\begin{lemma}\label{inf-def-emb-lem} (\cite[Prop.\ 1.7]{Deligne})
Let $X_0$ be a supercurve with punctures over $\C$. Suppose an infinitesimal deformation $X$ of $X_0$ as a supercurve
over $\C[t]/(t^2)$ or over $\C[\tau]$, where $t$ is even and $\tau$ is odd, is trivial as a deformation of a superscheme. Then it is trivial as a deformation
of a supercurve.
\end{lemma}

\begin{proof} Using the correspondence of Lemma \ref{even-case-rem}, we get the data $(C,L,\varphi_0)$, where $\varphi_0$ is an isomorphism
$$\varphi_0:L\rTo{\sim}\und{\Hom}(L,\om_C),$$ 
so that $\OO_{X_0}=\OO_C\oplus L$. It is convenient to think of $\varphi_0$ as a map
$$\varphi_0:L^{\ot 2}\to \om_C.$$
  
 \medskip
 
\noindent
{\it The case of even base $\C[t]/(t^2)$}.
In this case we can still use Lemma \ref{even-case-rem} for $X$. Since the underlying superscheme is not deformed, this means that only the map $\varphi_0$
is deformed to $\varphi=\varphi_0+t\psi$ for some map $\psi:L\to \und{\Hom}(L,\om_C)$. Since $\varphi_0$ is an isomorphism, we can write $\psi=\varphi_0\circ a$,
where $a$ is an endomorphism of $L$. Let us set 
$$\a=\id+\frac{t}{2}a:L\rTo{\sim} L.$$
Then on the locus where $L$ is locally free we can think of $a$ as a function on $C$, so
$\a^{\ot 2}=\id+ta$, and
$$\varphi_0\circ \a^{\ot 2}=\varphi_0+t\varphi_0\circ a=\varphi,$$
which implies that this equality holds everywhere, so the data
$(C,L,\varphi_0)$ and $(C,L,\varphi)$ are isomorphic.

\medskip

\noindent
{\it The case of odd base $\C[\tau]$}.
We have to consider possible supercurve structures $\de:\OO_X\to \om_{X/S}$ on $X=X_0\times S$ over $S= \Spec(\C[\tau])$ reducing to $\de_0$ on $X_0$ over $\C$.
Extending $\de_0$ to $X$ by extension of scalars, we can write
$$\de=\de_0+\tau D,$$
for some odd derivation $D:\OO_{X_0}\to \om_{X_0}$. Let
$$D_0:\OO_C\to L, \ \ D_1:L\to \om_C$$ be the 
components of $D$. A simple local calculation shows that the condition for $\de$ to define a supercurve structure on the smooth part of $X$ implies that $D_1$ is determined by $D_0$.

We want to find an automorphism $\a$ of $X/S$, trivial on $X_0$, such that $\de$ is obtained from $\de_0$ by conjugation by $\a$. Note that $\a$ necessarily has form
$$\a(f)=f+\tau E(f),$$
where $E:\OO_{X_0}\to \OO_{X_0}$ is an odd derivation. We take $E$ with the components 
$$E_0=D_0:\OO_C\to L, \ \ E_1=0:L\to \OO_C.$$
An easy calculation (on a smooth locus) shows that $\a_{\om} \de_0 \a^{-1}=\de_0+\tau \wt{D}$ produces $\wt{D}$ with $\wt{D}_0=D_0$. Here $\a_{\om}:\om_{X/S}\to \om_{X/S}$
is the map induced by $\a$. Hence $\wt{D}=D$, as required. 
\end{proof}

\subsection{Dualizing sheaf on the formal completion}\label{formal-supercurve-sec}

We will sometimes want to argue ``formally locally", so it will be useful to look at the obtained structure on the formal completion of $\OO_{X,x}$ for a supercurve $X/S$.
For this we use the duality theory for formal schemes developed in \cite{AJL} (or rather, its superanalog). Namely, by the results of \cite[Sec.\ 2]{AJL} (extended to the supercase), 
we can identify the completion
$\hat{\om}_{X,x}$ with the dualizing sheaf for the morphism of formal superschemes $\hat{X}=\Spf(\hat{\OO}_{X,x})\to \hat{S}=\Spf(\hat{\OO}_{S,s})$, where $s\in S$ is the image of $x\in X$.
The structure derivation $\de:\OO_X\to \om_{X/S}$ induces a continuous derivation 
$$\hat{\de}:\OO_{\hat{X}}\to \om_{\hat{X}/\hat{S}},$$
which is compatible with the derivation $\hat{\de}_{X_s}:\OO_{\hat{X}_s,x}\to \om_{\hat{X}_s/k(s)}$, where $\hat{X}_s$ is the formal neighborhood of $x$ in $X_s$. 
In particular, this will allow us to make sense of deformations of supercurve singularities in Sec.\ \ref{nodes-def-sec}.


The following formal local description of $\om_{X/S}$ will be useful (this is a particular case of \cite[1.6]{Deligne}).

\begin{lemma}\label{om-stable-lem}
Let $S=\Spec(R)$, where $R$ is a local Artinian $\C$-superalgebra; $C=\Spf(B)$, with $B=R[\![w_1,w_2]\!]/(w_1w_2)$; 
$X=\Spf(A)$, where $B\sub A$ is a finite extension of superalgebras. Assume that for $i=1,2$, there exists an odd element $\th_i\in A[w_i^{-1}]$, such that
$$A[w_i^{-1}]\simeq 
R(\!(z_i)\!)[\th_i],$$
where $w_i=z_i^d$ for some $d>0$.
Then for every $\eta_i\in \om_{X/S}[w_i^{-1}]$, for $i=1,2$,
we can write
$$\eta_i=(a_i(z_i)+b_i(z_i)\th_i)[dz_i|d\th_i],$$ 
where $a_i(z_i)$ and $b_i(z_i)$ are Laurent series.
We set $\Res_{z_i=0}(\eta_i):=\Res_{z_i=0}b_i(z_i)dz_i$. Then
$$\om_{X/S}:=\{(\eta_1,\eta_2)\in \om_{X/S}[w_1^{-1}]\oplus \om_{X/S}[w_2^{-1}] \ |\ \eta_1, \eta_2 \text{ compatible and }
\forall a\in A, \Res_{z_1=0}(a\eta_1)+\Res_{z_2=0}(a\eta_2)=0\},$$
where compatibility of $\eta_1$ and $\eta_2$ means that they define the same element of $\om_{X/S}[(w_1w_2)^{-1}]$.
\end{lemma}

\begin{proof}
Since $\pi:X\to C$ is a finite morphism, we have
\begin{equation}\label{om-X-S-Hom-isom}
\om_{X/S}\simeq\und{\Hom}(\pi_*\OO_X,\om_{C/S}).
\end{equation}
On the other hand, we have a well known description of $\om_{C/S}(C)$
as compatible pairs $(\eta_1,\eta_2)$, with $\eta_i\in \om_{C/S}[z_i]$ such that
\begin{equation}\label{C-res-condition-eq}
\Res_{z_1=0}(b\eta_1)+\Res_{z_2=0}(b\eta_2)=0,
\end{equation}
for every $b\in B=\OO(C)$.
Note that the isomorphism \eqref{om-X-S-Hom-isom} is induced by the trace map $\tau:\pi_*\om_{X/S}\to \om_{C/S}$, whose localizations on $\om_{X/S}[w_i^{-1}]$ are given by
$$\tau([a(z_i)+b(z_i)\th_i][dz_i|d\th_i])=b(z_i)dz_i.$$
The identification of $\om_{X/S}(X)$ with $\Hom_B(A,\om_{C/S}(C))$, sends $\eta\in \om_{X/S}(X)$
to the map $A\to \om_{C/S}(C):a\mapsto \tau(a\eta)$. 

Now we use the fact that $\tau(a\eta)$ is determined by the localizations $\tau(a\eta_i)$, $i=1,2,$
where $\eta_i\in \om_{X/S}[w_i^{-1}]$,
which should satisfy the sum of residue condition \eqref{C-res-condition-eq}.
It follows that $\eta$ is determined by $\eta_i$, $i=1,2$, which should satisfy the condition
$$\Res_{z_1=0}(b\tau(a\cdot \eta_1))+\Res_{z_2=0}(b\tau(a\cdot \eta_2))=0$$
for any $a\in A$, $b\in B$. Since $\tau$ is $B$-linear we can absorb the multiplication by $b$, so we obtain the claimed characterization.
\end{proof}

\subsection{Local descriptions}\label{local-descr-sec}


\subsubsection{Smooth supercurves with punctures}

It is well known (see e.g., \cite[Lem.\ 1.2]{LR} or \cite[Lem.\ 3.1]{DW} in the absolute case) 
that locally in classical topology near a point of a smooth supercurve, there exist relative coordinates $(z,\th)$ such that 
$\DD$ is generated by $D=\partial_\th+\th\partial_z$ and $\de$ is given by $\de(f)=D(f)\cdot [dz|d\th]$.
We show that the same assertion holds with respect to the \'etale topology and also consider an analogous statement for the case of Ramond punctures
(cf.\ \cite[Prop.\ 3.6]{BHR}).

\begin{lemma}\label{local-descr-smooth-lem}
(i) Let $X/S$ be a smooth supercurve and let $(z,\th)$ be a pair of even and odd local functions such that $dz$ and $d\th$ generate the relative cotangent bundle near a point $p\in X$.
Then there exists another pair $(w,\eta)$ like this defined in an \'etale neighborhood of $p$, with $w\equiv z\mod\NN_X$, such that $\DD$ is generated
by $\partial_\eta+\eta\partial_w$.

\noindent
(ii) Now let $X/S$ be a smooth supercurve with a Ramond puncture $R\sub X$, and let $(z,\th)$ be a pair of even and odd local functions such that $dz$ and $d\th$ generate the relative cotangent bundle and such that the ideal of $R$ is generated by $z$.
Then locally in \'etale topology there exist a change of coordinates to $(w,\eta)$, with $w\equiv z\mod\NN_X$, 
such that $\DD$ is generated by $\partial_\eta+\eta w\partial_w$.
\end{lemma}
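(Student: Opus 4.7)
The plan is to reduce both parts to a single scalar equation for one unknown function $g$, via an explicit coordinate ansatz, with \'etale localization entering only through the extraction of one square root.

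First I would choose a local generator $D$ of $\DD$ and, using that $dz, d\th$ form a coframe, rescale to arrange
$$D = \pa_\th + \alpha\,\pa_z, \qquad \alpha = \alpha_0(z) + \alpha_1(z)\th,$$
with $\alpha$ an odd $\OO_S$-relative function. Since $\alpha^2 = 0$ by direct expansion, a short calculation gives $\tfrac12[D,D] = D^2 = f\,\pa_z$ with
$$f = \pa_\th\alpha + \alpha\,\pa_z\alpha = (\alpha_1 + \alpha_0\alpha_0') + (\alpha_0\alpha_1' - \alpha_1\alpha_0')\,\th =: f^{(0)} + f^{(1)}\,\th.$$
Smoothness of $X/S$ in (i) is equivalent to $f \in \OO_X^\times$; the Ramond hypothesis that $(z)$ is the ideal of $R$ in (ii) is equivalent to $f = zu$ with $u \in \OO_X^\times$.

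I would then try the ansatz
$$w := z + \alpha_0\,\th, \qquad \eta := g(z)\,\th,$$
for an even function $g$ to be determined. The desired normal form $D = \lambda(\pa_\eta + \eta\pa_w)$ in case (i) (resp.\ $D = \lambda(\pa_\eta + \eta w\pa_w)$ in case (ii)) is equivalent, by comparing actions on the generators $w, \eta$, to the matching conditions $D\eta = \lambda$, $Dw = \lambda\eta$ (resp.\ $Dw = \lambda\eta w$), since two $\OO_S$-linear odd derivations agreeing on $w, \eta$ coincide. Direct computation of $Dw$ and $D\eta$ from the ansatz reduces these to the single scalar equation
$$g^2 = f^{(0)} \quad\text{(case (i))}, \qquad g^2 = f^{(0)}/z \quad\text{(case (ii))}.$$
In both cases the right-hand side is a unit, so $g$ exists after passing to the \'etale cover obtained by adjoining a square root. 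The super-Jacobian of $(z,\th) \mapsto (w,\eta)$ is triangular modulo $\NN_X$ with diagonal $(1, g)$, hence invertible, so $(w, \eta)$ form legitimate \'etale-local coordinates, and $w \equiv z \mod \NN_X$ is tautological.

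The main obstacle lies in case (ii): since the standard form $\pa_\eta + \eta w\pa_w$ has bracket $w\pa_w$ vanishing exactly on the divisor $(w)$, the ansatz is admissible only if $(w) = (z)$, equivalently $\alpha_0 \in (z)$. This is not manifest from $(f) = (z)$ alone, and I would derive it by evaluating $f^{(0)}$ and $f^{(1)}$ at $z = 0$: the identity $f^{(0)}(0) = 0$ gives $\alpha_1(0) = -\alpha_0(0)\alpha_0'(0)$; substituting into $f^{(1)}(0) = 0$ and using that $(\alpha_0'(0))^2 = 0$ since $\alpha_0'(0)$ is odd yields $\alpha_0(0)\,\alpha_1'(0) = 0$; since $\alpha_1'(0)$ is a unit (its classical reduction is the leading coefficient of $u$ at $z = 0$, nonzero by the Ramond hypothesis), this forces $\alpha_0(0) = 0$. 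With this identity in hand case (ii) concludes exactly as (i).
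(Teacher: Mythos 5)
Your proof is correct and follows essentially the same route as the paper: normalize the generator to $\partial_\th+\alpha\partial_z$, make the ansatz $w=z+\alpha_0\th$, $\eta=g(z)\th$ (the paper leaves the odd shift $a_1$ as an unknown and solves $a_1=g_1=\alpha_0$ from one of the two resulting equations, which you simply build in from the start), reduce to extracting an \'etale-local square root of a unit, and in the Ramond case establish $\alpha_0(0)=0$ by evaluating the two components of $\tfrac12[D,D]=zu\,\partial_z$ at $z=0$ --- exactly the paper's computation with $p_i,q_i$. No gaps.
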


\begin{proof} 
(i) Let $\DD$ be generated by an odd vector field $D$ of the form $D=f\partial_\th+g\partial_z$. One has 
$$\frac{1}{2}[D,D]\equiv f\partial_\th(g)\cdot \partial_z \mod\NN_X\TT_{X/S}.$$
Note also that modulo nilpotents $D$ reduces to $f\partial_\th$.
Hence, in order for $D$ and $[D,D]$ to generate $\TT_{X/S}$ both $f$ and $\partial_\th(g)$ should be invertible.

Thus, we can assume that $\DD$ is generated by some vector field of the form $D=\partial_\th+g\partial_z$, where $g=g_1(z)+g_0(z)\th$,
with $g_0$ even and $g_1$ odd. 
Furthermore, $g_0$ is invertible.
Let us look for $w$ and $\eta$ in the form 
\begin{align*}
&\eta=a_0(z)\cdot \th,\\
&w=z+a_1(z)\th,
\end{align*}
where $a_0$ is invertible even and $a_1$ is odd.
Changing to the new coordinates we get
$$D=[a_0+ga'_0\th]\partial_\eta+[-a_1+g(1+a'_1\th)]\partial_w.$$
In order for $\DD$ to be generated by $\partial_\eta+\eta\partial_w$ we need
the equation
$$-a_1+g(1+a'_1\th)=(a_0+ga'_0\th)\cdot a_0\th$$
to be satisfied. This is equivalent to the system
\begin{align*}
&-a_1+g_1=0,\\
&g_0+g_1a'_1=a_0^2.
\end{align*}
Since $g_0$ is invertible, in an \'etale neighborhood we can choose $a_0$ such that $a_0^2=g_0+g_1g'_1$.
This $a_0$ together with $a_1=g_1$ is a solution.

\medskip

\noindent
(ii) Let $D=f\partial_\th+g\partial_z$ be a generator of $\DD$. From the calculation in the beginning of part (i), we get an isomorphism
$$(\TT_{X/S}/\DD+[\DD,\DD])\ot \OO_{X_\bos}\simeq \OO_{X_\bos}/(f\partial_\th(g))\cdot \partial_z\oplus \OO_{X_\bos}/(f)\cdot \partial_\th$$
where $\OO_{X_\bos}=\OO_X/\NN_X$.
Thus, this sheaf surjects onto $\OO_{X_\bos}/(f)\oplus \OO_{X_\bos}/(f)$.
Since this quotient has to be isomorphic to $\OO_X/(\NN_X+(z))$, we deduce that $f$ is invertible modulo $\NN$, so $f$ is invertible.

Thus, we can assume that $\DD$ is generated by $D=\partial_\th+g\partial_z$, where
$g=g_1(z)+g_0(z)\th$. Then we have
$$\frac{1}{2}[D,D]=[g_0+g_1g'_1+(g_1g'_0-g_0g'_1)\th]\cdot \partial_z,$$ 
where $f'$ denotes the derivative with respect to $z$. 
In order for the commutator map $\DD^{\ot 2}\to \TT_{X/S}/\DD$ to vanish exactly on the divisor $(z)$ we should have
$$g_0+g_1g'_1+(g_1g'_0-g_0g'_1)\th=z(u_0(z)+u_1(z)\th),$$
with $u_0$ invertible. In other words,
\begin{equation}\label{g0-g1-eq}
\begin{array}{l}
g_0+g_1g'_1=zu_0, \\ 
g_1g'_0-g_0g'_1=zu_1.
\end{array}
\end{equation}
We claim that this implies that $g_1$ is divisible by $z$. Indeed, let us write
$$g_0\equiv p_0+q_0z \mod (z^2),$$ 
$$g_1\equiv p_1+q_1z\mod (z^2),$$
where $p_i$ and $q_i$ are functions on the base. Note that since $g_1$ is odd, from
the first of the equations \eqref{g0-g1-eq} we get that $q_0$ is invertible. Also, looking at the constant terms of these equations we get
\begin{align*}
&p_0+p_1q_1=0,\\
&p_1q_0-p_0q_1=0.
\end{align*}
This implies that
$$p_1(q_0+q_1^2)=p_1q_0=0.$$
Since $q_0$ is invertible, we deduce that $p_1=0$ which proves our claim that $g_1$ is divisible by $z$.

Now we are going to make the same change of coordinates as in (i) with an additional constraint that $a_1$ is divisible by $z$. 
Since $\eta w=a_0z\th$, we have to solve the equation
$$-a_1+g(1+a'_1\th)=(a_0+ga'_0\th)\cdot a_0z\th,$$
or equivalently, the system
\begin{align*}
&-a_1+g_1=0,\\
&g_0+g_1a'_1=a_0^2z.
\end{align*}
Thus, we get the solution by taking $a_0$ to be the square root of $u_0$
and $a_1=g_1$ (which is divisible by $z$).
\end{proof}

\begin{definition} We refer to $(w,\eta)$ as in Lemma \ref{local-descr-smooth-lem} as {\it standard coordinates}.
\end{definition}

By Example \ref{de-comp-ex}, if $(z,\th)$ are standard coordinates on a smooth supercurve (away from punctures)
then the canonical derivation is given by 
$$\de(f)=(\partial_\th+\th \partial_z)(f)\cdot [dz|d\th].$$
On the other hand, if $(z,\th)$ are standard coordinates near a Ramond puncture then
$$\de(f)=(\partial_\th+\th z\partial_z)(f)\cdot [\frac{dz}{z}|d\th].$$

\subsubsection{Nodes}\label{nodes-subsubsec}

In the rest of this subsection we consider only the absolute case, i.e., $S=\Spec(\C)$.
Let $X$ be a stable supercurve over $\C$. We denote by $(C,L)$ be the underlying generalized spin curve, and fix a node
$q\in C$. Recall that by definition $X$ is required to be Cohen-Macaulay (CM).
Using the classification of CM sheaves of rank $1$ on the nodal singularity
(see e.g. \cite[Sec.\ 1]{Jarvis-95}), we obtain that there are two possibilities for $L$:

\begin{itemize}
\item $L$ is locally free near $q$. Then we say that this is a {\it Ramond node} (or {\it R-node}).

\item $L$ is the push forward of a line bundle on the normalization of $C$ near $q$. Then we say that this is a {\it NS node}.
\end{itemize}

Furthermore, we have the following local descriptions near the nodes (see \cite{Deligne}), where the description of the generators $\om_{X/S}$ follows from Lemma
\ref{om-stable-lem}.

\begin{itemize}
\item Near a Ramond node $X$ has coordinates $(z_1,z_2,\th)$ (where $\th$ is odd) subject to $z_1z_2=0$. The complement of the node is the union of two branches 
$U_1$ and
$U_2$, where $z_i$ is invertible on $U_i$, and $\om_X$ is free with the basis $\bb$ given by
$$\bb=\begin{cases} [\frac{dz_1}{z_1}|d\th] & \text{ on } U_1,\\ -[\frac{dz_2}{z_2}|d\th] & \text{ on } U_2.\end{cases}$$
The derivation $\de$ is given by
$$\de(f)=\begin{cases} (\pa_\th+\th z_1\pa_{z_1})(f)\cdot \bb & \text{ on } U_1,\\ (\pa_\th-\th z_2\pa_{z_2})(f)\cdot \bb & \text{ on } U_2.\end{cases}$$

\item Near an NS node $X$ has coordinates $z_1,z_2,\th_1,\th_2$ subject to the equations
$$z_1z_2=z_1\th_2=z_2\th_1=\th_1\th_2=0.$$
The complement of the node is given again as the union of two branches $U_1$ and $U_2$, where $z_i$ is invertible on $U_i$ 
and $(z_i,\th_i)$ form coordinates on $U_i$. The generators $[dz_1|d\th_1]$ and $[dz_2|d\th_2]$ of $\om_{U_1}$ and of $\om_{U_2}$ extend to sections $s_1$ and $s_2$
of $\om_X$ (zero on another branch), however, they do not generate it: there is an extra section 
$$s_0=\begin{cases} \frac{\th_1}{z_1}[dz_1|d\th_1] & \text{ on } U_1,\\ -\frac{\th_2}{z_2}[dz_2|d\th_2] & \text{ on } U_2.\end{cases}.$$
The derivation $\de$ is given by
$$\de(f)=\begin{cases} (\pa_{\th_1}+\th_1 \pa_{z_1})(f)\cdot \bb & \text{ on } U_1,\\ (\pa_{\th_2}+\th_2\pa_{z_2})(f)\cdot \bb & \text{ on } U_2.\end{cases}.$$
\end{itemize}

\begin{remark}
The notions of NS node and NS puncture (resp., Ramond node and Ramond puncture) are related via the gluing construction that will be discussed in Sec.\ \ref{boundary-str-sec}.
Namely, two prestable supercurves with NS punctures $(X_1,P_1)$, $(X_2,P_2)$ can be glued along their NS punctures forming a new prestable supercurve with an NS node.
Similarly, one can glue two Ramond punctures into a Ramond node.
\end{remark}

\subsection{Correspondence between NS-punctures and divisors}\label{NS-correspondence-sec}

Here we recall a natural correspondence between NS-punctures $P\sub X$ and effective Cartier divisors $D$ supported at $P$, for a smooth supercurve $X/S$
(with arbitrary base $S$), see \cite{RSV88}, \cite{BR}. Note this correspondence only uses a neighborhood of $P$ in $X$, so it can also be applied for stable supercurves with punctures
of both types, i.e, for each NS-puncture $P_i$ we have a natural Cartier divisor $D_i$ supported at $P_i$.

It is based on the fundamental fact that for a smooth supercurve $X/S$, the composed map
\begin{equation}\label{smooth-superconf-map}
\TT^{sc}_{X/S}\to \TT_{X/S}\to \TT_{X/S}/\DD\simeq \DD^{\ot 2}\simeq \om_{X/S}^{-2}
\end{equation}
is an isomorphism of sheaves, where $\TT^{sc}_{X/S}\sub \TT_{X/S}$ is the subsheaf of superconformal vector fields (those preserving $\DD$). 
Indeed, the proof is easily obtained from the existence of standard coordinates as in Lemma \ref{local-descr-smooth-lem} 
(see \cite[Lem.\ 2.1]{LR}). We denote by
$$\a:\om_{X/S}^{-2}\rTo{\sim} \TT^{sc}_{X/S}$$
the inverse isomorphism. 

\begin{definition}\label{NS-div-corr-def} 
We say that an effective relative Cartier divisor $D\sub X$ {\it corresponds} to an NS-puncture $P\sub X$ if $D$ is supported at $P$
and the following property holds. If the ideal of $D$ is generated
locally by a function $f$ then the ideal of $P$ is generated by $(f,A(f))$, where $A$ is any generator of the distribution
giving the superconformal structure. Note that in this case we necessarily have an inclusion of subschemes $P\sub D$.
\end{definition}

The existence and uniqueness of a divisor $D$ corresponding to an NS-puncture $P$ is easy to check using standard coordinates $(z,\th)$.
Namely, with respect to such coordinates the ideal of $P$ has form $(z,\th-a)$, for some odd function $a$ on the base.
Then corresponding divisor $D$ is given by the ideal $(z+a\th)$. In the next lemma we give a coordinate-free characterization of this correspondence.

\begin{lemma}\label{NS-div-corr-lem}
For a smooth supercurve $X/S$ and an NS-puncture $P\sub X$ there exists a unique effective relative Cartier divisor $D$,
such that the following square is commutative 
\begin{equation}\label{NS-div-corr-square}
\begin{diagram}
\om_{X/S}^{-2}&\rTo{\a}& \TT^{sc}_{X/S}\\
\dTo{}&&\dTo{}\\
\om_{X/S}^{-2}|_{D}&\rTo{\b}&\TT_{X/S}|_{P}
\end{diagram}
\end{equation}
where $\b$ is an isomorphism of sheaves of $\OO_S$-modules.
Furthermore, the divisor $D$ corresponds to the NS-puncture $P$ in the sense of Definition \ref{NS-div-corr-def}.
\end{lemma}

\begin{proof} Let $\a_{P}:\om_{X/S}^{-2}\to \TT_{X/S}|_P$ denote the composition of $\a$ with the projection to $\TT_{X/S}|_{P}$.
The commutativity of the square \eqref{NS-div-corr-square} together with injectivity of $\b$ imply that the kernel of $\a_P$ is exactly
the subsheaf $\om_{X/S}^{-2}(-D)\sub \om_{X/S}^{-2}$. This shows the uniqueness of $D$.

Now let us construct $\b$ and show the commutativity of \eqref{NS-div-corr-square} for $P$ given by the ideal $(z,\th-a)$ and $D$ given by the ideal
$(z+a\th)$, where $(z,\th)$ are standard coordinates and $a$ is an odd function on the base.
Note that the isomorphism $\a$ locally has form
$$f\cdot \bb^{-2}\mapsto f\cdot \partial_z+\frac{1}{2}(-1)^{|f|}A(f)\cdot A,$$
where $A=\pa_{\th}+\th\pa_z$ 
and $\bb=[dz|d\th]$ is a generator of $\om_X$ (see the proof of \cite[Lem.\ 2.1]{LR}, where however the factor $\frac{1}{2}$ is missing).
It is easy to see that
$$\a_{P}((z+a\th)f\cdot\bb^{-2})=0,$$
while $\a_{P}(\bb^{-2})$ and $\a_{P}(\th\cdot \bb^{-2})$ form an $\OO_S$-basis of $\TT_{X/S}|_{P}$.
Since $\bb^{-2}$ and $\th\cdot\bb^{-2}$ project to an $\OO_S$-basis of $\om_{X/S}^{-2}|_{D}$, this shows that the assertion holds with
$$\b(\bb^{-2})=\a_P(\bb^{-2}), \ \ \b(\th\cdot \bb^{-2})=\a_P(\th\cdot\bb^{-2}).$$
\end{proof}

Note that since the composition $P\hra D\to S$ is an isomorphism, the composition $\OO_S\hra \OO_D\to \OO_P=\OO_D/(I_P/I_D)$ is an isomorphism,
which means that we have a canonical splitting
\begin{equation}\label{NS-div-splitting-eq}
\OO_D\simeq \OO_S\oplus I_P/I_D. 
\end{equation}
We will use this splitting later (see Remark \ref{Res-rem}).

\begin{cor}\label{NS-puncture-Tsc-isom-cor} 
The map \eqref{smooth-superconf-map}
induces an isomorphism of sheaves
$$\TT^{sc}_{(X,P)/S}\rTo{\sim} (\TT_{X/S}/\DD)(-D)\simeq \om_{X/S}^{-2}(-D),$$
where $\TT^{sc}_{(X,P)/S}\sub \TT^{sc}_{X/S}$ is the sheaf of superconformal vector fields preserving the ideal of $P$.
\end{cor}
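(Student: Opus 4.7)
The plan is to reduce the corollary to a direct consequence of Lemma \ref{NS-div-corr-lem} by identifying $\TT^{sc}_{(X,P)/S}$ as the kernel of a suitable restriction map.

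First, I would show that for a local section $v$ of $\TT_{X/S}$, the condition of preserving the ideal $I_P$ is equivalent to $v$ lying in $I_P\cdot\TT_{X/S}$, i.e., to $v$ vanishing in $\TT_{X/S}|_P := \TT_{X/S}/I_P\TT_{X/S}$. In standard relative coordinates $(z,\th)$, by Lemma \ref{NS-div-corr-lem} the ideal of $P$ has the form $I_P=(z,\th-a)$ for some odd function $a$ on the base. Writing $v=f\pa_z+g\pa_\th$, we have $v(z)=f$ and $v(\th-a)=g$, so $v(I_P)\sub I_P$ iff $f,g\in I_P$, which is exactly the condition $v\in I_P\cdot\TT_{X/S}$. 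Consequently,
$$\TT^{sc}_{(X,P)/S}=\ker\bigl(\TT^{sc}_{X/S}\to \TT_{X/S}|_P\bigr).$$

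Next, I would invoke the commutative square from Lemma \ref{NS-div-corr-lem}. Taking kernels of the two vertical maps in the diagram and using that the horizontal maps $\a$ and $\b$ are isomorphisms, the isomorphism $\a$ restricts to an isomorphism between the kernel of $\om_{X/S}^{-2}\to\om_{X/S}^{-2}|_D$ and the kernel of $\TT^{sc}_{X/S}\to\TT_{X/S}|_P$. Since $D$ is an effective Cartier divisor locally cut out by an even non-zero-divisor, the kernel of the left vertical map is precisely $\om_{X/S}^{-2}(-D)$. Combined with the identification in the previous paragraph, this gives the desired isomorphism
$$\TT^{sc}_{(X,P)/S}\rTo{\sim}\om_{X/S}^{-2}(-D).$$

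There is no real obstacle here; the only point demanding a little care is the equivalence between ``preserving $I_P$'' and ``vanishing on $P$'', which is trivial in the purely even setting but needs to be checked in the super setting because odd generators of $I_P$ are involved. The explicit coordinate computation above takes care of this, and all the structural content is already packaged in Lemma \ref{NS-div-corr-lem}.
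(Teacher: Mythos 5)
Your argument is correct and is exactly the deduction the paper intends: the corollary is stated without proof as an immediate consequence of Lemma \ref{NS-div-corr-lem}, obtained by taking kernels of the vertical maps in its commutative square. Your extra verification that preserving $I_P$ is equivalent to lying in $I_P\cdot\TT_{X/S}$ is the only point the paper leaves implicit, and your coordinate check of it is valid.
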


\begin{proof}
This follows from the fact that the map $\a$ induces an isomorphism of the kernels of the vertical arrows in the commutative square \eqref{NS-div-corr-square}.
The kernel of the left vertical arrow in this square is $\om_{X/S}^{-2}(-D)$, while the kernel of the right vertical arrow is $\TT^{sc}_{(X,P)/S}$.
\end{proof}

\begin{remark}
For a general smooth family $X\to S$ of relative dimension $(1|1)$ there is a canonical dual family $\hat{X}\to S$, parametrizing irreducible Cartier divisors in the fibers of $X\to S$
(see e.g., \cite{BR}). The double dual $\hat{\hat{X}}$ is canonically identified with $X$. The superconformal structure on $X/S$ can be viewed as an isomorphism of $X$ with $\hat{X}$.
\end{remark}

\subsection{Supercurves over even bases and spin curves}\label{even-moduli-sec}

If we only consider even bases, the functor of (stable) supercurves with punctures coincides with the functor
of stable curves with punctures equipped with generalized spin structures 
i.e., coherent sheaves $L$ as in Lemma \ref{even-case-rem}.
Note that in the setting of Lemma \ref{even-case-rem} we still refer to the two kinds of marked points $p_\bullet$ and $r_\bullet$ and NS and Ramond punctures.

The moduli stack $\ov{\SS}_{g,n_{NS},n_R}$ of stable spin curves of genus $g$
with $n_{NS}$ NS-punctures and $n_R$ Ramond punctures (where $n_R$ is necessarily even)
was studied in \cite{Jarvis}, \cite{JKV}, as a particular case of the stack of stable $r$-spin curves.
In particular, it shown to be a smooth proper Deligne-Mumford stack. 
Note that although for $r>2$ there are different versions of the functor of stable $r$-spin curves, however, in the case $r=2$, they all coincide (see
\cite{Jarvis-95}).



Let us recall how the generalized spin structures over stable curves look like.
A generalized spin structure $L$ over a smooth curve $C$ with punctures is a line bundle $L$ equipped with an isomorphism
$$L^2\simeq \om_C(r_1+\ldots+r_n),$$
where $(r_\bullet)$ are the Ramond punctures.

Now let $(C,p_\bullet,r_\bullet)$ be a stable curve, and
let $\rho:\wt{C}\to C$ be the partial normalization map, resolving a single node $q\in C$.
Let us equip $\wt{C}$ with punctures in the following way:
first, it inherits all the punctures (NS and Ramond) of $C$. Secondly,
we mark the two points in $\rho^{-1}(q)$ as two NS (resp., Ramond) punctures on $\wt{C}$ if $q$ is a NS (resp., Ramond) 
node on $C$.

For a generalized spin structure $L$ over $C$ 
let us define the sheaf $\wt{L}$ on $\wt{C}$ as the quotient of $\rho^*L$ by the torsion subsheaf.
Then $\wt{L}$ is a line bundle on $\wt{C}$ and 
$\wt{L}$ is a generalized spin structure on $\wt{C}$ with NS and Ramond punctures defined as above.

More precisely, if $q$ is a Ramond node then $L$ is locally free near $q$, so $\rho^*L$ is still locally free on $\wt{C}$ and $\wt{L}=\rho^*L$.
If $q$ is an NS node then $L$ is locally isomorphic to the ideal of the node, so $\rho^*L$ acquires a nontrivial torsion. A local computation shows that the quotient $\wt{L}$ of $\rho^*L$
by the torsion subsheaf is locally free, and the composition
of the natural maps $L\to \rho_*\rho^*L\to \rho_*\wt{L}$ is an isomorphism.


\begin{remark} The coarse moduli of the stack $\ov{\SS}_{g,n_{NS},n_R}$ considered in \cite{Jarvis}, \cite{JKV} gives the same compactification of the classical moduli space
of spin curves as the one constructed by Cornalba \cite{Cornalba}.
On the other hand, there is a different moduli stack constructed in \cite{AJ} (which gives the same coarse moduli as the other constructions), 
where stable curves are replaced by certain stacky curves. We will not use the latter moduli stack in this paper.
\end{remark}

\section{Deformations}\label{def-sec}

In this section we will study the deformation theory of stable supercurves (with punctures). In particular, we will prove that the deformation functor of a stable supercurve is smooth
and will compute its tangent space.

\subsection{Deformation functors on Artin superalgebras}

The development of deformation theory in the super context goes back to \cite{Vaintrob}, see also \cite[Sec.\ 7.1.9]{MZ}.
We consider the category $\Art_{\C}$ of local Artinian $\C$-superalgebras with the residue field $\C$.
Recall that a surjection $A\to B$ is called a {\it small extension} if $\fm\ker(A\to B)=0$,
where $\fm$ is the maximal ideal in $A$. Every surjection in $\Art_{\C}$ is a composition of a finite number of small extensions.

We apply some standard results of deformation theory (say, those in \cite{FM}), or rather their superanalogs (which are straightforward to prove).
Recall that for a set-valued functor $F$ on $\Art_{\C}$ such that $F(\C)=\{*\}$, and for morphisms $A'\to A$, $A''\to A$ in $\Art_{\C}$ one has a natural map
$$F(A'\times_A A'')\to F(A')\times_{F(A)} F(A'') \ \ \ \ \ \ \ \ \ \ \ \ \  \ \ \ \ \ \ \ \ \ \ \ \ \ (\star)$$
One considers the following Schlessinger conditions on $F$:

(H1) The map $(\star)$ is surjective if $A''\to A$ is a small extension;

(H2) The map $(\star)$ is an isomorphism if $A=\C$ and $A''\to A$ is a small extension.

We refer to functors satisfying (H1) and (H2) as {\it deformation functors}. Such a functor $F$ is called {\it smooth} if $F(A')\to F(A)$ is surjective for any small extension $A'\to A$.
More generally, a morphism of deformation functors $F\to G$ is called {\it smooth} if for every small extension $A'\to A$ in $\Art_{\C}$, the natural map
$$F(A')\to G(A')\times_{G(A)}F(A)$$
is surjective.

Note that the {\it tangent space} $t_F$ to a deformation functor $F$ is defined as the sum of even and odd components given by
$$t_F^+=F(\C[\eps]/\eps^2), \ \ t_F^-=F(\C[\tau]/\tau^2),$$
where $\eps$ is even and $\tau$ is odd. Elements of $t_F^+$ (resp., $t_F^-$) are referred to as even (resp., odd) {\it infinitesimal} deformations.

A morphism $h_R\to F$ from the functor $h_R$ pro-representable by a Noetherian complete local $\C$-algebra $\C$ to a deformation functor $F$, is called the {\it hull} of $F$ if
it is smooth, and the induced map on tangent spaces is an isomorphism. In this case we also say that the corresponding deformation over $R$ is {\it miniversal}.

\subsection{Miniversal deformations of the nodes}\label{nodes-def-sec}

In this subsection, following Deligne \cite{Deligne}, we describe miniversal deformations of two types of nodal singularities of prestable supercurves.
First, we give a precise definition of what do we mean by deformations of supercurve singularities.

\begin{definition}\label{formal-supercurve-def-def}
Let $A/\C$ be the completion of the local ring of a singular point on a supercurve over $\C$, so it is equipped with a derivation $\de_A:A\to \om_{\Spf(A)/\C}$
(see Sec.\ \ref{formal-supercurve-sec}). By a {\it deformation of $A$} over a local Artinian superalgebra $R$ we mean a flat $R$-superalgebra $B$ equipped with an isomorphism
$B\ot_R \C\simeq A$, and a derivation $\de_B:B\to \om_{\Spf(B)/R}$, inducing $\de_A$ under the reduction with respect to the homomorphism $R\to \C$.
\end{definition}

We begin by describing two families of supercurves over $S=\Spec(\C[t])$, where $t$ is even.

\subsubsection{Ramond node}\label{univ-def-node-I}

Define $X/S$ as a subscheme of the affine space $S\times \A^{2|1}$ over $S$ with coordinates $z_1,z_2,\th$, given 
by the equation 
$$z_1z_2=t.$$
Note that we have a natural trivialization $\bb$ of $\om_{X/S}$.

Over the open subset where $z_1\neq 0$ (resp., $z_2\neq 0$), we have $\bb=[\frac{dz_1}{z_1}|d\th]$
(resp., $\bb=[-\frac{dz_2}{z_2}|d\th]$), and $\de$ is given by $(\pa_\th+\th z_1\pa_{z_1})\cdot\bb$ 
(resp., $(\pa_\th-\th z_2\pa_{z_2})\cdot\bb$).

\subsubsection{NS node}\label{univ-def-node-II}

Define $X/S$ as a subscheme of the affine space $S\times \A^{2|2}$ over $S$ with coordinates $z_1,z_2,\th_1,\th_2$, given 
by the equations 
\begin{equation}\label{NSnode-miniversal-eq}
z_1z_2=-t^2, \ \ z_1\th_2=t\th_1, \ \ z_2\th_1=-t\th_2, \ \ \th_1\th_2=0.
\end{equation}

Over the chart where $z_i\neq 0$ ($i=1,2$),
\begin{equation}\label{NS-miniversal-de-formula}
\de(f)=(\pa_{\th_i}+\th_i\pa_{z_i})(f)[dz_i|d\th_i].
\end{equation}

\begin{remark} We follow the choice of signs in \cite{Deligne} in the equations \eqref{NSnode-miniversal-eq}. Note that one can get rid of signs replacing $z_1$ by $-z_1$ and $\th_1$ and $-\th_1$. As was pointed out by the anonymous referee, there is a natural way to see this deformation of the NS node as a quotient of the Ramond node deformation $z_1z_2=t$ by the $\Z_2$-action 
$$z_1\mapsto -z_1, \ \ z_2\mapsto -z_2, \ \ \th\mapsto -\th.$$
Namely, the algebra of functions on this $\Z_2$-quotient is generated by $Z_1=z_1^2$, $Z_2=z_2^2$, $\Th_1=z_1\th$ and $\Th_2=z_2\th$ subject to relations obtained from
\eqref{NSnode-miniversal-eq} by changing all signs to $+$.
\end{remark}

The following result is proved in \cite{Deligne}. For the convenience of the reader we provide the proof below, with some details elaborated.

\begin{theorem}\label{miniversal-Deligne-thm}
The families over $\C[\![t]\!]$ induced by the above two families are miniversal deformations of the completed rings of the two types of nodes.
\end{theorem}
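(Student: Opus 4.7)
The plan is to apply the standard formal-deformation criterion: a flat family over a formally smooth base is miniversal at a point once its Kodaira--Spencer map there is bijective. Let $R_0$ denote the completed local super-ring of either node together with its derivation $\de_0$, and $D(R_0)$ the formal deformation functor of this pair (equivalently of the super-ring together with its superconformal distribution $\DD$). The base $\Spec k[[t]]$ is formally smooth of pure dimension $1|0$, so it is enough to prove that (i) $T^1 D(R_0)$ is of dimension $1|0$, and (ii) the Kodaira--Spencer class at $t=0$ of the displayed family is a generator. Identification of central fibers is immediate: at $t=0$, \ref{univ-def-node-I} specializes to $z_1z_2=0$ in $\A^{2|1}$ with the expected $\de$ of \ref{local-descr-sec}, and \ref{univ-def-node-II} specializes to the four NS relations with \eqref{NS-miniversal-de-formula} giving the expected $\de$ on each chart.

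For (i) I would enumerate first-order flat deformations of $R_0$ as a super-ring, cut down by compatibility with a simultaneous first-order deformation of $\de_0$, and quotient by infinitesimal automorphisms. The Ramond case is clean: the only relation is $z_1z_2=0$, a first-order deformation perturbs it to $z_1z_2=\eps g(z_1,z_2,\th)$, and ambient coordinate changes together with trivial deformations of $\de_0$ reduce $g$ to a scalar, so $T^1 D(R_0)=k$; the family gives the class of $z_1z_2=\eps$, which is a generator, proving (ii) in this case. For the NS node one must analyze the four relations simultaneously. Working on the two charts $U_i=\{z_i\ne 0\}$, where the supercurve is smooth and Lemma \ref{local-descr-smooth-lem} applies, and gluing across the node, one finds that the only nontrivial first-order supercurve deformation is the one turning on $z_1\th_2=\eps\th_1$ and $z_2\th_1=-\eps\th_2$ (with $z_1z_2$ and $\th_1\th_2$ unchanged modulo $\eps^2$, since the family carries $z_1z_2=-t^2$). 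This is exactly the first-order truncation of \ref{univ-def-node-II} at $t=\eps$, giving (ii) and $\dim T^1 D(R_0)=1|0$.

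The main obstacle is the NS computation, which requires identifying precisely which flat first-order deformations of the four relations $z_1z_2=z_1\th_2=z_2\th_1=\th_1\th_2=0$ admit a compatible deformation of $\de_0$, and separating those from deformations that are trivial because of coordinate changes. I would handle it by computing the relevant tangent cohomology of the pair (ring, derivation) chart by chart using the standard-coordinate description of superconformal vector fields given at the end of \ref{local-descr-sec}, and then gluing at the node by an explicit comparison of transition data. Once this bookkeeping is complete, the Kodaira--Spencer map is seen to be bijective and the miniversality criterion delivers the theorem.
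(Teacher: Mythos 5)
Your proposal is correct and follows essentially the same route as the paper: the paper's proof likewise reduces everything to the computation (carried out in Deligne's letter) that the tangent space to the deformation functor of each node is purely even of dimension $1|0$ and is hit by the Kodaira--Spencer class of the displayed family, whence the functor is unobstructed and the family is miniversal. The only difference is presentational --- you phrase the conclusion via the bijectivity of the Kodaira--Spencer map over a formally smooth base, while the paper phrases it as absence of obstructions --- and in both cases the substantive content is the chart-by-chart $T^1$ calculation (including the vanishing of odd infinitesimal deformations), which you outline and the paper cites.
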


\begin{proof} Let us denote by $X_0$ the formal spectrum of the completed algebra $R$ of one of the two types of nodes over $\C$ (see \ref{nodes-subsubsec}).
We denote by $F$ the functor on $\Art_{\C}$ of deformations of $X_0$ as a formal supercurve (see Definition \ref{formal-supercurve-def-def}. 
It is easy to see that this functor satisfies (H1) and (H2).
Namely, in the context of (H1), if $X/A$ (resp., $X'/A'$, $X''/A''$) is a deformation of $X_0$ over $A$ (resp., $A'$, $A''$), then $\OO_{\wt{X}}:=\OO_{X'}\times_{\OO_X}\times \OO_{X''}$
can be viewed as a superscheme deformation $\wt{X}$ of $X_0$ over $\wt{A}:=A'\times_A A''$ inducing $X'/A'$ and $X''/A''$. Furthermore, the map
$$\OO_{X'}\times_{\OO_X}\times \OO_{X''}\to \om_{X'/A'}\times_{\om_{X/A}}\om_{X''/A''}\simeq \om_{\wt{X}/\wt{A}}$$
induced by the supercurve structures on $X'/A'$ and $X''/A''$, gives a supercurve structure on $\wt{X}/\wt{A}$. This proves the surjectivity needed for (H1).
In the case $A=\C$, if $\wt{X}$ is any supercurve over $A'\times A''$, let $X'/A'$, $X''/A''$ be the induced supercurves over $A'$ and $A''$. Then we have the induced map
$$\a:\OO_{\wt{X}}\to \OO_{X'}\times_{\OO_{X_0}} \OO_{X''}$$
compatible with the supercurve structures. Since both algebras are flat over $A'\times A''$ and the map of algebras over $\C$, induced by $\a$, is the identity map on $\OO_{X_0}$,
$\a$ is an isomorphism. This proves (H2).

It is enough to prove in both cases triviality of odd infinitesimal deformations and the fact that 
the space of even infinitesimal deformations is a $1$-dimensional space with generators coming from the above families over $\C[t]$. 
Indeed, assume we know this.   
We have a natural morphism $G\to F$ from the functor $G$ prorepresentable by $\C[\![t]\!]$ corresponding to one of our two families,
and by assumption, the map of tangent spaces $t_G\to t_F$ is an isomorphism. Since the functor $G$ is smooth, this implies that the morphism $G\to F$ is smooth (and so $G\to F$
is a hull of $F$, i.e., our deformation over $\C[\![t]\!]$ is miniversal).

Let $A\to B$ be a small extension with the kernel $I$. Then as is well known,
there exists a natural transitive action of $t_F\ot_{\C} I$ on every nonempty fiber of the map $F(A)\to F(B)$ (and similarly for $G$). 
Given $x_B\in G(B)$ mapping to $y_B\in F(B)$, together with a lift $y_A\in F(A)$ of $y_B$, we need to find $x_A\in G(A)$ mapping to both $x_B$ and $y_A$.
For this, first, we can find some $x'_A\in G(A)$ lifting $x_B\in G(B)$. Let $y'_A\in F(A)$ be the image of $x'_A$. Then $y'_A$ is in the same fiber of $F(A)\to F(B)$ as $y_A$,
so it is obtained from $y_A$ by the action of an element $z\in t_G\ot_{\C} I\simeq t_F\ot_{\C} I$. Now we define $x_A$ to be the element in the fiber of $G(A)\to G(B)$ over
$x_B$, obtained from $x'_A$ by the action of $-z$.

Thus, we are reduced to calculating the infinitesimal deformations. The calculation below is from \cite[Sec.\ 3]{Deligne}.
First, we calculate the space $T^1:=T^1_{R/\C}$ of first-order deformations of our superalgebra $R$ over $\C$ as $\Ext^1(L_{R/\C},R)$, where $L_{R/C}$ is the cotangent complex.
The latter is computed using a presentation of $R=S/I$ as the quotient of a super polynomial ring $S$, and a presentation $I=\coker(F_1\to F_0)$, where $F_0$ and $F_1$ are free 
$S$-modules. So the generators of $F_0$ correspond to generators $(f_i)$ of the ideal $I$, while generators of $F_1$ correspond to syzygies between $f_i$. Then $T^1$ is given by the cohomology in the middle term of the complex
$$\Hom_S(\Om_{S/\C},R)\rTo{d_0} \Hom_S(F_0,R)\rTo{d_1} \Hom_S(F_1,R),$$
where the first arrow sends a derivation $D:S\to R$ to the map $e_i\mapsto D(f_i)$.

In the case of the Ramond node, we have a single (even) relation $z_1z_2=0$ and no syzygies, so $T^1$ is identified with the quotient of $R$ by the partial derivatives of 
$z_1z_2$, so the space $T^1$ has dimension $(1,1)$. The corresponding universal deformation of the first order is given by the relation
\begin{equation}\label{Ramond-gen-family}
z_1z_2=t+\tau\th
\end{equation}
over $\C[t,\tau]/(t^2,t\tau)$ (where $t$ is even and $\tau$ is odd).

In the case of the NS node, we have four relations 
$$f=z_1z_2, \ \ \phi_1=z_1\th_2,  \ \ \phi_2=z_2\th_1, \ \ g=\th_1\th_2$$ 
and $4$ generating syzygies
$$\si_1=\th_1f-z_1\phi_2, \ \ \si_2=\th_2f-z_2\phi_1, \ \ s_1=z_1g-\th_1\phi_1, \ s_2=z_2g+\th_2\phi_2.$$
Thus, with respect to the dual bases of $\Hom_S(F_i,R)$ and the basis $\pa_{z_1},\pa_{z_2},\pa_{\th_1},\pa_{\th_2}$ of $\Hom_S(\Om_{S/\C},R)$, the maps $d_0$ and $d_1$
are given by
$$d_0=\left(\begin{matrix} z_2 & z_1 & 0 & 0 \\ \th_2 & 0 & 0 & z_1 \\ 0 & \th_1 & z_2 & 0 \\ 0 & 0 & \th_2 & -\th_1\end{matrix}\right), \ \ 
d_1=\left(\begin{matrix} \th_1 & 0 & -z_1 & 0 \\ \th_2 & -z_2 & 0 & 0 \\ 0 & -\th_1 & 0 & z_1 \\ 0 & 0 & \th_2 & z_2\end{matrix}\right).$$
We need to calculate $\ker(d_1)/\im(d_0)$. 

For $v=af^*+b\phi_1^*+c\phi_2^*+dg^*\in \ker(d_1)$, where $a,b,c,d\in R$, one has $z_1d=\th_1b$, and $z_2d=-\th_2c$. This implies that $d=\th_1 d_1+\th_2 d_2$.
Hence, modifying $v$ by $d_0(\pa_{\th_1})$ and $d_0(\pa_{\th_2})$ (the last two columns of $d_0$), we can make $d=0$. 
Let $H\sub \Hom_S(F_0,R)$ denote the span of $f^*$, $\phi_1^*$ and $\phi_2^*$. We proved that $\ker(d_1)=\ker(d_1)\cap H+\im(d_0)$.
Hence, 
$$\ker(d_1)/\im(d_0)=\ker(d_1)\cap H/\im(d_0)\cap H.$$
Note that $d_0(p\pa_{z_1}+r\pa_{z_2}+s\pa_{\th_1}+t\pa_{\th_2})\in H$ if and only if $\th_2 s=\th_1 t$, which is equivalent to $s,t\in (\th_1,\th_2)$.
Since $d_0(\th_1\pa_{\th_1})=d_0(\th_2\pa_{\th_2})=0$, we obtain that 
$$\im(d_0)\cap H=d_0(R\pa_{z_1}+r\pa_{z_2}+\th_2\pa_{\th_1}+\th_1\pa_{\th_2}).$$

Next, for $af^*+b\phi_1^*+c\phi_2^*\in \ker(d_1)$ we have $z_1c\in (\th_1)$, $z_2b\in (\th_2)$, which implies $b,c\in (\th_1,\th_2)$. Thus, adding multiples of $d_0(\pa_{z_1})$
and $d_1(\pa_{z_2})$, we can assume $b\in (\th_1)$, $c\in (\th_2)$. Then the condition to be in $\ker(d_1)$ reduces to $a\in (\th_1,\th_2)$. This identifies $T^1$ with the
quotient of the space of $af^*+b\phi_1^*+c\phi_2^*$ with $a\in (\th_1,\th_2)$, $b\in (\th_1)$, $c\in (\th_2)$, modulo the submodule generated
 by $\th_2d_1(\pa_{z_1})$, $\th_1d_1(\pa_{z_2})$, $d_1(\pa_{\th_1})$ and $d_1(\pa_{\th_2})$. Thus, the quotient is $4$-dimensional with the basis
$$\th_1f^*, \ \ \th_2 f^*, \ \ \th_1\phi_1^*, \ \ \th_2\phi_2^*.$$

In other words, the universal deformation of the first order is given by the relations
\begin{equation}\label{NS-gen-family}
z_1z_2+\tau_1\th_1+\tau_2\th_2=0, \ \ z_1\th_2+t_1\th_1=0, \ \ z_2\th_1+t_2\th_2=0, \ \ \th_1\th_2=0
\end{equation}
over $\C[t_1,t_2,\tau_1,\tau_2]/(t_1^2,t_1t_2,t_2^2)$, where $t_1,t_2$ are even and $\tau_1,\tau_2$ are odd.

By Lemma \ref{inf-def-emb-lem}, the tangent space $t_F$ to our deformation problem is a subspace in $T^1$, and over the restriction of the above family to $t_F$,
we have a derivation $\de:\OO_X\to \om_{X/S}$
deforming the derivation $\de_0$ on the special fiber $X_0$. 
From now on we denote by $X/S$ the formal superscheme given by either \eqref{Ramond-gen-family} or \eqref{NS-gen-family} over the superscheme $S=\Spec R$, corresponding
to an (even or odd) $1$-dimensional subspace in $T^1$. 


Let $U_i/S$ denote the localization of one of the families \eqref{Ramond-gen-family} or \eqref{NS-gen-family} obtained by inverting $z_i$, for $i=1,2$.
Then we have $\OO(U_i)\simeq R[z_i,\th_i]$, for $i=1,2$, so $\om_{U_i/S}$ is a free $\OO(U_i)$-module with the basis $[dz_i|d\th_i]$.
On the other hand, it is easy to see that in both cases we have $z_1^2z_2^2=0$, so the corresponding algebra is a finite extension of $R[\![w_1,w_2]\!]/(w_1w_2)$, where $w_i=z_i^2$.
Thus, by Lemma \ref{om-stable-lem}, we can characterize the pairs $\om_1\in \om_{U_1/S}$, $\om_2\in \om_{U_2/S}$ corresponding to the global sections of $\om_{X/S}$ by the
condition
\begin{equation}\label{residue-condition-om-def}
\Res_{z_1=0}(f_1\om_1)+\Res_{z_2=0}(f_2\om_2)=0
\end{equation}
for any global function $f\in \OO_X$ with restrictions $f_1=f|_{U_1}$, $f_2=f|_{U_2}$.

Our strategy will be 
to extend $\de_0$ to a pair of compatible derivations $\de_{0,i}:\OO_{U_i}\to \om_{U_i/S}$, $i=1,2$, and then to see whether there exists a pair of compatible derivations 
$(\de_1,\de_2)$, with $\de_i$ vanishing over $U_i\cap X_0$, such that $(\de_{0,1}+\de_1,\de_{0,2}+\de_2)$ takes $\OO_X$ to $\om_{X/S}$ 
(we know that such $(\de_1,\de_2)$ exists if and and only if $S$ corresponds to the linear (super)-subspace of $t_F\sub T^1$.)
We will construct the extension $\de_{0,1}$ on $U_1$ using the original formulas for $X_0$, and then will find a compatible $\de_{0,2}$
using equations \eqref{Ramond-gen-family} and \eqref{NS-gen-family}. 

\medskip

\noindent
{\bf Case of Ramond node}.

In this case we just have to check that $t_F^-$ is zero. In other words, we have to consider the family given by $z_1z_2=\tau\th$.
Note that $\om_{X/S}$ is free is one generator $\bb$, such that
$$\bb|_{U_1}=[\frac{dz_1}{z_1}|d\th], \ \ \bb|_{U_2}=[-\frac{dz_2}{z_2}|d\th].$$
The derivations $\de_{0,i}:\OO_{U_i}\to \om_{U_i/S}$ are given by $f\mapsto D_{0,i}(f)\cdot \bb$, where
$$D_{0,1}|_{U_1}(f)=\pa_{\th}+\th z_1\pa_{z_1}, \ \ D_{0,2}|_{U_2}(f)=\pa_{\th}-\th z_2\pa_{z_2}.$$

We need to check that there are no compatible derivations $D_i:\OO_{U_i\cap X_0}\to \OO_{U_i\cap X_0}$ 
such that $(D_{0,1}+\tau D_1,D_{0,2}+\tau D_2)$ sends $\OO_X$ to $\OO_X$.
For this we observe that
$$z_1|_{U_1}=z_1, \ \ z_1|_{U_2}=\frac{\tau}{z_2}\cdot \th.$$
Hence,
$$D_{0,1}(z_1)|_{U_1}=\th z_1, \ \ D_{0,2}(z_1)|_{U_2}=\frac{\tau}{z_2}.$$
Note that $D_2(z_1)$ is zero on $U_2\cap X_0$ (since $z_1$ vanishes on $U_2\cap X_0$), hence $(D_{0,1}+\tau D_1,D_{0,2}+\tau D_2)(z_1)$ would be a global function on $X$, whose restriction to $U_2$ is equal to $\frac{\tau}{z_2}$,
which is impossible. 

\medskip

\noindent
{\bf Case of NS node, odd deformations}.

Again we want to check the vanishing of $t_F^-$. We know that $t_F^-$ is a subspace of the 2 dimensional odd vector space $(T^1)^-$ corresponding to the odd part of 
the family \eqref{NS-gen-family}. Note that we have an action of $\G_m^2$ on the NS node $X_0$, so that the weights of the generating functions are
$$wt(\th_1)=(1,0), \ \ wt(\th_2)=(0,1), \ \ wt(z_1)=(2,0), \ \ wt(z_2)=(0,2).$$
The subspace $t_F^-$ is invariant under the induced $\G_m^2$-action on $(T^1)^-$. Since the weights of $\tau_1$ and $\tau_2$ are
$$wt(\tau_1)=(1,2), \ \ wt(\tau_2)=(2,1),$$
we see that if $t_F^-$ is nonzero, it coincides with one of the coordinate lines in $(T^1)^-$.

Thus, it is enough to study the restriction of the family \eqref{NS-gen-family} to the $\tau_1$ direction:
$$z_1z_2=\tau_1 \th_1, \ \ z_1\th_2=z_2\th_1=\th_1\th_2=0.$$

Using the description of the global sections of $\om_{X/S}$ as pairs $(\om_1,\om_2)\in \om_{U_1/S}\oplus \om_{U_2/S}$ satisfying the residue condition \eqref{residue-condition-om-def}, 
we can lift the generators of $\om_{X_0}$ to global sections of $\om_X$.
Namely, the generator $s_1=([dz_1|d\th_1],0)$ can be lifted to 
$$\wt{s}_1:=([dz_1|d\th_1], -\tau_1 \frac{\th_2}{z_2^2}[dz_2|d\th_2])$$
(one has to apply the residue condition to the global function $z_2$).
On the other hand, the generator $s_2=(0,[dz_2|d\th_2])$ on $X_0$ lifts to the same pair on $X$ that we denote $\wt{s}_2$, and the generator 
$$s=(\frac{\th_1}{z_1}[dz_1|d\th_1],-\frac{\th_2}{z_2}[dz_2|d\th_2])$$
on $X_0$ lifts to the same pair on $X$ that we denote $\wt{s}$.

The derivations $\de_{0,i}:\OO_{U_i}\to \om_{U_i/S}$, for $i=1,2$, are still given by the formula \eqref{NS-miniversal-de-formula}. We need to show that for any choice of compatible 
derivations
$\de_i:\OO_{U_i}\to \om_{U_i/S}$, $i=1,2$, the pair $(\de_{0,1}+\tau_1\de_1,\de_{0,2}+\tau_1\de_2)$ does not take $\OO_X$ to $\om_{X/S}$.
Note that 
$$(\de_{0,1},\de_{0,2})(\th_1,0)=([dz_1|d\th_1],0)=\wt{s}_1+\tau_1(0,\frac{\th_2}{z_2^2}[dz_2|d\th_2]),$$ 
Hence, if $(\de_{0,1}+\tau_1\de_1,\de_{0,2}+\tau_1\de_2)(\th_1,0)\in \om_{X/S}$ and $\de_1(\th_1)=\eta_1$, then we would get
$$(\eta_1,\frac{\th_2}{z_2^2}[dz_2|d\th_2])\in \om_{X/S},$$
with $\eta_1$ regular on $U_1$, which is impossible by the above description of $\om_{X/S}$.

\medskip

\noindent
{\bf Case of NS node, even deformations}.

In this case we work with the first-order deformation
$$z_1z_2=0, \ \ z_1\th_2=t_1\th_1, \ \ z_2\th_1=t_2\th_2, \ \ \th_1\th_2=0.$$
We have to check that the equation $t_1+t_2=0$ on $S$ is necessary in order for this to be in $t_F$.
 
The generator $s_1$ (resp., $s_2$) of $\om_{X_0/S}$ lifts to 
$$\wt{s}_1=([dz_1|d\th_1],-t_1\frac{1}{z_2}[dz_2|d\th_2]) \ (\text{resp., } \wt{s}_2=(-t_2\frac{1}{z_1}[dz_1|d\th_1],[dz_2|d\th_2])),$$
while the generator $s$ lifts to the same pair on $X$ that we denote $\wt{s}$.

The derivations $\de_{0,i}:\OO_{U_i}\to \om_{U_i/S}$, for $i=1,2$, is still given by the formula \eqref{NS-miniversal-de-formula}. 
We have 
$$(\de_{0,1},\de_{0,2})(\th_1)=(\de_{0,1},\de_{0,2})(\th_1,t_2\frac{\th_2}{z_2})=([dz_1|d\th_1],t_2\frac{1}{z_2}[dz_2|d\th_2])=\wt{s}_1+(t_1+t_2)(0,\frac{1}{z_2}[dz_2|d\th_2]),$$
Suppose $(\de_{0,1}+t_1\de_1+t_2\de'_1,\de_{0,2}+t_1\de_2+t_2\de'_2)(\th_1)\in \om_{X/S}$. We have 
$$(t_1\de_1,t_1\de_2)(\th_1)=(t_1\de_1(\th_1),t_1\de_2(t_2\frac{\th_2}{z_2}))=(t_1\eta,0), \ \ (t_2\de'_1,t_2\de'_2)(\th_1)=(t_2\eta',0),$$
where we used the relations $t_1t_2=t_2^2=0$.
Hence, we get 
$$((t_1\eta+t_2\eta',(t_1+t_2)\frac{1}{z_2}[dz_2|d\th_2])\in \om_{X/S},$$
with $\eta,\eta'$ regular on $U_1$, which is impossible unless
$t_1+t_2=0$ on $S$.
\end{proof}

\begin{cor}\label{smooth-def-nodes-cor}
The functors of deformations of both types of nodes are smooth.
\end{cor}

\begin{proof} Indeed, the functor $G$ prorepresentable by $\C[\![t]\!]$ is smooth. Since the morphism of deformation functors $G\to F$ is smooth (where $F$ is our deformation functor), this implies that $F$ is smooth.
\end{proof}

The result on formal deformations implies in a standard way (essentially via Artin approximation techniques) the following \'etale local description of neighborhoods of singular points in families of prestable supercurves.

\begin{lemma}\label{etale-local-nodes-lem}
Let $X\to S$ be a prestable supercurve, where $S$ is of finite type over $\C$. Then for any node $q$ in a fiber $X_s$ over $s\in S$, there exists an \'etale neighborhood $V$ of $s$ in $S$,
such that for each preimage $q'$ of $q$ in $X_V$, there exists a morphism $t:V\to \A^1$, such that \'etale locally near $q'$ the family $X_V\to V$ is identified with the pull-back under $f$ of the standard NS-node or Ramond-node deformation over $\A^1$.
\end{lemma}

\begin{proof} This is proved in \cite[Prop.\ 7.10]{MZ}. The key idea (also used in Artin approximation) is to use a presentation of the formal completion as an inductive
limit of smooth algebras given by Popescu's theorem.
\end{proof}

\subsection{Sheaf of infinitesimal automorphisms}\label{inf-aut-sh-sec} 

For a moment let us consider only supercurves over the point base $\Spec(\C)$.

\begin{definition} We define the $\Z_2$-graded sheaf of infinitesimal automorphisms $\AA$ of any geometric structure of the form (superscheme over $\C$
plus extra structure) as follows. The even part $\AA^+$ is given by automorphisms of the trivial family of these structures over $\C[\eps]/(\eps^2)$,
where $\eps$ is even, trivial modulo $\eps$. Similarly, the odd part $\AA^-$ is given by automorphisms of the trivial family over $\C[\tau]/(\tau^2)$, where
$\tau$ is odd, trivial modulo $\tau$. For example, the sheaf of infinitesimal automorphisms of a superscheme is exactly the tangent sheaf.
\end{definition}

For a stable supercurve $(X,P_\bullet,R_\bullet)$ over $\C$, we denote by 
$$\AA_{X,P_\bullet,R_\bullet}\sub \TT_X$$ 
the sheaf of its infinitesimal automorphisms (where $\TT_X$ is the sheaf of super-derivations from $\OO_X$ to $\OO_X$). 
Note that the space of locally trivial
infinitesimal deformations of $(X,P_\bullet,R_\bullet)$ is $H^1(X,\AA_{X,P_\bullet,R_\bullet})$.

\begin{lemma}\label{A-X-U-lem}
Let $(X,P_\bullet,R_\bullet)$ be a stable supercurve, $j:U\hra X$ a smooth locus. Then  
$\AA_{(X,P_\bullet,R_\bullet)}$ is identified with the subsheaf of $j_*\AA_{U,P_\bullet,R_\bullet}$ consisting of
those derivations of $\OO_U$ that send $\OO_X$ to $\OO_X$. 
\end{lemma}

\begin{proof}
Note that $\OO_X$ is a subsheaf in $j_*\OO_U$, hence the sheaf $\TT_X$ of derivations from $\OO_X$ to $\OO_X$,
can be identified with the subsheaf in $j_*\TT_U$ consisting of derivations of $\OO_U$ sending $\OO_X$ to $\OO_X$.
Now the assertion follows from Lemma \ref{isom-stable-lem} (applied to automorphisms of $X\times \Spec(\C[\eps]/(\eps^2))$ and $X\times \Spec(\C[\tau]/(\tau^2))$).
\end{proof}

\begin{definition}
Now let us consider a stable supercurve with punctures
$(X,P_\bullet,R_\bullet)$ over any base $S$.
We define 
$$\AA_{X/S}=\AA_{(X,P_\bullet,R_\bullet)/S}\sub \TT_{X/S}$$ 
as the subsheaf consisting of derivations $v$ in $\TT_{X/S}$ preserving the punctures and preserving the distribution $\DD$ over the smooth locus.
We still call $\AA_{X/S}$ the sheaf of infinitesimal automorphisms.
\end{definition}

In the case $S=\Spec(\C)$ this agrees with our previous definition. Indeed this follows easily from Lemma \ref{A-X-U-lem} which states
that $\AA_X$ is the intersection of $\TT_X$ with $j_*\AA_U$ in $j_*\TT_U$.
Note however that the formation of $\AA_{X/S}$ is not compatible with the base change in general.

In the smooth case we have the following useful identification of the sheaf of infinitesimal automorphisms over an arbtrirary (not necessarily even) base.

\begin{prop}\label{inf-aut-smooth-punct-prop} 
Let $(X,P_\bullet,R_\bullet)\to S$ be a smooth supercurve with punctures, where $(P_i)_{i\in I}$ are NS punctures
and $(R_j)_{j\in J}$ are Ramond punctures.
Then one has a natural isomorphism
$$\AA_{(X,P_\bullet,R_\bullet)/S}\simeq \TT^{sc}_{(X,P_\bullet)/S}\simeq \om_{X/S}^{-2}(-\sum_{i\in I} D_i-2\sum_{j\in J} R_j),$$
where $D_i\sub X$ is a divisor associated with the NS puncture $P_i$ (see Sec.\ \ref{NS-correspondence-sec}).
\end{prop}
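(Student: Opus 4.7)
The plan is to split the statement into two successive identifications and analyze each with a local argument.

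First, I would establish $\AA_{(X,P_\bullet,R_\bullet)/S}=\TT^{sc}_{(X,P_\bullet)/S}$. By definition $\AA$ consists of vector fields preserving $\DD$, the NS ideals, and the Ramond divisors, whereas $\TT^{sc}_{(X,P_\bullet)/S}$ imposes only the first two conditions. So the claim amounts to showing that a superconformal vector field automatically preserves each $R_j$. This follows because $R_j$ is intrinsic to $\DD$: it is the divisor of vanishing of the canonical commutator map $\DD^{\ot 2}\to \TT_{X/S}/\DD$. Any $v$ preserving $\DD$ acts on both $\DD^{\ot 2}$ and $\TT_{X/S}/\DD$ via the induced Lie derivatives, the commutator map is $v$-equivariant, and hence its zero locus $R=\sum R_j$ is $v$-invariant.

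For the second isomorphism, I would extend the map \eqref{smooth-superconf-map} to the punctured setting via
\[
\TT^{sc}_{X/S}\longrightarrow \TT_{X/S}/\DD\simeq \DD^{\ot 2}(R)\simeq \om_{X/S}^{-2}(-R),
\]
where the first identification is dual to the commutator map (whose image is $\TT_{X/S}/\DD(-R)$) and the second uses $\om_{X/S}\simeq \DD^{-1}(-R)$. The key claim is that this composition factors through $\om_{X/S}^{-2}(-2R)\hra \om_{X/S}^{-2}(-R)$ and, combined with the NS-puncture twist of Corollary \ref{NS-puncture-Tsc-isom-cor}, yields the stated isomorphism. Since the claim is local on $X$ and the punctures are pairwise disjoint, it suffices to work in the standard coordinates of Lemma \ref{local-descr-smooth-lem}. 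Away from punctures this is \eqref{smooth-superconf-map} itself. At a Ramond puncture, where $D=\partial_\eta+\eta w\partial_w$ and $R=(w)$, I would write a candidate $v=A\partial_w+BD$ and unfold the condition $[v,D]\in \DD$ by $\eta$-degree; one finds that for every function $h$ on $X$ there is a unique superconformal lift $v_h$ with $v_h\equiv hw\partial_w \bmod \DD$. Consequently the image of $\TT^{sc}_{X/S}$ in $\TT_{X/S}/\DD$ is the $\OO_X$-submodule generated by $w\partial_w \bmod \DD$, which under $\TT/\DD\simeq \DD^{\ot 2}(R)$ corresponds to $\DD^{\ot 2}\simeq \om_{X/S}^{-2}(-2R)$. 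At each NS puncture $P_i$ the additional twist by $-D_i$ is supplied by Corollary \ref{NS-puncture-Tsc-isom-cor}, applied in a neighborhood disjoint from the Ramond punctures. The canonical local identifications then glue to the global isomorphism.

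The main obstacle is the explicit computation at a Ramond puncture. Writing $A=A_0+A_1\eta$, $B=B_0+B_1\eta$ in standard coordinates, the superconformal condition unfolds into the pair
\[
A_1=2B_0 w,\qquad A_0-wA_0'=-2B_1 w.
\]
The second is an ODE in $w$ whose constant and linear Taylor coefficients are both obstructed, forcing $A_0(0)=0$ and $B_1(0)=0$ and thus $A\in (w)$. It is exactly this resonance at orders $0$ and $1$ in $w$ that produces the extra twist by $-R$ on top of the commutator-cokernel twist, so that the final shift is $-2R$ rather than $-R$. Once this local picture is in hand, the verification of existence and uniqueness of $v_h$ and the gluing of the local isomorphisms is routine.
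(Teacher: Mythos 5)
Your proposal is correct and follows essentially the same route as the paper: the first identification via the observation that the Ramond divisors are intrinsic to $\DD$ (as the vanishing locus of the commutator map), and the second via the composition $\TT^{sc}_{(X,P_\bullet)/S}\to \TT_{X/S}/\DD\simeq\om_{X/S}^{-2}(-\sum R_j)$, reduced to a local check that is handled by Corollary \ref{NS-puncture-Tsc-isom-cor} at NS punctures and by standard coordinates at Ramond punctures. The only difference is that you carry out the Ramond-puncture computation explicitly (correctly, with the equations $\tilde A_1=2B_0w$ and $A_0-wA_0'=-2\tilde B_1w$ forcing $A\in(w)$), whereas the paper delegates it to \cite[Prop.\ 3.12]{BHR}.
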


\begin{proof}
The first isomorphism corresponds to the fact that the Ramond punctures are recovered from the distribution $\DD\sub \TT_{X/S}$ as the vanishing divisor
of the $\OO_X$-linear map 
$$\DD^{\ot 2}\to \TT_{X/S}/\DD$$
induced by the Lie bracket. Thus, if a relative derivation on $X/S$ preserves $\DD$ and the NS punctures then it also preserves the Ramond punctures.

Next, we observe that there is a natural isomorphism
$$\TT_{X/S}/\DD\simeq \DD^2(\sum_j R_j)\simeq \om_{X/S}^{-2}(-\sum_j R_j).$$
(see \eqref{Ramond-D2-isom} and \eqref{Ramond-D-om-isom}). It remains to check that the natural map
$\TT^{sc}_{(X,P_\bullet)/S}\to \TT_{X/S}/\DD$ induces an isomorphism
$$\TT^{sc}_{(X,P_\bullet)/S}\rTo{\sim} (\TT_{X/S}/\DD)(-\sum_{i\in I} D_i-\sum_{j\in J} R_j).$$
This is a local question which is well known away from the punctures. Near the NS puncture this is Corollary \ref{NS-puncture-Tsc-isom-cor}, while near the Ramond puncture
this is proved in \cite[Prop.\ 3.12]{BHR} using standard coordinates (see Lemma \ref{local-descr-smooth-lem}). 
\end{proof}

In the case when the base is a point it is useful to rewrite the result in terms of the corresponding spin curve.

\begin{cor} Let $(X,P_\bullet,R_\bullet)$ be a smooth supercurve over $\C$ with the underlying spin curve $(C,L,p_\bullet,r_\bullet)$, where
$L^2\simeq \om_C(\sum r_j)$. Then
$$\AA_{X,P_\bullet,R_\bullet}^+\simeq \TT_C(-\sum p_i-\sum r_j), \ \ \AA_{X,P_\bullet,R_\bullet}^-\simeq \TT_C\ot L(-\sum p_i-\sum r_j).$$
\end{cor}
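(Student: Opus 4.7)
The strategy is to unpack Proposition \ref{inf-aut-smooth-punct-prop} in the case $S = \Spec(\C)$. That proposition identifies $\AA_{X,P_\bullet,R_\bullet}$ with $\om_X^{-2}(-\sum_i D_i - 2\sum_j R_j)$, so what remains is to translate this $\OO_X$-line bundle into its even and odd parts as $\OO_C$-modules. Over the point base we have $\OO_X = \OO_C \oplus L$ with $L = \OO_X^-$, and the generalized spin relation reads $L^2 \simeq \om_C(\sum r_j)$.

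The key local step is to compute $(\om_X)^+$ as an $\OO_C$-module. Using standard coordinates $(z,\th)$ from Lemma \ref{local-descr-smooth-lem}, $\om_X$ is freely generated by $\bb = [dz|d\th]$. A change of charts over $\Spec(\C)$ has the form $z' = \phi(z) + \text{(odd nilpotents)}$, $\th' = g(z)\th + \text{(higher terms)}$, and the Berezinian of its Jacobian reduces modulo $\NN_X$ to $\phi'(z)/g(z)$, which is precisely the transition cocycle of $\om_C \otimes L^{-1}$ on $C$. Hence $(\om_X)^+ \simeq \om_C \otimes L^{-1}$, and squaring together with the spin relation yields $(\om_X^{-2})^+ \simeq \om_C^{-2} \otimes L^2 \simeq \TT_C(\sum r_j)$. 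Since $\bb^{-2}$ is an even local generator, multiplication by $\OO_X^- = L$ gives $(\om_X^{-2})^- \simeq L \otimes \TT_C(\sum r_j)$.

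For the divisor twists, the proof of Lemma \ref{NS-div-corr-lem} shows that $D_i$ has local equation $z + a\th$; over $\Spec(\C)$ the odd parameter $a$ vanishes, so $\OO_X(-D_i)$ is the pullback along the bosonic projection $X \to C$ of $\OO_C(-p_i)$, with even part $\OO_C(-p_i)$ and odd part $L(-p_i)$. The same description holds near each Ramond puncture by Lemma \ref{local-descr-smooth-lem}(ii), where a standard coordinate $z$ cuts out $r_j$ on $C$. Combining all factors in the parity-graded tensor product yields
$$\AA_{X,P_\bullet,R_\bullet}^+ \simeq \TT_C(\sum r_j) \otimes \OO_C(-\sum p_i - 2\sum r_j) \simeq \TT_C(-\sum p_i - \sum r_j),$$
and analogously $\AA_{X,P_\bullet,R_\bullet}^- \simeq L \otimes \TT_C(-\sum p_i - \sum r_j)$, as claimed. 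The main obstacle is the Berezinian cocycle calculation with its sign and parity conventions; once this is carried out, the rest is a formal tensor-product manipulation using the spin relation to convert the $L^2$-factor into the Ramond shift.
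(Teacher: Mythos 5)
Your proposal is correct and follows essentially the route the paper intends: the corollary is stated without proof as an immediate unwinding of Proposition \ref{inf-aut-smooth-punct-prop} over the point base, and your computation of the even and odd parts of $\om_{X/S}^{-2}(-\sum D_i-2\sum R_j)$ via $(\om_X)^+\simeq\om_C\ot L^{-1}$ together with the spin relation $L^2\simeq\om_C(\sum r_j)$ is exactly that unwinding. The divisor identifications $D_i|_C=p_i$ and $R_j|_C=r_j$ and the final bookkeeping all check out.
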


\begin{proof}
Let $\pi:X\to C$ be the projection. We have $\om_X\simeq \pi^*L(-\sum r_j)$ (see Lemma \ref{even-case-rem}), so
$$\om_X^{-2}\simeq \pi^*L^{-2}(2\sum r_j)\simeq \pi^*\TT_C(\sum r_j).$$
Hence, 
$$\om_X^{-2}(-\sum p_i-2\sum r_j)\simeq \pi^*\TT_C(-\sum p_i-\sum r_j)=\OO_X\ot_{\OO_C}\TT_C(-\sum p_i-\sum r_j).$$
Considering even and odd parts we get the result.
\end{proof}

\subsection{Sheaf of infinitesimal automorphisms for stable supercurves}

The following local analysis of the sheaf  $\AA_{X,P_\bullet,R_\bullet}$ is from \cite{Deligne} (we corrected a misprint in the case of
the Ramond node).

\medskip

\noindent
{\it Ramond node}.

Locally near such a node, we have $\OO_X=\OO_C\oplus L$, where $L$ is locally free, $L^{\ot 2}\simeq \om_C$,
and
$$\AA_X=\TT_C\oplus L\ot \om_C^{-1}.$$

\medskip

\noindent
{\it NS node}.

Let $C=B_1\cup B_2$ be the two branches. Each $B_i$ is equipped with a square root $L_i$
of $\om_{B_i}$.
Then $\AA_X=\AA_1\oplus \AA_2$, with
$$\AA_i=\TT_{B_i}(-q_i)\oplus L_i\ot \TT_{B_i}(-q_i)$$









We can also determine the sheaf of infinitesimal automorphisms globally and
check the absence of infinitesimal automorphisms for stable supercurves by the standard count
(this is done in \cite{Deligne} for the case of supercurves without punctures).

\begin{prop}\label{no-inf-aut-prop} 
Let $(X,P_\bullet,R_\bullet)$ be a stable supercurve over $\C$ with the underlying stable spin curve $(C,L,p_\bullet,r_\bullet)$. 
Let $\rho:\wt{C}\to C$ be the normalization with the induced spin structure $\wt{L}$
(see Sec.\ \ref{even-moduli-sec}), and let $(\wt{X},\wt{P}_\bullet,\wt{R}_\bullet)$ be the corresponding smooth supercurve with punctures.

\noindent
(i) One has natural a isomorphism
$$\AA^+_{X,P_\bullet,R_\bullet}\simeq \rho_*\AA^+_{\wt{X},\wt{P}_\bullet,\wt{R}_\bullet}\simeq \TT_C(-\sum p_i-\sum r_j)$$
and an exact sequence
$$0\to \AA^-_{X,P_\bullet,R_\bullet}\to \rho_*\AA^-_{\wt{X},\wt{P}_\bullet,\wt{R}_\bullet}\to \bigoplus_{q\text{ R-node}}\OO_q\to 0$$
where the summation is over all Ramond nodes.

\noindent
(ii) One has $H^0(X,\AA_{X,P_\bullet,R_\bullet})=0$.
\end{prop}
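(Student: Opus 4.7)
The plan is to combine the corollary to Proposition \ref{inf-aut-smooth-punct-prop} for the smooth locus (which already covers all the punctures) with the bulleted local descriptions of $\AA_X$ at each type of node listed immediately above the proposition, and to glue the resulting stalkwise data into the global identifications. The only nontrivial comparison will be at a Ramond node.

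For part (i), on the smooth locus the corollary gives $\AA_X^+ \simeq \TT_C(-\sum p_i - \sum r_j)$ and $\AA_X^- \simeq \TT_C \otimes L(-\sum p_i - \sum r_j)$, where I read $\TT_C := \Hom(\om_C,\OO_C)$ as the log tangent sheaf, which is a line bundle even at nodes. At each node $q$ of $X$ the bulleted formulas express $\AA_X$ in terms of data on the branches, while the same corollary applied to $\wt X$ computes $\rho_*\AA^\pm_{\wt X}$; since the preimages $\wt q_1, \wt q_2$ become NS or Ramond punctures on $\wt X$ depending on the node type, they contribute a $(-\wt q_1 - \wt q_2)$ twist to both parts of $\AA_{\wt X}$. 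At an NS node the stalks match directly, as the local description $\AA_X = \AA_1 \oplus \AA_2$ with $\AA_i = \TT_{B_i}(-q_i) \oplus L_i \otimes \TT_{B_i}(-q_i)$ is already built from the two branches. At a Ramond node the even stalk $\TT_C$ agrees with $\rho_*\TT_{\wt C}(-\wt q_1 - \wt q_2)$ via the standard identification of the log tangent sheaf, while the odd stalk $\AA_X^- = L \otimes \om_C^{-1}$ (locally free of rank one) embeds into the rank-two $\OO_C$-module $\rho_*(\wt L \otimes \TT_{\wt C}(-\wt q_1 - \wt q_2))$ as the submodule of pairs matching at the node, with cokernel the length-one skyscraper $\OO_q$. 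Patching yields the isomorphism for $\AA^+$ and the short exact sequence for $\AA^-$.

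For part (ii), the even part vanishes because $\TT_C(-\sum p_i - \sum r_j) \simeq \om_C^{-1}(-\sum p_i - \sum r_j)$ has strictly negative degree on every irreducible component of the stable pointed curve $(C, p_\bullet, r_\bullet)$. For the odd part, the injection from (i) gives $H^0(X, \AA_X^-) \hra H^0(\wt X, \AA_{\wt X}^-) = \bigoplus_k H^0(\wt C_k, \wt L_k \otimes \TT_{\wt C_k}(-n_k^{NS} - r_k))$, where $n_k^{NS}$ and $r_k$ count the NS and Ramond punctures on $\wt C_k$ (including all node preimages). Since $\deg \wt L_k = g_k - 1 + r_k/2$, the degree of this twisted bundle is $1 - g_k - n_k^{NS} - r_k/2$, and the stability inequality $g_k + (n_k^{NS} + r_k)/2 > 1$ together with $n_k^{NS} \geq 0$ forces this to be strictly negative; hence every summand vanishes.

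The main obstacle is the Ramond-node cokernel computation in (i). In coordinates $(z_1, z_2, \th)$ with $z_1 z_2 = 0$ and superconformal generators $D_1 = \pa_\th + \th z_1 \pa_{z_1}$, $D_2 = \pa_\th - \th z_2 \pa_{z_2}$ from Sec.\ \ref{local-descr-sec}, one must check that the superconformal odd derivations of $\OO_X = \OO_C \oplus L$ form a rank-one $\OO_C$-module whose image in the rank-two $\OO_C$-module of pairs of odd superconformal fields on the branches is exactly the subsheaf of pairs whose values agree at the node, producing the one-dimensional cokernel $\OO_q$. This is Deligne's computation in the unpunctured case; since the punctures are disjoint from the nodes, the argument applies verbatim.
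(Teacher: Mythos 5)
Your proposal follows essentially the same route as the paper's proof: part (i) is reduced to a stalkwise comparison of $\AA_{X,P_\bullet,R_\bullet}$ with $\rho_*\AA_{\wt{X},\wt{P}_\bullet,\wt{R}_\bullet}$ at the two types of nodes via the local descriptions preceding the proposition (with the Ramond node producing the length-one cokernel $\OO_q$ exactly as in the paper), and part (ii) is deduced from the embedding into $H^0(\wt{X},\AA_{\wt{X}})$ together with the same degree count and stability inequality. So the structure and all the substantive computations agree with the paper.

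One point needs correcting. In the statement, $\TT_C$ is the sheaf of derivations of $\OO_C$, i.e.\ $\und{\Hom}(\Om_C,\OO_C)$, which at a node is isomorphic to $\rho_*\TT_{\wt{C}}(-\wt{q}_1-\wt{q}_2)$ and is therefore \emph{not} locally free there; it is not $\und{\Hom}(\om_C,\OO_C)=\om_C^{-1}$, which is invertible because $\om_C$ is. Your parenthetical reading of $\TT_C$ as ``a line bundle even at nodes'' is thus incompatible with your own (correct) identification of its stalk with $\rho_*\TT_{\wt{C}}(-\wt{q}_1-\wt{q}_2)$ at a Ramond node. This slip does not sink the argument: the stalkwise checks in (i) use the correct sheaves, and in (ii) the degree on each component of the normalization is $2-2g_k-(\text{number of special points})$ whether one computes it for $\TT_{\wt{C}_k}(-\sum \wt{q}_i)$ or for $\rho^*\om_C^{-1}|_{\wt{C}_k}$, so the vanishing of $H^0$ still follows from stability; but the identification itself should be removed.
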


\begin{proof} 
(i)  We have a natural morphism  
$$\kappa:\AA_{X,P_\bullet,R_\bullet}\to \rho_*\AA_{\wt{X},\wt{P}_\bullet,\wt{R}_\bullet}$$ 
which is an isomorphism
away from the nodes. Thus, the assertion can be checked by a local computation near the nodes (so we can forget about the punctures). 
Near an NS node we have
$$\AA^+_X\simeq \rho_*\TT_{\wt{C}}(-q_1-q_2), \ \ \AA^-_X\simeq \rho_*\wt{L}\ot\TT_{\wt{C}}(-q_1-q_2),$$
where $\{q_1,q_2\}\sub\wt{C}$ is the preimage of the node, so $\kappa$ is an isomorphism.

Near a Ramond node we have
$$\AA^+_X\simeq \TT_C\simeq \rho_*\TT_{\wt{C}}(-q_1-q_2), \ \ \AA^-_X\simeq L\ot \om_C^{-1},$$
whereas
$$\AA^-_{\wt{X}}\simeq \wt{L}\ot\om^{-1}_{\wt{C}}(-q_1-q_2)\simeq \rho^*(L\ot \om_C^{-1}).$$
Thus, $\kappa^+$ is an isomorphism, while $\kappa^-$ is an embedding with the cokernel of length 1 supported at the node.

\noindent
(ii) 
Since global infinitesimal automorphisms of $X$ embed into those for $\wt{X}$,
it is enough to prove the assertion in the case when $C$ is smooth. We can also assume it is connected.

When $C$ is smooth we have
$$\AA_X^+=\TT_C(-\sum p_i-\sum r_j), \ \ \AA_X^-=L\ot \TT_C(-\sum p_i-\sum r_j).$$
We have $L^2\simeq \om_C(\sum r_j)$, so
$$\deg L=g-1+n_R/2,$$
$$\deg L\ot \TT_C(-\sum p_i-\sum r_j)=-(g-1)-n_{NS}-n_R/2, \ \ 
\deg \TT_C(-\sum p_i-\sum r_j)=-2(g-1)-n_{NS}-n_R.$$
Note that $\deg \TT_C(-\sum p_i-\sum r_j)<0$ by stability of the underlying pointed curve.
Hence,
$$\deg L\ot \TT_C(-\sum p_i-\sum r_j)=\frac{1}{2}\deg \TT_C(-\sum p_i-\sum r_j) - n_{NS}/2<0,$$
so $\AA_X^\pm$ do not have global sections.
\end{proof}

\subsection{Smooth affine supercurves without Ramond punctures}\label{no-R-punct-def-sec}

Let $X_0$ be an affine (smooth) supercurve over $\C$.

\begin{lemma}\label{supeconf-field-pt}
For every $\C$-point $p\in X_0$, and every tangent vector $v_0\in T_pX_0$, there exists a superconformal
vector field $v$ on $X_0$ with $v(p)=v_0$.
\end{lemma}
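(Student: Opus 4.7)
The plan is to recast the statement as a lifting question for sections of an invertible sheaf on the affine superscheme $X_0$, and then settle it using affine vanishing. First I would invoke the canonical isomorphism $\alpha : \om_{X_0}^{-2} \xrightarrow{\sim} \TT^{sc}_{X_0}$ of \eqref{smooth-superconf-map}, which identifies global superconformal vector fields on $X_0$ with global sections of the line bundle $\om_{X_0}^{-2}$. The reduced point $P = \{p\}$ is an NS-puncture of $X_0$ in the sense of Section \ref{NS-correspondence-sec}, so Lemma \ref{NS-div-corr-lem} furnishes an effective Cartier divisor $D \sub X_0$ supported at $p$ together with a commutative square
\begin{diagram}
\om_{X_0}^{-2} & \rTo{\alpha} & \TT^{sc}_{X_0} \\
\dTo{} & & \dTo{} \\
\om_{X_0}^{-2}|_D & \rTo{\beta} & \TT_{X_0}|_P
\end{diagram}
in which $\beta$ is a $\Z_2$-graded $\C$-linear isomorphism and the right vertical arrow is evaluation at $p$, identifying $\TT_{X_0}|_P$ with $T_p X_0$.

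Through this square the existence of $v$ reduces to the following lifting problem: set $\sigma_0 := \beta^{-1}(v_0) \in \om_{X_0}^{-2}|_D$, and produce $\sigma \in H^0(X_0,\om_{X_0}^{-2})$ with $\sigma|_D = \sigma_0$; then $v := \alpha(\sigma)$ is a global superconformal vector field on $X_0$ satisfying $v(p) = v_0$. For the lifting I would apply the short exact sequence of quasi-coherent $\OO_{X_0}$-modules
\[
0 \to I_D \cdot \om_{X_0}^{-2} \to \om_{X_0}^{-2} \to \om_{X_0}^{-2}|_D \to 0,
\]
and invoke vanishing of $H^1$ of quasi-coherent sheaves on the affine superscheme $X_0$ to conclude that the restriction map on global sections is surjective. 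Since every arrow respects the $\Z_2$-grading, both the even and the odd components of $v_0$ are handled uniformly.

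No substantive obstacle arises: Lemma \ref{NS-div-corr-lem} already packages the local geometry at $p$ into the commutative square, and the only global input required is the standard vanishing of higher cohomology of quasi-coherent sheaves on an affine superscheme.
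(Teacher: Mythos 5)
Your proof is correct and is essentially the paper's argument in different packaging: under the isomorphisms $\a$ and $\b$ your exact sequence $0\to I_D\cdot\om_{X_0}^{-2}\to\om_{X_0}^{-2}\to\om_{X_0}^{-2}|_D\to 0$ is exactly the paper's sequence $0\to\AA_{X_0,p}\to\AA_{X_0}\to i_*T_pX_0\to 0$, and both proofs conclude by the vanishing of $H^1$ of the coherent kernel on the affine $X_0$. The only cosmetic difference is that you outsource the pointwise surjectivity to the commutative square of Lemma \ref{NS-div-corr-lem}, whereas the paper exhibits the superconformal fields $D-2\th D^2$ and $D^2$ directly in standard coordinates.
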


\begin{proof} First, we claim that there is an exact sequence of sheaves with respect to \'etale topology
$$0\to \AA_{X_0,p}\to \AA_{X_0}\to i_*T_pX_0\to 0$$
where $\AA_{X_0}=\TT_{X_0}^{sc}$. Indeed, for this we need to check the surjectivity of the last arrow.
Locally in \'etale topology the distribution  $\DD\sub \TT_{X_0}$ has a generator $D=\pa_\th+\th\pa_z$ (see Lemma \ref{local-descr-smooth-lem}). 
Then the vector fields $D-2\th D^2$ and $D^2$ are superconformal and restrict to a basis of $T_pX_0$.

Next, we observe that $\AA_{X_0,p}$ is isomorphic to a coherent sheaf on $X_0$ (see Proposition \ref{inf-aut-smooth-punct-prop}).
Hence, $H^1(X_0,\AA_{X_0,p})=0$ and we obtained the required surjectivity of the map $H^0(X_0,\AA_{X_0})\to T_pX_0$.
\end{proof}

\begin{lemma}\label{subbun-def-lem}
Let $A\to A/I=B$ be a small extension in $\Art_{\C}$, $X_0$ a superscheme over $\C$, $E_0$ a vector bundle on $X_0$, $D_0\sub E_0$ a subbundle. 
We denote by
$$(X_B=X_0\times \Spec(B), E_B=B\ot_{\C} E_0, D_B=B\ot_{\C} D_0)$$
the trivial deformation of these data over $B$. Consider $X_A=X_0\times \Spec(A)$ with the vector bundle $E_A=A\ot_{\C} E_0$.
Then subbundles $D_A\sub E_A$ reducing to $D_B\sub E_B$ over $B$
are in natural bijection with homomorphisms of bundles $D_0\to I\ot_{\C} (E_0/D_0)$ over $X_0$.
\end{lemma}

\begin{proof}
Since $\fm I=0$, we have a natural identification 
$$I\cdot D_A\simeq I\ot_A D_A\simeq I\ot_{\C}(\C\ot_A D_A)\simeq I\ot_{\C} D_0.$$
Thus, $D_A$ is an $\OO_{X_0}\ot A$-submodule in $A\ot_{\C} E_0$ containing $I\ot_{\C} D_0$ and reducing to $D_B$ modulo $I$.
Furthermore, since $D/I\cdot D_A=D_B$ embeds into $E_B=(A/I)\ot_{\C} E_0$, we have 
$$D_A\cap I\ot_{\C} E_0=I\cdot D_A=I\ot_{\C} D_0.$$
Let $\pi:A\ot_{\C} E_0\to B\ot_{\C} E_0$ be the projection. Let us consider the $\OO_{X_0}$-submodule
$$D'_A:=\pi^{-1}(1\ot D_0)\cap D_A\sub A\ot_{\C} E_0.$$
Since $D'_A$ contains $I\cdot D_A$ and $\pi$ induces a surjection from $A\cdot D'_A$ to $D_A/I\cdot D_A=A\ot_{\C} D_0$, we see that
$$D_A=A\cdot D'_A,$$
so $D_A$ is determined by $D'_A$. 

But $D'_A$ is in turn determined by the $\OO_{X_0}$-submodule
$$\ov{D}_A:=D'_A/(I\ot_{\C} D_0)\sub A\ot_{\C} E_0/(I\ot_{\C} D_0),$$
which projects to $1\ot D_0\sub (A/I)\ot_{\C} E_0$. 
We have a natural exact sequence of $\OO_{X_0}$-modules
$$0\to I\ot_{\C} (E_0/D_0)\to A\ot_{\C} E_0/(I\ot_{\C} D_0)\to (A/I)\ot_{\C} E_0\to 0$$
equipped with a spitting $1\ot v\mapsto 1\ot v$ over $1\ot E_0$. Hence, we can view $\ov{D}_A$ as an $\OO_{X_0}$-submodule
of $I\ot_{\C} (E_0/D_0)\oplus 1\ot D_0$, that intersects the first summand trivially. In other words, $\ov{D}_A$ is the graph of a homomorphism
$D_0\to I\ot_{\C} (E_0/D_0)$, which gives the claimed bijection.
\end{proof}

We consider the functor $\Def(X_0)$ from $\Art_{\C}$ to the category of sets associating with $A$ the set of isomorphism classes of
deformations of $X_0$ over $A$ as an affine supercurve. 
A deformation of a smooth supercurve $(X_0,\DD_0)$ over $A\in \Art_{\C}$ consists of a superscheme $X_A$, flat over $A$, and reducing to $X_0$ over $\C$,
as well as a distribution $\DD_A\sub \TT_{X_A/\Spec(A)}$ reducing to $\DD_0$. Note that $(X_A,\DD_A)$ will then automatically be a smooth supercurve over $A$ (since
both smoothness and surjectivity of the map \eqref{commutator-map} are open conditions).

Below we will use the fact that a group valued functor on local Artinian $\C$-algebras, satisfying (H1) and (H2) is automatically smooth (see \cite[Thm.\ 7.19]{FM}),
or rather a superanalog of this fact for group values functors on $\Art_{\C}$.

\begin{lemma}\label{Aut-smooth-lemma}
Let $X_0$ be a smooth supercurve over $\C$. Consider the functor on $\Art_{\C}$ that associates with $A$ the group $\Aut_0(X_A/A)$ of automorphisms
of $X_A=X_0\times\Spec(A)$ as a supercurve over $A$, reducing to the identity over $\C$. Then this functor satisfies (H1) and (H2), hence it is smooth.
\end{lemma}

\begin{proof}
It is well known that for any separated scheme $Y_0$ of finite type over $\C$ the functor of automorphisms of $Y_0\times\Spec(A)$ over $A$, reducing to the identity over $\C$ satisfies
(H1) and (H2) (see \cite[Ex.\ 7.2]{FM}). This result also holds for superschemes.
Since the functor $A\mapsto \Aut_0(X_A/A)$ is a subfunctor in such a functor for $X_0$, it suffices to check condition (H1) for the functor $\Aut_0$.
In the situation of (H1), set $B=A'\times_A A''$, and
suppose we have compatible automorphisms $\a_{A'}$ and $\a_{A''}$ of $X_{A'}$ and $X_{A''}$ as supercurves. Then we know that they come from an automorphism
$\a_B$ of $X_B$ as a superscheme over $B$. We claim that $\a_B$ automatically preserves the distribution $\DD_B:=B\ot_{\C}\DD_0\sub B\ot_{\C} \TT_{X_0}$.
Indeed, since $\DD_B=\DD_{A'}\times_{\DD_A}\DD_{A''}$,
this follows immediately from the fact that $\a_{A'}$ and $\a_{A''}$ preserve the distributions $\DD_{A'}$ and $\DD_{A''}$, respectively.
\end{proof}

\begin{lemma}\label{smooth-curve-trivial-def-lem} 
(i) Every deformation $X_A$ of $X_0$ over $A\in \Art_{\C}$ is isomorphic to a trivial deformation
$X_0\times \Spec(A)$, with the distribution induced by that on $X_0$.

\noindent
(ii) Given a surjection $A\to B$ in $\Art_{\C}$, and a deformation $X_A$ of $X_0$ over $A$, any superconformal automorphism
of $X_B=X_A\times_{\Spec A} \Spec B$ lifts to a superconformal automorphism of $X_A$.

\noindent
(iii) Analogs of (i) and (ii) hold for deformations of an affine supercurve $X_0$ with an NS-puncture $P_0\sub X_0$.

\noindent
(iv) For a surjection $A\to B$ in $\Art_{\C}$ any superconformal automorphism of $B(\!(z)\!)[\th]$ (with standard $\de$) lifts to a superconformal
automorphism of $A(\!(z)\!)[\th]$.
\end{lemma}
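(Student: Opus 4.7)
The plan is to reduce all four parts to standard obstruction-theoretic arguments on an affine supercurve and then invoke the vanishing of coherent cohomology in positive degrees. First factor any surjection $A\twoheadrightarrow B$ into a chain of small extensions with square-zero kernel $I$, so that it suffices to prove each statement for such a small extension.

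For (i), I would argue by induction on the length of $A$: assume $X_B := X_A\times_A B$ has already been identified with the trivial deformation $X_0\times\Spec(B)$. Then $X_A$ is a lift of that trivial deformation to $A$, and the set of isomorphism classes of lifts is either empty or a torsor under $H^1(X_0,\AA_{X_0})\ot I$, with obstruction class in $H^2(X_0,\AA_{X_0})\ot I$. By Proposition \ref{inf-aut-smooth-punct-prop}, $\AA_{X_0}\simeq\om_{X_0}^{-2}$ is a line bundle on the affine supercurve $X_0$, and coherent cohomology on an affine superscheme vanishes in positive degrees (reduce modulo the ideal of nilpotents and apply classical affine Serre vanishing layer by layer); hence $X_A$ is isomorphic to the trivial deformation. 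For (ii), the parallel argument applied to automorphisms shows the restriction map $\Aut^{sc}(X_A)\to\Aut^{sc}(X_B)$ is surjective: once non-empty, the fiber over a given $\varphi_B$ is a torsor under $H^0(X_0,\AA_{X_0})\ot I$, and the obstruction to non-emptiness lies in $H^1(X_0,\AA_{X_0})\ot I = 0$.

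Part (iii) proceeds identically after replacing $\AA_{X_0}$ by $\AA_{(X_0,P_0)}\simeq\om_{X_0}^{-2}(-D)$, which by Proposition \ref{inf-aut-smooth-punct-prop} is still a line bundle on the affine $X_0$, so the same vanishing applies. For (iv), the role of $X_0$ is played by the affine superscheme $\Spec(\C(\!(z)\!)[\th])$ with its standard superconformal structure; since $\C(\!(z)\!)[\th]$ is a local superring and the sheaf of superconformal vector fields on this affine base is a free module (generated, for instance, by $\pa_\th+\th\pa_z$ and its square), the same obstruction framework and vanishing applies.

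The main delicate point is (iv): one must check that the deformation-theoretic machinery for superconformal automorphisms still produces the correct torsor and obstruction classes when the base ring $B(\!(z)\!)[\th]$ is not of finite type over $B$. This amounts to verifying that the relevant infinitesimal automorphism sheaf is flat over $B$ and has vanishing cohomology in positive degrees, which holds because the spectrum is affine (in fact the underlying ring is local, with residue field $\C(\!(z)\!)$) and the sheaf is a free module. Once this is in hand, parts (i)--(iv) are all instances of the same principle: on an affine (super)scheme, a coherent sheaf of infinitesimal automorphisms has no $H^{\geq 1}$, so both the deformation and the automorphism-lifting problems are unobstructed.
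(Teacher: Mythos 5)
Your argument is correct but follows a genuinely different route from the paper's. You run the whole lemma through the standard obstruction calculus: lifts form torsors under $H^0$ or $H^1$ of the sheaf of infinitesimal superconformal automorphisms $\AA_{X_0}\simeq\om_{X_0}^{-2}$ (resp.\ $\om_{X_0}^{-2}(-D)$ in the punctured case), obstructions live one degree higher, and everything vanishes because $X_0$ is affine and $\AA_{X_0}$ is isomorphic to a coherent sheaf. The paper instead argues by hand: for (i) it splits the structure sheaf $S_A\to S_0$ directly, using that $S_0^+$ is projective and $S_0^-$ is locally free of rank one (so the odd splitting automatically squares to zero), and then moves the deformed distribution back to $\DD_0\ot A$ by observing that the adjoint action of $\DD_0\ot I$ realizes every homomorphism $\DD_0\to\TT_{X_0/k}/\DD_0\ot I$; for (ii) and (iv) it invokes the characteristic-zero smoothness of group-valued deformation functors from \cite{FM}; and for (iii) it uses Lemma \ref{supeconf-field-pt} to move the deformed puncture by a global superconformal vector field. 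Your approach is more uniform and arguably more economical. The one step you should make explicit is the input that licenses the torsor description in (i): identifying the set of lifts with an $H^1(X_0,\AA_{X_0})\ot I$-torsor requires knowing that every deformation of the pair $(X_0,\DD_0)$ over a small extension is \emph{locally} trivial --- triviality of the underlying superscheme deformation follows from smoothness, but the distribution must also be locally rigidified, and this is exactly where the computational content of the paper's proof sits. That input is supplied by the relative standard-coordinate statement, Lemma \ref{local-descr-smooth-lem}, applied to the smooth supercurve $X_A\to\Spec(A)$ (with the harmless passage from \'etale to Zariski cohomology for coherent sheaves); similarly the local liftability of superconformal automorphisms needed in (ii) and (iv) follows from the surjectivity of $\TT_{X_0}\to\und{\Hom}(\DD_0,\TT_{X_0}/\DD_0)$. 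With those references made explicit, your proof is complete.
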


\begin{proof} 
(i) Let $X_0=\Spec(S_0)$, $X_A=\Spec(S_A)$.
Let also $\fm\sub A$ be the
maximal ideal (which is nilpotent). 
We have an exact sequence
$$0\to \fm S_A\to S_A\to S_0\to 0.$$
In particular, $S_A^+$ is a nilpotent extension of $S_0^+$ in the category of commutative $\C$-algebras. 
Since $S_0^+$ is a smooth finitely generated $\C$-algebra, by the infinitesimal lifting property, there exists a section
$$\si^+:S_0^+\to S_A^+,$$
which is a homomorphism of $\C$-algebras.
 
We know that $S_0^-$ is a locally free $S_0^+$-module of rank $1$. In particular, $S_0^-$ is projective,
so we can choose an $S_0^+$-module splitting 
$$\si^-:S_0^-\to S_A^-$$
of the projection $S_A^-\to S_0^-$, where we view $S_A^-$ as an $S_0^+$-module via the embedding $\si^+$. Furthermore, we claim that $\si^-(S_0^-)\cdot \si^-(S_0^-)=0$ in $S_A$.
Indeed, this follows from the fact that locally $S_0^-$ is generated by one element.
Hence, $\si=(\si^+,\si^-):S_0\to S_A$ is a homomorphism of superalgebras.

From this we get a homomorphism $f:S_0\otimes_{\C} A\to S_A$ of $A$-algebras which induces an isomorphism after tensoring with $A/\fm$.
Hence, $f$ induces isomorphisms
$$S_0\ot_{\C} \fm^i/\fm^{i+1}\to S_A\ot_A \fm^i/\fm^{i+1}$$
for $i\ge 0$.
Since $S_A$ is flat over $A$, we have isomorphisms $\fm^iS_A/\fm^{i+1}S_A\simeq S_A\ot_A \fm^i/\fm^{i+1}$. 
Since $\fm^N=0$ for some $N>0$, the descending induction on $i$ shows that $f$ induces an isomorphism
$$S_0\otimes_{\C} \fm^i\to \fm^iS_A.$$
Hence, $f$ is an isomorphism.

Now let $\DD_0\sub\TT_{X_0/k}$ be the distribution giving the supercurve
structure on $X_0$, and let us set $X_A=X_0\times \Spec(A)$. 
It remains to show that any distribution $\DD_A\sub \TT_{X_A/A}$ 
giving a supercurve structure over $A$, deforming $\DD_0$,
is isomorphic to the pull-back of $A\ot_{\C}\DD_0$ under some automorphism of $X_A$ trivial on $X_0$.

We can assume that for some ideal $I\sub A$, such that $\fm I=0$ (so $A\to A/I$ is a small extension), 
the assertion holds for $A/I$,
so let 
$$\DD_A\sub \TT_{X_A/A}\simeq A\ot_{\C}\TT_{X_0/k}$$
be a subbundle of rank $0|1$, which reduces to $A/I\ot \DD_0$ over $A/I$.
By Lemma \ref{subbun-def-lem}, $\DD_A$ corresponds to a homomorphism 
$$f:\DD_0\to I\ot_{\C} (\TT_{X_0/k}/\DD_0),$$
so that $\DD_A$ is generated over $A$ by sections of the form $1\ot x+\wt{f(x)}$, for $x\in \DD_0$, where $\wt{f(x)}\in I\ot_{\C} \TT_{X_0/k}$ 
is a representative of $f(x)$.

On the other hand, any $I$-valued vector field $v\in I\ot H^0(X_0,\TT_{X_0/k})$ induces
an automorphism $\a_v$ of $X_A$ trivial on $X_{A/I}$: its action on functions is $\phi\mapsto \phi+v(\phi)$.
The automorphism $\a_v$ acts on vector fields on $X_A$ by $w\mapsto w+[v,w]$, so it changes the distribution $A\ot_{\C}\DD_0$ to the distribution $\DD_A$ 
generated over $A$ by sections of the form $1\ot x+[v,x]$. In other words, $\DD_A$ corresponds to
the homomorphism $f_v:\DD_0\to I\ot_{\C} (\TT_{X_0/k}/\DD_0)$ induced by the Lie bracket with $v$. 
The condition that the map \eqref{commutator-map} is an isomorphism for $X_0$ is equivalent to the condition that the map 
$$\DD_0\to \und{\Hom}(\DD_0,\TT_{X_0/k}/\DD_0): v \mapsto x\mapsto [v,x]$$
is an isomorphism. Hence, there is a unique global section $v$ of $\DD_0\ot I$, such that $f_v=f$, so the automorphism $\a_v$ sends $A\ot_{\CC}\DD_0$ to $\DD_A$,
as required.
 
\noindent
(ii) By (i) all deformations are trivial. Now the assertion follows from Lemma \ref{Aut-smooth-lemma}.


\noindent
(iii) Let $\phi:S_A\to A$ be the homomorphism corresponding to the NS-puncture, deforming
$\phi_0:S_0\to k$. By (i), we can assume that $S_A=S_0\otimes_{\C} A$. We claim that there exists an automorphism of $S_A$
over $A$, trivial on $S_0$, and compatible with the superconformal structure, 
transforming $\phi_0\ot A$ to $\phi$. 

It is enough to check this assuming that
$\phi=\phi_0\ot A \mod I$, where $\fm I=0$. Then $\phi-\phi_0\ot A$ is given by a $\phi_0$-derivation $S_0\to I$,
i.e., by an $I$-valued tangent vector at $P_0\in X_0$. It remains to extend this tangent vector to an $I$-valued 
superconformal vector field (see Lemma \ref{supeconf-field-pt})
and consider the corresponding automorphism of $X_A$.

The fact about automorphisms follows similarly to (ii) from smoothness of the corresponding group scheme. 

\noindent
(iv) The proof is similar to (ii): the functor associating with $A$ the group of superconformal automorphisms of $A(\!(z)\!)[\th]$ is a deformation
functor. Since we work over $\C$, it is smooth.
\end{proof}

\subsection{Neighborhood of Ramond puncture}\label{R-punct-def-sec}

Let $X_0$ be an affine supercurve over $\C$ with one Ramond puncture $R_0\sub X_0$, and let
$\DD_0\sub \TT_{X_0/k}$ be the structure distribution (so that \eqref{Ramond-D2-isom} is an isomorphism).


\begin{lemma}\label{Ramond-surj-lem}
The natural morphism of sheaves 
$$\TT_{X_0/k}\to\und{\Hom}_{\OO}(\DD_0,\TT_{X_0/k}/\DD_0): v\mapsto (v_0\mapsto [v,v_0]\mod\DD_0)$$
is surjective with respect to \'etale topology. Hence, we have an exact sequence of sheaves in classical topology
$$0\to \TT^{sc}_{X_0/k}\to \TT_{X_0/k}\to \und{\Hom}_{\OO}(\DD_0,\TT_{X_0/k}/\DD_0)\to 0$$
and the induced map on global sections
$$H^0(X_0,\TT_{X_0/k})\to \Hom_{\OO}(\DD_0,\TT_{X_0/k}/\DD_0)$$
is surjective
\end{lemma}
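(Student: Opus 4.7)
The plan is to verify surjectivity étale-locally using standard coordinates and then extract the global statement from the affineness of $X_0$. By Lemma \ref{local-descr-smooth-lem}, after an étale base change we may assume that $\DD_0$ is generated by $D = \partial_\th + \th\partial_z$ away from $R_0$, or by $D = \partial_\th + z\th\partial_z$ near $R_0$ (where $R_0 = (z)$). Fixing such a $D$ identifies the Hom sheaf with $\TT_{X_0/k}/\DD_0$ up to a parity shift, so the task reduces to realising every section of $\TT_{X_0/k}/\DD_0$ as $[v,D]\bmod\DD_0$ for some $v\in\TT_{X_0/k}$.

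Away from $R_0$ the super-Leibniz identity $[\phi D, D] \equiv \phi[D,D]\pmod{\DD_0}$, combined with $\tfrac12[D,D]=\partial_z$, already does the job. Near $R_0$ one has $\tfrac12[D,D]=z\partial_z$, so the same ansatz only covers sections vanishing on $R_0$. Direct computation yields the additional identities
$$[r(z)\th\partial_z, D]\equiv r\partial_z,\quad [p(z)\partial_z, D]\equiv \th(p - zp')\partial_z,\quad [q(z)\th\partial_\th, D]\equiv 2qz\th\partial_z \pmod{\DD_0},$$
for $r,p,q\in\C[z]$. The first realises any even section $r(z)\partial_z$; the latter two combine to realise any odd section $\th\phi(z)\partial_z$, since $p-zp'$ supplies all Taylor coefficients of $\phi$ except that of $z$, which is furnished by $q_0$. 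This proves étale-local, and hence Zariski, surjectivity.

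The resulting short exact sequence
$$0 \to \TT^{sc}_{X_0/k} \to \TT_{X_0/k} \to \und{\Hom}_\OO(\DD_0, \TT_{X_0/k}/\DD_0) \to 0$$
has kernel $\TT^{sc}_{X_0/k}$ by the very definition of a superconformal vector field. Since $X_0$ is affine and $\TT^{sc}_{X_0/k}$ is quasi-coherent, $H^1(X_0,\TT^{sc}_{X_0/k})=0$ by Serre vanishing, and the long exact sequence gives surjectivity on global sections. The main obstacle is the Ramond-puncture calculation: the naive ansatz $v=\phi D$ misses the sections of $\TT_{X_0/k}/\DD_0$ not vanishing on $R_0$, so one must assemble vector fields of both parities to exhaust the target.
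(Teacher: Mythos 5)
Your proof follows essentially the same route as the paper's: \'etale-local standard coordinates from Lemma \ref{local-descr-smooth-lem}, explicit bracket computations against the generator $D$, and the vanishing of $H^1(X_0,\TT^{sc}_{X_0/k})$ on the affine $X_0$ to pass to global sections. Your three commutator identities near the Ramond puncture are correct and together they do exhaust all sections $a_0(z)\partial_z+a_1(z)\th\partial_z$ of the target; just note that for a non-polynomial $\phi$ the equation $p-zp'=\phi$ requires an integration that need not stay in the \'etale-local ring, so it is cleaner to take $p$ constant and let $q(z)$ be general in the third identity (giving $2q z\th\partial_z$, i.e.\ all odd sections in $(z)$). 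The paper's own decomposition $a=c_0+c_1\th+bz$ with $v=(c_0\th+c_1)\partial_z+b v_0$ is the same computation packaged slightly differently.

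One step should be reworded. The sheaf $\TT^{sc}_{X_0/k}$ is not an $\OO_{X_0}$-submodule of $\TT_{X_0/k}$ (superconformal vector fields are not stable under multiplication by functions), so you cannot literally call it quasi-coherent and invoke Serre vanishing. The correct justification, which is the one the paper gives, is that the composition $\TT^{sc}_{X_0/k}\to\TT_{X_0/k}\to\TT_{X_0/k}/\DD_0$ is an isomorphism of sheaves of abelian groups (Proposition \ref{inf-aut-smooth-punct-prop}), so $\TT^{sc}_{X_0/k}$ is isomorphic as a sheaf to the coherent sheaf $\om_{X_0}^{-2}(-2R_0)$, whose $H^1$ vanishes on the affine $X_0$. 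With that substitution your argument is complete.
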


\begin{proof} Locally in \'etale topology we can assume that $\DD_0$ be generated by $v_0=\pa_\th+\th z\pa_z$ (see Lemma \ref{local-descr-smooth-lem}).
Then $\pa_z$ projects to a basis of $\TT_{X_0/k}$.
Thus, it is enough to check that for every function $a=a(z,\th)$ there exists $v$ with 
$$[v,v_0]=a\pa_z\mod\DD_0.$$
We can represent every $a$ in the form $a=c_0+c_1\th+bz$ for some function $b$ and some constants
$c_0,c_1$. It remains to note that
$$[(c_0\th+c_1)\pa_z+bv_0,v_0]=(\pm c_0+c_1\th+2bz)\pa_z.$$

The last statement is a consequence of the vanishing $H^1(X_0,\TT^{sc}_{X_0/k})=0$ which itself follows from the fact that
$\TT^{sc}_{X_0}$ is isomorphic to a coherent sheaf (see Proposition \ref{inf-aut-smooth-punct-prop}).
\end{proof}

We have the following analog of Lemma \ref{smooth-curve-trivial-def-lem}.

\begin{lemma} (i) Every deformation $(X_A,R_A)$ of $(X_0,R_0)$ over $A\in \Art_{\C}$ is isomorphic to the trivial deformation
$(X_0\times \Spec(A), R_0\times\Spec(A))$, with the distribution induced by that on $X_0$.

\noindent
(ii) Given a surjection $A\to B$ in $\Art_{\C}$, and a deformation $(X_A,R_A)$ of $(X_0,R_0)$ over $A$, any automorphism
of $(X_B,X_R)$ (obtained by the base change to $B$) lifts to an automorphism of $(X_A,R_A)$.
\end{lemma}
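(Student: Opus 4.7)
The plan is to mimic the proof of Lemma \ref{smooth-curve-trivial-def-lem}, adapting it to the Ramond setting. The essential new observation is that the Ramond divisor $R$ is determined by the distribution $\DD$ as the vanishing locus of the Lie-bracket map $\DD^{\ot 2}\to \TT_{X/S}/\DD$; consequently, after trivializing the underlying superscheme and the distribution, the divisor $R$ comes along automatically, so no separate puncture step is required.

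For part (i), I would argue by induction on small extensions $A\to A/I$ with $\fm I=0$. Assuming the result over $A/I$, first trivialize the underlying superscheme $X_A\simeq X_0\times\Spec(A)$ by the argument in the proof of Lemma \ref{smooth-curve-trivial-def-lem}(i): lift the projection $S_A^+\to S_0^+$ to a section by nilpotence, then lift the odd part using projectivity of $S_0^+$ together with the observation that any section $\si^-\colon S_0^-\to S_A^-$ satisfies $\si^-(S_0^-)\cdot\si^-(S_0^-)=0$ because $S_0^-$ is locally generated by a single element. This reduces the problem to trivializing a distribution $\DD\sub\TT_{X_0/\C}\ot A$ that restricts to $\DD_0\ot A/I$. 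Such a $\DD$ corresponds to a homomorphism $f_\DD\in\Hom_\OO(\DD_0,\TT_{X_0/\C}/\DD_0)\ot I$, and the automorphism $\id+v$ of $X_A$ associated with $v\in H^0(X_0,\TT_{X_0/\C}\ot I)$ (well defined since $I^2\sub \fm I=0$, so $v^2=0$) shifts $f_\DD$ by the class of the Lie bracket $v_0\mapsto[v,v_0]\mod\DD_0$. By Lemma \ref{Ramond-surj-lem}, applied globally using affineness of $X_0$, this bracket map is surjective on global sections, so we can choose $v$ to kill $f_\DD$ and thus trivialize $\DD$. The divisor $R_A$ is then automatically identified with $R_0\times\Spec(A)$.

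For part (ii), by (i) we may assume $(X_A,R_A)$ is the trivial deformation of $(X_0,R_0)$ with the pulled-back distribution. As in part (ii) of Lemma \ref{smooth-curve-trivial-def-lem}, the functor on $\Art_\C$ sending $A$ to the group of superconformal automorphisms of $X_0\times\Spec(A)$ preserving $R_0\times\Spec(A)$ is a group-valued deformation functor, which is smooth over $\C$ by \cite[Thm.\ 7.19]{FM}; smoothness yields the desired lift along every surjection $A\to B$.

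The main obstacle I expect is the distribution step in part (i): near a Ramond puncture the map $\DD_0^{\ot 2}\to \TT_{X_0/\C}/\DD_0$ vanishes along $R_0$, so (in contrast to the smooth case of Lemma \ref{smooth-curve-trivial-def-lem}(i)) one cannot realize an arbitrary $f_\DD$ using vector fields $v\in\DD_0\ot I$ alone. This is precisely why the strengthened surjectivity of Lemma \ref{Ramond-surj-lem}, permitting $v$ to range over all of $\TT_{X_0/\C}\ot I$, is essential to make the inductive step go through.
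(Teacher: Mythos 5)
Your proposal is correct and follows the same route as the paper: trivialize the underlying superscheme via Lemma \ref{smooth-curve-trivial-def-lem}(i), then use the global surjectivity statement of Lemma \ref{Ramond-surj-lem} in an induction over small extensions to trivialize the distribution, with $R$ coming along for free as the degeneracy locus of the bracket map, and deduce (ii) from smoothness of the automorphism group functor. Your closing remark correctly pinpoints why the full tangent sheaf (rather than just $\DD_0$) is needed near the Ramond divisor, which is exactly the role Lemma \ref{Ramond-surj-lem} plays in the paper's argument.
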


\begin{proof}
(i) 
As we have seen in Lemma \ref{smooth-curve-trivial-def-lem}(i),
we can assume that $X_A=X_0\times \Spec(A)$. We have a distribution $\DD_A\sub \TT_{X_A/A}$ deforming $\DD_0$.
It is enough to check that $\DD_A$ is obtained from $\DD_0\ot A$
by an automorphism of $X_A$, trivial on $X_0$. 
We can assume that for some ideal $I\sub A$, such that $\fm I=0$, the assertion holds for $A/I$,
so $\DD_A$ corresponds to a homomorphism 
$$f_\DD:\DD_0\to \TT_{X_0/k}/\DD_0\ot I.$$
Now the assertion follows from Lemma \ref{Ramond-surj-lem}.

\noindent
(ii) As before, this follows from (i) and from smoothness of the corresponding group scheme.
\end{proof}

\subsection{Unobstructedness}

Recall that a morphism of deformation functors $\Def_1\to \Def_2$ is called {\it smooth} if for every small extension $B\to A$ in $\Art_{\C}$,
the natural map
$$\Def_1(B)\to \Def_1(A)\times_{\Def_2(A)}\Def_2(B)$$
is surjective.

\begin{lemma}\label{aff-formal-node-smooth-lem}
(i) Let $X_0$ be an affine stable supercurve over $\C$ with a single NS node $q$ and let $\Def(\hat{\OO})$ denote the functor of deformations of
the completion of the local ring of $X_0$ at $q$,
$$\hat{\OO}\simeq \C[\![z_1,z_2]\!][\th_1,\th_2]/(z_1z_2,\th_1z_2,\th_2z_1,\th_1\th_2).$$  
Assume that there exists a function $f_0$ on $X_0$ such that in the formal neighborhood of $q$ one has
$f_0\equiv z_1+z_2\mod (z_1,z_2,\th_1,\th_2)^2$, and the principal open affine $D(f_0)\sub X_0$ coincides with $X_0\setminus\{q\}$.
Then the natural morphism of deformation functors
$$\Def(X_0)\to \Def(\hat{\OO})$$
is smooth.

\noindent
(ii) Similar assertions hold for am affine stable supercurve with a single Ramond node.
\end{lemma}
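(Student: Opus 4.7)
\smallskip\noindent\emph{Strategy.} The plan is to construct $X_B$ by Beauville--Laszlo patching: deform the smooth affine $U_0 := D(f_0) = X_0 \setminus \{q\}$ separately from the given $\hat X_B$, and then glue them along the punctured formal neighborhood of $q$. Set $R_A = \Gamma(X_A, \OO_{X_A})$, choose any lift $f_A \in R_A$ of $f_0$, and let $U_A = D(f_A)$, a smooth affine supercurve over $A$. Since $U_0$ is a smooth affine supercurve, Lemma \ref{smooth-curve-trivial-def-lem}(i) renders $\Def(U_0)$ smooth (every deformation is trivial), so $U_A$ lifts to a deformation $U_B$ of $U_0$ over $B$. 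Write $S_B = \Gamma(U_B, \OO_{U_B})$ and $\hat R_B = \Gamma(\hat X_B, \OO_{\hat X_B})$.

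\smallskip\noindent\emph{Lifting the gluing.} Completion at $q$ followed by localization at $f_A$ gives a canonical superconformal algebra map $\phi_A \colon R_A[f_A^{-1}] \to \hat R_A[f_A^{-1}]$. We seek a superconformal lift $\phi_B \colon S_B \to \hat R_B[f_B^{-1}]$. The target $\Spec \hat\OO[f_0^{-1}]$ is a disjoint union of two smooth affine supercurves --- two copies of $\Spec \C(\!(z)\!)[\th]$ with the NS distribution $\pa_\th + \th\pa_z$ in case (i), or the Ramond distribution $\pa_\th + \th z\pa_z$ in case (ii). Existence of the underlying algebra lift is automatic from formal smoothness of this target, and Lemma \ref{smooth-curve-trivial-def-lem}(iv) (together with its evident Ramond analog, obtained by rerunning the proofs of parts (i)--(ii) for the Ramond distribution on $\C(\!(z)\!)[\th]$) ensures that $\phi_B$ may be chosen to preserve $\DD$.

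\smallskip\noindent\emph{Patching and main obstacle.} Define
$$R_B \;:=\; S_B \times_{\hat R_B[f_B^{-1}]} \hat R_B,$$
the fiber product formed using $\phi_B$ on the left and the canonical localization $\hat R_B \to \hat R_B[f_B^{-1}]$ on the right. Standard Beauville--Laszlo arguments --- applicable because $f_B$ is a nonzerodivisor on each flat factor --- show $R_B$ is $B$-flat with $R_B \otimes_B A = R_A$ and $f_B$-adic completion equal to $\hat R_B$. The derivation $\de$ glues to $\Spec R_B$ precisely because $\phi_B$ is superconformal, and any NS- or R-punctures of $X_0$ --- all contained in $U_0 \subset U_B$ --- are inherited automatically. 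The genuinely nontrivial step is the \emph{superconformal} character of $\phi_B$: a mere algebra lift would yield only a superscheme, not a supercurve, after patching, and it is Lemma \ref{smooth-curve-trivial-def-lem}(iv) and its Ramond analog that promote the lift to the superconformal setting. Assertion (ii) is proved by the same argument with the Ramond local model $\hat\OO = \C[\![z_1, z_2]\!][\th]/(z_1z_2)$ throughout.
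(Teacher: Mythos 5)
Your proposal is correct and follows essentially the same route as the paper: trivialize the deformation of the smooth affine part $D(f_0)$, identify the formal completion at the node with Deligne's miniversal model whose localization at $f$ is the standard pair of punctured formal disks, lift the superconformal gluing map using Lemma \ref{smooth-curve-trivial-def-lem}(iv), and recover the deformation as the fiber product of the two pieces over the localized completion. The only minor quibbles are that the algebra lift of $\phi_B$ exists by formal smoothness of the \emph{source} $S_B$ over $B$ (not of the target), and that the identification of $\hat R_B[f_B^{-1}]$ with $B(\!(z_1)\!)[\th_1]\oplus B(\!(z_2)\!)[\th_2]$ silently uses Theorem \ref{miniversal-Deligne-thm} together with the explicit computation that the map $\ga$ is the localization at $f$.
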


\begin{proof}
(i) First, let us introduce the following notation. For $A\in \Art_{\C}$ and $t\in \fm_A$, we set
$$\hat{\OO}_{A,t}:=A[\![z_1,z_2]\!][\th_1,\th_2]/(z_1z_2+t^2,z_1\th_2-t\th_1,z_2\th_1+t\th_2,\th_1\th_2).$$

\noindent
{\bf Step 1}. Let $A\in \Art_{\C}$, $t$ be an element such that $t^n=0$. Let us consider a homomorphism of $A$-algebras,
$$\ga:\hat{\OO}_{A,t}\to A(\!(z_1)\!)[\th_1]\oplus A(\!(z_2)\!)[\th_2]: \phi(z_1,z_2,\th_1,\th_2)\mapsto (\phi(z_1,-t^2/z_2,\th_1,t\th_1/z_1), 
\phi(-t^2/z_2,-t\th_2/z_2,z_2,\th_2)).$$
Note that this is well-defined since $t^n=0$. It is also easy to see that $\ga$ is injective,
and the elements
$$(z_1^{-i},z_1^{-i}\th_1,z_2^{-i},z_2^{-i}\th_2)_{i\le 0}$$ 
project to a basis of the quotient by the image of $\ga$, as an $A$-module.
We claim that for any element $f\in \hat{\OO}_{A,t}$ such that $f\equiv z_1+z_2\mod (z_1,z_2,\th_1,\th_2)^2$,
$\ga$ is identified with the localization map
$$\hat{\OO}_{A,t}\to \hat{\OO}_{A,t}[f^{-1}].$$

Indeed, since the multiplication by $f$ is invertible on $A(\!(z_1)\!)[\th_1]\oplus A(\!(z_2)\!)[\th_2]$, it is enough to check that
for every $(p,q)\in A(\!(z_1)\!)[\th_1]\oplus A(\!(z_2)\!)[\th_2]$ one has $f^N(p,q)\in \im(\ga)$ for sufficiently large $N$.
But this follows from the inclusion
$$z_1^nA[\![z_1]\!][\th_1]\oplus z_2^nA[\![z_2]\!][\th_2]\sub \im(\ga).$$

Note that this entire picture is obtained by the change of coefficients $\C[T]/(T^n)\to A$ from the standard picture described in Sec.\ \ref{nodes-def-sec}.
It follows that we have a structure derivation
$$\de:\hat{\OO}_{A,t}\to \om_{\hat{\OO}_{A,t}/A},$$
such that after inverting $z_1$ or $z_2$ it is given by the formula \eqref{NS-miniversal-de-formula}.

\noindent
{\bf Step 2}. Let $X_B$ be a deformation of $X_0$ over $B\in \Art_{\C}$. Then for any function $f\in \OO(X_B)$, lifting $f_0$, 
the principal open affine $D(f)\sub X_B$ is the trivial deformation of $D(f_0)=X_0\setminus\{q\}$ over $B$.
Furthermore, there exist an isomorphism of supercurves $D(f)\simeq \Spec(B)\times D(f_0)$, and a superconformal isomorphism
\begin{equation}\label{completion-XB-t-isom}
\hat{\OO}_{X_B,q}\simeq \hat{\OO}_{B,t},
\end{equation}
for some element $t$ of the maximal ideal in $B$, so that we have a cartesian diagram
\begin{diagram}
\OO(X_B)&\rTo{}& B\ot \OO(D(f_0))\\
\dTo{}&&\dTo{r_B}\\
\hat{\OO}_{B,t}&\rTo{\ga}&B(\!(z_1)\!)[\th_1]\oplus B(\!(z_2)\!)[\th_2]
\end{diagram}
Here $r_B$ comes from a certain isomorphism $B\ot \widehat{\OO(D(f_0))}\simeq B(\!(z_1)\!)[\th_1]\oplus B(\!(z_2)\!)[\th_2]$,
where we equip $\OO(D(f_0))$ with the topology using powers of $I_q$, the ideal of a node in $\OO(X_B)$, and consider the completion. 

Indeed, it is clear that $D(f)$ is a deformation of $D(f_0)$ as a supercurve.
Since $D(f_0)$ is a smooth affine supercurve, by Lemma \ref{smooth-curve-trivial-def-lem}(i) all its deformations are trivial.
The existence of a superconformal isomorphism \eqref{completion-XB-t-isom} follows from Theorem \ref{miniversal-Deligne-thm}.
Finally, by Step 1, we have
$$B(\!(z_1)\!)[\th_1]\oplus B(\!(z_2)\!)[\th_2]\simeq \hat{\OO}_{B,t}[f^{-1}]\simeq \hat{\OO}_{B,t}\ot_{\OO(X_B)}\OO(X_B)[f^{-1}]\simeq
\hat{\OO}_{B,t}\ot_{\OO(X_B)} (B\ot \OO(D(f_0))),$$
which is isomorphic to the completion of $B\ot \OO(D(f_0))$ with respect to the $I_q$-adic topology.

\noindent
{\bf Step 3}. Now let $A\to B$ be a small extension in $\Art_{\C}$. 
We claim that there exists a homomorphism 
$$r_A:A\ot \OO(D(f_0))\to A(\!(z_1)\!)[\th_1]\oplus A(\!(z_2)\!)[\th_2]$$
lifting $r_B$ and compatible with derivations. Now for an element
$\wt{t}\in A$ lifting $t\in B$, 
let us define the ring $\wt{\OO}$ as the fibered product
\begin{equation}\label{def-A-fibered-product-diagram}
\begin{diagram}
\wt{\OO}&\rTo{}& A\ot \OO(D(f_0))\\
\dTo{}&&\dTo{r_A}\\
\hat{\OO}_{A,\wt{t}}&\rTo{\ga}&A(\!(z_1)\!)[\th_1]\oplus A(\!(z_2)\!)[\th_2]
\end{diagram}
\end{equation}
Then we claim that $\wt{\OO}=\OO(X_A)$ for some deformation $X_A$ of $X_0$ inducing $X_B$ (so there is also a derivation $\de$ on $X_A$). 

First, we have a superconformal isomorphism
$$\a_0:\widehat{\OO(D(f_0))}\simeq \C(\!(z_1)\!)[\th_1]\oplus \C(\!(z_2)\!)[\th_2].$$
Hence, the map $r_B$ corresponds to the composition of  superconformal isomorphisms
$$B\ot \widehat{\OO(D(f_0))}\rTo{B\ot \a_0} B(\!(z_1)\!)[\th_1]\oplus B(\!(z_2)\!)[\th_2]\rTo{\a_B} B(\!(z_1)\!)[\th_1]\oplus B(\!(z_2)\!)[\th_2]$$
for some $\a_B$. Now we define $r_A$ using the composition of superconformal isomorphisms
$$A\ot \widehat{\OO(D(f_0))}\rTo{A\ot \a_0} A(\!(z_1)\!)[\th_1]\oplus A(\!(z_2)\!)[\th_2]\rTo{\a_A} A(\!(z_1)\!)[\th_1]\oplus A(\!(z_2)\!)[\th_2],$$
where $\a_A$ is some lift of $\a_B$. We know that such a lift exists by Lemma \ref{smooth-curve-trivial-def-lem}(iv).

Since the lower horizontal arrow in \eqref{def-A-fibered-product-diagram} is a split embedding of $A$-modules, so is the upper horizontal arrow.
Hence, $\wt{\OO}$ is a projective $A$-module. Applying the reduction $?\ot_A B$ to our diagram we recover the
cartesian diagram from Step 2, so $\wt{\OO}\ot_A B\simeq \OO(X_B)$. 

Finally, we observe that we have a cartesian diagram of the dualizing sheaves parallel to diagram \eqref{def-A-fibered-product-diagram}.
Since both $r_A$ and $\ga$ are compatible with the derivations, we get a
a well defined derivation $\de:\wt{\OO}\to \om_{\wt{\OO}/A}$.

\noindent
(ii) The proof is similar to (i).
\end{proof}

\begin{prop}\label{global-def-prop} 
Let $(X,P_\bullet,R_\bullet)$ be a stable supercurve with punctures over $\C$. Let $q_1,\ldots,q_m$ be all the nodes of $X$.
Then the morphism of deformation functors
\begin{equation}\label{def-curve-nodes-morphism}
\Def(X,P_\bullet,R_\bullet)\to \prod_{i=1}^m \Def(\OO_{X,q_i})
\end{equation} 
is smooth.
The induced morphism of tangent spaces fits into an exact sequence
\begin{equation}\label{def-curve-nodes-ex-seq}
0\to H^1(X,\AA_{X,P_\bullet,R_\bullet})\to T_{\Def(X,P_\bullet,R_\bullet)}\to \bigoplus_{i=1}^m T_{\Def(\hat{\OO}_{X,q_i})}\to 0.
\end{equation}
\end{prop}

\begin{proof}
Roughly speaking, the idea is that away from the nodes, locally all deformations are trivial, so deformations are classified by appropriate noncommutative $H^1$
sets. Now we use the vanishing of the relevant $H^2$.

Suppose $B\to B/I=A$ is a small extension in $\Art_{\C}$, 
and we are given a deformation $X_A$ of $X$ over $A$ (as a punctured supercurve), and
liftings of the induced deformations of $\hat{\OO}_{X,q_i}$ to deformations of singularities over $B$. To prove smoothness of the morphism \eqref{def-curve-nodes-morphism},
we need to construct a deformation $X_B$ over $B$ lifting $X_A$ and inducing the given deformations of the singularities over $B$.

Let us cover $X_A$ by affine opens $U_{iA}$ such that for $i=1,\ldots,m$, $U_i:=U_{iA}\times_{\Spec(A)}\Spec(\C)$ contains $q_i$ and does not contain other
nodes or punctures, while for $i>m$, $U_i$'s lies in the smooth locus and 
contains at most one R-puncture or NS-puncture.

By Lemma \ref{aff-formal-node-smooth-lem}, for $i=1,\ldots,m$, we can extend the deformation $U_{iA}$ over $A$ to a deformation
$U_{iB}$  over $B$, inducing the given deformation of the singularity at $q_i$.
By the results of Sec.\ \ref{no-R-punct-def-sec} and \ref{R-punct-def-sec} all the local deformations 
$U_{iA}$, for $i>m$, are trivial, so we can
lift them to trivial deformations $U_{iB}$ over $B$. Furthermore, all the intersections $U_{ij}$ are smooth and contain
no punctures, so all their deformations are trivial, and we have some isomorphisms $\a_{ijB}$ between 
the deformations of $U_{ij}$ over $B$, induced from $U_{iB}$ and $U_{jB}$. 
The induced isomorphisms $\a_{ijA}$ may differ from the ones coming from $X_A$, but we can correct
each $\a_{ijB}$ using Lemma \ref{smooth-curve-trivial-def-lem}(ii) to make sure that $\a_{ijA}$ are the ones coming from $X_A$.

Thus, considering $\a_{kiB}\a_{jkB}\a_{ijB}$ over triple intersections we get a $2$-cocycle with values in 
$I\ot \AA_{X,P_\bullet,R_\bullet}$. Since the corresponding $H^2$ vanishes, this $2$-cocycle is a coboundary.
Hence, we can correct $\a_{ijB}$, so that they become compatible on triple intersections (without changing $\a_{ijA}$).
Thus, our data will give a deformation $X_B$ of $X$ (as a punctured supercurve).

By smoothness, we have surjectivity in sequence \eqref{def-curve-nodes-ex-seq}. It remains to observe that $H^1(X,\AA_{X,P_\bullet,R_\bullet})$
is identified with the space of locally trivial infinitesimal deformations, i.e., those infinitesimal deformations of $(X,P_\bullet,R_\bullet)$ that induce trivial deformations
of the singularities.
\end{proof}

Using the fact that the deformations of nodal singularities of supercurves are smooth (see Corollary \ref{smooth-def-nodes-cor}),
we derive the following

\begin{cor}\label{smooth-def-functor-cor}
The functor $\Def(X,P_\bullet,R_\bullet)$ is smooth.
\end{cor}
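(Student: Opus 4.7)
The plan is to combine two inputs: Proposition~\ref{global-def-prop}, which asserts that the morphism of deformation functors
\[
\Def(X,P_\bullet,R_\bullet) \to \prod_{i=1}^m \Def(\hat{\OO}_{X,q_i})
\]
is smooth, and Theorem~\ref{miniversal-Deligne-thm}, which identifies each local factor $\Def(\hat{\OO}_{X,q_i})$ with one of the two miniversal families of Sec.~\ref{nodes-def-sec}, each of which is pro-represented by $\C[\![t]\!]$ (with $t$ even) and is therefore a smooth deformation functor.

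First I would observe that smoothness of deformation functors is transitive in the following elementary sense: if $F_1 \to F_2$ is smooth and $F_2$ itself is smooth (i.e.\ $F_2 \to \mathrm{pt}$ is smooth), then $F_1$ is smooth. Indeed, given a small extension $B \to A$ in $\Art_{\C}$ and an element $\xi_A \in F_1(A)$, one first lifts its image $\eta_A \in F_2(A)$ to some $\eta_B \in F_2(B)$ using smoothness of $F_2$, and then applies smoothness of $F_1 \to F_2$ to the pair $(\xi_A, \eta_B)$ to obtain the desired $\xi_B \in F_1(B)$ restricting to $\xi_A$.

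Next I would check that the product $\prod_{i=1}^m \Def(\hat{\OO}_{X,q_i})$ is smooth. This is immediate since a finite product of smooth deformation functors is smooth (one lifts each factor independently), and the fact that each factor is smooth is contained in Theorem~\ref{miniversal-Deligne-thm}: there the tangent space is computed to be $1$-dimensional and purely even, and an explicit miniversal family over $\Spec\C[t]$ is exhibited, forcing the obstruction spaces to vanish.

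Combining these two observations with Proposition~\ref{global-def-prop} yields the corollary. There is no serious obstacle here; the work is all contained in the preceding results, and the argument is a one-line diagram chase on small extensions. The only minor point to keep in mind is that in the super setting one must verify smoothness against arbitrary small extensions in $\Art_{\C}$, including those with odd nilpotents, but both Proposition~\ref{global-def-prop} and Theorem~\ref{miniversal-Deligne-thm} are established in that generality, so the argument goes through verbatim.
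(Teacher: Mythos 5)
Your proposal is correct and is exactly the paper's argument: the corollary is deduced by composing the smoothness of $\Def(X,P_\bullet,R_\bullet)\to\prod_i\Def(\hat{\OO}_{X,q_i})$ from Proposition~\ref{global-def-prop} with the smoothness of each local factor established in Theorem~\ref{miniversal-Deligne-thm}. You merely spell out the (standard) transitivity of smoothness on small extensions, which the paper leaves implicit.
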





\section{Square of the relative canonical bundle}\label{square-sec}

In this section we will define the line bundle $\om^2_{X/S}$ on certain families of stable supercurves, as an extension from the smooth locus.
We show that it is useful in finding an ample line bundle over $X$, as well as due to its relation to the sheaf on infinitesimal automorphisms.




\subsection{Local freeness and ampleness}


It is well known that for a stable curve with punctures $(C,p_1,\ldots,p_n)$ over $S_0$, the line bundle
$\om_{C/S_0}^{log}=\om_{C/S_0}(p_1+\ldots+p_n)$ is relatively ample. 
Our goal is to find an analogous construction for families of stable supercurves.
The problem is that for a stable supercurve $X/S$ the relative dualizing sheaf $\om_{X/S}$ is not necessarily locally free.

We show that this problem can be solved by extending $\om^2_{X/S}$ from the smooth locus.

Note if $X/S$ is a prestable supercurve and $q\in X_s$ is a node of some fiber then the completion $\hat{\OO}_{X,q}$ gives a formal deformation of the node
singularity in $X_s$ with the base $\hat{\OO}_{S,s}$, hence, it induces a map from the formal neighborhood of $s$ in $S$ to the base of the miniversal deformation of the node
(described in Theorem \ref{miniversal-Deligne-thm}). 

\begin{theorem}\label{can-square-thm} 
Let $f:X\to S$ be a family of prestable supercurves inducing a smooth morphism to the base of the miniversal deformation of every node
in a fiber. Let $j:U\hra X$ be the open complement to the locus of nodes in fibers. Then for any $n\in \Z$, the sheaf
$\om_{X/S}^{2n}:=j_*\om_{U/S}^{\ot 2n}$ is a line bundle on $X$. For a morphism $u:S'\to S$ such that the induced family $X'\to S'$
also satisfies the above assumption, there is a natural isomorphism
\begin{equation}\label{om-2n-base-change-eq}
v^*\om_{X/S}^{2n}\rTo{\sim} \om_{X'/S'}^{2n},
\end{equation}
where $v:X'\to X$ is the induced morphism. In the case when $S$ is even, and $(C,L)$ is the underlying family of curves with spin structures, we have
$$\om_{X/S}^{2n}|_C\simeq \om_{C/S}^n\oplus \om_{C/S}^n\ot L=p^*\om_{C/S}^n,$$
where $p:X\to C$ is the natural projection.
In particular,
$$\om_{X/S}^{2n}|_C\simeq \om_{C/S}^n.$$
\end{theorem}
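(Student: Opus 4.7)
The strategy is to localize the local-freeness question on $X$ and reduce it to the two miniversal deformations from Section~\ref{nodes-def-sec}. Local freeness is checked \'etale-locally, and by hypothesis the map $S\to \Def(\hat{\OO}_{X,q})$ is smooth for every node $q$ of a fiber. Combined with Theorem~\ref{miniversal-Deligne-thm}, this presents $X\to S$ \'etale-locally near $q$ as the fibered product of the explicit miniversal family $Z\to T$ with a smooth morphism $S\to T$. Since smooth base change is flat and the complement of $U$ has codimension $\ge 2$, formation of $j_*\om_{U/S}^{\ot 2n}$ commutes with this base change, so it suffices to verify the statement in the two miniversal families. The base change isomorphism~\eqref{om-2n-base-change-eq} for an arbitrary $u\colon S'\to S$ between two families satisfying the hypothesis then follows from reflexivity: once both $v^*\om_{X/S}^{2n}$ and $\om_{X'/S'}^{2n}$ are known to be line bundles, they agree canonically on the smooth locus $U'$ and this identification extends uniquely over the codimension-$\ge 2$ singular locus.

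In the Ramond miniversal deformation~\ref{univ-def-node-I}, $\om_{X/S}$ is globally trivialized by $\bb$. Hence $\om_{X/S}^{\ot 2n}$ is manifestly a line bundle, and coincides with $j_*\om_{U/S}^{\ot 2n}$ because the singular locus of $X$ has codimension $\ge 2$ and line bundles on the Cohen--Macaulay scheme $X$ are reflexive.

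The core of the argument is the NS miniversal deformation~\ref{univ-def-node-II}, where $\om_{X/S}$ itself is not locally free. Using the relations $z_1z_2=-t^2$, $z_1\th_2=t\th_1$, $z_2\th_1=-t\th_2$, $\th_1\th_2=0$, one finds $z_2=-t^2/z_1$ and $\th_2=t\th_1/z_1$ on the overlap $U_1\cap U_2$, and a direct Berezinian computation yields the transition
\[
[dz_1|d\th_1]^{\ot 2}=\frac{z_1^2}{t^2}[dz_2|d\th_2]^{\ot 2}=-\frac{z_1}{z_2}[dz_2|d\th_2]^{\ot 2}.
\]
Consequently the local sections $-z_1^{-n}[dz_1|d\th_1]^{\ot 2n}$ on $U_1$ and $(-1)^{n+1}z_2^{-n}[dz_2|d\th_2]^{\ot 2n}$ on $U_2$ agree on the overlap and define a global section $\sigma\in (j_*\om_{U/S}^{\ot 2n})(X)$. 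To see that $\sigma$ generates, take any section $\tau=a_i[dz_i|d\th_i]^{\ot 2n}$ on $U_i$; the gluing relation $a_2=(z_1^2/t^2)^n a_1$ is equivalent to $(-1)^n a_1 z_1^n = a_2 z_2^n$, so the two prescriptions $f:=-a_1 z_1^n$ on $U_1$ and $f:=(-1)^{n+1}a_2 z_2^n$ on $U_2$ glue to a section of $\OO_X$ on $U$. Since the miniversal total space $X$ is Cohen--Macaulay with singular locus of codimension $\ge 2$, Hartogs' extension gives $f\in \OO_X(X)$, whence $\tau=f\sigma$.

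When $S$ is even and $(C,L)$ is the associated family of spin curves, the identification $\om_{X/S}^{2n}|_C\simeq \om_{C/S}^n\oplus \om_{C/S}^n\ot L = p^*\om_{C/S}^n$ is verified in each miniversal model by tracing through the explicit generator: $\bb^{\ot 2n}$ in the Ramond case, resp.\ $\sigma$ in the NS case, reduces modulo the odd variables to a generator of $\om_{C/S}^n$ on each branch of $C$, and the $\OO_X|_C=\OO_C\oplus L$ module structure produces the claimed splitting. The main technical obstacle is the NS case: constructing $\sigma$ is a Berezinian computation, but verifying that $\sigma$ generates $j_*\om_{U/S}^{\ot 2n}$ as an $\OO_X$-module requires the compatibility of the two local recipes for $f$ together with the Hartogs-type extension for $\OO_X$.
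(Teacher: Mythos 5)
Your proof is correct, and the overall architecture coincides with the paper's: reduce to the two miniversal models via smoothness of the classifying map and flat base change for $j_*$, dispose of the Ramond case using that $\om_{X/S}$ is already invertible there, and use $\OO_X\simeq j_*\OO_U$ (depth $\ge 2$ at the node) as the extension device. Where you genuinely diverge is in the core NS computation. The paper argues structurally: over the even base of the miniversal family one has $\OO_X=\OO_C\oplus L$ and $\om_{X/S}\simeq L\oplus\om_{C/S}$ with $L$ maximal Cohen--Macaulay on the cone $z_1z_2=-t^2$, so $j_*\om_{U/S}^{\ot 2}\simeq \om_{C/S}\oplus\om_{C/S}\ot L=p^*\om_{C/S}$ by pushing forward componentwise and invoking reflexivity of $\om_{C/S}$ and $L$; local freeness and the spin-curve restriction formula drop out simultaneously. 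You instead compute the transition cocycle of $\om_{U/S}^{\ot 2n}$ explicitly, $[dz_1|d\th_1]^{\ot 2}=-\tfrac{z_1}{z_2}[dz_2|d\th_2]^{\ot 2}$, exhibit the glued generator $\sigma$ (which for $n=1$ is, up to sign, exactly the section $e$ of \eqref{e-sec-eq} that the paper introduces only later), and reduce surjectivity of $\OO_X\rTo{\cdot\sigma} j_*\om_{U/S}^{\ot 2n}$ to Hartogs for functions. Your route buys an explicit generator in one stroke and treats all $n$ uniformly without passing through the $n=1$ case; the paper's route buys the identification $\om_{X/S}^{2n}\simeq p^*\om_{C/S}^n$ over even bases for free, which you have to recover afterwards by reducing $\sigma$ modulo the odd variables. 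Both are complete; the only point to be slightly careful about in your version, which you do address, is that generation requires both the compatibility of the two local recipes for $f$ on the overlap and the injectivity of $\OO_X\to j_*\OO_U$.
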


\begin{proof}
We claim that under our assumptions $X$ is Cohen-Macaulay near the nodes. Indeed, by assumption, $S$ is smooth near
every point $s$ with singular fiber $X_s$. Since the morphism $f$ is Cohen-Macaulay, the claim follows from
\cite[Lem.\ 37.21.4]{Stacks}. 
Since the complement to $U$ has codimension $\ge 2$ in $X$, by Lemma \ref{CM-str-sh-lem}(i),
we see that the natural map 
$$\OO_X\to j_*\OO_U$$
is an isomorphism. We will use this fact below.

Let us first prove that $j_*\om_{U/S}^{\ot 2}$ is a line bundle 
for the miniversal deformations of the nodes, i.e., when $S=\Spec k[t]$ and $X$ is given as in 
\eqref{univ-def-node-I} and \eqref{univ-def-node-II}.

For a Ramond node, $\om_{X/S}$ is a line bundle, hence, $\om_{X/S}^{\ot 2}$ is also a line bundle, and
$$\om_{X/S}^{\ot 2}\simeq j_*\om_{U/S}^{\ot 2}.$$

For an NS node, the total space $C$ of the underlying usual family of curves is the quadratic cone
$xy=-t^2$, and $U^{\red}$ is the complement to the singular point $q$ in $C$. Furthermore, we have
$$\OO_X=\OO_C\oplus L, \ \ \om_{X/S}\simeq L\oplus \om_{C/S},$$
where $L$ is a CM-sheaf of rank $1$ over $C$ (see Lemma \ref{even-case-rem}). Over $U$ we have
$$\om_{U/S}^{\ot 2}\simeq \om_{U^{\red}/S}\oplus \om_{U^{\red}/S}\ot L|_U.$$
We can compute the push-forward $j_*$ componentwise. Since the complement of $U$ has codimension $\ge 2$,
$\om_{C/S}$ is locally free and $L$ is a CM-sheaf with full support, by Lemma \ref{CM-str-sh-lem}, we get
$$j_*\om_{U/S}^{\ot 2}\simeq \om_{C/S}\oplus \om_{C/S}\ot L,$$
which is just the pull-back of $\om_{C/S}$ under the natural projection $X\to C$.
Hence, $j_*\om_{U/S}^{\ot 2}$ is locally free of rank $1$.

To prove the result in the general case, i.e., for an arbitary family $X/S$ inducing a smooth morphism to the base of the miniversal deformation of every node, 
we observe that the question is local, and by Lemma \ref{etale-local-nodes-lem}, an \'etale neighborhood $B$ of $X$ will have a smooth morphism $t:B\to X_0$
to the miniversal deformation of the node $X_0/S_0$, so that $B\cap U=t^{-1}(U_0)$, where $U_0\sub X_0$ is the complement to the node. Then the base change morphism
$$t^*j_{0*}\om_{U_0/S_0}^2\to j_*\om_{U/S}^2$$
is an isomorphism, hence, $j_*\om_{U/S}^2$ is locally free. 
The isomorphism \eqref{om-2n-base-change-eq} follows from this and from the compatilibity of the base change morphisms for the maps to $X_0$
from \'etale neighborhoods of nodes in $X'$ and in $X$.
\end{proof}

Now we recall that for every NS-puncture $P_i$,
there is a canonical divisor $D_i$ with the same support (see Sec.\ \ref{NS-correspondence-sec}). So, we can define the line bundle on $X$,
$$\LL_{X/S}:=\om_{X/S}^{\ot 2}(\sum D_i+\sum R_j).$$

Let $f:X\to S$ be a proper morphism of Noetherian superschemes. 

\begin{definition}
We say that a line bundle (of rank $1|0$) $L$ over $X$ is {\it strongly relatively ample over } $S$ 
if there exists $n>0$, 
a supervector bundle $\EE$ over $S$ and a closed embedding $\phi:X\to \P(\EE)$ over $S$,
such that $L^n\simeq\phi^*\OO(1)$.
\end{definition}

For a superscheme $S$ we denote by $S_{\bos}$ the usual scheme with the same underlying topological space as $S$ and with the sheaf of rings
$\OO_S/\NN_S$, where $\NN_S$ is the ideal generated by odd functions.

\begin{theorem}\label{ample-thm} 
Let $f:X\to S$ be a family of stable supercurves inducing a smooth morphism to the base of the miniversal deformation of every node
in a fiber. Then the line bundle $\LL_{X/S}$ is strongly relatively ample.
\end{theorem}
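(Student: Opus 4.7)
The plan is to deduce strong relative ampleness of $\LL_{X/S}$ from the underlying classical statement, using the relative ampleness criterion for flat morphisms of superschemes supplied by Proposition \ref{ample-crit-prop} in the appendix. Note first that $f\colon X \to S$ is flat (indeed relatively Cohen--Macaulay by definition of a stable supercurve), and that $\om_{X/S}^{2}$ is an honest line bundle on $X$ by Theorem \ref{can-square-thm} thanks to the smoothness assumption on the map to the miniversal deformation of each node; hence $\LL_{X/S}$ is a well-defined line bundle to which the criterion can be applied.

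To carry this out, I would first identify the restriction $\LL_{X/S}|_{X_{\bos}}$. By Theorem \ref{can-square-thm}, $\om_{X/S}^{2}$ restricts on the bosonic scheme to the classical relative dualizing sheaf $\om_{X_{\bos}/S_{\bos}}$ of the underlying family of ordinary stable pointed curves. For the divisor $D_i$ attached to an NS-puncture, the computation in Lemma \ref{NS-div-corr-lem} shows that its local equation has the form $z+a\th$ with $a$ an odd function; modulo $\NN_X$ this becomes $z$, which is the local ideal of the marked point $p_i$ on $X_{\bos}$. Similarly, each Ramond divisor $R_j$ is cut out by the even coordinate $z$ from Lemma \ref{local-descr-smooth-lem}(ii), giving the classical marked point $r_j$ on $X_{\bos}$. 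Altogether,
\[
\LL_{X/S}\big|_{X_{\bos}} \;\simeq\; \om_{X_{\bos}/S_{\bos}}\Bigl(\sum_{i} p_i + \sum_{j} r_j\Bigr),
\]
the ordinary relative log canonical sheaf of a family of stable pointed curves.

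Second, this classical log canonical sheaf is relatively ample by the standard result for families of stable pointed curves: on each geometric fiber, stability forces positive degree on every irreducible component, and one concludes by the usual combination of cohomology-and-base-change with the classical ampleness criterion. Plugging this into Proposition \ref{ample-crit-prop} yields strong relative ampleness of $\LL_{X/S}$ over $S$, which was the claim.

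The only real subtlety beyond classical input lies in Proposition \ref{ample-crit-prop} itself, whose proof is deferred to the appendix; granting it, the substantive step is the identification of the bosonic restriction above, which rests on the explicit local models from Sec.\ \ref{NS-correspondence-sec} and Sec.\ \ref{local-descr-sec} together with the base-change compatibility \eqref{om-2n-base-change-eq} for $\om_{X/S}^{2}$.
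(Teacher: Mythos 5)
Your proposal is correct and follows essentially the same route as the paper: both reduce to the bosonic family via Proposition \ref{ample-crit-prop}, identify $\LL_{X/S}|_{X_{\bos}}$ with the classical relative log canonical bundle $\om_{C/S_0}(\sum p_i+\sum r_j)$ (the paper does this by noting the line bundle is determined by its restriction to the complement of the nodes, you by the local equations of $D_i$ and $R_j$ — an immaterial difference), and then invoke the classical ampleness result for stable pointed curves.
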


\begin{proof} Let $C\to S_0$ be the underlying usual family of stable curves over $S_0=S_{\bos}$.
Since $f$ is flat, by Proposition \ref{ample-crit-prop} of the Appendix, it suffices to prove that $L:=\LL_{X/S}|_C$ is strongly relatively ample over $S_0$.
Since $L$ is a line bundle, the natural map 
$$L_{X/S}\to j_{\bos *}j_{\bos}^*L_{X/S}$$ 
is an isomorphism, where $j_{\bos}:U_{\bos}\to C$ is the embedding of the complement to the nodes.
We have 
$$j_{\bos}^*L_{X/S}\simeq \om_{U_{\bos}/S_0}(\sum D_{i \bos}+\sum R_{j \bos}).$$
Hence,
$$L_{X/S}\simeq \om_{C/S_0}(\sum D_{i \bos}+\sum R_{j \bos}),$$
which is relatively ample by a classical result on stable curves (see \cite[Thm.\ 1.2]{DM}, \cite[Lem.\ 6.1]{ACG}).
\end{proof}

\subsection{The sheaf of infinitesimal symmetries}

Now we can extend Proposition \ref{inf-aut-smooth-punct-prop} to the case of stable supercurves.

\begin{theorem}\label{inf-aut-thm}
Let $(X,P_\bullet,R_\bullet)\to S$ be a family stable supercurve with punctures, where $(P_i)_{i\in I}$ are NS punctures
and $(R_j)_{j\in J}$ are Ramond punctures, inducing a smooth morphism to the base of the miniversal deformation of every node in a fiber. 
Then one has a natural isomorphism
$$\AA_{(X,P_\bullet,R_\bullet)/S}\simeq \om_{X/S}^{-2}(-\sum_{i\in I} D_i-2\sum_{j\in J} R_j),$$
where $\om_{X/S}^{-2}$ is the line bundle defined in Theorem \ref{can-square-thm} 
and $D_i\sub X$ is the divisor associated with the NS puncture $P_i$ (see Sec.\ \ref{NS-correspondence-sec}).
\end{theorem}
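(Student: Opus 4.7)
The plan is to extend the isomorphism of Proposition \ref{inf-aut-smooth-punct-prop}, which is valid on the smooth locus $j\colon U\hra X$, to all of $X$. Set $\LL:=\om_{X/S}^{-2}(-\sum_{i\in I}D_i-2\sum_{j\in J}R_j)$. By Theorem \ref{can-square-thm}, $\LL$ is a well-defined line bundle on $X$, constructed as $j_*$ of its restriction to $U$. I would first construct a morphism $\AA_{X/S}\to \LL$ of $\OO_X$-modules that agrees with the Proposition \ref{inf-aut-smooth-punct-prop} isomorphism on $U$, and then verify it is an isomorphism by a local check at the nodes.

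To build the map, I would start from the tautological inclusion $\AA_{X/S}\hra\TT_{X/S}$ and pass to $j_*$. Since $X$ is Cohen-Macaulay near the nodes (as shown in the proof of Theorem \ref{can-square-thm}) and the complement of $U$ has codimension $\ge 2$, the sheaf $\TT_{X/S}$ injects into $j_*\TT_{U/S}$. Composing with the isomorphism on $U$ from Proposition \ref{inf-aut-smooth-punct-prop} and applying $j_*$ produces the desired map $\AA_{X/S}\to j_*(\om_{U/S}^{-2}(-\sum D_i-2\sum R_j))=\LL$ extending the known isomorphism over $U$.

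To prove this map is an isomorphism I would reduce to a local question at each node. Both sides are compatible with smooth base change: the right side by the base change formula \eqref{om-2n-base-change-eq}, and the left side because the distribution $\DD$, the NS-punctures and the Ramond punctures all pull back cleanly under smooth morphisms. By the smoothness hypothesis on the map from $S$ to the base of each miniversal node-deformation, it therefore suffices to verify the isomorphism on the two miniversal families described in \ref{univ-def-node-I} and \ref{univ-def-node-II}.

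The verification on the miniversal families is an explicit calculation. For the Ramond node $X$ given by $z_1z_2=t$ in $\A^{2|1}$, the sheaf $\om_{X/S}$ is free with trivialization $\bb$, and $\DD$ extends across the node; one computes that the superconformal vector fields form a rank $1|0$ free submodule of $\TT_{X/S}$ and that the bracket map $\TT^{sc}\to \TT/\DD\simeq\om_{X/S}^{-2}(-R)$ lands isomorphically in the required twist. The main obstacle will be the NS node, where $\om_{X/S}$ is not locally free (there is the extra section $s_0$) while $\om_{X/S}^{-2}$ is nonetheless a line bundle by Theorem \ref{can-square-thm}. There one must identify the subsheaf of $\TT_{X/S}$ consisting of derivations whose restriction to each of the two branches $U_1,U_2$ is superconformal in the coordinates $(z_i,\th_i)$, and check that, modulo the relations $z_1z_2=-t^2$, $z_1\th_2=t\th_1$, $z_2\th_1=-t\th_2$, $\th_1\th_2=0$, together with the formulas for $\de$ in \eqref{NS-miniversal-de-formula}, these vector fields form a rank $1|0$ free module matched by the constructed map with $\om_{X/S}^{-2}$ times the prescribed puncture twist. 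Once the isomorphism is verified on both miniversal families, base change finishes the proof.
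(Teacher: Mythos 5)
Your proposal is correct in outline, and its first half (building the map $\AA_{X/S}\to \om_{X/S}^{-2}(-\sum_i D_i-2\sum_j R_j)$ by pushing the isomorphism of Proposition \ref{inf-aut-smooth-punct-prop} forward along $j$) coincides with the paper's proof. Where you diverge is in how the map is shown to be an isomorphism: you propose a smooth base-change reduction to the two miniversal node families followed by an explicit computation of the superconformal vector fields there, whereas the paper gets the result for free from the identity $\OO_X\simeq j_*\OO_U$. Namely, $j_*\TT_{U/S}$ is the sheaf of relative derivations of $j_*\OO_U$, so $\TT_{X/S}\to j_*\TT_{U/S}$ is not merely injective but an \emph{isomorphism}; and since the conditions cutting $\AA_{X/S}$ out of $\TT_{X/S}$ (preserving the punctures and the distribution) are imposed only over $U$, it follows at once that $\AA_{X/S}=j_*\AA_{U/S}$, and the target is $j_*$ of the line bundle on $U$, which is $\om_{X/S}^{-2}(-\sum_i D_i-2\sum_j R_j)$ by the very construction in Theorem \ref{can-square-thm}. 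Your route is workable: the NS-node computation you anticipate is essentially the one the paper carries out later in Lemma \ref{KS-super-NS-lem}(ii), and the base-change reduction is legitimate provided you note that it is the \emph{flatness} of the smooth base change that makes $\AA_{X/S}$ behave well (the paper explicitly warns that $\AA_{X/S}$ is not compatible with arbitrary base change, e.g.\ restriction to fibers). What your approach buys is an explicit description of the infinitesimal automorphisms at each node type; what the paper's approach buys is that no coordinate computation at the nodes is needed at all, the whole content having already been absorbed into the statement that $j_*\om_{U/S}^{\pm 2}$ is a line bundle.
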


\begin{proof}
Over the smooth locus this holds by Proposition \ref{inf-aut-smooth-punct-prop}. 
Now let us show that the natural map
$$\AA_{X/S}\to j_*\AA_{U/S},$$
where $j:U\to X$ is the embedding of the smooth locus, is an isomorphism. 
Since the condition on $v\in \TT_{X/S}$ to belong to 
$\AA_{X/S}$ can be imposed only over the smooth locus,
it is enough to check that the natural map
$$\TT_{X/S}\to j_*\TT_{U/S}$$ 
is an isomorphism. But this follows immediately from the fact that $j_*\TT_{U/S}$ can be identified with relative
derivations of $j_*\OO_U$ and the isomorphism $j_*\OO_U\simeq \OO_X$.
\end{proof}

\begin{remark}
Note that neither $\TT_{X/S}$ nor $\AA_{X/S}$ are compatible with the base change. So in the situation of Theorem \ref{inf-aut-thm} the restriction of
$\AA_{X/S}$ to a fiber $X_s$, which is a stable supercurve with at least one node, is a line bundle on $X_s$, whereas $\AA_{X_s}$ is not
(since $T_C$ is not locally free for a nodal curve $C$).
\end{remark}

%
%

\section{Proof of Theorem A}\label{proof-thm}

To prove Theorem $A$ we need to check that $\ov{\SS}_{g,n_{NS},n_R}$ is a stack with representable diagonal, with an \'etale atlas,
and that it is smooth and proper over $\C$.

\subsection{$\ov{\SS}_{g,n_{NS},n_R}$ is a limit preserving stack}

A family of stable supercurves is given by a superscheme $X\to S$, smooth of dimension $1|1$,
together with a morphism $\de:\Om_{X/S}\to \om_{X/S}$ and sections $p_i:S\to X$ (the R-punctures can
be recovered from $\de$). The fact that isomorphisms between two families over $S$ form a sheaf in \'etale topology
 is proved in the standard way (and also follows from the representability proved below). 
 The \'etale descent for such families follows as in the classical case from the existence of the relatively ample bundle which we proved
in Theorem \ref{ample-thm} (see  \cite[Sec.\ 4.3.3]{FGA-exp}; see also \cite[Sec.\ 7.2.1]{MZ}). 

The fact that our stack $\MM$ is limit preserving, i.e., if $S=\Spec(A)$
with $A=\varinjlim_i A_i$, then $\MM(A)\simeq \varinjlim_i \MM(A_i)$, is proved essentially by the same arguments as in the case of the
moduli stack of usual curves, see \cite[Lem.\ 7.23]{MZ} for details.

\subsection{Representability of the diagonal}

We need to check that for a pair of families of stable supercurves
$X\to S$ and $Y\to T$, there is an algebraic space locally of finite type over $\C$, classifying isomorphisms
between $X_s$ and $Y_t$ as abstract superschemes. Passing to the induced families over $S\times T$, we can assume that
we have two families $X\to S$ and $Y\to S$ over the same base, and we need to check
that the functor $\Isom(X,Y)$ is representable by an algebraic space over $S$. In fact, we will check that it is representable by a superscheme over $S$.

For this we can use relative projectivity of the morphisms $X\to S$ and $Y\to S$ which holds by Theorem \ref{ample-thm}
and the standard approach via the Hilbert (super)schemes (the construction of Hilbert superschemes for 
projective morphisms of superschemes is discussed in \cite[Sec.\ 4]{Hilbert-schemes} and in \cite[Sec.\ 7]{MZ}). 
Namely, as in the classical case, the idea is that to an isomorphism $X_s\to Y_s$ 
we can associate its graph, which is a closed subscheme in $X_s\times Y_s$.
In more detail, first, let $\HH$ be the relative Hilbert superscheme over $S$ parametrizing subschemes in $X_s\times Y_s$
(with the same Hilbert series as the Hilbert series of $X_s$ with respect to a large power of $\LL_{X/S}|_{X_s}$). 
Let 
$$Z\sub X\times_S\times Y\times_S \HH$$
be the universal subscheme.
Let us consider the projections
$$p_X:Z\to X\times_S \HH, \ \ p_Y:Z\to Y\times_S \HH.$$
Then there is a universal open subscheme $\HH_1\sub \HH$ 
over which $p_X$ and $p_Y$ become isomorphisms (this is proved as in \cite[Thm.\ 5.22]{FGA-exp}). 
Then it is easy to see that $\HH_1$ represents $\Isom_{\sSch/S}(X,Y)$.

Next, assume that our stable supercurves $X$ and $Y$ over the same base are equipped with the matching number of punctures of each type,
$(P_i^X, R_j^X)$ and $(P_i^Y, R_j^Y)$.
Let $Z\sub X\times_S\times Y\times_S\HH_1$ be the graph of the universal isomorphism.
Then we have the induced sections $Z\times_X P_i^X$ and $Z\times_Y P_i^Y$ of the family $Z\to \HH_1$,
and induced divisors $Z\times_X R_j^X$ and $Z\times_Y R_j^Y$ in $Z$, which are smooth of dimension $0|1$ over $\HH_1$. Let us set $P_X$ (resp.,$P_Y$) be the subscheme of $Z$ obtained as the disjoint union of these subschemes pulled back from $X$ (resp., $Y$). Then there exists the largest closed subscheme
$\HH_2\sub\HH_1$ such that $P_X$ and $P_Y$ coincide over $\HH_2$.
This is a consequence of the following easy Lemma.

\begin{lemma}\label{vanishing-closed-subscheme-lem}
(a) Let $f:F_1\to F_2$ be a morphism of coherent sheaves over a superscheme $Z$, and let $Z\to S$ be a morphism. 
Assume that $F_1$ and $F_2$ are flat over $S$. Then there exists the largest closed subscheme
$T\sub S$ such that $f=0$ over $T$.

\noindent
(b) Let $R$ and $R'$ be subschemes in a superscheme $Z$, and let $Z\to S$ be a flat morphism such that both $R$ and $R'$ are flat over $S$. 
Then there exists the largest closed subscheme
$T\sub S$ such that $R=R'$ over $T$.
\end{lemma}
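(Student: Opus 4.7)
The plan is to prove (a) first and then derive (b) from it.

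For (a), the plan is to construct the defining ideal sheaf of $T$ locally on $S$ and glue. The essential observation, which uses flatness of $F_2$ over $\OO_S$ throughout, is the distributivity identity $(\II_1 \cap \II_2) \cdot F_2 = \II_1 F_2 \cap \II_2 F_2$ for any two ideal sheaves $\II_1, \II_2 \sub \OO_S$. This is obtained by tensoring the standard exact sequence $0 \to \II_1 \cap \II_2 \to \II_1 \oplus \II_2 \to \II_1 + \II_2 \to 0$ with the flat sheaf $F_2$ and comparing with the obvious presentation of $\II_1 F_2 \cap \II_2 F_2$ as the kernel of $\II_1 F_2 \oplus \II_2 F_2 \to (\II_1 + \II_2) F_2$. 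For a closed subscheme $T = V(\II)$, the condition $f|_{Z_T} = 0$ translates, again by flatness of $F_2$, to $\im(f) \sub \II \cdot F_2$. Since $F_1$ is coherent, this reduces locally to the vanishing of finitely many sections $s_j = f(e_j) \in F_2$, for $e_j$ generators of $F_1$. For each such $s_j$, the smallest ideal $\II_{s_j} \sub \OO_S$ with $s_j \in \II_{s_j} F_2$ can be read off from a local presentation of the flat $\OO_S$-module $F_2$. By the distributivity identity these local ideals are independent of the presentation, and by coherence of $\OO_S$ the sum $\sum_j \II_{s_j}$ glues to a coherent ideal sheaf $\II_T \sub \OO_S$; then $T := V(\II_T)$ is the required largest closed subscheme.

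For (b), the plan is to apply (a) to the two composite morphisms
\[
\alpha : \II_R \hookrightarrow \OO_Z \twoheadrightarrow \OO_{R'}, \qquad \beta : \II_{R'} \hookrightarrow \OO_Z \twoheadrightarrow \OO_R.
\]
Flatness of $\OO_Z$, $\OO_R$, $\OO_{R'}$ over $\OO_S$, together with the long exact sequence of $\operatorname{Tor}$ applied to $0 \to \II_R \to \OO_Z \to \OO_R \to 0$, implies that $\II_R$ is flat over $\OO_S$, and likewise $\II_{R'}$. Hence (a) applies to $\alpha$ and $\beta$, yielding closed subschemes $T_1, T_2 \sub S$. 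Set $T := T_1 \cap T_2$ scheme-theoretically. Over $T$, the pullbacks of $\II_R$ and $\II_{R'}$ agree as subsheaves of $\OO_{Z_T}$ (using flatness of $R$ and $R'$ over $S$ to identify pullbacks of ideal sheaves with ideal sheaves of pullbacks), hence $R \cap Z_T = R' \cap Z_T$; maximality of $T$ is inherited from maximality of $T_1$ and $T_2$.

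The main obstacle lies in (a): constructing the minimal vanishing ideal sheaf and verifying that the locally defined ideals generated by the components of $s_j$ in a local presentation of $F_2$ are independent of the presentation and patch coherently on overlaps. Both points reduce to the distributivity identity above, which is the essential consequence of the flatness hypothesis on $F_2$.
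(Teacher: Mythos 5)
Your part (b) is essentially the paper's own reduction: it applies (a) to the two composites $\II_R\to\OO_Z\to\OO_Z/\II_{R'}$ and $\II_{R'}\to\OO_Z\to\OO_Z/\II_R$, and your flatness check for $\II_R$ via the Tor sequence and the identification of pullbacks of ideal sheaves are fine. The problem is in part (a), at the sentence asserting that ``the smallest ideal $\II_{s_j}\sub\OO_S$ with $s_j\in\II_{s_j}F_2$ can be read off from a local presentation of the flat $\OO_S$-module $F_2$.'' That sentence is the entire content of the lemma, and nothing you prove supports it. The distributivity identity $(\II_1\cap\II_2)F_2=\II_1F_2\cap\II_2F_2$ (which you establish correctly) only shows that the family of ideals $\II$ with $s_j\in\II F_2$ is closed under \emph{finite} intersections; since Noetherian rings satisfy the ascending, not the descending, chain condition on ideals, this does not produce a smallest element. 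Moreover $F_2$ is coherent on $Z$ and flat over $S$, but as an $\OO_S$-module it is in general not finitely generated, and such a flat module need not admit a content ideal: for $\OO_S=\C[\![t]\!]$, $Z=\Spec\C(\!(t)\!)$, $F_1=F_2=\OO_Z$ and $f=\id$, the section $s=1$ lies in $(t^n)F_2$ for every $n$ but not in $0\cdot F_2$, so there is no smallest ideal $\II$ with $s\in\II F_2$ and no largest $T$ on which $f$ vanishes. So the step fails in the generality in which you (and the lemma) state it, and with it the local construction of $\II_T$.

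The missing input is a finiteness of $F_2$ over $\OO_S$. If $F_2$ were finite locally free over $\OO_S$, the content ideal is just the ideal generated by the coordinates of $s_j$ in a local basis and your argument closes up. In the situation where the paper actually uses the lemma, $Z\to S$ is projective, and the standard device is to twist by a relatively ample line bundle, push forward, and invoke Serre vanishing together with cohomology and base change to replace $f$ by a morphism $\pi_*(F_1(n))\to\pi_*(F_2(n))$ of coherent sheaves on $S$ whose target is locally free; one is then in the easy case, and the vanishing ideal is the image of $\pi_*(F_1(n))\ot\pi_*(F_2(n))^\vee\to\OO_S$. (For comparison, the paper disposes of (a) in one line by forming the subscheme of $Z$ cut out by the morphism $\OO_Z\to\und{\Hom}(F_1,F_2)$ determined by $f$ and passing to its image in $S$; your route is different, but both versions need the finiteness/properness input at exactly the point where your argument is silent.)
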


\begin{proof}
(a) Let $Z_f\sub Z$ be the closed subscheme corresponding to the ideal $\ker(f:\OO_Z\to \und{\Hom}(F_1,F_2))$. Then $T$ is the schematic image of $Z_f$.

\noindent
(b) We apply (a) to the morphisms $I_R\to \OO_Z/I_{R'}$ and $I_{R'}\to \OO_Z/I_R$ and observe that these morphisms vanish if and only if $R=R'$.
\end{proof}

Finally, we can define a closed subscheme $\HH_{sc}\sub \HH$ that corresponds to superconformal isomorphisms.
To this end we observe that we can pull back to $Z$ both morphisms $\de_X:\Om_{X/S}\to \om_{X/S}(R_X)$ and $\de_Y:\Om_{Y/S}\to \om_{Y/S}(R_Y)$, 
and then apply Lemma \ref{vanishing-closed-subscheme-lem} to the difference.
Then $\HH_{sc}$ represents the sheaf of isomorphisms between families $X$ and $Y$ as stable supercurves with punctures.

\subsection{Construction of an \'etale atlas}

The main point is to use the existence of the (purely even) Deligne-Mumford stack $\ov{\SS}=\ov{\SS}_{g,n_{NS},n_R}$ parametrizing 
stable curves with spin-structures (and punctures), constructed in \cite{AJ}.

For every stable supercurve $X_0$ over $\C$, we can find a family $\pi:\XX_0\to B_0$ of stable supercurves (with punctures) over an affine even base $B_0$,
and a $\C$-point $b\in B_0$ such that we get $X_0$ as a fiber of $\XX_0$ over $b$, 
such that the corresponding map $B_0\to \ov{\SS}$ is \'etale.

Let us consider the corresponding bundle over $B_0$,
$$\EE:=R^1\pi_*(\AA^-)=R^1\pi_*(\om_{\CC/B_0}^{-1}\ot \LL(-\sum p_i-\sum r_i)),$$
where $\CC\sub \XX_0$ is the corresponding usual curve over $B_0$ with the relative spin-structure $\LL$
(recall that $\AA^-$ denotes the sheaf of odd infinitesimal automorphisms of $\XX_0/B_0$). Note that the fact that $\EE$ is
locally free follows from the vanishing of $\pi_*(\AA^-)$, i.e., from the absence of odd infinitesimal automorphisms of $\XX_0/B_0$.

We define the superbase $B$ as
$$B=\Spec_{B_0}({\bigwedge}^\bullet \Pi\EE^\vee).$$
In other words, we view $\EE^\vee$ as extra odd coordinates.
Our goal is to extend $\XX_0$ to a family $\XX\to B$ (possibly after changing $B_0$ to an \'etale neighborhood of $b$).

Let $q_1,\ldots,q_N$ be a finite number of points in $C_0$, including all nodes, $q_1,\ldots,q_n$,
such that there is an ample effective divisor $D$ supported on $\{q_1,\ldots,q_N\}$, and $q_i$ are distinct from all the punctures.
Let also $q'_1,\ldots,q'_m$ be a set of smooth points in $C_0$, distinct from all the punctures and from $q_i$'s, such that
$D'=q'_1+\ldots+q'_m$ is ample. 

We claim that replacing $B_0$ by an \'etale neighborhood of $b$, we can assume
the existence of relatively ample effective Cartier divisors $\DD$ and $\DD'$ in $\CC_0$, extending $D$ and $D'$,
such that $\DD\cap \DD'=\emptyset$. 

Therefore, we have an affine cover of $\XX_0$ by two open sets, $\UU_0$ defined as the complement to $D'$ and $\UU_1$ defined as the complement to $D$.
Now, let us consider the tautological cohomology class
$$[c]\in H^0(B_0,\EE^\vee\ot R^1\pi_*(\AA^-))\simeq H^1(\XX_0,\pi^*\EE^\vee\ot \AA^-).
$$
We can represent it by a Cech cocycle 
$$c\in H^0(\UU_0\cap \UU_1,\pi^*\EE^\vee\ot \AA^-).$$

Now we observe that there is a natural morphism of sheaves
$$\om_{\CC/B_0}^{-1}\ot \LL(-\sum p_i-\sum r_i)=\AA^-\to \TT_{\XX_0/B_0,P_\bullet,R_\bullet}^-.$$
Hence, we can view odd sections of $\pi^*\EE^\vee\ot \AA^-$ as an even derivation 
$$\OO_{\XX_0}\to \Pi\pi^*\EE^\vee$$ 
(relative to $B_0$). Thus, $c$ gives rise to such a derivation over $\UU_0\cap \UU_1$.
Hence, we have a homomorphism 
$$\exp(c):\OO\to {\bigwedge}^\bullet_{\OO} \Pi\pi^*\EE^\vee,$$
over $\UU_0\cap \UU_1$, reducing to identity modulo ${\bigwedge}^{\ge 1}$. 
We can extend $\exp(c)$ to the automorphism $\wt{c}$ of
${\bigwedge}^\bullet_{\OO_{\XX_0}} \Pi\pi^*\EE^\vee$, identical on $\pi^{-1}{\bigwedge}^\bullet_{\OO_{B_0}}\Pi\EE^\vee$.
Now we define $\XX$ by gluing 
$$\UU_i\times_{B_0} B\simeq \Spec({\bigwedge}^\bullet_{\OO} \Pi\pi^*\EE^\vee|_{\UU_i}), \ \ i=0,1,$$ 
using $\wt{c}$ as an automorphism of $(\UU_0\cap\UU_1)\times_{B_0} B$.

Note that since on a smooth locus $c$ acts by a derivation preserving the structure distribution $\DD\sub \TT$,
the same is true for $\wt{c}$. Hence, $\XX$ has a natural structure of a stable supercurve over $B$ (see Lemma \ref{isom-stable-lem}).

We claim that the corresponding map from $B$ to the moduli stack $\SS\MM$ of supercurves is \'etale near $b$.
Indeed, it is enough to check that the induced map on tangent spaces at $b$ is an isomorphism.
The tangent space $T_bB$ to $B$ at $b$ is given by
$$(T_bB)^+=T_bB_0, \ \ (T_bB)^-=\EE_b=H^1(X_0,\AA^-_{X_0}).$$
The fact that the map on even tangent spaces is an isomorphism follows from the assumption that $B_0\to \SS$ is \'etale.
The map on odd tangent spaces corresponds to the natural map
$$H^1(X_0,\AA^-_{X_0})\to (T_b\SS\MM)^-.$$
The fact that it is an isomorphism follows from the exact sequence \eqref{def-curve-nodes-ex-seq} since the last term of this sequence is purely even.

\subsection{Properties of the stack of stable supercurves}

Corollary \ref{smooth-def-functor-cor} shows that the stack of stable supercurves is smooth. Since it is of finite type (for each fixed genus and fixed number of punctures),
the fact that it is proper can be checked for its even part, i.e., for the stack of stable curves with spin structures. But this is known due to works of Cornalba \cite{Cornalba}
and Jarvis \cite{Jarvis}.

\section{Kodaira-Spencer map}\label{KS-map-sec} 

In this section we will study the behavior of the Kodaira-Spencer map in degenerating families of supercurves where the limiting curve acquires one NS or Ramond 
node. This will later help us to calculate the canonical line bundle of the moduli stack of stable supercurves.
We begin with the classical case of a degenerating family of usual curve and then consider separately the cases of NS and Ramond nodes.

\subsection{Classical case}\label{KS-class-sec}

Let $\pi:C\to S$ be a family of stable curves over a smooth affine base $S$, equipped with a smooth morphism $t:S\to \A^1$.
We denote by $S_0\sub S$ the divisor $t=0$ and by $\pi_0:C_0\to S_0$ the induced family over $S_0$.
We assume that there is a section $q:S_0\to C_0$ such that the map $C\to S$ is smooth away from $q(S_0)$, and that the structure sheaf of the
completion of $C$ along $q(S_0)$
is isomorphic to $\OO_S[\![x,y]\!]/(xy-t)$ (so that the section $q$ corresponds to $x=y=t=0$). 
Below we will write simply $q$ to denote the relative node $q(S_0)\sub C_0$.

We would like to discuss the Kodaira-Spencer map for such a family.

Let us denote by $\TT_{S,S_0}$ the sheaf of derivations of $\OO_S$ preserving the ideal generated by $t$.
Let also $\TT_{C,C_0}$ denote the sheaf of derivations of $\OO_C$ preserving the divisor $C_0$, i.e., 
preserving the ideal generated by $t$ in $\OO_C$.
Finally, we denote by $\TT_{C/S}\sub \TT_{C,C_0}$ the relative tangent sheaf.

Note that $\pi^*\TT_S$ can be identified with derivations $\pi^{-1}\OO_S\to\OO_C$ (since $\Om_S$ is locally free). 
The subsheaf $\pi^*\TT_{S,S_0}$ corresponds to derivations $\pi^{-1}\OO_S\to \OO_C$ sending $t$ to $\OO_C\cdot t$.
Thus, we have a natural morphism $\TT_{C,C_0}\to \pi^*\TT_{S,S_0}$ sending a derivation of $\OO_C$ to its restriction to $\pi^{-1}\OO_S$.
Similarly, we have a map $\TT_{C_0}\to \pi_0^*\TT_{S_0}$ sending a derivation of $\OO_{C_0}$ to its restriction to $\pi^{-1}\OO_{S_0}$.

\begin{lemma}\label{KS-even-lem} (i) There are exact sequences
\begin{equation}\label{tangent-ex-seq}
0\to \TT_{C/S}\to \TT_{C,C_0}\to \pi^*\TT_{S,S_0}\to 0
\end{equation}

\begin{equation}\label{tangent-C0-ex-seq}
0\to \TT_{C_0/S_0}\to \TT_{C_0}\to \pi_0^*\TT_{S_0}\to 0
\end{equation}

\noindent
(ii) For the closed point $x_0\sub S_0$, the natural map
\begin{equation}\label{T-C0-S0-x0-map}
\TT_{C_0/S_0}|_{C_{x_0}}\to \TT_{C_{x_0}}
\end{equation}
is an isomorphism.

\noindent
(iii) Let $j:C\setminus\{q\}\to C$ denote the open embedding. Then the natural map $\om_{C/S}\to j_*\om_{C\setminus\{q\}/S}$ is an isomorphism.
The corresponding natural map
$$\Om_{C/S}\to j_*\Om_{C\setminus\{q\}/S}\simeq j_*\om_{C\setminus\{q\}/S}\simeq \om_{C/S}$$
is injective with the cokernel isomorphic to $\OO_q$.
Hence, the dual map 
$$\om_{C/S}^{-1}\to \TT_{C/S}$$ 
is an isomorphism.
Also, the coherent sheaf $\TT_{C,C_0}$ on $C$ is locally free, and the sheaf $\TT_{C_0/S_0}$ on $C_0$ is flat over $S$.

\noindent
(iv) Let us consider the map 
$$\TT_C\to \OO_C/(t)=i_*\OO_{C_0},$$
where $i:C_0\to C$ is the natural embedding, 
sending $v$ to $v(t)\mod (t)$.
Then its image is $i_*\II_q$, where $\II_q\sub \OO_{C_0}$ is the ideal sheaf of $q$, so we have
an exact sequence
\begin{equation}\label{tangent-C-C0-ex-seq}
0\to \TT_{C,C_0}\to \TT_C\to i_*\II_q\to 0
\end{equation}

\noindent
(v) There is an exact sequence
$$0\to \TT_C/\TT_{C,C_0}\rTo{t} \TT_{C,C_0}/t\TT_{C,C_0}\to i_*\TT_{C_0}$$
\end{lemma}

\begin{proof}
(i) For the first sequence, we have to check that the map $\TT_{C,C_0}\to \pi^*\TT_{S,S_0}$ is surjective. This is clear away from $q$.
It remains to check this in the formal neighborhood of $q$. But we can extend the derivation $t\partial_t$ to the derivation of $k[\![x,y,t]\!]/(xy-t)$ induced by
$x\partial_x+t\partial_t$.

It follows that the composition 
$$\TT_{C,C_0}\to \pi^*\TT_{S,S_0}\to i_*\pi_0^*\TT_{S_0}$$
is surjective, which implies surjectivity of the map $\TT_{C_0}\to \pi_0^*\TT_{S_0}$, and hence, exactness of the second sequence.

\noindent
(ii) This follows easily by a local computation near the node. Namely, locally a derivation $v$ in $\TT_{C_0/S_0}$ is described
by a pair of functions $v(x)$ and $v(y)$, satisfying $v(x)y+xv(y)=0$, or equivalently $v(x)\in (x)$, $v(y)\in (y)$, so $\TT_{C_0/S_0}$
is locally isomorphic to 
$$\II_q\simeq x\cdot\OO_{C_0}\oplus y\cdot \OO_{C_0}.$$ 
Similarly $\TT_{C_{x_0}}$ is isomorphic to 
$$\II_{q_0}\simeq x\cdot \OO_{C_{x_0}}\oplus y\cdot \OO_{C_{x_0}},$$ 
where $q_0$ is the node in $C_{x_0}$. 
The natural map \eqref{T-C0-S0-x0-map} corresponds under these isomorphisms to the natural map
$$(x\cdot\OO_{C_0}\oplus y\cdot \OO_{C_0})|_{\OO_{C_{x_0}}}\to x\cdot \OO_{C_{x_0}}\oplus y\cdot \OO_{C_{x_0}},$$
which is an isomorphism.

\noindent
(iii) The first assertion follows from the fact that $\om_{C/S}$ is locally free.
The injectivity of $\Om_{C/S}\to \om_{C/S}$ only has to be checked in a formal neighborhood of $q$.
Then we can identify $\Om_{C/S}$ with $(\OO dx\oplus \OO dy)/(xdy+ydx)$.
It is easy to see that any element of $\Om_{C/S}$ can be written uniquely as
$$f(t,x,y)dx+ g(y)dy.$$
The map $\Om_{C/S}\to \om_{C/S}$ is given by $dx\mapsto x\cdot \bb$, $dy\mapsto -y\cdot \bb$, where $\bb$ is a generator of $\om_{C/S}$.
Hence, it sends the above element to 
$$([f(t,x,y)x-g(y)y)\cdot \bb,$$
which is zero only if $f=g=0$. 
The last assertion follows by dualizing the sequence 
$$0\to \Om_{C/S}\to \om_{C/S}\to \OO_q\to 0$$
and using the vanishing $\und{\Hom}(\OO_q,\OO_C)=\und{\Ext}^1(\OO_q,\OO_C)=0$.

The fact that $\TT_{C,C_0}$ is locally free follows from the exact sequence \eqref{tangent-ex-seq}.
The sheaf $\TT_{C_0/S_0}$ is locally free away from the node. Thus, to check its flatness it is enough to consider it in the
formal neighborhood of the node. Then it can be identified with the subsheaf of $\OO_{C_0}\oplus \OO_{C_0}$
consisting of $(f,g)$ such that $f\in(x)$ and $g\in (y)$, so it is locally free as $\OO_{S_0}$-module.

\noindent
(iv) Since the map $\pi:C\to S$ is smooth away from $q$, it is enough to check the assertion in the formal neighborhood of the node.
Then we can extend $x,y$ to formal coordinates $(x,y,t_1,\ldots,t_d)$ on $C$. Since $t=xy$, the map in question $\TT_C\to \OO_C/(t)$ is given by 
$$f\partial_x+g\partial_y+\sum f_i\partial_{t_i}\mapsto fy+gx \mod (t),$$
so its image is $\II_q\sub \OO_{C_0}$.

\noindent
(v) This follows easily from the fact that the kernel of the projection $\TT_{C,C_0}\to i_*\TT_{C_0}$ is $t\TT_C\sub \TT_{C,C_0}$.
\end{proof}

It follows that we can rewrite the exact sequence \eqref{tangent-ex-seq} as
\begin{equation}\label{tangent-ex-seq-bis}
0\to \om^{-1}_{C/S}\to \TT_{C,C_0}\to \pi^*\TT_{S,S_0}\to 0.
\end{equation}
Hence, applying the functor $R\pi_*(\cdot)$ and using the isomorphism $\pi_*\OO_C\simeq \OO_S$, we get the induced coboundary map
\begin{equation}\label{KS-even-map}
KS:\TT_{S,S_0}\to R^1\pi_*\om^{-1}_{C/S}
\end{equation}
which we call the {\it Kodaira-Spencer map} for our family, since it restricts to the usual Kodaira-Spencer map over $S\setminus S_0$.
Note that we also have a similar map coming from the sequence \eqref{tangent-C0-ex-seq},
\begin{equation}\label{KS0-even-map}
KS_0:\TT_{S_0}\to R^1\pi_{0*}\TT_{C_0/S_0}.
\end{equation}
Let $x_0\in S_0$ denote the origin, and let $C_{x_0}$ be the corresponding curve with one node. 
Then by Lemma \ref{KS-even-lem}(ii), we have $\TT_{C_0/S_0}|_{C_{x_0}}\simeq \TT_{C_{x_0}}$,
so the exact sequence \eqref{tangent-C0-ex-seq} restricts to the standard exact sequence associated with the
embedding $C_{x_0}\hookrightarrow C_0$ (with the trivial normal bundle):
$$0\to \TT_{C_{x_0}}\to \TT_{C_0}|_{C_{x_0}}\to T_{x_0}S_0\ot \OO_{C_{x_0}}\to 0.$$
We can look at the corresponding coboundary map
$$\kappa_{x_0}: T_{x_0}S_0\to H^1(C_{x_0},\TT_{C_{x_0}}).$$

\begin{prop}\label{KS-even-prop} 
Assume that the map $\kappa_{x_0}$ is injective. 
Then $R^1\pi_*\om^{-1}_{C/S}$ is a vector bundle on $S$
and the map \eqref{KS-even-map} is an embedding of a subbundle.
\end{prop}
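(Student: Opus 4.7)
The plan is to first establish local freeness of $R^1\pi_*\om^{-1}_{C/S}$ by cohomology and base change, then reduce the subbundle statement to fibrewise injectivity at the unique closed point $x_0$, and finally compare $KS|_{x_0}$ with $\kappa_{x_0}$ through the natural inclusion $\om^{-1}_{C_{x_0}} \hra \TT_{C_{x_0}}$ (whose torsion cokernel at the node $q$ is $\OO_q$).

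For the first step, stability of the fibres forces $\om_{C_s}^{-1}$ to have strictly negative degree on every irreducible component of $C_s$, so $H^0(C_s,\om_{C_s}^{-1}) = 0$ and $h^1 = 3g-3$ is constant by Riemann--Roch. Cohomology and base change (with $R^{\ge 2}\pi_* = 0$) then yield $\pi_*\om_{C/S}^{-1} = 0$ and $R^1\pi_*\om_{C/S}^{-1}$ locally free of rank $3g-3$. Since $\TT_{S,S_0}$ is free on the basis $t\pa_t,\pa_{t_1},\ldots,\pa_{t_d}$ and $S$ has the unique closed point $x_0$, a standard Nakayama argument (fibrewise injectivity at the closed point between free modules over a local ring gives split injection with free cokernel) reduces the claim to the injectivity of $KS|_{x_0}$; equivalently, using the long exact sequence of \eqref{tangent-ex-seq-bis} restricted to $C_{x_0}$ together with $H^0(\om^{-1}_{C_{x_0}}) = 0$, to the vanishing $H^0(C_{x_0},\TT_{C,C_0}|_{C_{x_0}}) = 0$.

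To analyse $KS|_{x_0}$, restriction of vector fields along $C_0 \hra C$ produces a morphism of short exact sequences on $C_{x_0}$ from the restriction of \eqref{tangent-ex-seq-bis} to the restriction of \eqref{tangent-C0-ex-seq}. Its leftmost vertical is the inclusion $\om^{-1}_{C_{x_0}}\hra\TT_{C_{x_0}}$ with cokernel $\OO_q$ (a direct local computation at the node: $\TT_{C_{x_0}}$ at $q$ is the submodule $\{(a,b)\in R^2 : xb+ya = 0\}$ and $\om^{-1}_{C_{x_0}}$ is generated by $x\pa_x - y\pa_y$ inside it), while the rightmost vertical is the surjection $\TT_{S,S_0}|_{x_0} \twoheadrightarrow T_{x_0}S_0$ with kernel $k\lan t\pa_t\ran$. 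Taking $H^{\bullet}$ and using the stability vanishings $H^0(\om^{-1}_{C_{x_0}}) = H^0(\TT_{C_{x_0}}) = 0$ yields a commutative square linking $KS|_{x_0}$ with $\kappa_{x_0}$, whose right vertical $H^1(\om^{-1}_{C_{x_0}}) \twoheadrightarrow H^1(\TT_{C_{x_0}})$ is surjective with one-dimensional kernel, namely the image of $H^0(\OO_q)$ under the connecting map. If $KS|_{x_0}(v) = 0$, injectivity of $\kappa_{x_0}$ then forces the image of $v$ in $T_{x_0}S_0$ to vanish, so $v \in k\lan t\pa_t\ran$.

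It remains to verify that $KS|_{x_0}(t\pa_t) \ne 0$. The plan is to compute the Cech 1-cocycle representing $KS|_{x_0}(t\pa_t)$ using the cover of $C_{x_0}$ by $U_1 = $ formal neighborhood of $q$ and a sufficiently fine affine refinement of $U_0 = C_{x_0}\setminus q$ (on which smoothness of $\pi$ provides local lifts). On $U_1$ the natural lift of $t\pa_t$ to $\TT_{C,C_0}|_{C_{x_0}}$ is $x\pa_x$ (satisfying $x\pa_x(t) = xy = t$), while on each affine chart of $U_0$ one uses the tautological lift afforded by the relation $t = xy$ in the two branch coordinates. The identity $x\pa_x = x\pa_x|_{(x,t,t_i)} + t\pa_t|_{(x,t,t_i)}$ on the $x$-branch (and its symmetric analogue on the $y$-branch) shows that the resulting cocycle equals the local generator of $\om^{-1}_{C_{x_0}}$ on the $x$-branch of the punctured formal neighborhood of $q$ and vanishes on the $y$-branch. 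One then identifies this asymmetric cocycle with the image of the generator of $H^0(\OO_q)$ under the connecting map $H^0(\OO_q) \to H^1(\om^{-1}_{C_{x_0}})$, so that $KS|_{x_0}(t\pa_t)$ is precisely the smoothing class of $q$. The main obstacle is this last identification: ruling out that the asymmetric cocycle is a Cech coboundary requires analysing the behavior of global sections of $\om^{-1}_{C_{x_0}}$ on $C_{x_0}\setminus q$ near the two branches of the node, and ultimately hinges on the exactness at $q$ of $0 \to \om^{-1}_{C_{x_0}}\to\TT_{C_{x_0}}\to \OO_q \to 0$.
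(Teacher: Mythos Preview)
Your overall strategy matches the paper's exactly through the first two paragraphs: local freeness of $R^1\pi_*\om^{-1}_{C/S}$ from stability and base change, reduction to injectivity of $KS|_{x_0}=\kappa'_{x_0}$ at the closed point (equivalently $H^0(C_{x_0},\TT_{C,C_0}|_{C_{x_0}})=0$), and the commutative square relating $\kappa'_{x_0}$ to $\kappa_{x_0}$ so that only $\kappa'_{x_0}(t\pa_t)\ne 0$ remains. The divergence is only in how this last point is handled.

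There is a genuine gap in your \v{C}ech computation. The ``tautological lift'' $t\pa_t|_{(x,t,t_i)}$ is only defined on the coordinate patch near $q$; it is not a section of $\TT_{C,C_0}|_{C_{x_0}}$ over all of $U_0=C_{x_0}\setminus q$ (there is no canonical horizontal lift of $t\pa_t$ away from the node, only local ones). So the difference you compute on $U_0\cap U_1$ is not the \v{C}ech cocycle for the cover $\{U_0,U_1\}$: the actual cocycle is $(b^{-1},0)+(w-\tilde v_0|_{U_0\cap U_1})$ for some genuine lift $\tilde v_0$ over $U_0$, and there is no a priori reason the correction term is a coboundary. Refining $U_0$ does not help without controlling the additional overlaps, which you do not address.

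The identification you are after, $\kappa'_{x_0}(t\pa_t)=\delta(1)$, is nevertheless correct and is essentially what the paper proves, just packaged differently. The paper first restricts to the one-parameter disc $D=\Spf k[\![t]\!]\subset S$, where one has a genuine map of short exact sequences on $C_{x_0}$ (since now $C_0=C_{x_0}$), so $\kappa'_{x_0}(t\pa_t)$ equals the coboundary $\delta_D(t\pa_t)$ for the $D$-family. Then Lemma~\ref{KS-even-lem}(iv),(v) gives the exact sequence $0\to\II_q\to\TT_{C_D,C_{x_0}}|_{C_{x_0}}\to\TT_{C_{x_0}}$, and the vanishing $H^0(\II_q)=H^0(\TT_{C_{x_0}})=0$ forces $H^0(\TT_{C_D,C_{x_0}}|_{C_{x_0}})=0$, hence $\delta_D$ is injective. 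Your route via $\delta(1)$ is equivalent: the map $\TT_{C_D,C_{x_0}}|_{C_{x_0}}\to\TT_{C_{x_0}}$ fits into a morphism of short exact sequences identifying $\delta_D(t\pa_t)$ with $\delta(1)$, and then $\delta$ is injective because $H^0(\TT_{C_{x_0}})=0$. Either way, the passage through the one-parameter family $D$ (where restriction of vector fields to $C_{x_0}$ makes sense) is what replaces your direct \v{C}ech computation. Once you insert that step, your argument is complete and essentially coincides with the paper's.
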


\begin{proof}
Since $C_{x_0}$ is a stable curve, we have $H^0(C_{x_0},\om^{-1})=0$, which implies by semicontinuity 
that $\pi_*\om^{-1}_{C/S}=0$, and by the base change theorem that
$R^1\pi_*\om^{-1}_{C/S}$ is locally free.
Thus, we have a long exact sequence
$$0\to \pi_*\TT_{C,C_0}\to \TT_{S,S_0}\rTo{KS} R^1\pi_*\om^{-1}_{C/S}\to R^1\pi_*\TT_{C,C_0}\to R^1\pi_*\OO_C\ot \TT_{S,S_0}\to 0.$$
Note that $R^1\pi_*\OO_C$ is locally free of rank $g$. Also $\TT_{C,C_0}$ is a vector bundle on $C$ as follows from the exact sequence
\eqref{tangent-ex-seq}. Thus, if we prove that $H^0(C_{x_0},\TT_{C,C_0}|_{C_{x_0}})=0$ then it would follow that $\pi_*\TT_{C,C_0}=0$ and
$R^1\pi_*\TT_{C,C_0}$ is locally free, and our assertion would follow.

Let us consider the morphism of exact sequences on $C_0$,
\begin{diagram}
0&\rTo{}&\TT_{C/S}|_{C_0}&\rTo{}&\TT_{C,C_0}|_{C_0}&\rTo{}&\pi_0^*\TT_{S,S_0}|_{S_0}&\rTo{}& 0\\
&&\dTo{}&&\dTo{}&&\dTo{}\\
0&\rTo{}&\TT_{C_0/S_0}&\rTo{}&\TT_{C_0}&\rTo{}&\pi_0^*\TT_{S_0}&\rTo{}&0
\end{diagram}
Since the rightmost terms are locally free, the restrictions of these sequences to $C_{x_0}$ are still exact, so we get
a commutative square of the coboundary maps
\begin{diagram}
\TT_{S,S_0}|_{x_0}&\rTo{\kappa'_{x_0}}& H^1(C_{x_0},\om_{C_{x_0}}^{-1})\\
\dTo{r}&&\dTo{}\\
T_{x_0}S_0&\rTo{\kappa_{x_0}}& H^1(C_{x_0},\TT_{C_{x_0}})
\end{diagram}
We claim that the map $\kappa'_{x_0}$ is injective. Indeed, by assumption, $\kappa_{x_0}$ is injective.
Also, the map $r$ is surjective with the $1$-dimensional kernel spanned by $t\partial_t$.
Thus, our claim reduces to the assertion that the restriction of $\kappa'_{x_0}$ to $\ker(r)$ is injective.

To prove the last assertion let us consider the embedding $D\sub S$ of the formal $1$-dimensional disk with the coordinate $t$, so that $D_0=D\cap S_0=\{x_0\}$.
Then we can identify $\ker(r)$ with the image of the natural embedding
$$\TT_{D,x_0}|_{x_0}\to \TT_{S,S_0}|_{x_0}.$$
Applying Lemma \ref{KS-even-lem}(iv)(v) to the induced family of curves $\pi_D:C_D\to D$, we get an exact sequence of sheaves on $C_{x_0}$
$$0\to \II_{q_{x_0}}\rTo{t} \TT_{C_D,C_{x_0}}|_{C_{x_0}}\to \TT_{C_{x_0}}.$$
where $q_{x_0}\in C_{x_0}$ is the node.
Since $H^0(C_{x_0},\TT_{C_{x_0}})=H^0(C_{x_0},\II_{q_{x_0}})=0$, this implies that
$$H^0(C_{x_0},\TT_{C_D,C_{x_0}}|_{C_{x_0}})=0.$$

Therefore, looking at the exact sequence of sheaves on $C_{x_0}$,
$$0\to \om_{C_{x_0}}^{-1}\to \TT_{C_D,C_{x_0}}|_{C_{x_0}}\to (\pi_D^*\TT_{D,x_0})|_{C_{x_0}}\to 0$$
we get that the corresponding coboundary map
\begin{equation}
\label{C-x0-TD-coboundary-eq}
H^0(C_{x_0},\TT_{D,x_0}|_{x_0}\ot\OO_{C_{x_0}})\simeq\TT_{D,x_0}|_{x_0}\to H^1(C_{x_0},\om_{C_{x_0}}^{-1})
\end{equation}
in injective. 

Now, restricting to $C_{x_0}$ the natural morphism of exact sequences of vector bundles
\begin{diagram}
0&\rTo{}&\om_{C_D/D}^{-1}&\rTo{}&\TT_{C_D,C_{x_0}}&\rTo{}&\pi_D^*\TT_{D,x_0}&\rTo{}& 0\\
&&\dTo{\sim}&&\dTo{}&&\dTo{}\\
0&\rTo{}&\om_{C/S}^{-1}|_{C_D}&\rTo{}&\TT_{C,C_0}|_{C_D}&\rTo{}&\pi_D^*\TT_{S,S_0}|_D&\rTo{}&0
\end{diagram}
and considering the morphism between the corresponding long exact sequences of cohomology on $C_{x_0}$,
we deduce that the coboundary map \eqref{C-x0-TD-coboundary-eq} is equal to the restriction of $\kappa'_{x_0}$ to $\ker(r)$,
which proves our claim.

Finally, by stability of $C_{x_0}$, we have $H^0(C_{x_0},\om_{C_{x_0}}^{-1})=0$.
Hence, the long exact sequence
$$0\to H^0(C_{x_0},\om_{C_{x_0}}^{-1})\to
H^0(C_{x_0},\TT_{C,C_0}|_{C_{x_0}})\to \TT_{S,S_0}|_{x_0}\rTo{\kappa'_{x_0}}H^1(C_{x_0},\om_{C_{x_0}}^{-1})$$
shows the vanishing of
$H^0(C_{x_0},\TT_{C,C_0}|_{C_{x_0}})$.
\end{proof}

\begin{cor}\label{det-KS-class-sec}
In the situation of Proposition \ref{KS-even-prop} assume in addition that the map from $S$ to the moduli space of stable curves induces an isomorphism
on tangent spaces at the origin.
Then the map \eqref{KS-even-map} is an isomorphism.
Hence, the determinant of the usual Kodaira-Spencer map on $S'=S\setminus S_0$,
$$\det KS: \Det \TT_{S'}\to \Det R^1\pi_*\om^{-1}_{C/S}|_{S'}$$
is of the form $u/t$, where $u$ is a unit (with respect to trivializations regular on $S$).
\end{cor}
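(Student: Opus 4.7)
The plan is to combine the long exact sequence analysis from the proof of Proposition \ref{KS-even-prop} with a direct determinant comparison via the natural inclusion $\iota:\TT_{S,S_0}\hra \TT_S$. First I would observe that the hypothesis forces $\dim S = 3g-3$: indeed $\dim S = d+1 = \dim T_{x_0}S$, and by assumption $T_{x_0}S$ is identified with $H^1(C_{x_0}, \TT_{C_{x_0}})$, which has dimension $3g-3$. Consequently $\TT_{S,S_0}$, free of rank $3g-3$ with basis $t\pa_t,\pa_{t_1},\ldots,\pa_{t_d}$, and $R^1\pi_*\om^{-1}_{C/S}$, shown to be locally free in the proof of Proposition \ref{KS-even-prop}, are both locally free of rank $3g-3$ (the latter rank computed by Riemann--Roch on any fibre, using $H^0(\om^{-1})=0$).

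To upgrade $KS$ from an injection to an isomorphism I would restrict to the closed point $x_0$. Its fibre there is exactly the map $\kappa'_{x_0}$ from the proof of Proposition \ref{KS-even-prop}, which was shown injective. An injection between equidimensional vector spaces is automatically an isomorphism, so $KS|_{x_0}$ is one. Since $S$ is the formal polydisk $\Spf\C[\![t,t_1,\ldots,t_d]\!]$ and both sheaves in sight are free modules over this local ring, Nakayama's lemma promotes the central-fibre isomorphism to an isomorphism on all of $S$.

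For the determinant statement I would compare $KS$ with the usual Kodaira--Spencer map through $\iota$. In the bases $\{t\pa_t,\pa_{t_1},\ldots,\pa_{t_d}\}$ for $\TT_{S,S_0}$ and $\{\pa_t,\pa_{t_1},\ldots,\pa_{t_d}\}$ for $\TT_S$, the matrix of $\iota$ is $\mathrm{diag}(t,1,\ldots,1)$, so $\det\iota=t$, and $\iota$ becomes invertible exactly over $S'$. The sequence \eqref{tangent-ex-seq-bis} maps naturally to the standard sequence $0\to \om^{-1}_{C/S}\to \TT_C\to \pi^*\TT_S\to 0$, with the identity on the leftmost term and $\pi^*\iota$ on the rightmost; the one step demanding care is checking, via the resulting naturality of the coboundary in $R\pi_*$, that $KS|_{S'}$ factors as the usual Kodaira--Spencer map precomposed with $\iota|_{S'}$. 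Taking determinants and using that $KS$ is a global isomorphism on $S$ (so $\det KS$ is a unit) then gives the determinant of the usual Kodaira--Spencer map as $\det KS/\det\iota = u/t$ for a unit $u$.
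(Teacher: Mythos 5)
Your proof follows the same route as the paper's: the isomorphism claim is a rank count layered on Proposition \ref{KS-even-prop} (the paper leaves this implicit), and the determinant statement comes from comparing $KS$ with the usual Kodaira--Spencer map through the inclusion $\iota:\TT_{S,S_0}\hookrightarrow\TT_S$, whose determinant with respect to the bases $t\partial_t,\partial_{t_1},\ldots,\partial_{t_d}$ and $\partial_t,\partial_{t_1},\ldots,\partial_{t_d}$ equals $t$ --- exactly the observation the paper makes.

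One slip to fix: you justify $\dim S=3g-3$ by identifying $T_{x_0}S$ with $H^1(C_{x_0},\TT_{C_{x_0}})$ and asserting that this space has dimension $3g-3$. For a stable curve with a node the tangent space to the moduli space of stable curves is $\Ext^1(\Om_{C_{x_0}},\OO_{C_{x_0}})$, not $H^1(C_{x_0},\TT_{C_{x_0}})$; the latter is only the locally trivial part of the deformation space and has dimension $3g-4$ in the one-node case. The number $3g-3$ you need is the dimension of $\Ext^1(\Om_{C_{x_0}},\OO_{C_{x_0}})$ (equivalently, of the smooth moduli stack of stable curves), so the conclusion $\dim S=3g-3$ stands once the identification is corrected, and the rest of your argument (the fibre of $KS$ at $x_0$ is $\kappa'_{x_0}$, an injection between spaces of equal dimension, then Nakayama) goes through.
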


\begin{proof}
For the second statement, we can argue formally locally. Then we observe that $\TT_{S,S_0}$ has a basis $t\partial_t,\partial_{t_1},\ldots,\partial_{t_d}$, where $(t,t_1,\ldots,t_d)$ are 
formal coordinates on $S$. Hence the determinant of the natural morphism
$\TT_{S,S_0}\to \TT_S$ with respect to the standard bases is equal to $t$.
\end{proof}


\subsection{Super case, NS node}\label{super-KS-NS-sec}

First, let us review the Kodaira-Spencer map for a family of smooth supercurves $\pi:X\to S$.
We have the standard exact sequence
$$0\to \TT_{X/S}\to \TT_X\rTo{d\pi} \pi^*\TT_S\to 0.$$
We also have a relative distribution $\DD\sub \TT_{X/S}$.
Let us set 
$$\AA_X:=\{v\in \TT_X \ |\ [v,\DD]\sub \DD\}.$$
Then there is an exact sequence (see \cite[Sec.\ 2]{LR})
\begin{equation}\label{A-X-smooth-case-eq}
0\to \AA_{X/S}\to \AA_X\to \pi^{-1}\TT_S\to 0
\end{equation}
where 
$$\AA_{X/S}:=\AA_X\cap \TT_{X/S}.
$$
Indeed, let $D$ be a local generator of $\DD$. Since $\DD\sub \TT_{X/S}$,
the condition $[v,D](f)=0$, for $f\in \pi^{-1}\OO_S$, is equivalent to $D(v(f))=0$, i.e., to $v(f)\in \pi^{-1}\OO_S$.
Furthermore, locally any $v\in \pi^{-1}\TT_S$ can be extended to a section of $\AA_X$.
Also, as we have seen before,
$$\AA_{X/S}\simeq \TT_{X/S}/\DD\simeq \DD^2\simeq \om_{X/S}^{-2}.$$
Now the Kodaira-Spencer map is the coboundary map
\begin{equation}
KS: \TT_S\to \pi_*\pi^{-1}\TT_S\to R^1\pi_*\AA_{X/S}\simeq R^1\pi_*\om_{X/S}^{-2}.
\end{equation}

We want to study the behavior of this map near the component of the boundary divisor of the moduli space where a supercurve acquires a NS node.
So let us consider a family of stable supercurves $\pi:X\to S$ over a smooth affine base with a smooth map $t:S\to \A^1$.
We denote by $S_0\sub S$ the divisor $t=0$ and by $\pi_0:X_0\to S_0$ the induced family.
We assume that there is a section $q:S_0\to X_0$ such that $q(S_0)$ is the relative node of $X_0$ and the map $X\to S$ is smooth away from $q(S_0)$.
Furthermore, we assume that the structure sheaf of $X$ completed along $q(S_0)$ is generated over $\OO_S$ by even generators $z_1,z_2$ and
odd generators $\th_1,\th_2$, subject to the relations
\begin{equation}\label{NS-node-def-relations}
z_1z_2=-t^2, \ \ z_1\th_2=t\th_1, \ \ z_2\th_1=-t\th_2, \ \ \th_1\th_2=0
\end{equation}
(so that $q(S_0)$ corresponds to $z_1=z_2=0$, $\th_1=\th_2=0$).
In this case, by Lemma \ref{etale-local-nodes-lem}, arguing \'etale locally, we can assume that 
the complement to the node is covered by two charts $U_1$ and $U_2$ where $z_i$ is invertible on $U_i$,
and there exist odd sections $s_i$ (resp., even section $s_0$) of $\om_{X/S}$
such that
\begin{equation}\label{s1-s2-s0-generators}
\begin{array}{l}
s_1=\begin{cases} [dz_1 | d\th_1] & \text{ on } U_1, \\ -\frac{t}{z_2}[dz_2 | d\th_2] & \text{ on } U_2;\end{cases}, \\
s_2=\begin{cases} \frac{t}{z_1}[dz_1 | d\th_1] & \text{ on } U_1, \\ [dz_2 | d\th_2] & \text{ on } U_2;\end{cases}, \\
s_0=\begin{cases} \frac{\th_1}{z_1}[dz_1 | d\th_1] & \text{ on } U_1, \\ -\frac{\th_2}{z_2}[dz_2 | d\th_2] & \text{ on } U_2;\end{cases}
\end{array}
\end{equation}
(see \cite[Sec.\ 2.3]{Deligne}). 
Note that the relative derivation $\de:\OO_X\to \om_{X/S}$ satisfies
$$\de(z_i)=\th_i s_i, \ \ \de(\th_i)=s_i,$$
for $i=1,2$. 

\begin{lemma}\label{NS-om-lem} 
In the above situation, in the formal neighborhood of the node, $\om_{X/S}$ is generated as an $\OO_X$-module
by global sections $s_1,s_2$ and $s_0$ subject to defining relations
$$z_1s_2=ts_1, \ \ z_2s_1=-ts_2, \ \ \th_1s_2=\th_2s_1=ts_0, \ \ z_1s_0=\th_1s_1, \ \ z_2s_0=-\th_2s_2, \ \ \th_1s_0=\th_2s_0=0.$$
It has a topological basis over $\OO_S$,
$$z_1^ns_1, \ \ z_2^ns_2,  \ \ z_1^n\th_1s_1, \ \ z_2^n\th_2s_2, \ \ s_0,
$$
where $n\ge 0$.
\end{lemma}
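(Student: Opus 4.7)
The plan is to identify $\om_{X/S}$ near the node with the $R$-module $M$ (where $R=\OO_X$ completed along the node) having the stated presentation: generators $s_1,s_2$ (odd), $s_0$ (even), modulo the six listed relations. I would produce a morphism $\phi\colon M\to\om_{X/S}$ by sending each generator to the section of the same name, then show it is an isomorphism by matching topological $\OO_S$-bases.

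For the morphism, I would verify the six relations by a direct local check on each chart $U_1,U_2$ using \eqref{s1-s2-s0-generators}. On $U_1$, for instance,
$$z_1s_2 = z_1(t/z_1)[dz_1|d\th_1] = ts_1, \quad z_1s_0 = z_1(\th_1/z_1)[dz_1|d\th_1] = \th_1 s_1,$$
$$\th_2s_1 = (t\th_1/z_1)[dz_1|d\th_1] = ts_0, \quad \th_1s_0 = (\th_1^2/z_1)[dz_1|d\th_1] = 0,$$
using $z_1\th_2 = t\th_1$ and $\th_1^2 = 0$; the remaining relations and the chart $U_2$ are handled symmetrically, and compatibility on $U_1\cap U_2$ is automatic from \eqref{NS-node-def-relations}.

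Next, I would establish the topological $\OO_S$-basis of $M$. The ring $R$ itself has topological $\OO_S$-basis $\{1,z_1^n,z_2^n\}_{n\ge 1}$ (even) and $\{z_1^n\th_1,z_2^n\th_2\}_{n\ge 0}$ (odd), obtained by normalizing monomials via \eqref{NS-node-def-relations}. Multiplying a basis element of $R$ by one of $s_0,s_1,s_2$ and reducing through the given relations---$z_1s_2=ts_1$ and $z_2s_1=-ts_2$ collapse crossed $z$-products; $z_1s_0=\th_1s_1$ and $z_2s_0=-\th_2s_2$ convert $z_i^ks_0$ ($k\ge 1$) into $z_i^{k-1}\th_is_i$; $\th_1s_2=\th_2s_1=ts_0$ and $\th_is_0=0$ eliminate mixed-index products---produces an $\OO_S$-combination of the claimed basis elements. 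Linear independence is verified on the smooth locus $U=U_1\cup U_2$: after inverting $z_1$, the relations force $s_2=(t/z_1)s_1$ and $s_0=(\th_1/z_1)s_1$, so $M|_{U_1}$ is free of rank $0|1$ on $s_1|_{U_1}$ with each listed element of index ``$1$'' mapping to a distinct $R[z_1^{-1}]$-monomial times the free generator; similarly for index ``$2$'' on $U_2$.

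Finally, $\phi$ is an isomorphism. By the Cohen--Macaulay property ($X/S$ has even codimension $2$ at the node), $\om_{X/S}\simeq j_*\om_{U/S}$ for $j\colon U\hookrightarrow X$; the topological basis of $M$ likewise gives $M\simeq j_*j^*M$. Since $\phi$ is an isomorphism on each $U_i$ (both sides rank-$0|1$ free on $s_i$), it is an isomorphism on $U$, and hence on the whole formal neighborhood. The main obstacle is the linear-independence half of the basis claim, i.e., confirming that the six stated relations exhaust all $R$-relations among $s_0,s_1,s_2$; this amounts to checking confluence and termination of the rewrite system generated by the relations, which reduces to a systematic overlap check among the six rules.
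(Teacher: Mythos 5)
Your verification of the six relations and your reduction of monomials to the claimed topological basis are fine, but there is a genuine gap at the very step that carries the content of the lemma: the assertion that ``the topological basis of $M$ likewise gives $M\simeq j_*j^*M$.'' Having a clean topological $\OO_S$-basis does not imply saturation. Concretely, the submodule $M'\sub M$ generated by $s_1,s_2$ alone also has a perfectly good topological basis ($z_i^ns_i,\ z_i^n\th_is_i$, $n\ge 0$), and $j^*M'=j^*M$ because $s_0=(\th_1/z_1)s_1$ after inverting $z_1$; yet $M'\subsetneq M\sub j_*j^*M'$, so $M'\to j_*j^*M'$ is not surjective. Thus your chain ``$\phi$ is an isomorphism on $U$, both sides equal $j_*$ of their restrictions, hence $\phi$ is an isomorphism'' breaks exactly where one must show that $s_0,s_1,s_2$ generate \emph{all} of $j_*\om_{U/S}$ --- i.e.\ that no further sections with poles along the node (such as $z_1^{-1}s_1$ or $z_1^{-2}\th_1s_1$) extend regularly to the other branch. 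Establishing $M=j_*j^*M$ requires an honest computation of $j_*j^*M$ (gluing $\OO_{U_1}[z_1^{-1}]s_1$ and $\OO_{U_2}[z_2^{-1}]s_2$ along the locus where $t$ is invertible, in the style of Step~1 of Lemma~\ref{aff-formal-node-smooth-lem}), which you have not done and which is not a formal consequence of anything you proved. By contrast, the ``confluence of the rewrite system,'' which you single out as the main obstacle, is routine.

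The paper avoids this issue by a structural argument: it reduces by base change to the miniversal family over $\Spf k[\![t]\!]$, where $X$ splits as $\OO_X=\OO_C\oplus L$ with $C$ the quadric cone $z_1z_2=-t^2$ and $L$ the maximal Cohen--Macaulay module generated by $\th_1,\th_2$; then $\om_X=\om_C\oplus\und{\Hom}(L,\om_C)$, with $\om_C$ free on $s_0$ (as $C$ is a hypersurface) and $\und{\Hom}(L,\om_C)\simeq L$ via $\de$, generated by the images $s_1,s_2$ of $\th_1,\th_2$. Generation --- the part your proof leaves open --- comes for free from this identification. If you want to keep your presentation-based route, you must either import that splitting or replace the ``likewise'' by the explicit gluing computation of $j_*\om_{U/S}$, and you should also record why $\om_{X/S}\to j_*\om_{U/S}$ is an isomorphism (maximal Cohen--Macaulayness of the relative dualizing sheaf plus codimension $2$ of the node locus).
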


\begin{proof}
It is enough to check this when $S=\C[t]$, 
in which case this is discussed in \cite[Sec.\ 2.3]{Deligne}.
In more detail, $X$ is split, with $\OO_X=\OO_C\oplus L$, where $C$ is the curve $z_1z_2=-t^2$, and $L$ is the $\OO_C$-module
generated by $\th_1$,$\th_2$ (which is a maximal CM-module over $\OO_C$). Then $\om_X=\om_C\oplus \und{\Hom}(L,\om_C)$,
and we have an isomorphism $\de:L\rTo{\sim}\und{\Hom}(L,\om_C)$. 
The section $s_0$ corresponds to the standard generator of $\om_C$, while $s_1$ and $s_2$ are the images of the generators $\th_1,\th_2$ of $L$
under $\de$. Another way to prove this is to use Lemma \ref{om-stable-lem}.
\end{proof}

Note that the line bundle $\om_{X/S}^2:=j_*\om_{U/S}^2$ is generated near the node by the section
\begin{equation}\label{e-sec-eq}
e=\begin{cases} \frac{1}{z_1} [dz_1 |d\th_1]^2 & \text{ on } U_1, \\ \frac{1}{z_2} [dz_2 |d\th_2]^2 & \text{ on } U_2.\end{cases}
\end{equation}

As before, we denote by $\TT_X$ the sheaf of derivations of $\OO_X$
and consider the subsheaf $\AA_X\sub \TT_X$ consisting of $v$ such that $[v,\DD]\sub \DD$ on the smooth locus
of $\pi:X\to S$. We also set $\AA_{X/S}=\AA_X\cap \TT_{X/S}$. We use a similar definition for $\AA_{X_0/S_0}\sub \AA_{X_0}$.

Now similarly to the nodal even case we consider the subsheaf
$\AA_{X,X_0}\sub \AA_X$ consisting of $v$ such that $v(t)\in (t)$. We have a natural projection
$$\AA_{X,X_0}\to \pi^{-1}\TT_{S,S_0}.$$

Note that some of the above sheaves do not have $\OO_X$-module, only the $\pi^{-1}\OO_S$-module structure.
Nevertheless, for a subscheme $S'\sub S$ we have a natural operation of restriction to $\pi^{-1}(S')$: we set for an $\pi^{-1}\OO_S$-module $\FF$,
$$\FF|_{\pi^{-1}(S')}:=\FF\ot_{\pi^{-1}\OO_S}\pi^{-1}\OO_{S'}.$$

Below we will prove an analog of Lemma \ref{KS-even-lem}.
Note that the slight difference from the case of the usual curves is that the sheaves $\AA_X$, $\AA_{X,X_0}$ and $\AA_{X/S}$
are not $\OO_X$-submodules of $\TT_X$. However, as we know from Theorem \ref{inf-aut-thm}, there is an isomorphism
of $\pi^{-1}\OO_S$-modules
$$\AA_{X/S}\simeq \om_{X/S}^{-2}:=j_*\om_{U/S}^{-2},$$ 
where $j:U=X\setminus\{q\}\to X$ denotes the open embedding and $\om_{X/S}^{-2}$ is a line bundle on $X$ (see Theorem \ref{can-square-thm}). 



\begin{lemma}\label{KS-super-NS-lem} (i) There are exact sequences
\begin{equation}\label{tangent-super-ex-seq}
0\to \AA_{X/S}\to \AA_{X,X_0}\to \pi^{-1}\TT_{S,S_0}\to 0
\end{equation}

\begin{equation}\label{tangent-X0-ex-seq}
0\to \AA_{X_0/S_0}\to \AA_{X_0}\to \pi_0^{-1}\TT_{S_0}\to 0
\end{equation}

\noindent
(ii) In the formal neighborhood of a node, derivations in $\AA_{X_0/S_0}$ are in bijection with pairs of functions $a_1,a_2$
in $\OO_{X_0}$ such that $a_i\in (z_i)$: the corresponding derivation $v$ is given by 
\begin{equation}\label{v-ai-Di-eq}
v(z_i)=a_i+(-1)^{|v|}\frac{1}{2}D_i(a_i)\th_i, \ \ v(\th_i)=(-1)^{|v|}\frac{1}{2}D_i(a_i)
\end{equation}
for $i=1,2$, where $D_i=\partial_{\th_i}+\th_i\partial_{z_i}$.

For the closed point $s_0\in S_0$, the natural morphism
$$\AA_{X_0/S_0}|_{X_{s_0}}\to \AA_{X_{s_0}/k}$$
is an isomorphism.


\noindent
(iii) One has 
$$\AA_{X,X_0}=\AA_X$$
and the natural morphism
$$\AA_{X,X_0}/t\AA_{X,X_0}\to i_*\AA_{X_0}$$
is injective.
\end{lemma}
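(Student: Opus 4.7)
The plan is to tackle the three parts in the order (ii), (iii), (i), because the local description in (ii) is the input for the identification in (iii), which in turn underpins the surjectivity argument in (i).

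For (ii), on each smooth branch $U_i$ near the node, Lemma \ref{NS-div-corr-lem} gives the explicit form \eqref{v-ai-Di-eq} of a vertical superconformal vector field in terms of a single function $a_i(z_i,\th_i)$. The content is to detect which pairs $(a_1,a_2)$ glue to a derivation of $\OO_{X_0}$, i.e., respect the nodal relations $z_1z_2=z_1\th_2=z_2\th_1=\th_1\th_2=0$. Applying $v$ to $z_1z_2=0$ and restricting to $U_1$ (where $z_2=0$) forces $z_1 v(z_2)|_{U_1}=0$; once $v(z_2)$ is expressed in terms of $a_2$ via \eqref{v-ai-Di-eq}, this translates into $a_2\in(z_2)$, and symmetrically $a_1\in(z_1)$, after which the remaining three relations follow mechanically. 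The second half of (ii) is formal: the description is stable under base change (it uses no special properties of $S_0$), so restricting to the fiber $X_{s_0}$ recovers precisely the description of $\AA_{X_{s_0}}$.

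For (iii), the central claim is $\AA_{X,X_0}=\AA_X$: every $v\in\AA_X$ automatically satisfies $v(t)\in(t)$. First, the superconformal condition $[v,D]\in\OO D$ applied to $f\in\pi^{-1}\OO_S$ (where $D(f)=0$) gives $D(v(f))=0$, which locally means $v(f)\in\pi^{-1}\OO_S$; by Cohen-Macaulayness of $X$ and the codimension-two position of the node, this extends from the smooth locus to all of $X$, so $v(t)\in\pi^{-1}\OO_S$. Second, I would apply $v$ to the relation $z_1z_2=-t^2$ to get
\begin{equation*}
v(z_1)z_2+z_1v(z_2)=-2tv(t);
\end{equation*}
expanding in $t$ and reading off the order-$t$ coefficient as an equation in $\OO_{X_0}$, the left-hand side lies in $(z_1,z_2)\OO_{X_0}$, while the right-hand side is $-2c_0$ with $c_0=v(t)|_{t=0}\in\OO_{S_0}$. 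Through the identification $\OO_{X_0}/(z_1,z_2)\simeq\OO_{S_0}$ along the section $q$, this forces $c_0=0$, hence $v(t)\in(t)$. For the injectivity of $\AA_{X,X_0}/t\AA_{X,X_0}\to i_*\AA_{X_0}$, suppose $v\in\AA_{X,X_0}$ with $v|_{X_0}=0$ as a derivation, so $v(\OO_X)\subset(t)\OO_X$; by flatness of $X/S$ the element $t$ is a nonzerodivisor in $\OO_X$, so $v=tw$ uniquely with $w\in\TT_X$. On the smooth locus $D(t)=0$ gives $[v,D]=t[w,D]$; combined with $[v,D]\in\OO D$ and torsion-freeness of $\TT_{X/S}/\DD$ (a line bundle there) this yields $[w,D]\in\OO D$, so $w\in\AA_X$, and by the first part of (iii) $w\in\AA_{X,X_0}$, giving $v\in t\AA_{X,X_0}$.

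For (i), the map $\AA_{X,X_0}\to\pi^{-1}\TT_{S,S_0}$ is well defined by the above, with kernel $\AA_X\cap\TT_{X/S}=\AA_{X/S}$. For surjectivity, away from the node the smooth-case sequence \eqref{A-X-smooth-case-eq} suffices, so one only needs to lift the generators $t\partial_t$ and $\partial_{t_i}$ of $\TT_{S,S_0}$ near the node. Each $\partial_{t_i}$ extends by zero on the vertical directions of $\hat\OO_{X,q}$, compatibly with the relations \eqref{NS-node-def-relations} since these involve only $t$. For $t\partial_t$, I would exhibit the explicit lift $v=z_1\partial_{z_1}+\tfrac12\th_1\partial_{\th_1}+t\partial_t$ on $U_1$ and its symmetric form on $U_2$; a direct check shows the images agree on the overlap via $z_2=-t^2/z_1$, $\th_2=t\th_1/z_1$, and $[v,D_1]=-\tfrac12 D_1\in\OO D_1$. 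The second exact sequence \eqref{tangent-X0-ex-seq} is obtained by running the same argument on the family $X_0\to S_0$.

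The main obstacle is the first part of (iii), namely forcing $v(t)\in(t)$ from superconformality alone. The mechanism is subtle: the nodal relation $z_1z_2=-t^2$ produces a scalar obstruction at order $t$, captured by the quotient $\OO_{X_0}/(z_1,z_2)\simeq\OO_{S_0}$, which detects and kills a potential nonvanishing constant component of $v(t)|_{t=0}$. Once this is established, the rest of the lemma reduces to routine local computation and standard descent.
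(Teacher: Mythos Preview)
Your approach matches the paper's closely: the explicit lift of $t\partial_t$ for (i), the branch-by-branch description for (ii), and for (iii) the combination of $v(t)\in\pi^{-1}\OO_S$ (from superconformality on the smooth locus, extended by Cohen--Macaulay) with the consequence of $v(z_1z_2=-t^2)$. The injectivity argument in (iii) is also the paper's.

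There is, however, a genuine imprecision in your treatment of the key step in (iii). You write ``expanding in $t$ and reading off the order-$t$ coefficient as an equation in $\OO_{X_0}$''. But $\OO_X$ is not canonically of the form $\OO_{X_0}[\![t]\!]$: the defining relation $z_1z_2=-t^2$ mixes the filtration, so ``the order-$t$ coefficient of $v(z_1)z_2+z_1v(z_2)$'' is not a priori a well-defined element of $(z_1,z_2)\OO_{X_0}$. Moreover, your asserted identification $\OO_{X_0}/(z_1,z_2)\simeq\OO_{S_0}$ is false: that quotient is $\OO_{S_0}[\th_1,\th_2]/(\th_1\th_2)$, of rank $1|2$. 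The paper avoids both issues by working directly with the topological $\OO_S$-basis $1,\,z_1^n,\,z_2^n,\,z_1^n\th_1,\,z_2^n\th_2$ of the completion: one checks that the coefficient of $1$ in any element of $(z_1,z_2)\OO_X$ lies in $t^2\OO_S$ (because $z_1z_2=-t^2$, $z_1\th_2=t\th_1$, etc.), and since $v(t)\in\OO_S$ means $-2tv(t)$ has coefficient of $1$ equal to $-2tv(t)$, one gets $v(t)\in t\OO_S$. Your argument can be repaired along the same lines, but as written it does not go through.

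A minor point: in (ii) you cite Lemma~\ref{NS-div-corr-lem} for the local form \eqref{v-ai-Di-eq} of superconformal vector fields, but that lemma concerns the NS-puncture/divisor correspondence; the relevant formula appears in its proof (or in the standard local description of $\TT^{sc}$).
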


\begin{proof}
(i) In both cases it is enough to prove surjectivity of the last arrow in the formal neighborhood of $q$.
Let us first check this for $\AA_{X,X_0}\to \pi^{-1}\TT_{S,S_0}$.
It is enough to extend $t\partial_t$ to an even derivation $v$ in the formal neighborhood of $q$ in $X$,
such that $v|_{U_i}$ preserves $\OO_X\cdot (\partial_{\th_i}+\th_i\partial_{z_i})$, for $i=1,2$. For this, we can take $v$ given by
$$v(z_i)=z_i, \ \ v(\th_i)=\frac{1}{2}\th_i, \ \ v(t)=t.$$
This shows that sequence \eqref{tangent-super-ex-seq} is exact.

The composed arrow
$$\AA_{X,X_0}\to \pi^{-1}\TT_{S,S_0}\to i_*\pi_0^{-1}\TT_{S_0}$$
is still surjective and factors through $i_*\AA_{X_0/S_0}$. This implies exactness
of \eqref{tangent-X0-ex-seq}.

\noindent
(ii) Recall that $\OO_{X_0}$ is the quotient
of $\OO_{S_0}[\![z_1,z_2,\th_1,\th_2]\!]$ by the relations 
$$z_1z_2=0, \ \ z_1\th_2=0, \ \ z_2\th_1=0, \ \ \th_1\th_2=0.$$
Thus, a derivation $v$ in $\AA_{X_0/S_0}$ is described by the functions
$v(z_1)$, $v(z_2)$, $v(\th_1)$ and $v(\th_2)$. Furthermore, there should exist
a pair of functions $a_1(z_1,\th_1)\in \OO_{X_0}[z_1^{-1}]$, $a_2(z_2,\th_2)\in \OO_{X_0}[z_2^{-1}]$ of the same parity as $v$ such that
$$v(z_i)=a_i+(-1)^{|v|}\frac{1}{2}D_i(a_i)\th_i, \ \ v(\th_i)=(-1)^{|v|}\frac{1}{2}D_i(a_i) \ \text{ in } \OO_{X_0}[z_i^{-1}],$$
for $i=1,2$, where $D_i=\partial_{\th_i}+\th_i\partial_{z_i}$.

The condition $v(z_1z_2)=0$ implies that 
$$v(z_i)\in (z_i,\th_i)\sub \OO_{X_0}$$
for $i=1,2$. Hence, $a_i$ is the image of the element $v(z_i)-v(\th_i)\th_i\in (z_i,\th_i)\sub \OO_{X_0}$
under the map $\OO_{X_0}\to \OO_{X_0}[z_i^{-1}]$.
Note that the restriction of the latter map to 
$$(z_i,\th_i)=\OO_S[z_i]z_i\oplus \OO_S[z_i]\th_i\sub \OO_{X_0}$$
is an embedding. Thus,
$v(z_i)$ is determined by its image in $\OO_{X_0}[z_i^{-1}]$ and the above formulas for $v(z_i)$ hold in $\OO_{X_0}$ with some
$$a_i=f_i(z_i)z_i+g_i(z_i)\th_i,$$
for $i=1,2$, with $f_i,g_i\in\OO_S[z_i]$.

It follows also that $v(z_1)\th_2=v(z_2)\th_1=0$.
Hence, the conditions $v(z_1\th_2)=v(z_2\th_1)=0$ are equivalent to $v(\th_i)\in (z_i,\th_i)$, for $i=1,2$.
Since $D_i(a_i)\equiv g_i(z_i)\mod (\th_i)$, this is equivalent to $g_i(z_i)\in (z_i)$. 
The condition $v(\th_1\th_2)$ is then automatically satisfied.
Note also that the condition $v(\th_i)\in (z_i,\th_i)$ implies that $v(\th_i)$ is determined by its image in $\OO_{X_0}[z_i^{-1}]$.
Thus, the condition on $v$ to define a section of $\AA_{X_0/S_0}$ is that
$a_i\in (z_i)$, for $i=1,2$, which is equivalent to our assertion.

For the last assertion, we note that it is enough to check it in the formal neighborhood of the node. Then the statement follows immediately from
the above explicit description of $\AA_{X_0/S_0}$ and from a similar description of $\AA_{X_{s_0}/k}$.





\noindent
(iii) We claim that in fact any derivation $v$ in $\TT_X$ satisfies $v(t)\in (t,z_1,z_2,\th_1,\th_2)$.
Indeed, we have
$$v(z_1)z_2+z_1v(z_2)=-2tv(t),$$
so $tv(t)\in (z_1,z_2)$.
Let us write $tv(t)=z_1f+z_2f'$, and decompose $f$ and $f'$ with respect to the topological $\OO_S$-basis 
$$1, (z_1^n)_{n\ge 1}, (z_2^n)_{n\ge 1}, (z_1^n\th_1)_{n\ge 0}, (z_2^n\th_2)_{n\ge 0}.$$
Then we get the equation of the form
\begin{align*}
&tv(t)=z_1[a_0+\sum_{n\ge 1} a_n z_1^n+\sum_{n\ge 1} b_n z_2^n+\sum_{n\ge 0} c_nz_1^n\th_1+\sum_{n\ge 0}d_nz_2^n\th_2]\\
&+z_2[a'_0+\sum_{n\ge 1} a'_n z_1^n+\sum_{n\ge 1} b'_n z_2^n+\sum_{n\ge 0} c'_nz_1^n\th_1+\sum_{n\ge 0}d'_nz_2^n\th_2]
\end{align*}
The relations imply that the free term of the right-hand side is $-(b_1+a'_1)t^2$, hence the free term of $v(t)$ is divisible by $t$, as claimed.

It follows that for any $v\in \AA_X$, we have $v(t)\in t\pi^{-1}\OO_S$, so $v\in \AA_{X,X_0}$.
For the last assertion we observe that the kernel of
the restriction map
$$\AA_{X,X_0}\to i_*\AA_{X_0}$$
consists of derivations $v$ in $\AA_X$ that are in $t\cdot \TT_X$. Hence, this kernel is $t\AA_X=t\AA_{X,X_0}$.
\end{proof}

Now, as in the even case, we consider the {\it Kodaira-Spencer maps}
associated with sequence \eqref{tangent-super-ex-seq} 
\begin{equation}\label{KS-super-NS-map}
KS:\TT_{S,S_0}\to R^1\pi_*\om^{-2}_{X/S},
\end{equation}

Let $s_0\in S_0$ denote the origin, and consider the restriction of the exact sequence \eqref{tangent-X0-ex-seq}
to $X_{s_0}$, which can be identified with
$$0\to \AA_{X_{s_0}}\to \AA_{X_0}|_{X_{s_0}}\to T_{s_0}S_0\ot \und{\C}_{X_{s_0}}\to 0.$$
We have the corresponding coboundary map
$$\kappa_{s_0}:T_{s_0}S_0\to H^1(X_{s_0},\AA_{X_{s_0}}).$$

Now we can prove the following analog of Proposition \ref{KS-even-prop}.

\begin{prop}\label{KS-super-NS-prop}
Assume that the map $\kappa_{x_0}$ is injective.
Then $R^1\pi_*\om^{-2}_{X/S}$ is a vector bundle on $S$
and the map \eqref{KS-super-NS-map} is an embedding of a subbundle.
\end{prop}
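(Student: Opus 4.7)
The plan is to follow the structure of the proof of Proposition~\ref{KS-even-prop} in the super setting, using the identification $\AA_{X/S}\simeq\om_{X/S}^{-2}$ from Theorem~\ref{inf-aut-thm} and the structural results of Lemma~\ref{KS-super-NS-lem}. First, to show that $R^1\pi_*\om_{X/S}^{-2}$ is locally free, I would check that $\pi_*\om_{X/S}^{-2}=0$ via semicontinuity: on the fiber $X_{s_0}$ with underlying spin curve $(C_{s_0},L_{s_0})$, Theorem~\ref{can-square-thm} identifies $\om_{X_{s_0}}^{-2}|_{C_{s_0}}$ with $\om_{C_{s_0}}^{-1}\oplus L_{s_0}^{-1}$, both of which have no global sections by stability and degree considerations. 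Standard base change then produces both $\pi_*\om_{X/S}^{-2}=0$ and local freeness of $R^1\pi_*\om_{X/S}^{-2}$.

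Next, apply $R\pi_*$ to the exact sequence~\eqref{tangent-super-ex-seq}. Using properness and connectedness of the fibers ($\pi_*\OO_X=\OO_S$), the pushforward of $\pi^{-1}\TT_{S,S_0}$ is identified with $\TT_{S,S_0}$, and combined with the vanishing above one gets the long exact sequence
\begin{equation*}
0\to \pi_*\AA_{X,X_0}\to \TT_{S,S_0}\rTo{KS} R^1\pi_*\om_{X/S}^{-2}\to R^1\pi_*\AA_{X,X_0}\to \ldots
\end{equation*}
Thus injectivity of $KS$ amounts to $\pi_*\AA_{X,X_0}=0$, and the subbundle property reduces to fiberwise injectivity of $KS$ at every point of $S$; by semicontinuity and the hypothesis made only at $s_0$, it is enough to verify this at $s_0$ itself.

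By Nakayama, both desired properties reduce to the vanishing $H^0(X_{s_0},\AA_{X,X_0}|_{X_{s_0}})=0$. Restricting sequence~\eqref{tangent-super-ex-seq} to $X_{s_0}$ (which stays exact because $\TT_{S,S_0}$ is locally free) and using $H^0(X_{s_0},\om_{X_{s_0}}^{-2})=0$, this vanishing is equivalent to injectivity of the coboundary
\begin{equation*}
\kappa'_{s_0}:\TT_{S,S_0}|_{s_0}\to H^1(X_{s_0},\om_{X_{s_0}}^{-2}).
\end{equation*}
As in the even case, I would compare $\kappa'_{s_0}$ to the hypothesized injective $\kappa_{s_0}$ via the restriction $r:\TT_{S,S_0}|_{s_0}\to T_{s_0}S_0$, which is surjective with kernel spanned by $t\pa_t|_{s_0}$. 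Injectivity of $\kappa'_{s_0}$ then reduces to the single claim that $\kappa'_{s_0}(t\pa_t|_{s_0})\neq 0$.

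This last claim is the main obstacle, and the super-specific input enters here. Restrict the entire picture to the one-dimensional even formal disk $D\hra S$ with coordinate $t$; by functoriality of the coboundary, the value $\kappa'_{s_0}(t\pa_t|_{s_0})$ is computed from the family $X_D\to D$, so it suffices to show that the corresponding coboundary $\TT_{D,s_0}|_{s_0}\to H^1(X_{s_0},\om_{X_{s_0}}^{-2})$ is injective, equivalently that $H^0(X_{s_0},\AA_{X_D,X_{s_0}}|_{X_{s_0}})=0$. Here Lemma~\ref{KS-super-NS-lem}(iii) is crucial: the identification $\AA_{X_D,X_{s_0}}=\AA_{X_D}$ combined with injectivity of $\AA_{X_D}/t\AA_{X_D}\hra i_*\AA_{X_{s_0}}$ yields an embedding
\begin{equation*}
H^0(X_{s_0},\AA_{X_D,X_{s_0}}|_{X_{s_0}})\hra H^0(X_{s_0},\AA_{X_{s_0}}),
\end{equation*}
and the right-hand side vanishes by Proposition~\ref{no-inf-aut-prop} (stability of $X_{s_0}$). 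This completes the reduction and hence the proof.
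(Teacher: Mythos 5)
Your proposal is correct and follows essentially the same route as the paper's proof: the vector bundle statement via the vanishing of $H^0(X_{s_0},\om^{-2}_{X/S}|_{X_{s_0}})$ computed on the underlying spin curve, then reduction of injectivity of the fiber map $KS(s_0)$ to the kernel of $r$ via the commutative square with $\kappa_{s_0}$, and finally the restriction to the disk $D$ using Lemma \ref{KS-super-NS-lem}(iii) together with $H^0(X_{s_0},\AA_{X_{s_0}})=0$ from Proposition \ref{no-inf-aut-prop}(ii). Your repackaging of the fiberwise injectivity as the vanishing $H^0(X_{s_0},\AA_{X,X_0}|_{X_{s_0}})=0$ mirrors the even-case Proposition \ref{KS-even-prop} but is equivalent to what the paper does.
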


\begin{proof} First, let $(C_{s_0},L_{s_0})$ be the usual stable curve with a spin structure, underlying $X_{s_0}$.
We observe that
$$\om^{-2}_{X/S}|_{X_{s_0}}=\om_{C_{s_0}}^{-1}\oplus \om_{C_{s_0}}^{-1}\ot L_{s_0}.$$
Indeed, this follows easily from Theorem \ref{can-square-thm} applied to
the induced family of stable supercurves $X_D\to D$ over the formal disk $D\sub S_0$ with the coordinate $t$.
By stability of $C_{s_0}$, we get
$$H^0(C_{s_0},\om^{-1})=H^0(C_{s_0},\om^{-1}\ot L_{s_0})=0.$$
In other words, we get the vanishing
\begin{equation}\label{X-s0-om-van}
H^0(X_{s_0},\om^{-2}_{X/S}|_{X_{s_0}})=0.
\end{equation} 
It follows that
$$\pi_*\AA_{X/S}=\pi_*\om^{-2}_{X/S}=0,$$
and that $R^1\pi_*\om^{-2}_{X/S}$ is a vector bundle on $S$.


Thus, $KS$ is a map of vector bundles on $S$, so it is enough to show that the corresponding map of fibers at $s_0$,
$$KS(s_0):\TT_{S,S_0}|_{s_0}\to (R^1\pi_*\om_{X/S}^{-2})|_{s_0}\simeq H^1(X_{s_0},\om_{X/S}^{-2}|_{X_{s_0}}),$$
is injective. 

To this end, let us consider the morphism of exact sequences on $X_0$,
\begin{diagram}
0&\rTo{}&\AA_{X/S}|_{X_0}&\rTo{}&\AA_{X,X_0}|_{X_0}&\rTo{}&\pi_0^{-1}\TT_{S,S_0}|_{S_0}&\rTo{}& 0\\
&&\dTo{}&&\dTo{}&&\dTo{}\\
0&\rTo{}&\AA_{X_0/S_0}&\rTo{}&\AA_{X_0}&\rTo{}&\pi_0^{-1}\TT_{S_0}&\rTo{}&0
\end{diagram}
Since the rightmost terms are locally free, the restrictions of these sequences to $X_{s_0}$ are still exact, so we get
a commutative square of the coboundary maps
\begin{diagram}
\TT_{S,S_0}|_{s_0}&\rTo{KS(s_0)}& H^1(X_{s_0},\om_{X/S}^{-2}|_{X_{s_0}})\\
\dTo{r}&&\dTo{}\\
T_{s_0}S_0&\rTo{\kappa_{s_0}}& H^1(X_{x_0},\AA_{X_{s_0}})
\end{diagram}
Since $\kappa_{s_0}$ is injective, arguing as before, we see that it is enough to check
that the restriction of $KS(s_0)$ to the $1$-dimensional subspace
$$\TT_{D,s_0}|_{s_0}\hra \TT_{S,S_0}|_{s_0}$$
is injective.

Considering the induced family $\pi_D:X_D\to D$ and applying Lemma \ref{KS-super-NS-lem}(iv), we get
an inclusion of sheaves on $X_{x_0}$,
$$\AA_{X_D,X_{s_0}}|_{X_{s_0}}\hra \AA_{X_{s_0}}.$$
We know that $H^0(X_{s_0},\AA_{X_{s_0}})=0$ (see Prop.\ \ref{no-inf-aut-prop}(ii)), so we get that
$$H^0(X_{s_0},\AA_{X_D,X_{s_0}}|_{X_{s_0}})=0.$$

Next, restricting the sequence \eqref{tangent-super-ex-seq} for the family $X_D/D$ to $X_{s_0}$, we get an exact sequence
$$0\to \om_{X_D}^{-2}|_{X_{s_0}}\to \AA_{X_D,X_{s_0}}|_{X_{s_0}}\to (\pi_D^{-1}\TT_{D,s_0})|_{X_{s_0}}\to 0.$$
From the above vanishing we get that the corresponding coboundary map
\begin{equation}
\label{X-s0-TD-super-coboundary-eq}
H^0(X_{s_0},\TT_{D,s_0}|_{s_0}\ot\OO_{X_{s_0}})\simeq\TT_{D,s_0}|_{s_0}\to H^1(X_{s_0},\om_{X_D}^{-2}|_{X_{s_0}})
\end{equation}
is injective. 

Now, restricting to $X_{s_0}$ the natural morphism of exact sequences 
\begin{diagram}
0&\rTo{}&\om_{X_D/D}^{-2}&\rTo{}&\AA_{X_D,X_{s_0}}&\rTo{}&\pi_D^{-1}\TT_{D,s_0}&\rTo{}& 0\\
&&\dTo{\sim}&&\dTo{}&&\dTo{}\\
0&\rTo{}&\om_{X/S}^{-2}|_{X_D}&\rTo{}&\AA_{X,X_0}|_{X_D}&\rTo{}&\pi_D^{-1}\TT_{S,S_0}|_D&\rTo{}&0
\end{diagram}
and considering the morphism between the corresponding long exact sequences of cohomology on $X_{s_0}$,
we deduce that the coboundary map \eqref{X-s0-TD-super-coboundary-eq} is equal to the restriction of $KS(s_0)$ to $\ker(r)$,
which proves our claim.
\end{proof}

Note that the Berezinian of the natural morphism $\TT_{S,S_0}\to \TT_S$ is equal to $t$ for an appropriate choice of bases.
As in Corollary \ref{det-KS-class-sec}, this leads to the following result.

\begin{cor}\label{KS-super-NS-node-cor}
Assume that the map $\kappa_{x_0}$ is an embedding and the superdimension of $S$ is $3g-3|2g-2$.
Then the map \eqref{KS-super-NS-map} is an isomorphism. Hence, the Berezinian of the usual Kodaira-Spencer map on $S'=S\setminus S_0$,
$$\ber(KS):\Ber(\TT_{S'})\to \Ber R^1\pi_*\om_{X'/S'}^{-2}$$
is of the form $u/t$, where $u$ is invertible on $S$ (with respect to trivializations regular on $S$).
\end{cor}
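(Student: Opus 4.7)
The plan is to combine the injectivity result of Proposition \ref{KS-super-NS-prop} with a rank count to force the Kodaira--Spencer embedding to be an equality of vector bundles on $S$, and then to convert the resulting unit Berezinian on $S$ into the claimed formula on $S'$ by passing through the inclusion $\TT_{S,S_0}\hra \TT_S$.

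First, I would verify that the source and target of $KS$ are locally free of the same superrank on $S$. Since $S$ has superdimension $3g-3|2g-2$, the sheaf $\TT_{S,S_0}$ is locally generated as an $\OO_S$-module by $t\pa_t,\pa_{t_1},\ldots,\pa_{t_d},\pa_{\tau_1},\ldots$, so it is a vector bundle of rank $3g-3|2g-2$. For the target, the proof of Proposition \ref{KS-super-NS-prop} already shows $R^1\pi_*\om^{-2}_{X/S}$ is a vector bundle. To compute its rank, I would use the decomposition $\om^{-2}_{X/S}|_{X_{s_0}}\simeq \om_{C_{s_0}}^{-1}\oplus \om_{C_{s_0}}^{-1}\otimes L_{s_0}$ already invoked there, together with the vanishing $H^0(X_{s_0},\om^{-2}_{X/S}|_{X_{s_0}})=0$ and Riemann--Roch for the underlying spin curve. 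This gives superdimension $3g-3|2g-2$ for the fiber, matching the source.

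Next, Proposition \ref{KS-super-NS-prop} asserts that $KS$ is an embedding of a subbundle. Since source and target are vector bundles on $S$ of the same superrank, this subbundle inclusion must be an isomorphism, proving the first claim.

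Finally, to derive the Berezinian formula on $S'=S\setminus S_0$, observe that the natural inclusion $\iota:\TT_{S,S_0}\hra \TT_S$ restricts to an isomorphism over $S'$, and the usual Kodaira--Spencer map factors as $KS_{S'}=KS|_{S'}\circ (\iota|_{S'})^{-1}$. In trivializations regular on $S$ (replacing $\partial_t$ by $t\pa_t$ in the basis of $\TT_{S,S_0}$, and keeping fixed bases for $\TT_S$ and $R^1\pi_*\om^{-2}_{X/S}$), the Berezinian of $\iota$ equals $t$, as noted just before the corollary, while the Berezinian of $KS$ is a unit $u$ on $S$ by the first part. Hence $\ber(KS_{S'})=u/t$ with respect to these regular trivializations. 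The main obstacle is verifying the rank count for $R^1\pi_*\om^{-2}_{X/S}$ at the singular fiber $s_0$, but this reduces immediately via the spin decomposition already used in the preceding proposition, so no new ingredient is required.
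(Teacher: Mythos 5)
Your proposal is correct and follows essentially the same route the paper intends: Proposition \ref{KS-super-NS-prop} gives a subbundle embedding, the superdimension hypothesis together with the rank $3g-3|2g-2$ of $R^1\pi_*\om^{-2}_{X/S}$ (computed from the spin decomposition of $\om^{-2}_{X/S}|_{X_{s_0}}$ and Riemann--Roch) forces it to be an isomorphism, and the factor $t$ comes from the Berezinian of $\TT_{S,S_0}\hra\TT_S$, exactly as the paper notes immediately before the corollary and as in its proof of the even analogue.
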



\subsection{Super case, Ramond node}\label{super-KS-R-sec}

Now we consider a family $X\to S$ over a smooth affine base $S$ with the relative Ramond node $q:S_0\to X_0=\pi^{-1}(S_0)$ over the divisor $S_0=(t=0)$ 
(where $t:S\to \A^1$ is a smooth morphism), 
such that $X\setminus q(S_0)$ is smooth over $S$, and
the completion of $\OO_X$ along $q(S_0)$ is generated over $\OO_S$ by generators $z_1,z_2,\th$ subject to the single relation
$$z_1z_2=t.$$
The distribution is generated by $\partial_{\th}+\th z_i\partial_{z_i}$ over $z_i\neq 0$.

Note that in this case the relative dualizing sheaf $\om_{X/S}$ is a line bundle on $X$.

\begin{lemma}\label{KS-super-R-lem} (i) The sequences
\eqref{tangent-super-ex-seq} and \eqref{tangent-X0-ex-seq} are still exact.

\noindent
(ii) In the formal neighborhood of a node, even (resp., odd) derivations in $\AA_{X_0/S_0}$ are in bijection with pairs of even (resp., odd) functions 
$a_i=f_i(z_i)+g_i(z_i)\th\in \OO_S[z_i,\th_i]$, $i=1,2$,
such that $g_1(0)=g_2(0)$.
The corresponding derivation $v$ is given by 
$$v(z_i)=a_iz_i+(-1)^{|v|}\frac{1}{2}D_i(a_i)z_i\th, \ \text{ for } i=1,2,$$
$$v(\th)=(-1)^{|v|}\frac{1}{2}[(-1)^{|v|}(g_1+g_2-g_2(0))+\th z_1\partial_{z_1}(f_1)+\th z_2\partial_{z_2}(f_2)].$$
where $D_i=\partial_{\th}+\th z_i\partial_{z_i}$.

For a closed point $s_0\in S_0$, the natural morphism
$$\AA_{X_0/S_0}|_{X_{s_0}}\to \AA_{X_{s_0}/k}$$
is an isomorphism.


\noindent
(iii) One has 
$$\AA_{X,X_0}=\AA_X$$
and the natural morphism
$$\AA_{X,X_0}/t\AA_{X,X_0}\to i_*\AA_{X_0}$$
is injective.
\end{lemma}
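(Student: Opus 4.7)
The overall structure parallels Lemma \ref{KS-super-NS-lem}, with modifications for the simpler Ramond relation $z_1z_2=t$ and the single odd generator $\th$.

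For (i), the only nontrivial point is surjectivity of $\AA_{X,X_0}\to\pi^{-1}\TT_{S,S_0}$ in the formal neighborhood of the node; on the smooth part this is standard. I would lift $t\pa_t$ to the symmetric derivation $v$ on $\OO_X$ determined by
\[
v(z_1)=\tfrac{1}{2}z_1,\quad v(z_2)=\tfrac{1}{2}z_2,\quad v(\th)=0,
\]
which automatically gives $v(t)=t$; a short Leibniz computation shows $[v,D_i]=0$ on $U_i$, where $D_i=\pa_\th+\th z_i\pa_{z_i}$, so $v\in\AA_{X,X_0}$. Exactness of \eqref{tangent-X0-ex-seq} then follows by composing with restriction to $X_0$, just as in Lemma \ref{KS-super-NS-lem}(i).

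For (ii), the plan is to work on each smooth chart $U_i$ separately. The vector fields $D_i^2=z_i\pa_{z_i}$ and $D_i$ span $\TT_{U_i/S_0}$ as an $\OO_{U_i}$-module, and using $[D_i^2,D_i]=0$ the superconformal condition $[v,D_i]\in\OO\cdot D_i$ applied to $v=aD_i^2+bD_i$ solves uniquely to $b=\pm\tfrac{1}{2}D_i(a)$. Writing $a_i=f_i(z_i)+g_i(z_i)\th$, this reproduces the stated formulas for $v(z_i)$ and for the local expression of $v(\th)$. Gluing the two local vector fields across the node: matching $v(\th)$ on the two branches at $z_1=z_2=0$, the only obstruction to assembling a single element of $\OO_{X_0}$ is the constant term of the odd part, yielding the single constraint $g_1(0)=g_2(0)$; the symmetric expression $g_1(z_1)+g_2(z_2)-g_2(0)$ in the formula for $v(\th)$ is then the unique well-defined lift, as can be seen by restricting it to each branch. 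The isomorphism $\AA_{X_0/S_0}|_{X_{s_0}}\to\AA_{X_{s_0}}$ follows by noting that the parametrization over $\OO_{S_0}$ specializes term-by-term to the analogous one over $\C$.

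For (iii), the identity $v(t)=v(z_1z_2)=v(z_1)z_2+z_1v(z_2)\in(z_1,z_2)\OO_X$ is the starting point. On each smooth chart, the superconformal parametrization from (ii) forces $v(z_i)\in(z_i)\OO_{U_i}$ (the factor $z_i$ is built into the normal form $v(z_i)=a_iz_i+\ldots$); since $\OO_X\simeq j_*\OO_U$ by the Cohen--Macaulay property and $z_i$ is a non-zero-divisor by flatness, this lifts to $v(z_i)\in(z_i)\OO_X$, whence $v(t)\in(z_1z_2)\OO_X=t\OO_X$, proving $\AA_X=\AA_{X,X_0}$. The injectivity of $\AA_{X,X_0}/t\AA_{X,X_0}\to i_*\AA_{X_0}$ then reduces to showing that if $w\in\TT_X$ and $tw\in\AA_X$ then $w\in\AA_X$: from $[tw,D]=t[w,D]\pm w(t)D\in\OO\cdot D$ and $w(t)D\in\OO\cdot D$, using that $t$ is a non-zero-divisor on the smooth locus, one concludes $[w,D]\in\OO\cdot D$. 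The main obstacle will be the explicit chart-by-chart calculation in (ii), both in deriving the normal form $v=a_iD_i^2\pm\tfrac{1}{2}D_i(a_i)D_i$ and in isolating the single gluing constraint $g_1(0)=g_2(0)$; once (ii) is in hand, parts (i) and (iii) follow with minor bookkeeping.
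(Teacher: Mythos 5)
Parts (i) and (ii) of your proposal are correct and follow the paper's own route: the paper lifts $2t\partial_t$ by $v(z_i)=z_i$, $v(\th)=0$ (your $\tfrac12 z_i$ normalization is the same computation), and its description of $\AA_{X_0/S_0}$ is exactly your chart-by-chart normal form $v=a_iD_i^2+(-1)^{|v|}\tfrac12 D_i(a_i)D_i$ followed by the gluing constraint $g_1(0)=g_2(0)$ extracted from matching $v(\th)$ at the node. Your write-up of (ii) just supplies details the paper leaves implicit.

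Part (iii), however, has a genuine gap in the proof that $\AA_{X,X_0}=\AA_X$. You claim that ``the superconformal parametrization from (ii) forces $v(z_i)\in(z_i)\OO_{U_i}$'' and that this ``lifts'' to $v(z_i)\in(z_i)\OO_X$. On the chart $U_i$ the function $z_i$ is invertible, so $(z_i)\OO_{U_i}=\OO_{U_i}$ and the first assertion is vacuous: the normal form only says $v(z_i)=a_iz_i+\cdots$ with $a_i\in\OO_X[z_i^{-1}]$, and nothing prevents $a_i$ from having a pole at the node, in which case $v(z_i)\notin(z_i)\OO_X$. The passage from $(z_i)\OO_{U_i}$ to $(z_i)\OO_X$ is where all the content lies, and neither $\OO_X\simeq j_*\OO_U$ nor the non-zero-divisor property of $z_i$ gives you divisibility by $z_i$ in $\OO_X$. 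Worse, the statement $v(z_i)\in(z_i)\OO_X$ appears to be provable only \emph{after} one knows $v(t)\in t\,\pi^{-1}\OO_S$ (one then splits $v$ as a standard lift of an element of $\TT_{S,S_0}$ plus a relative field and treats each piece), so as structured your argument is circular. The paper avoids this entirely: for \emph{any} $v\in\TT_X$ one has $v(t)=v(z_1)z_2+z_1v(z_2)\in(z_1,z_2)$, and a direct computation with the topological $\OO_S$-basis $1,(z_1^n)_{n\ge1},(z_2^n)_{n\ge1},(z_1^n\th)_{n\ge0},(z_2^n\th)_{n\ge0}$ shows (using $z_1z_2^n=tz_2^{n-1}$, etc.) that the $\OO_S$-component of any element of $(z_1,z_2)$ lies in $t\OO_S$; since $v\in\AA_X$ forces $v(t)\in\pi^{-1}\OO_S$, one concludes $v(t)\in t\,\pi^{-1}\OO_S$. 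You should replace your divisibility claim with this free-term argument. The remaining reduction in (iii) (injectivity modulo $t$ via $[tw,D]=t[w,D]$ and $t$ being a non-zero-divisor on $\TT_{U/S}/\DD$) is fine, apart from the spurious term $w(t)D$ in your bracket formula, which should be absent since $D(t)=0$.
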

 
\begin{proof}
(i) To show surjectivity of the morphism $\AA_{X,X_0}\to \pi^{-1}\TT_{S,S_0}$
we observe that the derivation $v\in \AA_{X,X_0}$, given by
$$v(t)=2t, \ \ v(z_i)=z_i, \ \ v(\th)=0,$$
extends $2t\partial_t$.

\noindent
(ii)
Here is the description of $\AA_{X_0/S_0}$.
The condition $v(z_1)z_2+v(z_2)z_1=0$ implies that $v(z_i)\in (z_i)$, in particular $v(z_i)$ is determined by its image in $\OO_{X_0}[z_i^{-1}]$.
Also, we should have $a_i\in \OO_S[z_i,z_i^{-1},\th]$ such that
$$v(z_i)=a_iz_i+(-1)^{|v|}\frac{1}{2}D_i(a_i)z_i\th, \ \ v(\th)=(-1)^{|v|}\frac{1}{2} D_i(a_i) \ \text{ in } \OO_{X_0}[z_i^{-1}].$$
Let us write $a_i=f_i(z_i)+g_i(z_i)\th$. Then $a_1$ and $a_2$ could be arbitrary elements of $\OO_S[z_1,\th]$ and $\OO_S[z_2,\th]$ (of the same parity) such that 
$g_1(0)=g_2(0)$.
 

\noindent
(iii) The proof is similar to that of Lemma \ref{KS-super-NS-lem}(iv):
we use the condition $v(t)\in (z_1,z_2)\sub \OO_X$ and the
topological $\OO_S$-basis of $\OO_X$
$$1, (z_1^n)_{n\ge 1}, (z_2^n)_{n\ge 1}, (z_1^n\th)_{n\ge 0}, (z_2^n\th)_{n\ge 0}.$$
\end{proof}
 
Using the same arguments as before we derive the following assertion.  
 
\begin{prop}\label{KS-super-R-prop}
The statements of Proposition \ref{KS-super-NS-prop} and Corollary \ref{KS-super-NS-node-cor} hold 
in the case of a Ramond degeneration as well.
\end{prop}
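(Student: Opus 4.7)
The plan is to run the proof of Proposition \ref{KS-super-NS-prop} almost verbatim, substituting at each step the local Ramond data from Lemma \ref{KS-super-R-lem} for the NS data of Lemma \ref{KS-super-NS-lem}. A pleasant simplification is that in the Ramond setting $\om_{X/S}$ is already a line bundle on $X$, so we do not need to invoke Theorem \ref{can-square-thm} to make sense of $\om_{X/S}^{-2}$.

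First I would establish that $\pi_*\om^{-2}_{X/S}=0$ and that $R^1\pi_*\om^{-2}_{X/S}$ is locally free. Restricting $\om^{-2}_{X/S}$ to the central fiber $X_{s_0}$ gives, on the underlying spin curve $(C_{s_0},L_{s_0})$, the sum $\om_{C_{s_0}}^{-1}\oplus \om_{C_{s_0}}^{-1}\otimes L_{s_0}^{-1}$; stability of the underlying pointed spin curve (as in the degree count in the proof of Proposition \ref{no-inf-aut-prop}(ii)) forces both summands to have vanishing $H^0$. Semicontinuity and base change then give $\pi_*\om^{-2}_{X/S}=0$ and local freeness of $R^1\pi_*\om^{-2}_{X/S}$, so $KS$ is a morphism of vector bundles on $S$.

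Next I would reduce injectivity of $KS$ to injectivity of the fiber $KS(s_0)$, and then mimic the diagram chase from the NS case. Restricting the exact sequences \eqref{tangent-super-ex-seq} and \eqref{tangent-X0-ex-seq} to $X_{s_0}$ (the right-hand terms being locally free, so restriction preserves exactness) produces the commutative square of coboundary maps
\begin{diagram}
\TT_{S,S_0}|_{s_0}&\rTo{KS(s_0)}& H^1(X_{s_0},\om_{X/S}^{-2}|_{X_{s_0}})\\
\dTo{r}&&\dTo{}\\
T_{s_0}S_0&\rTo{\kappa_{s_0}}& H^1(X_{s_0},\AA_{X_{s_0}})
\end{diagram}
Since $\kappa_{s_0}$ is injective and $r$ is surjective with one-dimensional kernel spanned by $t\partial_t$, it suffices to show injectivity of $KS(s_0)$ on $\ker(r)\simeq \TT_{D,s_0}|_{s_0}$, where $D\subset S$ is the formal even disk with coordinate $t$.

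The main step is the vanishing $H^0(X_{s_0},\AA_{X_D,X_{s_0}}|_{X_{s_0}})=0$. Here is where the Ramond specifics enter: Lemma \ref{KS-super-R-lem}(iii) gives the injection $\AA_{X_D,X_{s_0}}/t\AA_{X_D,X_{s_0}}\hookrightarrow i_*\AA_{X_{s_0}}$, and Proposition \ref{no-inf-aut-prop}(ii) furnishes $H^0(X_{s_0},\AA_{X_{s_0}})=0$. Feeding this into the long exact sequence associated with
$$0\to \om^{-2}_{X_D/D}|_{X_{s_0}}\to \AA_{X_D,X_{s_0}}|_{X_{s_0}}\to \pi_D^{-1}\TT_{D,s_0}|_{X_{s_0}}\to 0$$
makes the corresponding coboundary $\TT_{D,s_0}|_{s_0}\to H^1(X_{s_0},\om^{-2}_{X_D/D}|_{X_{s_0}})$ injective, and the natural map from the $X_D$-sequence to the $X$-sequence identifies it with $KS(s_0)|_{\ker(r)}$. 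This proves injectivity of $KS$ as a map of vector bundles, i.e.\ an embedding of a subbundle.

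For the corollary-analog, once $\kappa_{s_0}$ is an isomorphism and $S$ has superdimension $3g-3|2g-2$, matching ranks forces $KS$ to be an isomorphism of vector bundles. The inclusion $\TT_{S,S_0}\hookrightarrow \TT_S$ has Berezinian equal to $t$ (only the even direction $t\partial_t$ is affected), so comparing $\ber(KS)$ on $S$ to the Berezinian of the usual Kodaira--Spencer map on $S\setminus S_0$ yields the stated form $u/t$ with $u$ invertible on $S$. The main place requiring genuine attention, rather than formal translation, is checking Lemma \ref{KS-super-R-lem}(iii) with the Ramond normal form $z_1z_2=t$ and a single odd generator $\theta$, but this is the combinatorial analog of the NS argument using the $\OO_S$-basis $1,z_1^n,z_2^n,z_1^n\theta,z_2^n\theta$ listed in its proof.
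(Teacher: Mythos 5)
Your proposal is correct and is exactly the paper's (essentially unwritten) argument: the paper proves Proposition \ref{KS-super-R-prop} by simply declaring that the arguments of Proposition \ref{KS-super-NS-prop} and Corollary \ref{KS-super-NS-node-cor} carry over once Lemma \ref{KS-super-R-lem} supplies the Ramond analogs of the local statements, which is precisely the translation you carry out. The only slip is cosmetic: by Theorem \ref{can-square-thm} the restriction of $\om_{X/S}^{-2}$ to the central fiber is $\om_{C_{s_0}}^{-1}\oplus\om_{C_{s_0}}^{-1}\otimes L_{s_0}$ rather than $\om_{C_{s_0}}^{-1}\oplus\om_{C_{s_0}}^{-1}\otimes L_{s_0}^{-1}$, but the degree count giving the vanishing of $H^0$ is unaffected.
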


\subsection{Kodaira-Spencer in the presence of NS and Ramond punctures}\label{Kodaira-punct-sec}

Everything in Sec.\ \ref{super-KS-NS-sec} and \ref{super-KS-R-sec} has an analog in the case of families of stable supercurves with punctures.
Namely, for such a family $(X,P_\bullet,R_\bullet)/S$, we should replace the sheaves $\AA_X$ and $\AA_{X/S}$ 
with their intersections $\AA_{X,P,R}$ and $\AA_{(X,P,R)/S}$ 
with the subsheaf $\TT_{X,P,R}\sub \TT_X$ of derivations preserving all the punctures (i.e., the corresponding ideals in $\OO_X$).
This will not change the local picture near the nodes, however, we need to make some changes in the global statements.

Let us assume that we have a family $(X,P,R)$ of stable supercurves with punctures over $S$, acquiring a single node over $S_0=(t=0)\sub S$,
so that forgetting the punctures we are in the situation of either Sec.\ \ref{super-KS-NS-sec} or Sec.\ \ref{super-KS-R-sec}.
We will still have the equality $\AA_{X,P,R}=\AA_{X,X_0,P,R}$ as in Lemma \ref{KS-super-NS-lem}, and the analogs of sequences \eqref{tangent-super-ex-seq}
and \eqref{tangent-X0-ex-seq} are
$$0\to \AA_{(X,P,R)/S}\to \AA_{X,P,R}\to \pi^{-1}\TT_{S,S_0}\to 0,$$
$$0\to \AA_{(X_0,P_0,R_0)/S}\to \AA_{X_0,P_0,R_0}\to \pi_0^{-1}\TT_{S_0}\to 0.$$
Indeed, the only extra statement is the surjectivity of the right arrows near the punctures, which follows from the standard local description of the punctures
(see Sec.\ \ref{local-descr-sec}).

Recall also that by Theorem \ref{inf-aut-thm}, we have an isomorphism
$$\AA_{(X,P_\bullet,R_\bullet)/S}\simeq \LL(X,P_\bullet,R_\bullet):=\om_{X/S}^{-2}(-\sum_{i\in I} D_i-2\sum_{j\in J} R_j),$$
where $D_i\sub X$ are the divisors associated with the NS nodes $P_i\sub X$, and $\om_{X/S}^{-2}$ is defined as in Theorem \ref{can-square-thm}.

The analog of Corollary \ref{KS-super-NS-node-cor} states that under the assumption that the map 
$$\kappa_{x_0}:T_{s_0}S_0\to H^1(X_{s_0}, \AA_{X_{s_0},P_{s_0},R_{s_0}})$$
is injective and the superdimension of $S$ is $3g-3|2g-2$, the Kodaira-Spencer map induces an isomorphism of line bundles on $S$,
$$\Ber(\TT_S)\rTo{\sim} \Ber(R^1\pi_*\LL(X,P_\bullet,R_\bullet))(S_0).$$

\section{The boundary divisor}\label{boundary-str-sec}

In this section we discuss the definition of the boundary of our compactification as an effective Cartier divisor. We also study the NS and the Ramond components
of the boundary divisor.

\subsection{Modified sheaf of differentials}

Let $\pi:X\to S$ be a family of stable supercurves such that the map from $S$ to the deformation space of each node is smooth.
Let $j:U\to X$ be the embedding of the smooth locus. We want to study the sheaf $j_*\Om_{U/S}$ and the map 
$$\Om_{X/S}\to j_*\Om_{U/S}.$$

Since $U/S$ is a smooth supercurve, we have an exact sequence
$$0\to \om^2_{U/S}\rTo{\kappa} \Om_{U/S}\rTo{\de} \om_{U/S}\to 0$$
where in standard coordinates
$$\de(dz)=\th [dz|d\th], \ \ \de(d\th)=[dz|d\th],$$
$$\kappa([dz|d\th]^{\ot 2})=dz-\th d\th.$$
Applying the functor $j_*$, we get an exact sequence
$$0\to \om^2_{X/S}\rTo{\kappa} j_*\Om_{U/S}\rTo{j_*\de} \om_{X/S},$$
where $\om^2_{X/S}=j_*\om_{U/S}$ is a line bundle on $X$ (see Theorem \ref{can-square-thm}).

\begin{lemma} The map $j_*\de$ is surjective, so we have an exact sequence
\begin{equation}\label{j-Om-ex-seq}
0\to \om^2_{X/S}\rTo{\kappa} j_*\Om_{U/S}\rTo{j_*\de} \om_{X/S}\to 0
\end{equation}
\end{lemma}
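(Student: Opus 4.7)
The plan is to reduce surjectivity of $j_*\de$ to a pair of explicit local computations at the two types of nodes. Surjectivity is a local question on $X$, and away from the nodes $j$ is the identity so the exact sequence on $U$ gives what we want directly. All sheaves in sight are compatible with flat base change on $S$ (for $j_*\Om_{U/S}$ this is flat base change along the open immersion $j$), so the hypothesis that $X/S$ comes from a smooth map to the miniversal deformation of each node lets us replace $X/S$ locally by the two model families of \ref{univ-def-node-I} and \ref{univ-def-node-II}.

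For the Ramond model, $\om_{X/S}$ is a line bundle with the distinguished generator $\bb$ on a neighborhood of the node. On each branch $U_i$ the section $d\th$ of $\Om_{U/S}$ maps to $\bb|_{U_i}$ under $\de$, and since $\th$ is a single global odd coordinate these two local lifts automatically coincide. So $j_*\de$ surjects onto a generator, giving the result in this case.

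For the NS model I would lift each of the three generators $s_1, s_2, s_0$ of $\om_{X/S}$ from Lemma \ref{NS-om-lem} separately. The section $s_0$ is the cleanest: take $\frac{\th_1}{z_1}d\th_1$ on $U_1$ and $-\frac{\th_2}{z_2}d\th_2$ on $U_2$; a short calculation using $z_1z_2=-t^2$, $z_2\th_1=-t\th_2$ and $\th_i^2=0$ shows these two expressions agree on $U_1\cap U_2$, and by construction each maps to $s_0$ under $\de$. For $s_1$ the naive lifts $d\th_1$ on $U_1$ and $-\frac{t}{z_2}d\th_2$ on $U_2$ do not glue, but their difference on $U_1\cap U_2$ works out to $-\frac{t\th_2}{z_2^2}dz_2$, which lies in $\kappa(\om^2_{U/S})$ because $\th_2^2=0$; this discrepancy is a \v{C}ech coboundary for the cover $\{U_1,U_2\}$, since the section $-\frac{t\th_2}{z_2^2}(dz_2-\th_2 d\th_2)$ of $\om^2_{U/S}$ is defined on the whole chart $U_2$. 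Correcting on $U_2$ by this section produces a global lift of $s_1$; the case of $s_2$ is symmetric.

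The main obstacle is the bookkeeping in the NS case: one has to apply the relations \eqref{NS-node-def-relations} and the super Leibniz rule carefully when translating forms between the two charts, and to verify that the apparent poles in $z_i$ are always killed either by factors of $\th_i$ (via $\th_i^2=0$) or by the defining relations themselves. Once the three lifts $\tilde s_0, \tilde s_1, \tilde s_2$ are in hand, $j_*\de$ hits a generating set of $\om_{X/S}$ near each node, and surjectivity follows.
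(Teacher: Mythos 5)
Your proof is correct and follows essentially the same route as the paper: reduce to the standard local models of the two node types and exhibit explicit preimages under $j_*\de$ of the generators $s_0,s_1,s_2$ of $\om_{X/S}$ from Lemma \ref{NS-om-lem}. The only differences are cosmetic: the paper lifts $s_0$ by the glued section $dz_1/z_1=-dz_2/z_2$ rather than by $\th_1 d\th_1/z_1$ (the two lifts differ by $\kappa$ applied to the generator of $\om^2_{X/S}$, so both work), and for $s_1,s_2$ it observes directly that $d\th_i$ is already a global section of $\Om_{X/S}$ mapping to $s_i$, which renders your \v{C}ech coboundary correction unnecessary --- indeed your corrected section on $U_2$ is exactly $d\th_1|_{U_2}$.
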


\begin{proof} We know that $j_*\de$ is surjective over $U$, so it is enough to check that it is surjective near the nodes.

Near a Ramond node, the map $\de:\Om_{X/S}\to \om_{X/S}=j_*\om_{U/S}$ is surjective. Since it factors through
$j_*\Om_{U/S}$, we see that $j_*\de$ is surjective near a Ramond node.

To check surjectivity near an NS-node, we can work \'etale locally and apply Lemma \ref{etale-local-nodes-lem}.
Hence, it remains to make a computation for the standard deformation of the NS-node.
Recall the generators $s_1,s_2,s_0$ of $\om_{X/S}$ (see Lemma \ref{NS-om-lem}).
We know that $s_1$ and $s_2$ are in the image of $\de:\Om_{X/S}\to \om_{X/S}$.
On the other hand, the sections $dz_1/z_1$ and $-dz_2/z_2$ of $\Om_{U/S}$ over $z_1\neq 0$ and $z_2\neq 0$ glue into a global section of
$j_*\Om_{U/S}$, which maps to $s_0$ under $j_*\de$.
\end{proof}

Now let us consider a family of stable supercurves $X/S$ over a smooth affine base $S$ with a smooth map $t:S\to \A^1$, such that over $S_0=(t=0)\sub S$,
we have a relative NS node $q:S_0\to X_0=S_0\times_S X$, so that $X\setminus q(S_0)$ is smooth over $S$, 
the completion of $\OO_X$ along $q(S_0)$ is given by the standard generators and relations 
\eqref{NS-node-def-relations}, and the derivation $\de$ is given by the standard formula.

\begin{lemma}\label{differential-generators-lem}
(i) In the above situation, \'etale locally along $q(S_0)$, the sheaf $j_*\Om_{U/S}$ is generated as an $\OO_X$-module by global sections
$$e=\frac{dz_1}{z_1}=-\frac{dz_2}{z_2}, \ \ d\th_1, \ \ d\th_2, \ \ f=\frac{\th_1d\th_1}{z_1}=-\frac{\th_2d\th_2}{z_2},$$
with defining relations
\begin{eqnarray}\label{j*Om-NS-relations-eq}
td\th_1-z_1d\th_2=t\th_1 e, \ \ z_2d\th_1+td\th_2=-t\th_2e, \ \ \ \ \ \ \ \ \ \ \ \ \ \ \nonumber\\
tf=\th_2d\th_1=\th_1d\th_2, \ \ z_1f=\th_1d\th_1, \ \ z_2f=-\th_2d\th_2, \ \ \th_1f=\th_2f=0.
\end{eqnarray}

\noindent
(ii) We have an exact sequence on $X_0$,
$$0\to \KK\to \Om_{X_0/S_0}\to j_*\Om_{U/S}|_{X_0}\to \CC_0\to 0$$
where $\KK$ has an $\OO_{S_0}$-basis 
\begin{equation}\label{KK-basis-eq}
z_1dz_2=-z_2dz_1, \ \ z_1d\th_2=-\th_2dz_1, \ \ z_2d\th_1=-\th_1dz_2, \ \ \th_1d\th_2=\th_2d\th_1,
\end{equation}
and $\CC_0$ has an $\OO_{S_0}$-basis the images of
\begin{equation}\label{CC-basis-eq}
e, \ \  f, \ \ \th_1e, \ \ \th_2e.
\end{equation}
Furthermore, the map of $\OO_{S_0}$-modules $\pi_*\KK\to \pi_*\Om_{X_0/S_0}$ is an embedding of a direct summand.
\end{lemma}

\begin{proof}
(i) \'Etale locally we can think of sections of $j_*\Om_{U/S}$ as compatible sections of $\Om_{U_1/S}$ and $\Om_{U_2/S}$, where $U_i$ is the open subset
where $z_i$ is invertible. Thus, $e$ is a well defined section of  $j_*\Om_{U/S}$.
Using relations \eqref{NS-node-def-relations}, we get
$$z_2d\th_1=-td\th_2-\th_1dz_2,$$
so over $U_1\cap U_2$,
$$\frac{\th_1}{z_1}d\th_1=\frac{\th_2}{t}(-\frac{t}{z_2}d\th_2)=-\frac{\th_2}{z_2}d\th_2,$$
so $f$ is a well defined section of $j_*\Om_{U/S}$.

Let us denote by $\FF$ the $\OO_X$-module given by generators $e$, $f$, $d\th_1$ and $d\th_2$ and relations \eqref{j*Om-NS-relations-eq}
It is straightforward to check (by inverting $z_1$ or $z_2$) that these relations are satisfied in $j_*\Om_{U/S}$, so we have a well defined morphism
$\FF\to j_*\Om_{U/S}$.
The map $\kappa:\om^2_{X/S}\to j_*\Om_{U/S}$ sends a generator to $e-f$. Thus, we 
we have a commutative diagram with exact rows
\begin{equation}\label{j-Om-F-diagram}
\begin{diagram}
&&\OO_X &\rTo{e-f}& \FF&\rTo{}&\FF/\OO_X(e-f)&\rTo{}&0\\
&&\dTo{\sim}&&\dTo{} &&\dTo{}\\
0&\rTo{}&\om^2_{X/S}&\rTo{\kappa}&j_*\Om_{U/S} &\rTo{j_*\de}&\om_{X/S}&\rTo{}&0
\end{diagram}
\end{equation}
One can check using Gr\"obner basis technique that 
$$z_1^ne, \ \ z_1^n\th_1e, \ \  z_1^nd\th_1, \ \ z_1^n\th_1d\th_1, \ \ z_2^ne, \ \ z_2^n\th_2e, \ \ z_2^nd\th_2, \ \ z_2^n\th_2d\th_2, \ \ f,
$$
where $n\ge 0$, is an $\OO_S$-basis of $\FF$. This easily implies
that the map $\OO_X\rTo{e-f}\FF$ is injective (by looking at the coefficients of $e$).

The quotient $\FF/\OO_X(e-f)$ is generated by $e,d\th_1,d\th_2$ subject to relations
$$td\th_1-z_1d\th_2=0, \ \ z_2d\th_1+td\th_2=0,$$
$$te=\th_2d\th_1=\th_1d\th_2, \ \ z_1e=\th_1d\th_1, \ \ z_2e=-\th_2d\th_2, \ \ \th_1e=\th_2e=0.$$
Note that 
$$\de(d\th_1)=s_1,\ \ \de(d\th_2)=s_2, \ \ \de(e)=s_0.$$
Comparing these with generators and relations of $\om_{X/S}$ (see Lemma \ref{NS-om-lem})
we deduce that the map $\FF/\OO_X(e-f)\to \om_{X/S}$ is an isomorphism.
Now the fact that $\FF\to j_*\Om_{U/S}$ is an isomorphism follows from the diagram \eqref{j-Om-F-diagram}.

\noindent
(ii) From (i) we see that $j_*\Om_{U/S}$ is a coherent sheaf (since this can be checked \'etale locally along $q(S_0)$).
Hence, the kernel $\KK$ and the cokernel $\CC_0$ of the map $\Om_{X_0/S_0}\to j_*\Om_{U/S}|_{X_0}$ are coherent sheaves supported at the node.
Also, we get an explicit description of the completion of $j_*\Om_{U/S}|_{X_0}$ at the node by generators and relations. 
Taking the quotient by the image of $\Om_{X_0/S}$,
we get that $\CC_0$ (which coincides with its completion at the node) is generated by the images $(\ov{e},\ov{f})$ of $(e, f)$, with the defining relations
$$z_1\ov{e}=0, \ \ z_2\ov{e}=0, \ \ z_1\ov{f}=0, \ \ z_2\ov{f}=0, \ \ \th_1\ov{f}=0, \ \ \th_2\ov{f}=0.$$
This implies the assertion that the elements \eqref{CC-basis-eq} constitute an $\OO_{S_0}$-basis of $\CC$.

The kernel $\KK$ coincides with the kernel of the restriction $\Om_{X_0/S_0}\to \Om_{X_1/S_0}\oplus \Om_{X_2/S_0}$. The four elements \eqref{KK-basis-eq}
can be extended to an $\OO_{S_0}$-basis of $\Om_{X_0/S_0}$: we have to add 
$$z_i^ndz_i, \ z_i^n\th_idz_i, \ z_i^nd\th_i, \ z_i^n\th_id\th_i,$$
where $i=1,2$, $n\ge 0$. Since the latter elements project to independent elements of $\Om_{X_1/S_0}\oplus \Om_{X_2/S_0}$,
the assertion follows.
\end{proof}

Next let us consider the case of a Ramond node, i.e., consider similar data $(X/S, t:S\to \A^1, q:S_0\to X_0)$, such that $q(S_0)$ is
a relative Ramond node, and \'etale locally along $q(S_0)$, $\OO_X$ is given by the standard generators $z_1,z_2,\th$ subject 
to $z_1z_2=t$, and with the distribution generated by $\partial_{\th}+\th z_i\partial_{z_i}$ over $z_i\neq 0$.


\begin{lemma}\label{differential-generators-R-lem}
(i) In the above situation the sheaf $j_*\Om_{U/S}$ is freely generated in a formal neighborhood of $q(S_0)$, as an $\OO_X$-module, by global sections
$$e=\frac{dz_1}{z_1}=-\frac{dz_2}{z_2}, \ \ d\th.$$

\noindent
(ii) We have an exact sequence on $X_0$,
$$0\to \KK\to \Om_{X_0/S_0}\to j_*\Om_{U/S}|_{X_0}\to \CC_0\to 0$$
where $\KK\simeq q_*\OO_{S_0}$ is generated by $z_1dz_2=-z_2dz_1$
and $\CC_0\simeq q_*\OO_{S_0}$ is generated by the image of $e$.
Furthermore, the map of $\OO_{S_0}$-modules $\pi_*\KK\to \pi_*\Om_{X_0/S_0}$ is an embedding of a direct summand.
\end{lemma}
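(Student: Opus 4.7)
For part (i), the section $e$ is globally well-defined in the formal neighborhood of the node: applying $d$ to the defining relation $z_1 z_2 = t$ gives $z_2\,dz_1 + z_1\,dz_2 = 0$ in $\Omega_{X/S}$, so $dz_1/z_1 = -dz_2/z_2$ on $U_1 \cap U_2$ where both $z_i$ are invertible. On each $U_i$, the free generators $dz_i, d\theta$ of $\Omega_{U_i/S}$ can be replaced by $e = \pm dz_i/z_i$ and $d\theta$ since $z_i$ is invertible. A section of $j_*\Omega_{U/S}$ thus amounts to a compatible pair $(\gamma_i e + \delta_i\,d\theta)_{i=1,2}$, and compatibility identifies $(\gamma_i)$ and $(\delta_i)$ with sections of $j_*\OO_U$. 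Because $V(z_1, z_2)$ has codimension two in the formal neighborhood and $X$ is Cohen--Macaulay, $j_*\OO_U = \OO_X$ (cf.\ the proof of Theorem \ref{can-square-thm}), so $j_*\Omega_{U/S}$ is freely generated by $e$ and $d\theta$ over $\OO_X$.

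For part (ii), local freeness from part (i) gives $j_*\Omega_{U/S}|_{X_0} = R_0 e \oplus R_0\,d\theta$, where $R_0 = \OO_{S_0}[\![z_1, z_2]\!][\theta]/(z_1z_2)$. Using the presentation $\Omega_{X_0/S_0} = R_0^3 / R_0 \cdot (z_2, z_1, 0)$, the natural map sends $(\alpha, \beta, \gamma) \mapsto (\alpha z_1 - \beta z_2) e + \gamma\,d\theta$. The image is $(z_1,z_2)R_0\cdot e \oplus R_0 \cdot d\theta$, so $\CC_0 \cong (R_0/(z_1,z_2))\overline{e}$ generated by $\overline{e}$; since $R_0/(z_1,z_2) = \OO_{S_0}[\theta]$ is the structure ring of the Ramond node subscheme, this is the asserted $q_*\OO_{S_0}$.

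For the kernel, $(\alpha, \beta, 0) \in \KK$ iff $\alpha z_1 = \beta z_2$ in $R_0$. A direct calculation in the $\OO_{S_0}$-basis of $R_0$ shows that the syzygy module equals $(z_2)\oplus (z_1) \subset R_0^2$; multiplication by $z_2$ and $z_1$ yield $R_0$-isomorphisms $(z_2) \cong R_0/(z_1)$ and $(z_1) \cong R_0/(z_2)$, under which the trivial syzygy $R_0\cdot (z_2, z_1)$ becomes the diagonal embedding. The Mayer--Vietoris sequence $0 \to R_0 \to R_0/(z_1) \oplus R_0/(z_2) \to R_0/(z_1,z_2) \to 0$ for the nodal ring then yields $\KK \cong R_0/(z_1,z_2) = q_*\OO_{S_0}$, generated by the class of $z_2\,dz_1 \equiv -z_1\,dz_2$. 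For the direct-summand assertion, using $z_2 dz_1 + z_1 dz_2 = 0$ and its $R_0$-multiples one writes down an explicit $\OO_{S_0}$-basis of $\pi_*\Omega_{X_0/S_0}$ in which $z_2\,dz_1$ and $z_2\theta\,dz_1$ appear as basis vectors; the span of the remaining basis elements provides a complement. The main subtlety is the syzygy computation and its identification with $R_0/(z_1,z_2)$ via Mayer--Vietoris; the remaining steps are routine local calculations, parallel to (but simpler than) the NS case treated in Lemma \ref{differential-generators-lem}.
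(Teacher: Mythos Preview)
Your proof is correct and in places more careful than the paper's own argument.

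For part (i), you take a genuinely different route. The paper exploits the exact sequence
\[
0 \to \om_{X/S}^2 \xrightarrow{\kappa} j_*\Om_{U/S} \xrightarrow{\delta} \om_{X/S} \to 0
\]
established earlier: since both $\om_{X/S}$ and $\om_{X/S}^2$ are line bundles near a Ramond node, one reads off that $e$ and $d\theta$ form a free basis from the facts that $\kappa$ sends a generator of $\om_{X/S}^2$ to $e-\theta\,d\theta$ and $\delta(d\theta)$ generates $\om_{X/S}$. Your argument bypasses this entirely by trivializing $\Om_{U_i/S}$ directly in terms of $e$ and $d\theta$ and invoking $j_*\OO_U=\OO_X$. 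Your approach is more elementary and self-contained; the paper's is more uniform with the NS case treated in Lemma~\ref{differential-generators-lem}, where one really needs the $\om^2$--$\om$ filtration because $j_*\Om_{U/S}$ is not free.

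For part (ii), the paper simply writes down the map $dz_i\mapsto \pm z_i e$, $d\theta\mapsto d\theta$ and asserts that the kernel and cokernel are as claimed. Your Mayer--Vietoris identification of the syzygy quotient with $R_0/(z_1,z_2)$ is a clean way to fill in what the paper leaves implicit, and your computation agrees with the cyclic $\OO_{X_0}$-module generated by $z_1\,dz_2$ that the paper describes.

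One remark worth flagging: you compute $\KK\cong\CC_0\cong R_0/(z_1,z_2)=\OO_{S_0}[\theta]$, which has $\OO_{S_0}$-rank $1|1$, and you interpret the paper's ``$q_*\OO_{S_0}$'' as the structure sheaf of the Ramond node \emph{subscheme} (the locus $z_1=z_2=0$, which is $0|1$-dimensional over $S_0$). That is the correct object; a literal reading of $q\colon S_0\to X_0$ as a section would give a rank $1|0$ skyscraper, which does not match. Your direct-summand basis, including both $z_2\,dz_1$ and $z_2\theta\,dz_1$, already reflects this. So your interpretation is the right one, and you have in fact been more precise than the statement as written.
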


\begin{proof} The proof is similar to that of Lemma \ref{differential-generators-lem} but is much easier since both $\om_{X/S}$ and $\om_{X/S}^2$ are locally free near
the Ramond node. Namely, $\om_{X/S}$ is freely generated by the section glued from $\frac{1}{z_1}[dz_1|d\th]$ and $-\frac{1}{z_2}[dz_2|d\th]$,
The map $\kappa$ sends a generator of $\om_{X/S}^2$ to $e-\th d\th$.
Since $\de$ sends $d\th$ to a generator of $\om_{X/S}$, we deduce that $e$ and $d\th$ freely generate $j_*\Om_{U/S}$ over $\OO_X$. 

The map $\Om_{X_0/S_0}\to j_*\Om_{U/S}|_{X_0}$ sends $dz_1$ to $z_1e$, $dz_2$
to $-z_2e$, and $d\th$ to $d\th$. This easily implies that its kernel is generated by $z_2dz_1=-z_1dz_2$, while its cokernel is generate by the image of $e$.
\end{proof}

\subsection{Boundary Cartier divisor}\label{boundary-equation-sec}

The boundary divisor of the compactified moduli superspace has codimension $1|0$, however, since the supermoduli space is not reduced,
the structure of the Cartier divisor on the boundary is not automatically given. We claim however that there is a natural such structure.

Let $\pi:X\to S$ be a family of stable supercurves, inducing a surjection to the miniversal space of deformations of each node.
We are going to study the natural $2$-term complex
$$[\Om_{X/S}\to j_*\Om_{U/S}]$$
placed in degrees $-1$ and $0$, where $j:U\to X$ is the open embedding of the complement to the nodes.
Note that this complex is acyclic over $U$. Furthermore, both terms are flat over $S$, so
the line bundle $\Ber\pi_*[\Om_{X/S}\to j_*\Om_{U/S}]$ is well defined. Similarly
to the theory in the even case (see \cite{KM}), we can define a canonical trivialization of this line bundle away from the locus of nodal supercurves.

\begin{prop}\label{Ber-boundary-prop}
The canonical trivialization $c$ of $\BB:=\Ber\pi_*[\Om_{X/S}\to j_*\Om_{U/S}]$ away from the nodal locus extends to a regular section of $\BB$ on $S$ that has form $t\cdot b$ near
a point corresponding to a stable supercurve with one NS or Ramond node, for some trivializing local section $b$ of $\BB$ and some local function $t$ whose reduction
modulo nilpotents gives an equation of the corresponding reduced divisor.
\end{prop}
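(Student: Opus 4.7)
The plan is to work \'etale-locally on $S$ and reduce to the miniversal deformations of the two node types from Sec.\ \ref{nodes-def-sec} via smooth base change. This reduction is valid because $\Om_{X/S}$ and $j_*\Om_{U/S}$ are both $\OO_S$-flat (flatness of the latter follows from the explicit topological $\OO_S$-bases described in Lemmas \ref{differential-generators-lem}(i) and \ref{differential-generators-R-lem}(i)), so the two-term complex $[\Om_{X/S} \to j_*\Om_{U/S}]$, its $R\pi_*$, the Berezinian $\BB$, and the canonical section $c$ on the smooth locus all commute with smooth base change. The smoothness hypothesis on $S$ over the deformation space of each node thus reduces the claim to the two universal families in Sec.\ \ref{univ-def-node-I} and \ref{univ-def-node-II}, where $S = \Spec k[t]$ and everything is given by explicit generators and relations.

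In each universal case the map $\phi\colon \Om_{X/S} \to j_*\Om_{U/S}$ is an isomorphism off the section $q\colon S_0 \hookrightarrow X$ supporting the node, so its kernel $K := \ker\phi$ and cokernel $C := \coker\phi$ are supported on $q(S_0)$, which is finite over $S$. Hence $R\pi_* K = \pi_* K$ and $R\pi_* C = \pi_* C$, and the distinguished triangle $K[1] \to [\Om_{X/S} \to j_*\Om_{U/S}] \to C$ yields a canonical isomorphism
\[
\BB \simeq \Ber(\pi_* K)^{-1} \otimes \Ber(\pi_* C),
\]
under which the canonical trivialization $c$ on $S \setminus S_0$ coincides with the product of the natural trivializations of each factor (since both $\pi_*K$ and $\pi_*C$ vanish off $S_0$).

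It then remains to compute $\pi_*K$ and $\pi_*C$ and compare their Berezinians. For a Ramond node, Lemma \ref{differential-generators-R-lem}(ii) gives $\pi_*K \simeq \pi_*C \simeq i_{0*}\OO_{S_0}$, each of whose Berezinian is $\OO_S(-S_0)$ with canonical section $t$; tracing the trivializations through the identification shows that $c$ extends to $S$ as $t\cdot b$ for a local basis $b$ of $\BB$. For an NS node, Lemma \ref{differential-generators-lem}(ii) presents $\pi_*K$ and $\pi_*C$ as $\OO_{S_0}$-free modules of super-rank $2|2$ with the bases \eqref{KK-basis-eq} and \eqref{CC-basis-eq}; a direct super-linear algebra calculation using the defining relations \eqref{NS-node-def-relations} and the generators \eqref{s1-s2-s0-generators} determines the Berezinian of the comparison map induced by $\phi$ and shows that it equals $t$ up to a unit in $\OO_S$.

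The main obstacle is the explicit super-Berezinian computation in the NS case: the $4\times 4$ super-matrix relating the two bases via $\phi$ involves delicate sign bookkeeping for super-transpositions, and the crux is a nontrivial cancellation between the contribution from the even relation $z_1z_2 = -t^2$ (which naively would contribute a factor of $t^2$) and the contributions from the odd relations $z_1\th_2 = t\th_1$, $z_2\th_1 = -t\th_2$, leaving exactly one power of $t$. The claim that $t$ reduces modulo nilpotents to a local equation of the reduced nodal divisor is then immediate from the description of the miniversal families, since the parameter $t$ is even and its vanishing already cuts out the reduced boundary in each universal model.
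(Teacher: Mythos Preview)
Your kernel--cokernel triangle approach has two gaps that together make the argument fail.

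First, Lemmas \ref{differential-generators-lem}(ii) and \ref{differential-generators-R-lem}(ii) do not compute the kernel and cokernel of $\phi\colon\Om_{X/S}\to j_*\Om_{U/S}$ on $X$; they compute the kernel $\KK$ and cokernel $\CC_0$ of the \emph{restricted} map $\Om_{X_0/S_0}\to (j_*\Om_{U/S})|_{X_0}$ on the central fibre $X_0$. These differ. For instance, in the Ramond miniversal family one checks directly (using that $\OO_S[\![z_1,z_2]\!]/(z_1z_2-t)$ is a domain when $\OO_S=k[t]$) that $\phi$ is injective on all of $X$, so your $K$ is zero, whereas $\KK\ne 0$; and $\pi_*C=\pi_*(\OO_X/(z_1,z_2))\cdot\bar e\simeq\OO_{S_0}[\th]$ has super-rank $1|1$, not $1|0$.

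Second --- and this is the decisive point --- even granting your identifications the conclusion does not follow. Take the Ramond case with your $\pi_*K\simeq\pi_*C\simeq i_{0*}\OO_{S_0}$: if each factor carries canonical section $t$, then under $\BB\simeq\Ber(\pi_*K)^{-1}\otimes\Ber(\pi_*C)$ the canonical section is $t^{-1}\cdot t=1$, a unit, not $t$ as you assert. More generally, over an even base the canonical section of $\Ber(M)$ for a torsion module $M$ of super-rank $a|b$ supported on $(t)$ is $t^{a-b}$ up to a unit; any decomposition through $K$ and $C$ in which the even and odd ranks balance will produce a unit. The same happens with the correct $K=0$, $\pi_*C$ of rank $1|1$: the resolution $[\OO_S^{1|1}\xrightarrow{t}\OO_S^{1|1}]$ has $\ber(t\cdot\id_{1|1})=t/t=1$. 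So the triangle, as you use it, simply cannot detect the factor of $t$.

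The paper's proof takes a different route: it represents $R\pi_*[\Om_{X/S}\to j_*\Om_{U/S}]$ by $[\pi_*\Om_{X/S}(D)\xrightarrow{\iota}\pi_*j_*\Om_{U/S}(D)]$ for $D$ positive and disjoint from the node, and computes $\ber(\iota)$ by building explicit bases $(b_i)$ and $(c_i)$. The essential device (in the NS case) is to lift the basis \eqref{KK-basis-eq} of $\KK\subset\pi_*\Om_{X_0/S_0}(D)$ to elements $b_1,\ldots,b_4$ of $\pi_*\Om_{X/S}(D)$, with $b_1\equiv z_1dz_2$ lifted modulo $t^2$ rather than modulo $t$ (possible because $z_1z_2=-t^2$ in the NS model, so $z_1dz_2$ is still killed by $I_q^2$ on $X_0^{(1)}$). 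One then finds $\iota(b_1)=t^2c_1$ and $\iota(b_i)=tc_i$ for $i=2,3,4$, where the $c_i$ reduce modulo $t$ to the basis \eqref{CC-basis-eq} of $\CC_0$, and extends to full bases with $c_i=\iota(b_i)$ for $i>4$. With parities $(+,-,-,+)$ on $b_1,\ldots,b_4$ this gives $\ber(\iota)=(t^2\cdot t)/(t\cdot t)=t$. The second-order lifting of $b_1$ is exactly what breaks the even/odd symmetry and produces the extra factor of $t$; this information is invisible in $\coker\phi$ alone, which is why your shortcut cannot work.
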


\begin{proof}
We can work in an \'etale neighborhood of a point on $S$ corresponding to a stable supercurve with one node.
Let us first consider the case of an NS node.
We can choose a sufficiently positive relative Cartier divisor $D$ (disjoint from the node) such that $\pi_*[\Om_{X/S}\to j_*\Om_{U/S}]$ will be represented by
the compex of supervector bundles
$$\pi_*(\Om_{X/S}(D))\rTo{\iota} \pi_*(j_*\Om_{U/S}(D)).$$
To understand this map, we first restrict it to $X_0$.
We have an exact sequence on $S$,
$$0\to \pi_*\KK\to \pi_*(\Om_{X_0/S_0}(D))\to \pi_*(j_*\Om_{U/S}(D)|_{S_0})\to \pi_*\CC_0\to 0,$$
where both $\pi_*\KK$ and $\pi_*\CC_0$ are locally free of rank $2|2$.
Furthermore, $\pi_*\KK$ embeds into $\pi_*(\Om_{X_0/S_0}(D))$ as a subbundle, and we have an $\OO_{S_0}$-basis \eqref{KK-basis-eq} of
$\pi_*\KK$. Note that the elements of this basis are defined in terms of the coordinates which only exist locally near the node. However,
they are killed by the ideal $I_q$ of the node, so they can be viewed as sections of $\pi_*\Om_{X_0/S_0}(D)$.


We can choose liftings of the $\OO_{S_0}$-basis \eqref{KK-basis-eq} in $\pi_*\KK$ to $\OO_S$-independent sections $b_1,b_2,b_3,b_4$ in $\pi_*(\Om_{X/S}(D))$, so
that
$$b_1\equiv z_1dz_2 \mod(t), \ b_2\equiv \th_2dz_1 \mod(t), \ b_3\equiv \th_1dz_2 \mod(t), \ b_4\equiv \th_2d\th_1 \mod(t).$$
Then we can extend it to a basis $(b_1,\ldots,b_n)$ of $\pi_*(\Om_{X/S}(D))$ so that the elements $(\iota(b_5),\ldots,\iota(b_n))$
form a basis of a free $\OO_S$-submodule in $\pi_*(j_*\Om_{U/S}(D))$, which projects modulo $(t)$ to a basis of the image of
$\pi_*\Om_{X_0/S_0}(D)$.

Next, we adjust our choice of $b_1$ so that
$$b_1\equiv z_1dz_2 \mod(t^2).$$
Here the right-hand side can be viewed as a section of 
$$\pi_*(\Om_{X/S}(D))/(t^2)=\pi_*(\Om_{X_0^{(1)}/S_0^{(1)}}(D)),$$
where we consider the base change $X_0^{(1)}\to S_0^{(1)}$, where $S_0^{(1)}\sub S$ is given by the ideal $t^2$.
Namely, we observe that near the node one has $I_q^2\sub (z_1,z_2)$, which implies that $z_1dz_2=-z_2dz_1$ is killed by $I_q^2$ on $X_0^{(1)}$
(due to the relation $z_1z_2=-t^2$).
Thus, we can lift $z_1dz_2 \mod(t^2)$ to a section $b_1\in \pi_*(\Om_{X/S}(D))$.

Now let us analyse the images of $b_1,\ldots,b_4$ under $\iota$.
We have $\iota(b_i)=tc_i$ for some $c_i\in \pi_*(j_*\Om_{U/S}(D))$, for $i=2,3,4$.
In the case of $b_1$ we know that $\iota(b_1)=t^2c_1$ for some $c_1\in \pi_*(j_*\Om_{U/S}(D))$.

Furthermore, the restriction of $\iota(b_i)$ to a formal neighborhood $\hat{X}$ of the node is the image
of the restriction of $b_i$ to this formal neighborhood under the similar map for $\hat{X}$.
Since $b_1$ restricts to $z_1dz_2+t^2x$, it maps to 
$$-z_1z_2e+t^2x=t^2(e+x),$$
where $x$ comes from $\Om_{\hat{X}/S}$.
Hence, 
$$c_1\equiv e \mod \Om_{\hat{X}/S}.$$
Similarly, we compute
$$\th_2dz_1=\th_2z_1e=t\th_1e,$$
$$\th_1dz_2=-\th_1z_2e=t\th_2e,$$
$$\th_2d\th_1=tf.$$
Hence, we get
$$c_2\equiv \th_1e\mod \Om_{\hat{X}/S}, \ \ c_3\equiv \th_2e \mod \Om_{\hat{X}/S}, \ \ c_4\equiv f \mod \Om_{\hat{X}/S}.$$
It follows that $c_1,c_2,c_3,c_4$ project to an $\OO_S$-basis of $\pi_*\CC_0$.
Hence, $c_1,c_2,c_3,c_4$ are linearly indpendent over $\OO_S$, and 
can be extended to an $\OO_S$-basis $(c_1,\ldots,c_n)$ of $\pi_*(j_*\Om_{U/S}(D))$ by $c_i=\iota(b_i)$ for $i>4$.
Computing the Berezinian in this basis we get $t$.

The case of Ramond node is considered in a similar way using Lemma \ref{differential-generators-R-lem}.
In this case we can choose bases $(b_1,\ldots,b_n)$ of $\pi_*(\Om_{X/S}(D))$ and $(c_1,\ldots,c_n)$ of $\pi_*(j_*\Om_{U/S}(D))$, such that 
$b_1\equiv z_1dz_2 \mod (t^2)$, $\iota(b_1)=-tc_1$ (since $z_1dz_2=-z_1z_2e=-te$ in $j_*\Om_{U/S}(D)$), and $\iota(b_i)=c_i$ for $i>1$.
This shows that the Berezinian is $t$.
\end{proof}

The above Proposition gives a natural definition of the boundary divisor in the moduli space of stable supercurves $\ov{\SS}$ as an effective Cartier divisor.
More precisely, let $k:\SS'\hra \ov{\SS}$ be the complement to the locus of stable supercurves with more than $1$ node. Since
the even codimension of the latter locus is $>1$ and $\ov{\SS}$ is smooth, for any vector bundle $\VV$ over $\ov{\SS}$, the natural map
$\VV\to k_*k^*\VV$ is an isomorphism.
Let us consider the line bundle
$$\LL:=\Ber\pi_*[\Om_{X/\ov{\SS}}\to j_*\Om_{U/\ov{\SS}}]$$
over $\ov{\SS}$.
By Proposition \ref{Ber-boundary-prop}, the canonical trivialization $c$ of $\LL$ over the smooth locus gives a regular global section of $k^*\LL$
and hence of $\LL\simeq k_*k^*\LL$.
We define the effective Cartier divisor $\De$ to be the vanishing divisor of this global section.
Note that by definition, we have an isomorphism
\begin{equation}\label{boundary-line-bundle}
\OO(\De)\simeq \LL\simeq \Ber R\pi_*(j_*\Om_{U/S})\ot \Ber^{-1} R\pi_*(\Om_{X/S}).
\end{equation}

The proof of Proposition \ref{Ber-boundary-prop} also yields the following characterization of the divisor $\De$.

\begin{lemma}\label{Cartier-char-lem} 
The effective Cartier divisor $\De$ is a unique Cartier divisor supported on the locus of nodal supercurves with the following two properties:
\begin{itemize}
\item for any stable supercurve $X_0$ with a single NS node $q$ there exists an \'etale neighborhood $S$ of the corresponding
point $[X_0]$ in the moduli space and an \'etale neighborhood $X_S$ of
$q$ in the family $X\to S$ induced by the universal family such that $\OO_{X_S}$ is generated over $\OO_S$ by $z_1,z_2,\th_1,\th_2$ subject to relations 
\eqref{NS-node-def-relations}, where $t$ is a local equation of $\De$ on $S$;
\item for any stable supercurve $X_0$ with a single Ramond node $q$ there exists an \'etale neighborhood $S$ of $[X_0]$ in the moduli space and an \'etale neighborhood $X_S$ of
$q$ in the family $X\to S$ induced by the universal family such that $\OO_{X_S}$ is generated over $\OO_S$ by $z_1,z_2,\th$ subject to the relation
$z_1z_2=t$, where $t$ is a local equation of $\De$ on $S$;
\end{itemize}
\end{lemma}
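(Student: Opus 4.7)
The plan is to deduce this characterization from Proposition~\ref{Ber-boundary-prop} together with the construction of the moduli stack in Theorem~A.

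For existence of the two local descriptions, let $[X_0]\in\ov{\SS}$ be a point corresponding to a stable supercurve with a single NS node $q$. The \'etale atlas construction in the proof of Theorem~A, combined with Theorem~\ref{miniversal-Deligne-thm}, produces an \'etale neighborhood $S$ of $[X_0]$ in $\ov{\SS}$ equipped with a smooth morphism $\varphi\colon S\to\Spec(\C[t])$ to the base of the miniversal family of \ref{univ-def-node-II}. Since the latter is algebraic and cut out by the explicit relations \eqref{NS-node-def-relations} in the coordinates $z_1,z_2,\th_1,\th_2$, pulling these coordinates and the parameter $t$ back along $\varphi$ yields, in an \'etale neighborhood $X_S\subset X$ of $q$ in the family $X\to S$ induced by the universal family, generators of $\OO_{X_S}$ over $\OO_S$ satisfying \eqref{NS-node-def-relations}. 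Proposition~\ref{Ber-boundary-prop} applied to this family shows that the canonical section of $\LL\simeq\OO(\De)$ has the form $t\cdot b$ for a local trivialization $b$, so $t$ is a local equation of $\De$ on $S$. The Ramond case is strictly analogous, using the algebraic miniversal family of \ref{univ-def-node-I} with base $\Spec(\C[t])$ and relation $z_1z_2=t$.

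For uniqueness, let $\De'$ be any Cartier divisor on $\ov{\SS}$ supported on the nodal locus and satisfying both properties. At every generic point $\eta$ of an irreducible component of the reduced nodal locus the corresponding supercurve has exactly one node; by the stated property applied to $\De'$ (respectively to $\De$ via the existence part just proved) both divisors have, at $\eta$, local equation equal to some deformation parameter $t$ of that node, and so both have multiplicity one along this component. Since $\ov{\SS}$ is smooth by Corollary~\ref{smooth-def-functor-cor}, a Cartier divisor on $\ov{\SS}$ supported on the nodal locus is determined by its multiplicities along the irreducible components of that locus, whence $\De=\De'$.

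The principal technical point to verify is the existence of \emph{\'etale-local}, rather than merely formal, coordinates realizing the relations \eqref{NS-node-def-relations} and $z_1z_2=t$. This is immediate from the algebraicity of the miniversal families in Sec.\ \ref{nodes-def-sec} together with the smoothness of the classifying morphism from the \'etale atlas of $\ov{\SS}$ to them, so no additional input beyond Proposition~\ref{Ber-boundary-prop} is required.
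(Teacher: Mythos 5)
Your existence argument is fine and is essentially the paper's (the local presentations come from smoothness of the classifying map to the miniversal deformations of Sec.\ \ref{nodes-def-sec}, and Proposition \ref{Ber-boundary-prop} then identifies the deformation parameter $t$ with a local equation of $\De$). The problem is in the uniqueness step.

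The claim that ``since $\ov{\SS}$ is smooth, a Cartier divisor on $\ov{\SS}$ supported on the nodal locus is determined by its multiplicities along the irreducible components of that locus'' is false precisely because $\ov{\SS}$ is a non-reduced superspace. For example, on $\A^{3g-3|2g-2}$ with even coordinate $t$ and odd coordinates $\th_1,\th_2,\ldots$, the ideals $(t)$ and $(t+\th_1\th_2)$ define distinct effective Cartier divisors with the same reduced support and the same multiplicity (length $1$ at the generic point). This is exactly the subtlety the paper flags at the start of Sec.\ \ref{boundary-equation-sec} (``since the supermoduli space is not reduced, the structure of the Cartier divisor on the boundary is not automatically given''), and it is the whole reason the lemma is nontrivial: reducing the comparison of $\De$ and $\De'$ to a comparison of multiplicities throws away the information you need. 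A second, related issue is that you only compare the two divisors at generic points of the components, whereas you must control them on all of $\ov{\SS}$, including the deeper strata where the curve has several nodes, which the two bulleted conditions do not address at all.

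The correct argument runs differently. First, the two conditions determine the actual ideal sheaf of the divisor (not just its multiplicity) at \emph{every} point of $\SS'$, the open locus of supercurves with at most one node: at such a point the local equation is forced to be a generator of the ideal $(t)$ cut out by the standard presentation of the node, so $\De\cap\SS'=\De'\cap\SS'$ as Cartier divisors. Second, to pass from $\SS'$ to $\ov{\SS}$ one uses that the complement of $\SS'$ has even codimension $\ge 2$ in the smooth stack $\ov{\SS}$, so that for any invertible ideal sheaf $\II\sub\OO_{\ov{\SS}}$ one has $\II=k_*k^*\II$ inside $k_*\OO_{\SS'}=\OO_{\ov{\SS}}$, where $k:\SS'\hra\ov{\SS}$ is the inclusion; hence a Cartier divisor is uniquely determined by its restriction to $\SS'$. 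This Hartogs-type extension replaces your multiplicity argument and also disposes of the multi-node strata.
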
 

\begin{proof} These conditions clearly characterize $\De\cap \SS'$. To show uniqueness of an extension to $\ov{\SS}$, we observe that if $\II\sub \OO_{\ov{\SS}}$ is
an invertible ideal sheaf then $\II$ is identified with $k_*k^*\II\sub k_*\OO_{\SS'}=\OO_{\ov{\SS}}$.
\end{proof}

  
\subsection{Another definition of the boundary divisor and the normal crossing property}\label{normal-cross-sec}

Let $X/\ov{\SS}$ be the universal stable supercurve.

\begin{lemma}
The natural morphism $\AA_X\to \pi^*\TT_{\ov{\SS}}$ of sheaves on $X$
factors through a morphism $\AA_X\to \pi^{-1}\TT_{\ov{\SS}}$.
\end{lemma}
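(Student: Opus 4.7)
The plan is to verify the factorization locally on $X$ and then glue. In local coordinates $(t_i)$ on $\ov{\SS}$, the natural map $\TT_X\to\pi^*\TT_{\ov{\SS}}$ sends $v\mapsto\sum_i v(t_i)\ot\partial_{t_i}$, and the image lies in the subsheaf $\pi^{-1}\TT_{\ov{\SS}}\sub\pi^*\TT_{\ov{\SS}}$ precisely when each coefficient $v(t_i)$ is a section of $\pi^{-1}\OO_{\ov{\SS}}\sub\OO_X$. Thus the statement reduces to showing $v(f)\in\pi^{-1}\OO_{\ov{\SS}}$ for every $v\in\AA_X$ and every local section $f$ of $\pi^{-1}\OO_{\ov{\SS}}$.

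On the smooth locus $U\sub X$, I would invoke the standard argument that underlies the exact sequence \eqref{A-X-smooth-case-eq}: for a local generator $D$ of $\DD$ one has $D(f)=0$ since $D$ is vertical and $f$ is pulled back from $\ov{\SS}$, so superconformality $[v,D]\in\DD$ forces $D(v(f))=0$; in the standard coordinates of Lemma \ref{local-descr-smooth-lem} a direct computation gives $\ker D=\pi^{-1}\OO_{\ov{\SS}}$, which yields the claim on $U$.

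At a node $q$ of a fiber $X_s$, the construction of $\ov{\SS}$ in Section \ref{proof-thm} together with Theorem \ref{miniversal-Deligne-thm} supplies an \'etale neighborhood $S$ of $s$ in $\ov{\SS}$ and a regular function $t$ on $S$ such that, after passing to a suitable \'etale neighborhood of $q$ in $X$, the induced family is in the standard local form of Section \ref{univ-def-node-I} (Ramond) or \ref{univ-def-node-II} (NS). I would then apply parts (i) and (iii) of Lemma \ref{KS-super-NS-lem} in the NS case, and of Lemma \ref{KS-super-R-lem} in the Ramond case, which produce the exact sequence
\[
0\to\AA_{X/S}\to\AA_X\to\pi^{-1}\TT_{S,S_0}\to 0.
\]
Since $\pi^{-1}\TT_{S,S_0}\sub\pi^{-1}\TT_{\ov{\SS}}$, this provides the required factorization near $q$.

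The hard part is already contained in the two KS-lemmas: in particular, the equality $\AA_X=\AA_{X,X_0}$, expressing the fact that a superconformal vector field automatically satisfies $v(t)\in(t)$, is a coordinate computation in the explicit presentation of $\hat\OO_{X,q}$ and is carried out there. Once the local factorizations through $\pi^{-1}\TT_{\ov{\SS}}$ are in place on $U$ and in a neighborhood of each node, their uniqueness (as lifts through the inclusion $\pi^{-1}\TT_{\ov{\SS}}\hra\pi^*\TT_{\ov{\SS}}$) allows one to patch them into a global morphism $\AA_X\to\pi^{-1}\TT_{\ov{\SS}}$.
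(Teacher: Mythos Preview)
Your argument is correct, but it takes a longer route than the paper's own proof. The paper dispatches the lemma in two lines: it invokes the identity $\AA_X = j_*\AA_U$ (established in the proof of Theorem~\ref{inf-aut-thm}, using that $\TT_{X/S}\to j_*\TT_{U/S}$ is an isomorphism), together with the factorization \eqref{A-X-smooth-case-eq} on the smooth locus $U$. Since the inclusion $\pi^{-1}\TT_{\ov{\SS}}\hookrightarrow\pi^*\TT_{\ov{\SS}}$ is a monomorphism and the map $\AA_X\to\pi^*\TT_{\ov{\SS}}$ is determined by its restriction to $U$, the factorization through $\pi^{-1}\TT_{\ov{\SS}}$ on $U$ automatically propagates to all of $X$.

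By contrast, you treat the smooth locus and the nodes separately, appealing at the nodes to Lemmas~\ref{KS-super-NS-lem} and~\ref{KS-super-R-lem}. This works, but it overshoots: those lemmas prove the sharper statement that the image even lands in $\pi^{-1}\TT_{S,S_0}$ (i.e.\ that $v(t)\in(t)$), which is irrelevant for the present lemma. Moreover, the map $\AA_{X,X_0}\to\pi^{-1}\TT_{S,S_0}$ those lemmas take as given already presupposes, in effect, the factorization you are trying to establish---and the way the paper justifies that map is precisely via the $j_*$ argument above. So your detour through the KS lemmas ultimately rests on the same idea, just packaged less directly. The upshot: once you have $\AA_X=j_*\AA_U$, no separate analysis at the nodes is needed.
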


\begin{proof}
This follows the fact that $\AA_X=j_*\AA_U$, where $U\sub X$ is the complement to the nodes (see the proof of Theorem \ref{inf-aut-thm}), 
and from the corresponding statement for the smooth locus $U\to \ov{\SS}$ (see Eq. \eqref{A-X-smooth-case-eq}).
\end{proof}

Let us consider the natural morphism 
\begin{equation}\label{boundary-eq}
\pi_*(\AA_X/\AA_{X/\ov{\SS}})\to \TT_{\ov{\SS}}
\end{equation}
induced by the map $\AA_X\to \pi^{-1}\TT_{\ov{\SS}}$.

\begin{prop} The $\OO_{\ov{\SS}}$-module $\pi_*(\AA_X/\AA_{X/\ov{\SS}})$ is locally free of rank $(3g-3|2g-2)$ over $\ov{\SS}$, and the Cartier divisor
associated with the Berezinian of the map \eqref{boundary-eq} coincides with $\De$. Furthermore, $\De$ is a normal crossing divisor.
\end{prop}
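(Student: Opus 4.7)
The plan is to identify $\pi_*(\AA_X/\AA_{X/\ov{\SS}})$ with the subsheaf $\TT_{\ov{\SS}}(-\log \De) \subset \TT_{\ov{\SS}}$, from which all three assertions of the proposition will follow once $\De$ is known to be normal crossing.

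First I would pin down the local structure of $\De$. Fix a geometric point $[X_0] \in \ov{\SS}$ whose fiber has nodes $q_1, \ldots, q_m$. By Proposition \ref{global-def-prop} combined with Theorem \ref{miniversal-Deligne-thm}, there is an \'etale neighborhood of $[X_0]$ in $\ov{\SS}$ whose coordinate ring contains distinguished parameters $t_1, \ldots, t_m$, each $t_i$ pulled back from the miniversal parameter of the node $q_i$. Restricting the universal family to the axis $\{t_j = 0 : j \neq i\}$ gives a one-parameter degeneration whose only node is $q_i$; by Lemma \ref{Cartier-char-lem}, $\De$ is cut out along this axis by $t_i$. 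Running the calculation of Proposition \ref{Ber-boundary-prop} simultaneously at all $m$ nodes — the contribution to the Berezinian of $[\Om_{X/S}\to j_*\Om_{U/S}]$ is local at each node — I would show that near $[X_0]$ the ideal of $\De$ factorizes as $(t_1 t_2 \cdots t_m)$. Hence $\De$ is a normal crossing divisor locally of the form $\{t_1\cdots t_m = 0\}$.

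Next I would identify the pushforward. The natural map $\AA_X \to \pi^{-1}\TT_{\ov{\SS}}$ has kernel $\AA_{X/\ov{\SS}}$, so $\AA_X/\AA_{X/\ov{\SS}}$ injects into $\pi^{-1}\TT_{\ov{\SS}}$. Over the smooth locus of $\pi$, the sequence \eqref{A-X-smooth-case-eq} shows this injection is an equality. Near a node, Lemma \ref{KS-super-NS-lem}(i),(iii) (for an NS node) and Lemma \ref{KS-super-R-lem} (for a Ramond node) assert $\AA_X = \AA_{X,X_0}$ and identify the image of $\AA_X$ in $\pi^{-1}\TT_{\ov{\SS}}$ with $\pi^{-1}\TT_{\ov{\SS}}(-\log\De)$, where $\De$ is cut out locally by the miniversal parameter of that node. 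Applying $\pi_*$ and using that $\pi$ is proper with geometrically connected fibers gives a subsheaf of $\TT_{\ov{\SS}}$ whose sections over an open $U$ consist of those vector fields on $U$ admitting local lifts to $X$; this subsheaf equals $\TT_{\ov{\SS}}$ off $\De$ and equals $\TT_{\ov{\SS}}(-\log\De)$ at every point of $\De$ (since the preimage in $X$ then contains at least one node), so globally
$$\pi_*(\AA_X/\AA_{X/\ov{\SS}}) \simeq \TT_{\ov{\SS}}(-\log\De).$$

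With both pieces in place the rest is formal. Since $\De$ is normal crossing on the smooth superscheme $\ov{\SS}$, the log tangent sheaf is locally free, necessarily of the same rank $(3g-3|2g-2)$ as $\TT_{\ov{\SS}}$; in local coordinates $(t_1,\ldots,t_m,u_\bullet,\eta_\bullet)$ with $\De=\{t_1\cdots t_m=0\}$ it has basis $t_1\partial_{t_1},\ldots,t_m\partial_{t_m},\partial_{u_\bullet},\partial_{\eta_\bullet}$, so the Berezinian of the inclusion into $\TT_{\ov{\SS}}$ equals $t_1\cdots t_m$, recovering $\De$ as a Cartier divisor. The principal obstacle I anticipate is the multi-node refinement of Proposition \ref{Ber-boundary-prop}: one needs to verify that the Berezinian of $[\Om_{X/S}\to j_*\Om_{U/S}]$ factors as a product of local contributions with no cross-terms. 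This can be carried out by choosing the bases $b_\bullet$ and $c_\bullet$ from the proof of that proposition compatibly at different nodes — the kernels and cokernels of the comparison morphism on $X_0$ are supported at the pairwise disjoint points $q_1,\ldots,q_m$, so the local matrix blocks decouple — making the vanishing order along each $\{t_i=0\}$ exactly one.
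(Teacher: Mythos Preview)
Your proposal is correct and follows the same key identification as the paper: $\pi_*(\AA_X/\AA_{X/\ov{\SS}})\simeq \TT_{\ov{\SS}}(-\log\De)$, with the Berezinian of the inclusion then being $t_1\cdots t_m$. The difference is one of ordering and economy.

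You first establish that $\De$ is normal crossing by a multi-node refinement of Proposition~\ref{Ber-boundary-prop}, then use that to make sense of $\TT_{\ov{\SS}}(-\log\De)$ and carry out the identification. The paper reverses this and thereby avoids the refinement entirely. It takes an \'etale neighborhood $B$ of $[X_0]$ with a smooth map $(t_1,\ldots,t_k):B\to\A^k$ coming from the miniversal deformations of the nodes, sets $D=(t_1\cdots t_k=0)$, and shows directly that $\pi_*\ov{\AA}_X=\TT_{B,D}$ by two inclusions: restricting to an open neighborhood $U_i$ of the $i$th node gives $\pi_*\ov{\AA}_X\subset\pi_{i*}(\ov{\AA}_X|_{U_i})=\TT_{B,D_i}$ for each $i$, hence $\pi_*\ov{\AA}_X\subset\bigcap_i\TT_{B,D_i}=\TT_{B,D}$; conversely $\pi^{-1}\TT_{B,D}\subset\ov{\AA}_X$ is checked locally over each $U_i$, giving the reverse inclusion after applying $\pi_*$. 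Once $\pi_*\ov{\AA}_X=\TT_{B,D}$, the Berezinian of \eqref{boundary-eq} is $t_1\cdots t_k$, and agreement with $\De$ over $\SS'$ forces the associated Cartier divisor to be $\De$ (by the uniqueness in Lemma~\ref{Cartier-char-lem}). The normal crossing property of $\De$ is then a consequence, not an input.

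So your ``principal obstacle'' (the multi-node factorization of the Berezinian of $[\Om_{X/S}\to j_*\Om_{U/S}]$) is genuine extra work that the paper's ordering sidesteps. Your argument is sound, but the paper's route is shorter. One small imprecision in your step 2: near the node $q_i$ the image of $\AA_X$ in $\pi^{-1}\TT_{\ov{\SS}}$ is $\pi^{-1}\TT_{\ov{\SS},D_i}$, not $\pi^{-1}\TT_{\ov{\SS}}(-\log\De)$; the full log condition only appears after taking $\pi_*$ and intersecting over all nodes, which your ``admitting local lifts'' phrasing captures implicitly but should be made explicit.
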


\begin{proof}
Note that \eqref{boundary-eq} is an isomorphism over the locus of smooth supercurves.
By Lemma \ref{KS-super-NS-lem}, in an \'etale neighborhood $S$ of a point $[X_0]$, where $X_0$ has a single NS node, the image of the morphism
$\AA_X\to \pi^{-1}\TT_S$ is $\pi^{-1}\TT_{S,S_0}$, where $S_0$ is the divisor $(t)$. 
Hence, in this case \eqref{boundary-eq} is the embedding $\TT_{B,B_0}\hra \TT_B$, and the Berezinian of this morphism is $t$.
By Lemma \ref{KS-super-R-lem}, the similar statement holds in a neighborhood of a curve with a single Ramond node.
This implies that $\pi_*(\AA_X/\AA_{X/\ov{\SS}})$ is locally free over $\SS'\sub \ov{\SS}$, and the divisor of the Berezinian of \eqref{boundary-eq} over $\SS'$ is
$\De\cap \SS'$.
 


Nex, let us study the situation near a point $[X_0]$ of $\ov{\SS}$, such that $[X_0]$ has several nodes $q_1,\ldots,q_k$.
Using the fact that the map from deformations of $[X_0]$ to the product of deformation spaces of the nodes is smooth, we see that there exists
an \'etale neighborhood $B$ of $[X_0]$, together with a smooth map $(t_1,\ldots,t_k):B\to \A^k$ such that the function $t_i$ corresponds the induced deformation of
the node $q_i$.

Let us consider the normal crossing divisor $D=(t_1\ldots t_k=0)$ in $B$, and let $\TT_{B,D}\sub \TT_B$ be the subsheaf of derivations preserving $D$.
Note that $\TT_{B,D}=\cap \TT_{B,D_i}$ where $D_i$ is the divisor $t_i=0$.

Let us consider the restriction $\pi:X\to B$ of the universal family and set 
$$\ov{\AA}_X:=\AA_X/\AA_{X/B}\sub \pi^*\TT_B.$$
Let $U_i$ be an open neighborhood of $q_i$ in $X$, and let $\pi_i=\pi|_{U_i}$. Then, as we have seen above,
we have
$$\ov{\AA}_X|_{U_i}=\pi_i^{-1}\TT_{B,D_i}.$$
Thus, for each $i=1,\ldots,k$, we have an inclusion
$$\pi_*(\ov{\AA}_X)\sub \pi_{i*}(\ov{\AA}_X|_{U_i})=\TT_{B,D_i}\sub \TT_B.$$
Hence, we deduce the inclusion
$$\pi_*(\ov{\AA}_X)\sub \TT_{B,D}.$$

On the other hand, we claim that there is an inclusion
$$\pi^{-1}\TT_{B,D}\sub \ov{\AA}_X.$$
Indeed, it is enough to check this over each $U_i$. But then we have
$$\pi^{-1}\TT_{B,D}|_{U_i}\sub \pi_i^{-1}\TT_{B,D_i}=\ov{\AA}_X,$$
as required.
Hence, passing to $\pi_*(?)$, we derive the inclusion
$$\TT_{B,D}=\pi_*\pi^{-1}\TT_{B,D}\sub \pi_*\ov{\AA}_X.$$
as claimed. 

Thus, we get $\pi_*\ov{\AA}_X=\TT_{B,D}$. 
Hence, $\pi_*\ov{\AA}_X$ is locally free and the Berezinian of the morphism $\TT_{B,D}\to \TT_B$ has the required form.
\end{proof}






\subsection{NS and Ramond boundary components as effective Cartier divisors}\label{De-NS-R-sec}

Recall that we denote by $\SS'\sub \ov{\SS}$ the open locus of stable supercurves with at most one node.
Note that $\De\cap \SS'$ is the disjoint union of two Cartier divisors, one supported on stable supercurves with one NS node and another supported on
those with a Ramond node. We want to extend these two divisors on $\SS'$ to effective Cartier divisors $\De_{NS}$ and $\De_R$ on $\ov{\SS}$ such that
$\De=\De_{NS}+\De_R$.

First, we are going to define the divisor $\De_{NS}$ giving the NS component.
For this let us consider the structure map 
$$\de:\Om_{X/\ov{\SS}}\to \om_{X/\ov{\SS}}(R).$$
As we have seen before, it is surjective away from the nodes.
It is also surjective on Ramond nodes.
Let us consider the ideal sheaf on $\ov{\SS}$,
$$\II_\de:=\Ann \pi_*\coker(\de),$$
supported on the locus of stable supercurves with at least one NS node.

\begin{prop} 
(i) The ideal sheaf $\II_\de$ defines an effective Cartier divisor $\De_{NS}$ on $\ov{\SS}$, which coincides with $\De$
in a neighborhood of any point $[X_0]$ corresponding to a stable supercurve $X_0$ with only NS nodes.

\noindent
(ii) There exists a unique effective Cartier divisor $\De_R$ supported on the locus of stable supercurves with at least one Ramond node,
such that 
$$\De=\De_{NS}+\De_R.$$
If $X_0$ is a stable supercurve with NS nodes $q_1,\ldots,q_r$ and Ramond nodes $q_{r+1},\ldots,q_k$, then there exists an \'etale neighborhood $B$ of
$[X_0]$ in $\ov{\SS}$ with a smooth morphism $t_1,\ldots,t_k:B\to \A^k$ such that $t_1\ldots t_r$ is an equation of $\De_{NS}$ and $t_{r+1}\ldots t_k$ is an equation
of $\De_R$.
\end{prop}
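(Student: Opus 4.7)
The plan is to reduce both assertions to explicit local computations of $\coker(\de)$ using the standard local models of the two node types from Section~\ref{nodes-def-sec}, and then to glue these local statements using the étale-local description of $\ov{\SS}$ provided by Lemma~\ref{Cartier-char-lem}.

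First I would locate the support of $\coker(\de)$. On the smooth locus $\de$ surjects onto $\om_{U/\ov{\SS}}(R)$ by the explicit formulas in Section~\ref{local-descr-sec}, and at a Ramond node $\om_{X/\ov{\SS}}$ is locally free with $\de(\th)=\bb$ a local generator, so $\de$ is again surjective. Hence $\coker(\de)$ vanishes away from the NS nodes. At an NS node, using the standard local coordinates $(z_1,z_2,\th_1,\th_2)$ over an étale base with boundary equation $t$, and the presentation of $\om_{X/\ov{\SS}}$ from Lemma~\ref{NS-om-lem} in terms of generators $s_1,s_2,s_0$, we have $\de(\th_i)=s_i$, so $\im(\de)$ is the $\OO_X$-submodule generated by $s_1,s_2$. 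The relations $\th_2 s_1 = \th_1 s_2 = t s_0$, $z_1 s_0 = \th_1 s_1$, $z_2 s_0 = -\th_2 s_2$ and $\th_i s_0 = 0$ show that the class $\bar s_0\in \coker(\de)$ is annihilated by $(z_1,z_2,\th_1,\th_2,t)$; checking against the topological basis in Lemma~\ref{NS-om-lem} confirms that $s_0$ itself is not in $\im(\de)$, and more generally $\OO_{\ov{\SS}}\cdot s_0\cap \im(\de) = (t)\cdot s_0$. Combined with $z_1 z_2=-t^2$, this yields $\coker(\de)\simeq q_*(\OO_{\ov{\SS}}/(t))$ in a formal neighborhood of the NS node.

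Passing to an étale neighborhood $B$ of $[X_0]\in \ov{\SS}$ with coordinates $t_1,\dots,t_k$ and NS nodes $q_1,\dots,q_r$ as in Lemma~\ref{Cartier-char-lem}, these local statements give $\pi_*\coker(\de)\simeq \bigoplus_{i=1}^r \OO_{D_i}$ on $B$, where $D_i=(t_i=0)$. Since $t_1,\dots,t_r$ form a regular sequence on $B$, the annihilator of this direct sum is the principal ideal $\bigcap_i(t_i)=(t_1\cdots t_r)$, which defines the effective Cartier divisor $\De_{NS}$ and proves the first assertion of (i); when $X_0$ has only NS nodes, $r=k$ and this local equation coincides with the local equation of $\De$ from Lemma~\ref{Cartier-char-lem}, completing (i). For (ii), the same lemma gives $\De$ the local equation $t_1\cdots t_k$, so $\De-\De_{NS}$ is an effective Cartier divisor $\De_R$ with local equation $t_{r+1}\cdots t_k$, manifestly supported on the Ramond locus, with uniqueness immediate from being the complementary divisor.

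The main obstacle is the computation of $\coker(\de)$ at an NS node: one must invoke the identity $t s_0 = \th_2 s_1 = \de(\th_2\,d\th_1)$ to obtain $t\bar s_0=0$, and carefully trace the topological basis of Lemma~\ref{NS-om-lem} to confirm that the cokernel is exactly $\OO_{\ov{\SS}}/(t)$ rather than a proper quotient.
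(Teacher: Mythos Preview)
Your proposal is correct and follows essentially the same approach as the paper: compute $\coker(\de)$ locally at each NS node using the generators-and-relations description of $\om_{X/S}$ from Lemma~\ref{NS-om-lem}, deduce that $\pi_*\coker(\de)\simeq\bigoplus_{i=1}^r \OO_B/(t_i)$ on an \'etale chart, and conclude that its annihilator is $(t_1\cdots t_r)$. One small remark: Lemma~\ref{Cartier-char-lem} is stated only for curves with a single node, so for the multi-node \'etale chart you should instead cite the normal-crossing description in Section~\ref{normal-cross-sec} (or directly Proposition~\ref{global-def-prop}).
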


\begin{proof} Let $X_0$ be a stable supercurve with NS nodes $q_1,\ldots,q_r$ and Ramond nodes $q_{r+1},\ldots,q_k$.
Consider an \'etale neighborhood $B$ of $[X_0]$, equipped with a smooth map $(t_1,\ldots,t_k):B\to \A^k$,
such that $t_i$ gives the universal deformation of the node $q_i$.  
Then the ideal of $\De$ is generated by $t_1\ldots t_k$. 

On the other hand, using the description of $\om_{X/S}$ by generators and relations (see Lemma \ref{NS-om-lem}), we see that in the formal neighborhood of each NS node $q_i$,
$\coker(\de)$ is generated by $s_0$ subject to the relations 
$$\th_1s_0=\th_2s_0=z_1s_0=z_2s_0=t_is_0=0.$$
It is easy to deduce from this that the annihilator of $\pi_*\coker(\de)$ is generated by $t_1\ldots t_r$.
This implies all the assertions.
\end{proof}

\subsection{The NS node boundary components}\label{NS-boundary-gluing-sec}

We will use the following construction of {\it gluing two superschemes along a closed subscheme}.

Assume that $X$, $X'$ and $Y$ are superschemes, $i:Y\to X$ and $i:Y\to X'$ are closed embeddings.
We want to define a new superscheme $Z$ by gluing $X$ and $X'$ along $Y$. As a topological space we
can define $Z$ to the usual gluing of the topological spaces of $X$ and $X'$ along $Y$, so that we have
closed embeddings $j:X\to Z$, $j':X'\to Z$, so that $k:=j_1\circ i=j_2\circ i$. Hence, we have two homomorphisms
of sheaves of rings
$$\phi:j_*\OO_X\to k_*\OO_Y, \ \ \phi':j'_*\OO_{X'}\to k_*\OO_Y,$$
and we define $\OO_X$ to be the subsheaf of $j_*\OO_X\oplus j'_*\OO_{X'}$, namely, the preimage of the diagonal
$k_*\OO_Y\sub k_*\OO_Y\oplus k_*\OO_Y$ under $\phi\oplus \phi'$. 

We can apply this to gluing two stable supercurves along NS-punctures.

Suppose $X/S$ and $X'/S$ is a pair of stable supercurves with NS-punctures $P\sub X$ and $P'\sub X'$.
We have an isomorphism $P\simeq S\simeq P'$, so we can define a new superscheme $Z/S$ by gluing $X$ and $X'$
along $P\simeq P'$.

\begin{lemma}
The glued superscheme $Z/S$ has a natural stable supercurve structure such that the derivation $\de$ on $Z$ is defined as the composition
$$\OO_Z\to j_*\OO_X\oplus j'_*\OO_{X'}\rTo{(j_*\de,j'_*\de')} j_*\om_{X/S}\oplus j'_*\om_{X'/S}\to \om_{Z/S}.$$
\end{lemma}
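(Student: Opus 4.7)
The plan is to reduce both parts of the claim---that $Z/S$ is a stable supercurve with punctures and that the given composition defines the required derivation $\de_Z$---to an \'etale-local computation in the formal neighborhood of the glued section $P\simeq P'\sub Z$. Away from that locus, $Z\setminus P=(X\setminus P)\sqcup(X'\setminus P')$ is a disjoint union of smooth supercurves (possibly with Ramond punctures), and every required structure---smoothness of dimension $1|1$, Ramond divisors, relative Cohen-Macaulay property, and the derivation $\de$---is simply inherited from $X/S$ and $X'/S$. All the new content therefore lies in a neighborhood of the newly-formed node.

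To produce \'etale-local coordinates near $P$, we combine Lemma \ref{local-descr-smooth-lem}(i) with Lemma \ref{NS-div-corr-lem}. The latter identifies the ideal of $P$ with $(z_1,\th_1-a)$ for some odd $a\in \OO_S$, and after absorbing $a$ by a superconformal automorphism of the \'etale-local model (of the type produced in Lemma \ref{smooth-curve-trivial-def-lem}(iii)) we may assume that the puncture is cut out by $(z_1,\th_1)$ while $\DD_X$ is generated by $\pa_{\th_1}+\th_1\pa_{z_1}$. Analogous coordinates $(z_2,\th_2)$ are chosen on $X'$. Viewing $z_1,\th_1$ (resp.\ $z_2,\th_2$) as elements of $\OO_Z$ by extension by zero to the opposite branch, the fibered-product description of $\OO_Z$ yields the explicit presentation
\[
\OO_Z\simeq \OO_S\{z_1,z_2,\th_1,\th_2\}/(z_1z_2,\ z_1\th_2,\ z_2\th_1,\ \th_1\th_2),
\]
which is exactly the local NS-node model from Sec.\ \ref{local-descr-sec}. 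This gives flatness and the relative Cohen-Macaulay property for $Z/S$ and identifies $Z^{\red}$ as the nodal curve obtained by gluing $X^{\red}$ and $X'^{\red}$ at the two marked points.

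Once the local model is in hand, the derivation falls out: by Lemma \ref{NS-om-lem}, $\om_{Z/S}$ near the node is generated by $s_1,s_2,s_0$ subject to the NS-node relations (specialized to $t=0$), and the natural map $j_*\om_{X/S}\oplus j'_*\om_{X'/S}\to\om_{Z/S}$ identifies the two summands with the subsheaves generated by $s_1$ and $s_2$ respectively---the extra generator $s_0$ is exactly the new section supported at the node. The composition defining $\de_Z$ is therefore well-defined, and its values $\de_Z(z_i)=\th_i s_i$, $\de_Z(\th_i)=s_i$ respect the four defining relations of the node using $z_i s_j=\th_i s_j=0$ for $i\neq j$. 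On the smooth locus $\de_Z$ tautologically restricts to $\de_X$ and $\de_{X'}$, so the induced structure on $U_Z$ is the correct smooth supercurve with Ramond punctures, and the fiberwise isomorphism $\de^{-}\colon \OO_{Z_s}^{-}\rTo{\sim}\om_{Z_s}^{-}(R_{Z_s})$ follows branch-by-branch. Stability of $Z^{\red}$ with its inherited markings is the standard fact that two stable pointed curves glued at a pair of marked points produce a stable nodal curve.

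The main obstacle is the superconformal coordinate normalization in the second paragraph---passing from the ``generic'' puncture ideal $(z_1,\th_1-a)$ to $(z_1,\th_1)$ while keeping $\DD$ in standard form---since this is what makes the fibered product match the NS-node presentation cleanly. Once that normalization is carried out, the remaining verifications reduce to routine bookkeeping with the generators and relations of Lemma \ref{NS-om-lem}.
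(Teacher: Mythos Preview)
Your argument is correct, but it follows a different route from the paper's.  The paper does not set up explicit \'etale-local coordinates at the node; instead it observes that the only nontrivial condition in the definition of a stable supercurve that is not immediate is the fiberwise isomorphism $\de^-:\OO_{Z_s}^-\rTo{\sim}\om_{Z_s}^-$, and checks this over a point by passing to the underlying generalized spin curve.  There one has $\OO_X=\OO_C\oplus L$, $\OO_{X'}=\OO_{C'}\oplus L'$, the glued curve satisfies $\OO_Z^-\simeq j_*L\oplus j'_*L'$, and the statement becomes the well-known fact that the push-forward of a spin structure through a node is again a generalized spin structure, with $\de^-$ realizing the isomorphism $j_*L\oplus j'_*L'\to\und{\Hom}(j_*L\oplus j'_*L',\om_{Z_0})$.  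This is shorter and avoids your ``main obstacle'' entirely.

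Your approach, by contrast, carries out the local computation in full: you normalize coordinates so that the punctures sit at the origin, recognize the fibered product as the standard NS-node presentation, and then read off the derivation from the generators-and-relations description of $\om_{Z/S}$.  This is more laborious but also more self-contained, and it makes flatness and the Cohen--Macaulay property explicit rather than leaving them implicit.  Two small citation issues: the coordinate normalization moving the puncture to $(0,0)$ is just the superconformal translation $(z,\th)\mapsto(z-a\th,\th-a)$, valid over any base---Lemma \ref{smooth-curve-trivial-def-lem}(iii) is about Artin rings and not quite what you want here; and Lemma \ref{NS-om-lem} is stated for the miniversal deformation family, so strictly speaking you should invoke the $t=0$ description from Sec.\ \ref{local-descr-sec} together with flat base change.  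Neither affects the validity of your argument.
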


\begin{proof}
If $U\sub X$, $U'\sub X'$ are smooth loci, then $(U\setminus P)\sqcup (U'\setminus P')$ is an open subset of $Z$, which is a smooth supercurve over $S$.
Next, we need to check that in the case when $S$ is a point, $\de^-$ induces an isomorphism of $\OO_Z^-$ with $\om_Z^-$.
Since the base is even, we have identifications
$$\OO_X=\OO_C\oplus L, \ \ \OO_{X'}=\OO_{C'}\oplus L',$$
and the smooth marked points $P\sub C$, $P'\sub C'$, so that the embedding of $P$ into $X$ corresponds to the projection
$$\OO_X\to \OO_C\to \OO_P.$$
Note that $L$ and $L'$ are generalized spin-structures, i.e., we have an isomorphism $L\rTo{\sim}\und{\Hom}(L,\om_C)$ (which corresponds to $\de^-$ on $X$), 
and similarly, for $L'$. Furthermore, we know that $L$ (resp., $L'$) is locally free near $P$ (resp., $P'$).

The glued superscheme $Z$ has the underlying nodal curve $Z_0$, which is glued from $C$ and $C'$ along $P\simeq P'$,
and 
$$\OO_Z\simeq \OO_{Z_0}\oplus j_*L\oplus j'_*L'.$$
It is well known that the natural map
$$j_*L\oplus j'_*L'\to \und{\Hom}(j_*L\oplus j'_*L',\om_{Z_0})$$
is an isomorphism, i.e., $j_*L\oplus j'_*L'$ is a generalized spin-structure.
It is easy to see that above map is precisely $\de^-$, so this proves that $Z/S$ is a stable supercurve.
\end{proof}

Let $\ov{\SS}_{g;m,n}$ denote the moduli superspace of stable supercurves of genus $g$ with $m$ NS marked points and $n$ Ramond marked points.
Then we can apply the above gluing construction to the pair of families of stable supercurves over
$\ov{\SS}_{g_1;m_1+1,n_1}\times \ov{\SS}_{g_2;m_2+1,n_2}$, pulled back from each factor and using the last NS puncture on each of them.
This leads to a morphism
\begin{equation}\label{NS-sep-node-morphism}
\ov{\SS}_{g_1;m_1+1,n_1}\times \ov{\SS}_{g_2;m_2+1,n_2}\to \ov{\SS}_{g_1+g_2;m_1+m_2,n_1+n_2}.
\end{equation}

Similarly, if $X/S$ is a stable supercurve with two disjoint NS-punctures $P,P'\sub X$, then we can glue $P$ with $P'$ and get a new stable supercurve with
a non-separating node. This leads to a morphism 
\begin{equation}\label{NS-nonsep-node-morphism}
\ov{\SS}_{g;m+2,n}\to \ov{\SS}_{g+1;m,n}
\end{equation}

\begin{lemma}\label{NS-boundary-tangent-emb-lem}
Both morphisms \eqref{NS-sep-node-morphism} and \eqref{NS-nonsep-node-morphism} induce embeddings of codimension $1|0$ on tangent spaces and factor
through the divisor $\De_{NS}$. Furthermore, near a stable supercurve with a single separating (resp., non-separating)
NS node, the divisor $\De_{NS}$ coincides with the schematic image of \eqref{NS-sep-node-morphism} (resp., \eqref{NS-nonsep-node-morphism}).
\end{lemma}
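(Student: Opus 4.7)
I treat the separating gluing \eqref{NS-sep-node-morphism} in detail; the non-separating case \eqref{NS-nonsep-node-morphism} is entirely parallel, with the partial normalization being a single connected curve in $\ov{\SS}_{g;m+2,n}$. Write $\Phi\colon B\to \ov{\SS}$ for $B=\ov{\SS}_{g_1;m_1+1,n_1}\times \ov{\SS}_{g_2;m_2+1,n_2}$ and $\ov{\SS}=\ov{\SS}_{g_1+g_2;m_1+m_2,n_1+n_2}$. For $b\in B$ let $[X]=\Phi(b)$ and let $q\in X$ be the NS node produced by the gluing. I will verify three subclaims: the factoring through $\De_{NS}$, the codimension-$1|0$ embedding on tangent spaces, and the local schematic-image identification.

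For the factoring, the gluing produces a family $\XX\to B$ in which $q$ extends to a relative NS node $q(B)\sub \XX$; the formal completion of $\OO_\XX$ along $q(B)$ is isomorphic to $\OO_B[\![z_1,z_2,\th_1,\th_2]\!]$ modulo the $t=0$ specialization of the relations \eqref{NS-node-def-relations}. Pick an \'etale neighborhood $U\to \ov{\SS}$ of $[X]$ as in Lemma \ref{Cartier-char-lem}, with local parameters $t_1,\dots,t_r$ for the NS nodes of $X$, so that $\De_{NS}\cap U=V(t_1\cdots t_r)$, and let $t_{i_0}$ correspond to $q$. The universal property of the miniversal deformation of $q$ (Theorem \ref{miniversal-Deligne-thm}) applied to the trivial deformation over $B$ forces the classifying map $B\to\Spec k[t_{i_0}]$ to factor through the origin, giving $\Phi^{*}t_{i_0}=0$ and hence $\Phi^{*}(t_1\cdots t_r)=0$. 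Running this at every $b\in B$ yields $\Phi^{-1}\II_{\De_{NS}}\cdot \OO_B=0$.

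For the tangent-space statement, restrict to $b$ with $X$ having only the single NS node $q$. Proposition \ref{global-def-prop} provides
$$0\to H^1(X,\AA_{X,P_{\bullet},R_{\bullet}})\to T_{[X]}\ov{\SS}\to T_{\Def(\hat\OO_{X,q})}\to 0,$$
the rightmost term being $1|0$-dimensional by Theorem \ref{miniversal-Deligne-thm}. Proposition \ref{no-inf-aut-prop}(i) then gives $\AA_{X,P_{\bullet},R_{\bullet}}\simeq \rho_{*}(\AA_{X_1,\ldots,Q_1,\ldots}\oplus \AA_{X_2,\ldots,Q_2,\ldots})$ at the NS node; combined with Proposition \ref{global-def-prop} applied to each factor $(X_i,Q_i,\ldots)$, this identifies $H^1(X,\AA_X)\simeq T_bB$. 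A Cech gluing argument on a cover of $\wt{X}=X_1\sqcup X_2$ adapted to $Q_1\cong Q_2$ then identifies $d\Phi|_b$ with the kernel embedding in the displayed sequence, so $d\Phi|_b$ is an embedding of codimension $1|0$ with image $T_{[X]}\De_{NS}$ (the latter being cut out by $t_{i_0}$ locally).

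For the schematic image, from Proposition \ref{inf-aut-smooth-punct-prop} and Riemann-Roch on the underlying spin curve one extracts $\dim \ov{\SS}_{g,n_{NS},n_R}=(3g-3+n_{NS}+n_R\mid 2g-2+n_{NS}+n_R/2)$, whence $\dim B=\dim \De_{NS}$; moreover $\De_{NS}$ is smooth near $[X]$ by Section \ref{normal-cross-sec}. Together with the previous step, $B\to \De_{NS}$ is an \'etale morphism at $b$ between smooth proper stacks of the same superdimension, hence a finite \'etale cover onto its image, which is clopen in $\De_{NS}$ and meets the (unique, local) irreducible component through $[X]$. Since $\De_{NS}$ is reduced locally, the scheme-theoretic image equals that component. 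The main technical obstacle in this plan is the Cech gluing step that identifies $d\Phi|_b$ with the canonical embedding in \eqref{def-curve-nodes-ex-seq}: the gluing entangles the odd directions at $Q_1\cong Q_2$, so one must carefully track Cech representatives on $\wt X$ and their glued counterparts on $X$ through the non-$\OO$-linear sheaf isomorphism of Proposition \ref{no-inf-aut-prop}(i), using the formal NS-node model of Sec.\ \ref{super-KS-NS-sec}; the remaining steps are bookkeeping given the deformation theory already developed.
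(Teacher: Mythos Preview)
Your plan follows essentially the same route as the paper: both use the exact sequence \eqref{def-curve-nodes-ex-seq}, the identification $\AA_X\simeq\rho_*\AA_{\wt X}$ from Proposition~\ref{no-inf-aut-prop}(i), and the observation that the node--deformation parameter $t$ pulls back to zero on the glued family.

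Two points of comparison are worth making. First, the paper does not restrict to points $b$ where $X$ has only the single node $q$; it handles all $b$ at once by exhibiting a morphism of the exact sequences \eqref{def-curve-nodes-ex-seq} for $\wt X$ (with nodes $q_1,\dots,q_m$) and for $X$ (with nodes $q,q_1,\dots,q_m$), in which the left vertical arrow $H^1(\wt X,\AA_{\wt X})\to H^1(X,\AA_X)$ is an isomorphism and the right vertical arrow is the obvious codimension-$1|0$ inclusion. The middle arrow is then forced to be an embedding of codimension $1|0$. This dissolves the difficulty you flag: once one accepts that $d\Phi|_b$ \emph{is} the middle vertical arrow of this diagram (which is just the functoriality of deformation functors under the gluing), no explicit Cech tracking through the formal NS-node model is needed. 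Your restriction to the single-node locus therefore leaves a small gap in the tangent-space statement, which the diagram argument closes for free.

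Second, for the schematic-image assertion the paper is more direct than your \'etale-cover argument: having shown that the image of $d\Phi|_b$ is exactly $\ker(dt)=T_{[X]}\De_{NS}$ and that $B$ is smooth, one immediately gets that $\Phi$ factors through $\De_{NS}$ and is formally \'etale onto it near $b$, so the schematic image is $\De_{NS}$ locally. No global dimension count or properness is invoked.
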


\begin{proof} 
The idea is to use exact sequences \eqref{def-curve-nodes-ex-seq}. 
Let $X$ be a stable supercurve with punctures and a fixed NS-node $q\in X$, and let $\rho:\wt{X}\to X$ be the normalization at $q$, equipped with the two NS punctures over $q$.
Then by Proposition \ref{no-inf-aut-prop}(i), we have
$$\AA_X\simeq \rho_*\AA_{\wt{X}}$$
(where we take into account all the punctures on both sides).
Hence, the natural map
$$H^1(\wt{X},\AA_{\wt{X}})\to H^1(X,\AA_X)$$
is an isomorphism.

Let $q_1,\ldots,q_m$ be the nodes of $X$ different from $q$. Then we have a morphism of exact sequences
\begin{diagram}
0&\rTo{}&H^1(\wt{X},\AA_{\wt{X}})&\rTo{}& T_{\Def(\wt{X})} &\rTo{}& \bigoplus_{i=1}^m T_{\Def(\OO_{X,q_i})} &\rTo{}& 0\\
&&\dTo{}&&\dTo{}&&\dTo{}\\
0&\rTo{}&H^1(X,\AA_X)&\rTo{}& T_{\Def(X)} &\rTo{}& T_{\Def(\OO_{X,q})}\oplus\bigoplus_{i=1}^m T_{\Def(\OO_{X,q_i})} &\rTo{}& 0
\end{diagram}
in which the right vertical arrow is the natural inclusion of codimension $1|0$. Since the left vertical arrow is an isomorphism, the assertion about the
map of tangent spaces follows.

To check that our morphisms factor through $\De_{NS}$, we have to check that the pull-back of the morphism $\OO\to \OO(\De_{NS})$ is zero.
Since the sources are smooth, it is enough to check this generically, so we can consider a neighborhood of the point corresponding to a supercurve with a single
NS node. Then we know that the local equation of $\De_{NS}$ will be $(t=0)$, where $t$ corresponds to the map to the universal deformation of the node.
Since the deformation of the node given by the source of the maps \eqref{NS-sep-node-morphism} and \eqref{NS-nonsep-node-morphism} is trivial, this proves that the pull-back of $t$ with respect to one of these maps is zero. Using the fact that the morphism on tangent spaces is an embedding with the image which is the orthogonal to $dt$, we deduce
that near this point $\De_{NS}$ is the schematic image of the morphism.
\end{proof}

\subsection{The Ramond node boundary components}\label{R-boundary-gluing-sec}

Gluing along two Ramond punctures is more subtle.

First, we point out a certain ``residual structure" we have on each Ramond puncture.
Let $R\sub X$ be a Ramond puncture in a stable supercurve $X/S$.

\begin{lemma}\label{R-puncture-structure-lem}
The map
$$\de|_R:\Om_{X/S}|_R\to \om_{X/R}(R)|_R\simeq \om_{R/S}$$
factors through the canonical projection $\Om_{X/S}|_R\to \Om_{R/S}$, and induces
an isomorphism
$$\ov{\de}_R:\Om_{R/S}\to \om_{R/S}.$$ 
This induces a trivialization of $(\Om_{R/S})^{\ot 2}$ (or equivalently, of $\om_{R/S}^{\ot 2}$)
which is locally given by $(d\th)^2$, for $\th$ such that
$\OO_R=\OO_S[\th]$, 
\end{lemma}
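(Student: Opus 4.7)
The plan is to prove the three claims in sequence by local computation, using standard coordinates near $R$ provided by Lemma \ref{local-descr-smooth-lem}(ii). Étale-locally choose coordinates $(z,\th)$ with $R=(z=0)$ and $\de(f)=(\pa_\th+\th z\pa_z)(f)\cdot [dz/z|d\th]$; all three assertions then reduce to explicit formulas in these coordinates.

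For the factorization and the isomorphism, I would observe that $I_R=(z)$ and compute $\de(z)=\th z\cdot [dz/z|d\th]\in I_R\cdot \om_{X/S}(R)$, so $\de(z)|_R=0$. By the conormal exact sequence $I_R/I_R^2\to \Om_{X/S}|_R\to \Om_{R/S}\to 0$, this shows $\de|_R$ kills the conormal and hence factors through $\Om_{R/S}$, giving the induced map $\ov{\de}_R$. To see it is an isomorphism, compute $\ov{\de}_R(d\th)=\de(\th)|_R=[dz/z|d\th]|_R$, which under the adjunction identification $\om_{X/S}(R)|_R\simeq \om_{R/S}$ is (up to nonzero scalar) the Berezin-volume generator $[d\th]$ of $\om_{R/S}$. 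Thus $\ov{\de}_R$ sends a local generator of a rank-$0|1$ line bundle to a local generator, and is therefore an isomorphism.

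For the trivialization, the key observation is that on the $0|1$-dimensional smooth family $R/S$ there is a canonical perfect pairing
$$\om_{R/S}\otimes \Om_{R/S}\to \OO_R, \qquad [d\th]\otimes d\th\mapsto 1,$$
which is globally well-defined because the local generators $[d\th]$ and $d\th$ transform inversely under a change of odd coordinate $\th\mapsto a+b\th$ (the Berezinian of the $0|1$ change-of-basis matrix $b$ is $b^{-1}$, while $d\th\mapsto b\,d\th$, and the tensor $[d\th]\otimes d\th$ is therefore invariant). Composing $\ov{\de}_R\otimes\id_{\Om_{R/S}}$ with this pairing yields a map $\Om_{R/S}^{\otimes 2}\to \OO_R$ which is nowhere vanishing since $\ov{\de}_R$ is an isomorphism, hence a trivialization; in standard coordinates it sends $(d\th)^{\otimes 2}\mapsto \ov{\de}_R(d\th)(d\th)=[d\th](d\th)=1$, which matches the claimed local formula. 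The equivalent trivialization of $\om_{R/S}^{\otimes 2}$ is obtained by transporting along the isomorphism $\ov{\de}_R^{\otimes 2}$.

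The main subtlety is keeping track of parities and, in particular, identifying the adjunction isomorphism $\om_{X/S}(R)|_R\simeq \om_{R/S}$ correctly in the super setting; once this is fixed compatibly with the standard coordinate description of $\de$, all the calculations are routine.
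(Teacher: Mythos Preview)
Your proof is correct and follows essentially the same approach as the paper: both work in the standard \'etale-local coordinates $(z,\th)$ of Lemma \ref{local-descr-smooth-lem}(ii), compute that $\de(dz)$ lies in $I_R\cdot\om_{X/S}(R)$ so that $\de|_R$ factors through $\Om_{R/S}$, and identify $\ov{\de}_R(d\th)$ with the generator of $\om_{R/S}$ coming from $[dz/z|d\th]$. For the trivialization, the paper simply notes that in relative dimension $0|1$ one has $\om_{R/S}^{-1}\simeq\Om_{R/S}$ (since $\Ber$ of a rank $0|1$ bundle is its inverse), so $\ov{\de}_R$ is automatically a trivializing section of $(\Om_{R/S})^{\ot 2}$; your explicit verification via the pairing $[d\th]\ot d\th\mapsto 1$ and the coordinate-change check is a slightly more hands-on way of saying the same thing.
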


\begin{proof}
Recall that in appropriate \'etale local coordinates $(z,\th)$, $R$ is given by $(z=0)$ and we have 
$$\de(dz)=z\th [\frac{dz}{z}|d\th], \ \ \de(d\th)=  [\frac{dz}{z}|d\th].$$
It follows that $\de|_R(dz)=0$, and
the induced map $\ov{\de}_R$ is given by
$$\ov{\de}_R(d\th)=\ov{b},$$
where $\ov{b}$ is the generator of $\om_{R/S}$ corresponding to $[\frac{dz}{z}|d\th]$.

Since the relative dimension of $R$ is $0|1$, we have in fact an isomorphism 
$\om_{R/S}^{-1}\simeq \Om_{R/S}$. Thus, we can view $\ov{\de}_R$ as a trivializing section of $(\Om_{R/S})^{\ot 2}$.
\end{proof}

The above Lemma implies that every Ramond puncture $R$ has a preferred system of \'etale local relative coordinates $\th_i$, such that over intersections one has
$\th_j=\pm \th_i+a_{ij}$, where $a_{ij}$ are functions on the base. Namely, we require that $(d\th_i)^2$ is the canonical trivialization of $(\Om_{R/S})^{\ot 2}$.
In other words, we have a canonical principal bundle $\PP_R\to S$ with the structure supergroup $\Z/2\ltimes \A^{0|1}$.


We can restate the above structure in more invariant terms. Let $\pi:R\to S$ denote the projection.
First, we observe that 
\begin{equation}\label{PhiR-def}
\Phi_R:=\pi_*\OO_R/\OO_S\simeq \Ber^{-1}\pi_*\OO_R
\end{equation}
is a line bundle of rank $0|1$ on $S$. Furthermore, we have a canonical isomorphism of odd line bundles on $R$,
$$\pi^*\Phi_R\rTo{\sim} \Om_R$$
induced by the de Rham differential $\OO_S/\OO_R\to \Om_R$. Furthermore, the trivialization of $(\Om_R)^{\ot 2}$
in Lemma \ref{R-puncture-structure-lem} actually comes from the canonical trivialization of
$\Phi_R^2$ on $S$.

Let $\Theta\sub \tot(\Phi_R)$ be the $\Z/2$-torsor over $S$ corresponding to $\Phi_R$.
Then the principal bundle $\PP_R\to S$ can be identified with the preimage of $\Theta$ in $\tot(\pi_*\OO_R)$ under
the natural projection $p:\pi_*\OO_R\to \pi_*\OO_R/\OO_S$.

\begin{lemma}\label{Aut-PhiR-lem}
The group scheme $\Aut(\Phi_R)\to S$ can be identified with the group of automorphisms of $\pi_*\OO_R$, which are
identity on $\OO_S$, and induce $\pm \id$ on $\pi_*\OO_R/\OO_S$. This group scheme is an extension of $\Z/2$ by
the line bundle $\Phi_R^{-1}$.
\end{lemma}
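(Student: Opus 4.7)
The plan is to describe $\Aut(\Phi_R)$ explicitly in a standard \'etale-local chart, read off the extension structure from the composition law, and then identify the kernel of the projection to $\Z/2$ with $\tot(\Phi_R^{-1})$ in a coordinate-free way.

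\'Etale-locally on $S$, the Ramond puncture carries a coordinate $\th$ with $\pi_*\OO_R = \OO_S \oplus \OO_S\th$; write $\bar\th$ for its image in $\Phi_R$. Any $\OO_S$-algebra automorphism $\phi$ of $\pi_*\OO_R$ which is the identity on $\OO_S$ is determined by $\phi(\th)$, which must be odd, so $\phi(\th) = u\th + a$ with $u\in\OO_S^+$ and $a\in\OO_S^-$. Invertibility forces $u$ to be a unit, and the induced map on $\Phi_R$ is multiplication by $u$; requiring this to be $\pm\id$ is intrinsic and pins down $u = \pm 1$. This proves the first identification and gives a surjection $\Aut(\Phi_R)\to \Z/2$, $(\eps, a)\mapsto \eps$. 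A direct computation yields
$$(\eps_1, a_1)\circ (\eps_2, a_2) = (\eps_1\eps_2,\ a_2 + \eps_2 a_1),$$
so the kernel $\{(1,a)\}$ is abelian under $a_1+a_2$.

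For the coordinate-free identification of the kernel with $\tot(\Phi_R^{-1})$: given $\phi$ with $\phi(\th) = \th+a$, the difference $\phi - \id\colon \pi_*\OO_R\to \pi_*\OO_R$ vanishes on $\OO_S$ and has image in $\OO_S$, because $\phi$ induces the identity on $\Phi_R$. It therefore descends to an even $\OO_S$-linear map $\overline{\phi-\id}\colon\Phi_R\to \OO_S$, that is, to an even section of $\und\Hom(\Phi_R,\OO_S) = \Phi_R^{-1}$. This assignment is tautologically additive in $a$, so it gives an isomorphism of group schemes from the kernel to $\tot(\Phi_R^{-1})$, assembling the advertised extension
$$0\to \Phi_R^{-1}\to \Aut(\Phi_R)\to \Z/2\to 0.$$

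The one place where care is required is parity book-keeping. Because $\Phi_R$ is an odd line bundle, so is $\Phi_R^{-1}$, and its \emph{even} local sections are precisely odd functions on $S$ --- matching the parity of $a\in\OO_S^-$. I would also take care to see that the kernel is naturally $\Phi_R^{-1}$ rather than $\Phi_R$: the two get identified via the canonical trivialization of $\Phi_R^{\ot 2}$ noted in Lemma~\ref{R-puncture-structure-lem}, so the distinction is only meaningful before invoking that trivialization. Beyond this minor trap, I do not anticipate any real obstacle.
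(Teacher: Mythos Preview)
The paper states this lemma without proof, so there is nothing to compare against; your argument is correct and is precisely the kind of routine verification the authors leave to the reader. Your local computation of the composition law and the coordinate-free identification of the kernel via $\phi-\id:\Phi_R\to\OO_S$ are both clean, and your remark that $\Phi_R$ and $\Phi_R^{-1}$ only differ before invoking the trivialization of $\Phi_R^{\ot2}$ is exactly the right observation.

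One small point worth tightening: the lemma's first sentence is really an identification of $\Aut(\PP_R)$ (the automorphism group scheme of the principal $\Z/2\ltimes\A^{0|1}$-bundle) with the described subgroup of $\OO_S$-algebra automorphisms of $\pi_*\OO_R$. You implicitly use this, but it would be worth one line saying that an automorphism of $\PP_R$ is, by definition, a relabelling of the preferred coordinates $\th$ and hence an $\OO_S$-algebra automorphism of $\pi_*\OO_R$ preserving the class $(d\th)^2$, i.e.\ inducing $\pm\id$ on $\Phi_R$. With that sentence added, the argument is complete.
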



Now suppose $X/S$ is a (possibly disconnected) stable supercurve  with two R-punctures $R\sub X$ and $R'\sub X$,
equipped with an isomorphism $R\simeq R'$ over $S$. 
Then we can glue $X$ with itself along $R\simeq R'$ into a superscheme $Z$ over $S$ equipped with a finite morphism $j:X\to Z$.

\begin{lemma}\label{Ramond-gluing-lem} 
Assume that the isomorphism $\a:R\rTo{\sim} R'$ is such that
the induced isomorphism
$$\a^*\om_{R'/S}^{\ot 2}\rTo{\sim} \om_{R/S}^{\ot 2}$$
is equal to $-1$, where we use the trivializations of $\om_{R/S}^{\ot 2}$ and
$\om_{R'/S}^{\ot 2}$ coming from the supercurve structure on $X$.
Then the glued superscheme $Z/S$ has a natural stable supercurve structure, such that the derivation $\de$ is
uniquely determined from the commutative diagram
\begin{diagram}
\OO_Z &\rTo{\de}& \om_{Z/S}\\
\dTo{r}&&\dTo{}\\
j_*\OO_X&\rTo{j_*\de}& j_*\om_{X/S}(R)
\end{diagram}
\end{lemma}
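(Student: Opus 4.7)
The plan is to do essentially all the verification locally near the new node $q := j(R) = j(R')$, since the rest of the structure is inherited from $X$ via the natural finite morphism $j\colon X \to Z$ produced by the fibered-product construction of $\OO_Z$ spelled out just above for the NS case. Properness of $Z/S$ follows from properness of $X/S$ and finiteness of $j$; away from $q$ the morphism $Z \to S$ is locally isomorphic to $X\to S$, so flatness, Cohen--Macaulayness, the smooth supercurve structure, the remaining NS and Ramond punctures, and the reduced nodal structure are all inherited.

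The heart of the argument is a local coordinate analysis at $q$. I would choose standard coordinates $(w_1,\eta_1)$ near $R$ and $(w_2,\eta_2)$ near $R'$ in \'etale neighborhoods (Lemma \ref{local-descr-smooth-lem}(ii)), so that $R = (w_1=0)$, $R' = (w_2=0)$, $\DD$ is generated by $\pa_{\eta_i}+\eta_i w_i\pa_{w_i}$, and $\de(f)=(\pa_{\eta_i}+\eta_i w_i\pa_{w_i})(f)\cdot[\frac{dw_i}{w_i}|d\eta_i]$. By Lemma \ref{R-puncture-structure-lem} the trivializations of $\om_{R/S}^{\ot 2}$ and $\om_{R'/S}^{\ot 2}$ are $(d\eta_1)^2$ and $(d\eta_2)^2$. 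Writing $\alpha^*\eta_2 = c\eta_1+a$ with $c$ an even invertible function and $a$ an odd function on $S$, one gets $\alpha^*(d\eta_2)^2 = c^2(d\eta_1)^2$, and the hypothesis forces $c^2=-1$. After an appropriate even rescaling and odd translation on the second branch, I would pass to new coordinates $(z_1,z_2,\theta)$ on an \'etale neighborhood of $q$ with $z_i = w_i$ and $\theta$ identified across the two branches, in which the two generators of $\DD$ take the form $\pa_\theta+\theta z_1\pa_{z_1}$ and $\pa_\theta-\theta z_2\pa_{z_2}$ and the generators of $\om_{X/S}(R)$ and $\om_{X/S}(R')$ become $[\frac{dz_1}{z_1}|d\theta]$ and $-[\frac{dz_2}{z_2}|d\theta]$. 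The identity $c^2=-1$ is precisely what is needed to produce this normal form; the result is that the formal completion of $\OO_Z$ at $q$ is a pullback of Deligne's miniversal Ramond-node ring of Section \ref{univ-def-node-I}, so $Z\to S$ is flat and Cohen--Macaulay at $q$ with the expected local Ramond-node structure.

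With this local model in hand, I would define $\de_Z$ by the stated diagram. For a local section $f=(f_1,f_2)$ of $\OO_Z$ near $q$, the composition $j_*\de_X\circ r$ produces the pair $(\de_X(f_1),\de_X(f_2))$ on the two branches, and in the coordinates above this is visibly of the form $g\cdot\bb$ for a unique local section $g$ of $\OO_Z$ (the opposite-sign pattern of $\bb$ on the two branches absorbs the sign produced by the coordinate change). Uniqueness of $\de_Z$ is then automatic because the map $\om_{Z/S}(R_Z)\to j_*\om_{X/S}(R_X)$ is injective by a Cohen--Macaulay argument as in Lemma \ref{isom-stable-lem}. The fibrewise condition that $\de_Z^-$ induces the expected isomorphism $\OO_{Z_s}^-\simeq\om_{Z_s}^-(\sum R_{i,s})$ is immediate on the smooth locus by inheritance, and at $q$ from the explicit local model. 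The sole real obstacle is the sign-tracking in the coordinate change: without $c^2=-1$ the two local pictures of $\DD$ and $\om_{X/S}$ cannot be simultaneously put in the standard Ramond-node normal form, and the glued object fails to carry a supercurve structure at $q$.
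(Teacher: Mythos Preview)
Your proposal is correct and follows essentially the same approach as the paper: both reduce to a local computation in standard coordinates near the two Ramond punctures, use the hypothesis $c^2=-1$ (the paper writes $\alpha^*(d\theta')^2=-(d\theta)^2$ and then sets $\overline{\theta}=i\theta'$) to match the two branches into the standard Ramond-node presentation $(z_1,z_2,\theta)$ with $z_1z_2=0$ and the derivation satisfying $\de(\theta)=\pm[\frac{dz_i}{z_i}|d\theta]$ on the two branches, and then verify that $(j_*\de_X)\circ r$ lands in $\om_{Z/S}$. Your treatment is slightly more systematic in explicitly listing the global conditions (properness, flatness, CM, fibrewise $\de^-$-isomorphism) and in invoking the Cohen--Macaulay injectivity argument for uniqueness of $\de_Z$; the paper leaves these mostly implicit and instead adds a short spin-curve reformulation over a point to justify the last condition. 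One small wording issue: the formal completion of $\OO_Z$ at $q$ is the \emph{central fiber} $t=0$ of the miniversal family of Section~\ref{univ-def-node-I} (base-changed to $S$), not a genuine pullback of the whole family.
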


\begin{proof} To check that $\de$ is well defined, we need to check that the image of the composition
$j_*\de\circ r$ belongs to $\om_{Z/S}$.
For this we can argue locally. Let $(z,\th)$ be the standard local coordinates on $X$ near the puncture $R$,
so that $R$ is given by $(z=0)$, and
$$\de(dz)=z\th [\frac{dz}{z}|d\th], \ \ \de(d\th)=  [\frac{dz}{z}|d\th].$$
Let also $(z',\th')$ be similar coordinates near $R'$. Then $\a^*(d\th')^2=-d\th$, so changing $\th$ to $\pm\th+a$, we can assume
that $i\a^*(\th')=\th$.
Let us set $\ov{\th}=i\th'$. Then we have
$$\de(dz')=z'\ov{\th} [\frac{dz}{z}|d\ov{\th}], \ \ \de(d\ov{\th})=-[\frac{dz'}{z'}|d\ov{\th}].$$

Then we have relative coordinates on the glued scheme $Z$, $(z,z',\th)$, where $zz'=0$, $\th$ restricts to $\th$ near $R$ and to $\ov{\th}$ near $R'$.
Since $\de(z)$ has no pole at $R$, we see that $(\de(z),0)$ belongs to $\om_{Z/S}$. Similarly, $(0,\de'(z'))$ belongs to $\om_{Z/S}$.
Finally, 
$$(\de(\th),\de'(\ov{\th}))=([\frac{dz}{z}|d\th],-[\frac{dz'}{z'}|d\ov{\th}]),$$
which comes from a section of $\om_{Z/S}$.

Over the point, our construction can be recast as follows. We start with a Ramond spin curve $(C,p,p',L)$, together with an identification
$$\a:L|_p\simeq L|_{p'},$$
such that $\a^2=-1$ (where we use trivializations of $\om_C(p)|_p$ and $\om_{C}(p')|_{p'}$).
We glue $p$ with $p'$ and get a nodal curve $Z_0$, and then descend $L$ to a line bundle $\LL$ over $Z_0$ using $\a$.
Then we have a natural isomorphism $\LL^{\ot 2}\simeq\om_{Z_0}$, and one can easily check that the corresponding isomorphism 
$\LL\rTo{\sim} \und{\Hom}(\LL,\om_{Z_0})$ is induced by $\de^-$, where $\de$ is defined as above.
\end{proof}

The choice of an isomorphism $\a$ can be interpreted in terms of the principal $\Z/2\ltimes\A^{0|1}$-bundles $\PP_R$ and $\PP_{R'}$ as follows.
For every $c\in \C^*$ let us denote by $[c]$ the automorphism of $\Z/2\ltimes\A^{0|1}$ given by the rescaling by $c$ on $\A^{0|1}$ (and trivial on $\Z/2$).
Then a choice of $\a$ is equivalent to a choice of an isomorphism of $\Z/2\ltimes\A^{0|1}$-bundles,
$$\PP_R\to [i]_*\PP_{R'},$$
where $[i]_*\PP_{R'}$ is the push-out of $\PP_{R'}$ with respect to the automorphism $[i]$.

We can apply the above construction to the two last Ramond punctures $R_{n+1}$ and $R_{n+2}$ on the universal stable supercurve over
$\ov{\SS}_{g;m,n+2}$.
Let $\PP_{n+1}\to \ov{\SS}_{g;m,n+2}$ and $\PP_{n+2}\to \ov{\SS}_{g;m,n+2}$ be the corresponding
principal $\Z/2\ltimes \A^{0|1}$-bundles.
Let $\Isom(\PP_{n+1},[i]_*\PP_{n+2})\to \ov{\SS}_{g;m,n+2}$ denote the bundle of isomorphisms between these $\Z/2\ltimes \A^{0|1}$-bundles.
Note that it is also a principal bundle with the group $\Z/2\times \A^{0|1}$.

Then the above gluing construction gives a family of stable supercurves with a Ramond node over $\Isom(\PP_{n+1},[i]_*\PP_{n+2})$, so we get a morphism
\begin{equation}\label{Ramond-nonsep-node-morphism}
\Isom(\PP_{n+1},[i]_*\PP_{n+2})\to \ov{\SS}_{g+1;m,n}
\end{equation}

Similarly, we can glue the last Ramond punctures $R,R'$ on the stable supercurves $X,X'$ over the moduli space 
$\ov{\SS}_{g_1;m_1,n_1+1}\times \ov{\SS}_{g_2;m_2,n_2+1}$ pulled back from each factor.
This gives a morphism
\begin{equation}\label{Ramond-sep-node-morphism}
\Isom(\PP_R,[i]_*\PP_{R'})\to \ov{\SS}_{g_1+g_2;m_1+m_2,n_1+n_2}.
\end{equation}
Note that in this case $n_1$ and $n_2$ are odd, so $n_1+n_2\ge 2$.

\begin{lemma}
Both morphisms \eqref{Ramond-nonsep-node-morphism} and \eqref{Ramond-sep-node-morphism} induce embeddings of codimension $1|0$ on tangent spaces
and factor through the divisor $\De_R$. Furthermore, near a stable supercurve with a single non-separating Ramond node the divisor $\De_R$ coincides with the schematic image
of \eqref{Ramond-nonsep-node-morphism}.
\end{lemma}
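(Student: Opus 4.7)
The plan is to follow closely the proof of Lemma \ref{NS-boundary-tangent-emb-lem}, substituting the fact that for a Ramond node the sheaf $\AA^-_X$ differs from $\rho_*\AA^-_{\wt X}$ (the analogous sheaf on the partial normalization at the node) by an extra cokernel $\OO_q$, as recorded in Proposition \ref{no-inf-aut-prop}(i). Let $X$ be a stable supercurve with a distinguished Ramond node $q$ and other nodes $q_1,\ldots,q_m$. Combining the short exact sequence from Proposition \ref{no-inf-aut-prop}(i) with the vanishing $H^0(\wt X,\AA_{\wt X})=0$ from Proposition \ref{no-inf-aut-prop}(ii), I obtain $H^1(X,\AA_X^+)\simeq H^1(\wt X,\AA_{\wt X}^+)$ and a short exact sequence
$$0\to \C\to H^1(X,\AA_X^-)\to H^1(\wt X,\AA_{\wt X}^-)\to 0.$$
Comparing \eqref{def-curve-nodes-ex-seq} for $(\wt X,\wt P_\bullet,\wt R_\bullet)$ and for $(X,P_\bullet,R_\bullet)$, and using that the deformation space of a Ramond node singularity is of superdimension $1|0$, I conclude that the induced map $T_{\Def(\wt X,\wt P_\bullet,\wt R_\bullet)}\hra T_{\Def(X,P_\bullet,R_\bullet)}$ is injective with quotient of superdimension $1|1$: one even direction coming from the smoothing parameter of $q$, and one odd direction canonically identified with $\C\simeq\OO_q$ via the sequence above.

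The source of each gluing map is an $\A^{0|1}$-torsor (modulo a $\Z/2$-action) over the relevant product of lower-genus moduli, so its odd tangent fiber is canonically $\Phi_R^{-1}$ by Lemma \ref{Aut-PhiR-lem} and the definition \eqref{PhiR-def}. The key matching I need to verify is that varying the gluing isomorphism $\a$ by an odd first-order element of $\Phi_R$, in the recipe of Lemma \ref{Ramond-gluing-lem}, produces precisely the nontrivial class in $H^1(X,\AA_X^-)$ sitting above zero in $H^1(\wt X,\AA_{\wt X}^-)$. This matching is the technical heart of the argument and, I expect, the main obstacle; it amounts to a \v{C}ech-style computation on the formal neighborhood of $q$, identifying the odd variation of $\a$ with an odd section of $\rho_*\AA_{\wt X}^-$ whose image in $\OO_q$ under Proposition \ref{no-inf-aut-prop}(i) is the canonical generator. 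Once the two odd contributions are matched, the codimension $1|0$ tangent embedding for both morphisms \eqref{Ramond-nonsep-node-morphism} and \eqref{Ramond-sep-node-morphism} follows at once.

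Granted this tangent-space identification, the remaining claims are formal. To see that each morphism factors through $\De_R$, I apply Lemma \ref{Cartier-char-lem} and the discussion of Section \ref{De-NS-R-sec}: in an \'etale neighborhood of $[X]$ in the target, a local equation of $\De_R$ is the smoothing coordinate $t$ of the Ramond node $q$. Since the glued family is by construction constant in that smoothing direction, the pullback of $t$ to the source vanishes, so the morphism factors through $V(t)$ and hence through $\De_R$. Finally, near a point corresponding to a stable supercurve with a single non-separating Ramond node, the source $\Isom(\PP_{n+1},[i]_*\PP_{n+2})$ is smooth (being an $\A^{0|1}$-torsor over the smooth stack $\ov{\SS}_{g;m,n+2}$), the divisor $\De_R$ is smooth of the correct codimension $1|0$ by Section \ref{normal-cross-sec}, and the tangent map lands in $T\De_R$ as an isomorphism by the above. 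Hence the morphism is \'etale onto $\De_R$ locally, and $\De_R$ is identified with the schematic image of \eqref{Ramond-nonsep-node-morphism} near the chosen point.
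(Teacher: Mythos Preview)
Your overall plan is sound and closely tracks the paper's argument; the treatment of the even tangent part, the factoring through $\De_R$, and the schematic image are all correct (the paper in fact leaves the last two implicit, handling them by analogy with the NS case). The one substantive difference is in how you handle the odd tangent spaces, and here the paper takes a shorter path that bypasses the \v{C}ech computation you flag as the main obstacle.

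You propose to show that $T^-_{\Def(\wt X)}\to T^-_{\Def(X)}$ is injective with one-dimensional cokernel, and then to match the odd variation of $\a$ with that cokernel via an explicit local computation. The paper instead works directly with $T^-_{\Def(\wt X,\a)}\to T^-_{\Def(X)}$ and argues by dimension count plus surjectivity: since all odd deformations of $X$ are locally trivial, $T^-_{\Def(X)}=H^1(X,\AA_X^-)$, and the same exact sequence you wrote gives $\dim T^-_{\Def(X)}=\dim H^1(\wt X,\AA_{\wt X}^-)+1=\dim T^-_{\Def(\wt X,\a)}$; it then simply observes that every locally trivial deformation of $X$ lifts to a deformation of $(\wt X,\a)$ (normalize and read off the induced gluing), giving surjectivity and hence isomorphism. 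No \v{C}ech matching is needed.

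There is also a small gap in your argument as written: you assert injectivity of $T^-_{\Def(\wt X)}\to T^-_{\Def(X)}$, but the short exact sequence $0\to\C\to H^1(X,\AA_X^-)\to H^1(\wt X,\AA_{\wt X}^-)\to 0$ only gives a surjection in the opposite direction. What you need to say is that the geometric gluing map is a \emph{splitting} of this surjection (glue, then normalize at $q$, recovers the original deformation of $\wt X$), hence injective. This is easy but should be made explicit; without it, the diagram chase against \eqref{def-curve-nodes-ex-seq} does not by itself yield injectivity on odd parts.
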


\begin{proof}
Let $X$ be the stable supercurve obtained by gluing two Ramond punctures $R, R'$ on a stable supercurve $\wt{X}$, via an isomorphism
$\a:R\to R'$ as in Lemma \ref{Ramond-gluing-lem}. 
Note that we have an exact sequence
$$0\to \Pi\C\to T_{\Def(\wt{X},\a)}\to T_{\Def(\wt{X})}\to 0,$$
where the odd line $\Pi\C$ corresponds to infinitesimal deformations of $\a$.
By Proposition \ref{no-inf-aut-prop}(i), we have
$$\AA_X^+\simeq \rho_*\AA_{\wt{X}}^+.$$
As in Lemma \ref{NS-boundary-tangent-emb-lem} we deduce from this that
$$T^+_{\Def(\wt{X},\a)}\to T^+_{\Def(X)}$$
is an embedding of codimension $1$.

Next, we claim that the map between the spaces of odd infinitesimal deformations,
$$T^-_{\Def(\wt{X},\a)}\to T^-_{\Def(X)}$$
is an isomorphism. Indeed, we know that every odd deformation of $X$ is locally trivial, i.e., 
$$T^-_{\Def(X)}\simeq H^1(X,\AA_X^-).$$
Also, by Proposition \ref{no-inf-aut-prop}(i),
we have an exact sequence
$$0\to \C\to H^1(X,\AA_X^-)\to H^1(\wt{X},\AA_{\wt{X}}^-)\to 0$$
so 
$$\dim T^-_{\Def(X)}= \dim H^1(\wt{X},\AA_{\wt{X}}^-)+1=\dim T^-_{\Def(\wt{X},\a)}.$$
It remains to observe that every locally trivial deformation of $X$
can be lifted to a deformation of $(\wt{X},\a)$.
\end{proof}

\section{Canonical line bundle on the moduli space of stable supercurves}\label{canonical-bundle-sec}

In this section we will calculate the canonical line bundle of $\ov{\SS}_{g,n_{NS},n_R}$, eventually proving Theorem B.

\subsection{What Kodaira-Spencer isomorphism gives for the canonical bundle}

Set $\ov{\SS}=\ov{\SS}_{g,n_{NS},n_R}$. We are interested in the canonical line bundle $K_{\ov{\SS}}=\Ber^{-1}(\TT_{\ov{\SS}})$.
Over the smooth locus $\SS\sub \ov{\SS}$, from the Kodaira-Spencer isomorphism we get an isomorphism (see Sec.\ \ref{Kodaira-punct-sec})
\begin{equation}\label{smooth-can-KS-formula-eq}
\ber(KS_{\ov{\SS}}):K_{\SS}\rTo{\sim} \Ber^{-1}(R^1\pi_*\LL(X,P_\bullet,R_\bullet))\simeq \Ber(R\pi_*\LL(X,P_\bullet,R_\bullet)),
\end{equation}
where
$$\LL(X,P_\bullet,R_\bullet):=\om_{X/S}^{-2}(-\sum_{i\in I} D_i-2\sum_{j\in J} R_j).$$
Recall that 
here $\om_{X/S}^{-2}$ is the line bundle defined in Theorem \ref{can-square-thm}, and
$D_i$ are the divisors on the universal curve $X$ associated with the NS punctures.

Our study in Sec.\ \ref{KS-map-sec}
of the behavior of the Kodaira-Spencer map in degenerating families of supercurves leads to the following identification of $K_{\ov{\SS}}$.

\begin{prop}\label{stable-can-KS-formula-prop}
One has a natural isomorphism 
$$\ber(KS_{\ov{\SS}})^{-1}:K_{\ov{\SS}}\rTo{\sim} \Ber(R\pi_*\LL(X,P_\bullet,R_\bullet))(-\De),$$
where $\De$ is the boundary divisor.
\end{prop}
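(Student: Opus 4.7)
The plan is to extend the Kodaira–Spencer isomorphism from the smooth locus $\SS$ to the whole compactification $\ov{\SS}$, by replacing $\TT_{\ov{\SS}}$ with the subsheaf $\TT_{\ov{\SS},\De}\subset \TT_{\ov{\SS}}$ of vector fields preserving $\De$ (which is locally given by $\TT_{S,S_0}$ in the model situations considered in Sec.\ \ref{KS-map-sec}). The starting point is the exact sequence of sheaves on the universal curve $\pi:X\to \ov{\SS}$,
$$0\to \AA_{(X,P_\bullet,R_\bullet)/\ov{\SS}}\to \AA_{X,P_\bullet,R_\bullet}\to \pi^{-1}\TT_{\ov{\SS},\De}\to 0,$$
which is the global version of the sequences \eqref{tangent-super-ex-seq} considered locally in Sec.\ \ref{super-KS-NS-sec}, \ref{super-KS-R-sec} and \ref{Kodaira-punct-sec}, and makes sense globally because, by Sec.\ \ref{normal-cross-sec}, one has $\pi_*(\AA_{X,P_\bullet,R_\bullet}/\AA_{(X,P_\bullet,R_\bullet)/\ov{\SS}})=\TT_{\ov{\SS},\De}$.

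Applying $R\pi_*$ and using that $\pi_*\AA_{X,P_\bullet,R_\bullet}=0$ (by Prop.\ \ref{no-inf-aut-prop}(ii) combined with semicontinuity, since $R^0$ vanishes on every fiber), we obtain a coboundary map
$$KS_{\ov{\SS}}:\TT_{\ov{\SS},\De}\longrightarrow R^1\pi_*\AA_{(X,P_\bullet,R_\bullet)/\ov{\SS}}\simeq R^1\pi_*\LL(X,P_\bullet,R_\bullet),$$
using the identification from Theorem \ref{inf-aut-thm}. The first main step is to show that $KS_{\ov{\SS}}$ is an isomorphism of vector bundles. Both sides are locally free (using Theorem \ref{can-square-thm} and base change together with the pointwise vanishing $H^0(X_s,\LL(X_s,P_\bullet,R_\bullet))=0$ from Prop.\ \ref{no-inf-aut-prop}), so it suffices to check isomorphism fiber by fiber. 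Over $\SS$ this is the classical Kodaira–Spencer isomorphism \eqref{smooth-can-KS-formula-eq}. Over a boundary point it follows by restriction from the fiberwise statements established in Corollary \ref{KS-super-NS-node-cor} and Proposition \ref{KS-super-R-prop} (and their punctured analog in Sec.\ \ref{Kodaira-punct-sec}): the assumption there that $\kappa_{s_0}$ is injective and that the base has the correct dimension is automatic here, since $\ov{\SS}$ is smooth of dimension $(3g-3|2g-2)$ (plus contributions from punctures) and is the moduli stack itself, so tangent-space maps are isomorphisms.

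The second step is to convert this into the desired formula by taking Berezinians. From Sec.\ \ref{normal-cross-sec} the inclusion $\TT_{\ov{\SS},\De}\hookrightarrow \TT_{\ov{\SS}}$ is an inclusion of vector bundles of the same rank whose Berezinian has divisor of zeros exactly $\De$; therefore
$$\Ber(\TT_{\ov{\SS},\De})\simeq \Ber(\TT_{\ov{\SS}})(-\De),$$
hence $K_{\ov{\SS}}=\Ber^{-1}(\TT_{\ov{\SS}})\simeq \Ber^{-1}(\TT_{\ov{\SS},\De})(-\De)$. Combining with the isomorphism $KS_{\ov{\SS}}$ and the standard identification $\Ber^{-1}(R^1\pi_*\LL)\simeq \Ber(R\pi_*\LL)$ (valid because $R^0\pi_*\LL=0$),
$$K_{\ov{\SS}}\simeq \Ber^{-1}(R^1\pi_*\LL(X,P_\bullet,R_\bullet))(-\De)\simeq \Ber(R\pi_*\LL(X,P_\bullet,R_\bullet))(-\De),$$
which is the desired isomorphism, and by construction it restricts on $\SS$ to the inverse of \eqref{smooth-can-KS-formula-eq}.

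The main obstacle is the bijectivity of $KS_{\ov{\SS}}$ at boundary points: one needs to run the argument of Propositions \ref{KS-super-NS-prop}--\ref{KS-super-R-prop} globally on $\ov{\SS}$, and in particular to carry out the reduction to the one-parameter degeneration along a disk transversal to $\De$ (using the auxiliary family $X_D\to D$) simultaneously at NS and Ramond boundary points and in the presence of punctures. The fact that vector fields in $\AA_X$ automatically preserve $\De$ (i.e., the equality $\AA_X=\AA_{X,X_0}$ from Lemmas \ref{KS-super-NS-lem}(iii) and \ref{KS-super-R-lem}(iii)) is what ensures that the coboundary $KS_{\ov{\SS}}$ really lands on $\TT_{\ov{\SS},\De}$ rather than only on $\TT_{\ov{\SS}}$, and is the key input for matching the twist by $-\De$ on the right with the full $K_{\ov{\SS}}$ on the left.
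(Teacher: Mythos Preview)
Your proposal is essentially correct and follows the same strategy as the paper, though you unpack it more explicitly. The paper's proof is two sentences: it observes that the isomorphism \eqref{smooth-can-KS-formula-eq} over $\SS$ extends as a meromorphic map between the two line bundles on the smooth stack $\ov{\SS}$, and then checks via the local computations of Sec.\ \ref{KS-map-sec} (Cor.\ \ref{KS-super-NS-node-cor}, Prop.\ \ref{KS-super-R-prop}, and their punctured analog in Sec.\ \ref{Kodaira-punct-sec}) that this map acquires a simple pole at every generic point of $\De$. Since $\ov{\SS}$ is smooth, this codimension-$1$ information determines the isomorphism globally.

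Two small imprecisions in your write-up are worth flagging. First, the global exact sequence you display, with third term $\pi^{-1}\TT_{\ov{\SS},\De}$, is not literally correct near a fiber with several nodes: as the proof in Sec.\ \ref{normal-cross-sec} shows, near the node $q_i$ one has $\ov{\AA}_X|_{U_i}=\pi_i^{-1}\TT_{B,D_i}$ rather than $\pi_i^{-1}\TT_{B,D}$, so the quotient $\AA_X/\AA_{X/\ov{\SS}}$ is only a subsheaf of $\pi^{-1}\TT_{\ov{\SS}}$, not equal to $\pi^{-1}\TT_{\ov{\SS},\De}$. What is true, and what you actually need for the coboundary, is the statement $\pi_*(\AA_X/\AA_{X/\ov{\SS}})=\TT_{\ov{\SS},\De}$ that you cite. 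Second, your ``fiber by fiber'' verification only invokes the single-node results, which do not directly cover boundary points with several nodes. The clean fix is precisely the paper's codimension-$1$ argument: once $KS_{\ov{\SS}}$ is a map of locally free sheaves of the same rank that is an isomorphism away from codimension $\ge 2$, smoothness of $\ov{\SS}$ forces $\ber(KS_{\ov{\SS}})$ to be nowhere vanishing. Neither point affects the validity of your argument, and the key inputs you cite are exactly those the paper uses.
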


\begin{proof}
We just have to check that isomorphism \eqref{smooth-can-KS-formula-eq} acquires simple poles at all generic points of $\De$.
But this follows from the results of Sec. \ \ref{KS-map-sec} and our local description of $\De$ at points corresponding to stable 
supercurves with single nodes.
\end{proof}

Next, we will study the line bundle $\Ber(R\pi_*\LL(X,P_\bullet,R_\bullet))$. We begin with the case 
when there are no punctures. Then the above identification of the canonical bundle becomes
$$\ber(KS_{\ov{\SS}})^{-1}:K_{\ov{\SS}}\rTo{\sim} \Ber(R\pi_*(\om_{X/S}^{-2})).$$

\subsection{Isomorphisms for Berezinian bundles}\label{super-Deligne-sec}

Let $\pi:X\to S$ be a smooth proper morphism of relative dimension $1|1$. Then we can define Deligne's symbol
$\lan L_1$, $L_2\ran$ of a pair of line bundles of rank $1|0$
over $X$ similarly to the even case (considered in \cite{Deligne-det}), so that for a relative effective Cartier divisor $D$
one has
$$\lan \OO_X(D),L\ran\simeq \Ber(\pi_*(L|_D))/\Ber(\pi_*\OO_D).$$

If $\BB(L)$ denotes the Berezinian of the derived push-forward of $L$ then one has, as in the classical case,
$$\BB(L_1\ot L_2)\ot\BB(\OO)\simeq \BB(L_1)\ot\BB(L_2)\ot\lan L_1,L_2\ran.$$
However, in the supercase,
for any line bundles $L$ and $L'$ of rank $1|0$,
there is a canonical isomorphism
\begin{equation}\label{Ber-L-D-eq}
\a_D:\Ber(\pi_*L|_D)\rTo{\sim}\Ber(\pi_*(L'|_D)),
\end{equation}
induced by any local isomorphism $L\to L'$ (the point is that the Berezinian of a scalar automorphism of a linear space of rank $1|1$ is trivial). Hence, the Deligne's symbol is canonically trivial, so we get
\begin{equation}\label{super-Deligne-eq}
\BB(L_1\ot L_2)\ot\BB(\OO)\simeq \BB(L_1)\ot\BB(L_2)
\end{equation}
(see \cite{Voronov}).

More explicitly, locally over $S$ we can pick a relative positive divisor $D$ such that $L_1L_2(D)$, $L_2(D)$ and $\OO_X(D)$ have no $R^1\pi_*$,
and assume also that we have an even section $s\in H^0(X,L_2(D))$, fiberwise regular, vanishing on a relative divisor $E$.
Then we have the resolutions
\begin{align*}
&R\pi_*(\OO_X): & [\pi_*L_2(D)\to \pi_*L_2(D)|_E], \\
&R\pi_*(L_1): & [\pi_*L_1L_2(D)\to \pi_*L_1L_2(D)|_E], \\
&R\pi_*(L_2): & [\pi_*L_2(D)\to \pi_*L_2(D)|_D], \\
&R\pi_*(L_1L_2): & [\pi_*L_1L_2(D)\to \pi_*L_1L_2(D)|_D].
\end{align*}
Using these resolutions we get an isomorphism
\begin{align*}
&\BB(L_1L_2)\ot \BB(\OO_X)=\BB(L_1L_2(D))\ot \BB(L_2(D))\ot \Ber^{-1}(\pi_*L_1L_2(D)|_D)\ot \Ber^{-1}(\pi_*L_2(D)|_E)\\
&\rTo{\id\ot\id\ot\a^{-1}_D\ot\a^{-1}_E}
\BB(L_1L_2(D))\ot \BB(L_2(D))\ot \Ber^{-1}(\pi_*L_2(D)|_D)\ot \Ber^{-1}(\pi_*L_1L_2(D)|_E)=\BB(L_1)\ot \BB(L_2).
\end{align*}

Note that isomorphism \eqref{super-Deligne-eq} can be used to calculate $\BB(L_1L_2)$ when $L_1$ and $L_2$ are not necessarily of
rank $1|0$: for a line bundle $L$ of rank $0|1$ we use the isomorphism
$$\BB(\Pi L)\simeq \BB(L)^{-1}.$$

For example, for a line bundle $L$ of rank $1|0$ one has
$$\BB(L^3)\simeq \BB(L)^3/\BB(\OO)^2.$$
Applying this to $L=\Pi \om_X$, where $X/S$ is a smooth supercurve,
one gets the analog of Mumford isomorphism in the supercase (see \cite{Voronov}),
\begin{equation}\label{super-Mumford-eq}
\BB(\om_{X/S}^3)\simeq \BB(\om_{X/S})^3\ot \BB(\OO_X)^2\simeq \BB(\om_{X/S})^5,
\end{equation}
since $\BB(L)\simeq \BB(\om_{X/S}\ot L^{-1})$ by Grothendieck-Serre duality.
Thus, denoting $\Ber_i:=\BB(\om_{X/S}^i)$, we can write this isomorphism as
$$\Ber_3\simeq \Ber_1^5.$$

We will need an extension of isomorphism \eqref{super-Deligne-eq} to families of stable supercurves $\pi:X\to S$ and to the case when $L_1$ is replaced by a not necessarily
locally free sheaf. For a coherent sheaf $\FF$ over $X$, flat over $S$, we denote by $\BB(\FF)$ the Berezinian of the perfect complex $R\pi_*(\FF)$.

\begin{lemma}\label{super-Deligne-nonfree-lem}
Let $\pi:X\to S$ be a stable supercurve, $\FF$ a coherent sheaf on $X$ flat over $S$.
Assume that $\FF$ is locally free of rank $1|0$ over the smooth locus of $\pi$. Then for any line bundle $L$ of rank $1|0$ on $X$, one has
a canonical isomorphism
\begin{equation}\label{super-Deligne-sheaf-eq}
\BB(\FF\ot L)\simeq \BB(\FF)\ot \BB(L)\ot \BB(\OO_X)^{-1}.
\end{equation}
\end{lemma}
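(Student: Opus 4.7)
The plan is to imitate the construction of isomorphism~\eqref{super-Deligne-eq} recalled above, making sure the auxiliary divisors are placed inside the smooth locus $U\sub X$ of $\pi$ so that $\FF$ remains locally free wherever we restrict it. Working locally on $S$ and using the relative ampleness of Theorem~\ref{ample-thm}, I would choose an effective Cartier divisor $D\sub U$ so positive that $R^1\pi_*$ vanishes for each of $\FF(D)$, $(\FF\ot L)(D)$, $L(D)$ and $\OO_X(D)$, and then pick a generic even section $s\in H^0(X,L(D))^+$ whose zero divisor $E$ is also a relative Cartier divisor contained in $U$. Such a choice exists because for $D$ sufficiently ample the even global sections of $L(D)$ generate it on $U$, and the finite singular locus of each fiber, together with the associated points of $\FF$ lying outside $U$, can be dodged generically.

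Once $D$ and $E$ lie in $U$, the usual two short exact sequences
\begin{equation*}
0\to \OO_X\to \OO_X(D)\to \OO_X(D)|_D\to 0, \qquad 0\to \OO_X\xrightarrow{s} L(D)\to L(D)|_E\to 0
\end{equation*}
remain exact after tensoring with any of $\FF$, $L$ or $\FF\ot L$: in each case the relevant section is a non-zero-divisor on the tensored sheaf, since that sheaf is locally free at all points of $D$ or $E$ and the section is a unit elsewhere. The resulting two-term flat resolutions of $R\pi_*\OO_X$, $R\pi_*L$, $R\pi_*\FF$ and $R\pi_*(\FF\ot L)$ have exactly the shape used in the smooth case, and feeding them into the same Berezinian calculation presents both sides of~\eqref{super-Deligne-sheaf-eq} in the form
\begin{equation*}
\BB((\FF\ot L)(D))\ot \BB(L(D))\ot \Ber(\cdots|_D)^{-1}\ot \Ber(\cdots|_E)^{-1},
\end{equation*}
the two presentations differing only by a swap of which of $(\FF\ot L)(D)$ and $L(D)$ is restricted to $D$ and which to $E$. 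The identification of the two expressions is then the canonical isomorphism~\eqref{Ber-L-D-eq}, applied on $D$ and on $E$, between Berezinians of pushforwards of two rank $1|0$ line bundles along a relative divisor of dimension $0|1$.

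The main obstacle is precisely this last step: isomorphism~\eqref{Ber-L-D-eq} requires the restricted sheaves to be line bundles of rank $1|0$, which is exactly why $D$ and $E$ must be forced into the smooth locus where $\FF$ is locally free. Once this geometric condition is secured by a generic choice of ample sections, the rest of the argument is a direct transcription of the one producing~\eqref{super-Deligne-eq}. Independence of the resulting isomorphism from the choices of $D$ and $s$ is then a routine compatibility verification via common refinements, analogous to the corresponding check for the classical Deligne pairing.
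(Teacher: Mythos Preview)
Your proposal is correct and follows exactly the paper's own approach: the paper's proof is a one-sentence remark that one repeats the recipe for~\eqref{super-Deligne-eq}, replacing $L_1$ by $\FF$ and $L_2$ by $L$, while choosing the auxiliary divisor $D$ and the vanishing locus $E$ of $s$ to lie in the smooth locus of $\pi$. You have supplied more detail than the paper (exactness after tensoring, why $D$ and $E$ can be placed in $U$, independence of choices), but the underlying argument is the same.
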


\begin{proof} We can use the similar recipe as above to construct this isomorphism locally over $S$: we simply replace $L_1$ by $\FF$, $L_2$ by $L$ and
take $D$ with support in the smooth locus of $\pi$, and a section $s\in H^0(X,L(D))$ with the vanishing divisor $E$, also contained in the smooth locus of $\pi$.
\end{proof}

\subsection{Behavior of the super Mumford isomorphism near the NS boundary divisor}\label{super-Mum-NS-sec}

Now we want to look at the behavior of the super Mumford isomorphism \eqref{super-Mumford-eq}
near the generic point of boundary divisors. In the case of a Ramond node it extends to an isomorphism, since $\om_{X/S}$ is
still a line bundle. Thus, it remains to study locally the boundary component where one NS node appears. So let
$\pi:X\to S$ be a family of stable supercurves as in Sec.\ \ref{super-KS-NS-sec}, so we have a smooth morphism $t:S\to \A^1$, a relative NS node $q:S_0\to X_0\sub X$ over 
the divisor $S_0=(t=0)\sub S$, and $\OO_X$ has the standard description along $q(S_0)$.

Let $S'=S\setminus S_0$, and let $X'=\pi^{-1}(S')$ be the corresponding family of smooth supercurves.
First, we need to explain how to extend the line bundles $\BB(\om_{X'/S'}^3)$ and $\BB(\OO_{X'})$ to $S$.
For the second one this is straightforward: the extension is given by $\BB(\OO_X)$.

For any integer $n$ let us set
$$\om_{X/S}^n:=j_*\om_{U/S}^n,$$
where $U\sub X$ is the smooth locus of $\pi$.
Recall that $\om_{X/S}^2$ is a line bundle on $X$ (see Theorem \ref{can-square-thm}). Thus, we have 
$$\om_{X/S}^3=j_*\om_{U/S}^3\simeq \om_{X/S}^2\ot j_*\om_{U/S}\simeq \om_{X/S}^2\ot \om_{X/S}.$$ 
This is a coherent sheaf, flat over $S$, so we can take $\BB(\om_{X/S}^3)$ as the desired extension of
$\BB(\om_{X'/S'}^3)$.

Note that by Grothendieck-Serre duality, we have an isomorphism of line bundes on $S$,
$$\BB(\om_{X/S})\simeq \BB(\OO_X).$$

Since $\om_{X/S}$ is a coherent sheaf flat over $S$, we deduce from Lemma \ref{super-Deligne-nonfree-lem} an isomorphism
$$\BB(\Pi\om_{X/S}^3)\simeq \BB(\Pi\om_{X/S})\ot \BB(\om_{X/S}^2)\ot \BB(\OO_X)^{-1}\simeq \BB(\om_{X/S}^2)\ot \BB(\OO_X)^{-2},$$
or equivalently,
\begin{equation}\label{BB-om3-X-Mum-eq}
\BB(\om_{X/S}^3)\simeq \BB(\om_{X/S}^2)^{-1}\ot \BB(\OO_X)^2.
\end{equation}

Thus, our problem reduces to studying the behavior of the isomorphism on $X'$,
$$\BB(\om_{X'/S'}^2)\simeq \BB(\Pi\om_{X'/S'})^2\ot \BB(\OO_{X'})^{-1}\simeq \BB(\OO_{X'})^{-3}$$
near the divisor $S_0\sub S$. To this end we look more carefully at the recipe for this isomorphism outlined in Sec.\ \ref{super-Deligne-sec}.

We can pick a sufficiently positive relative divisor $D$ supported in the smooth locus of $\pi$, and also an even section $s$ of $\Pi\om_{X/S}(D)$
with the zero locus $E$. Furthermore, we want to make a special choice of $s$ as described below.


\begin{lemma}\label{choice-of-section-lem}
There exists an exact sequence of $\OO_X$-modules,
$$0\to \om^{\reg}_{X/S}\to \om_{X/S}\to q_*\OO_{S_0}\to 0$$
such that in the formal neighborhood of $q(S_0)$, $\om^{\reg}_{X/S}$ is the $\OO_X$-submodule generated by the sections $s_1$ and $s_2$
given by \eqref{s1-s2-s0-generators}. 
Furthermore, we can choose an even section $s$ of $\Pi\om^{\reg}_{X/S}(D)$ in such a way that in the formal neighborhood of $q$ one has
$$s=v\cdot (s_1+u\cdot s_2),$$
for some invertible functions $u$ and $v$ that are congruent to $1$ modulo the ideal of $q(S_0)$.
By making a change of variables 
$$z_1\mapsto u^{-1}z_1,  \ \ z_2\mapsto uz_2, \ \ \th_1\mapsto \th_1, \ \ \th_2\mapsto u\th_2$$
we can achieve that $u=1$.
\end{lemma}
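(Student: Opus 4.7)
The proof breaks into three steps, matching the three assertions of the lemma.

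\textbf{Step 1: construction of $\om^{\reg}_{X/S}$.} Define the subsheaf globally by gluing. On the smooth locus $U\sub X$ set $\om^{\reg}_{X/S}|_U := \om_{U/S}$, while in the formal neighborhood of $q(S_0)$ set $\om^{\reg}_{X/S} := \OO_X \cdot s_1 + \OO_X \cdot s_2 \sub \om_{X/S}$. These agree on the overlap because the identities $s_0 = (\th_1/z_1) s_1$ on $U_1$ and $s_0 = -(\th_2/z_2) s_2$ on $U_2$ show that $s_0$ already lies in the submodule generated by $s_1,s_2$ over the smooth locus, so both definitions recover $\om_{U/S}$ there. To identify the quotient with $q_*\OO_{S_0}$, use Lemma \ref{NS-om-lem}: in $\om_{X/S}/\om^{\reg}_{X/S}$ the image $\bar s_0$ is annihilated by $z_1, z_2, \th_1, \th_2$ from the listed relations and also by $t$ since $ts_0 = \th_2 s_1 \in \om^{\reg}_{X/S}$. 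Since the displayed basis of $\om_{X/S}$ shows that $\bar s_0$ generates the quotient, it is a cyclic $\OO_X$-module with annihilator equal to the ideal $I_{q(S_0)} = (t, z_1, z_2, \th_1, \th_2)$ of the section $q$, which is precisely $q_*\OO_{S_0}$.

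\textbf{Step 2: construction of the section $s$.} Enlarge $D$, kept disjoint from the node, so that $R^1\pi_*(\om^{\reg}_{X/S}(D)\ot I_{q(S_0)}) = 0$; this is automatic for sufficiently ample $D$ because $\om^{\reg}_{X/S}$ is coherent and equal to the line bundle $\om_{U/S}$ off the node. The associated long exact sequence then yields surjectivity of the evaluation map $\pi_*\om^{\reg}_{X/S}(D) \to \om^{\reg}_{X/S}|_{q(S_0)}$ onto the free $\OO_{S_0}$-module of rank $2$ with basis $s_1|_{q(S_0)}$, $s_2|_{q(S_0)}$. Pick an even section $s$ of $\Pi\om^{\reg}_{X/S}(D)$ whose reduction at $q(S_0)$ equals $s_1|_{q(S_0)} + s_2|_{q(S_0)}$; enlarging $D$ further and applying a standard genericity argument one also arranges that $s$ is fiberwise a non-zero-divisor, with vanishing divisor $E$ inside the smooth locus. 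Expanding $s = As_1 + Bs_2$ in $\hat{\OO}_{X, q(S_0)}$, the condition at $q(S_0)$ forces $A \equiv 1 \equiv B \pmod{I_{q(S_0)}}$, so $A$ and $B$ are units. Setting $v := A$ and $u := B/A$ gives the required decomposition, with both $u,v$ invertible and congruent to $1$ modulo $I_{q(S_0)}$.

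\textbf{Step 3: reducing to $u=1$.} A direct computation verifies that the substitution $(z_1,z_2,\th_1,\th_2)\mapsto(u^{-1}z_1,uz_2,\th_1,u\th_2)$ preserves all four relations \eqref{NS-node-def-relations}, and since $u$ is a unit $\equiv 1\pmod{I_{q(S_0)}}$ the new variables again form a system of formal generators of $\hat{\OO}_{X,q(S_0)}$ over $\OO_S$. Defining $s_1^{new}, s_2^{new}$ by formula \eqref{s1-s2-s0-generators} in the new coordinates and computing Berezinians of the Jacobians on the two charts, one finds that to leading order the substitution rescales $[dz_1|d\th_1]$ on $U_1$ by $u^{-1}$ (giving $s_1 = u \cdot s_1^{new}$) and leaves $[dz_2|d\th_2]$ on $U_2$ unchanged (giving $s_2 = s_2^{new}$), with the higher-order corrections being units in $1+I_{q(S_0)}$. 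The asymmetric scaling $u^{-1}$ versus $u$ is chosen precisely so that the coefficients in front of $s_1^{new}$ and $s_2^{new}$ in the expansion of $s_1 + u s_2$ coincide, making $s_1 + u s_2$ a unit multiple of $s_1^{new} + s_2^{new}$ in the new generators; absorbing that unit factor into $v$ yields $s = v'(s_1^{new}+s_2^{new})$. The main technical point is this last Jacobian matching on the two charts, which is what dictates the specific form of the change of variables.
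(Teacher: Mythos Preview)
Your proof is correct and follows essentially the same approach as the paper, with considerably more detail in Steps 1 and 2 than the paper gives (the paper's Step 1 is one sentence invoking Lemma~\ref{NS-om-lem}, and Step 3 is literally ``it is easy to check that the given change of variables transforms $s$ in the desired way''). One small remark on your Step 3: your leading-order Jacobian computation gives the right mechanism, but the claim that the higher-order unit corrections in front of $s_1^{new}$ and $s_2^{new}$ actually \emph{coincide} (so that $s_1+us_2$ is a unit multiple of $s_1^{new}+s_2^{new}$ exactly, not merely modulo $I_{q(S_0)}^2$) deserves one more line---it holds because on each chart $U_i$ the sheaf $\om_{U/S}$ is a line bundle, so the ratio $s_1/s_1^{new}$ computed there is a single invertible function, and the same function governs $s_2/s_2^{new}$ on that chart via $s_2=(t/z_1)s_1$.
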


\begin{proof}
In the formal neighborhood of $q$, the quotient of $\om_{X/S}/(\OO_X s_1+\OO_X s_2)$ is generated by $s_0$ and is isomorphic to $q_*\OO_{S_0}$.
This implies the first assertion. Furthermore, using Lemma \ref{NS-om-lem}, we see that
$\om^{\reg}_{X/S}\ot q_*\OO_{S_0}$ is a free $q_*\OO_{S_0}$-module generated by the images of $s_1$ and $s_2$.
For sufficiently positive $D$, the map
$$H^0(X,\om^{\reg}_{X/S}(D))\to H^0(X,\om^{\reg}_{X/S}\ot q_*\OO_{S_0})$$
is surjective, so we can choose an odd section $s\in H^0(X,\om^{\reg}_{X/S}(D))$ such that 
$$s\equiv s_1+s_2\mod J_{q(S_0)}.$$
Thus, in the formal neighborhood of $q$, we can write $s=u_1s_1+u_2s_2$ where $u_i\equiv 1\mod J_{q(S_0)}$.

It is easy to check that the given change of variables transforms $s$ in the desired way.
\end{proof}

Now we define coherent sheaves $\FF$ and $\GG$ on $X$ by the exact sequences
$$0\to \OO_X\rTo{s}\Pi \om_{X/S}(D)\to \FF\to 0,$$
$$0\to \Pi \om_{X/S} \rTo{\cdot s} \om_{X/S}^2(D)\to \GG\to 0.$$
Let $\hat{\FF}$ and $\hat{\GG}$ denote
the completions of $\FF$ and $\GG$ at $q$.

\begin{lemma}\label{F-G-completions-lem}
(a) The space $\hat{\FF}$ is a free $\OO_S$-module with the basis $s_1, s_0$. 

\noindent
(b) We have an isomorphism $\hat{\GG}\simeq \OO_Z$, where $Z\sub X$ is a sub-superscheme given by
$$z_1=-t, \ \ z_2=t, \ \ \th_1+\th_2=0.$$
In particular, $\hat{\GG}$ is a free $\OO_S$-module with the basis $e, \th_1e$,
where $e$ is the local generator of $\om_{X/S}^2$ given by \eqref{e-sec-eq}.
The sheaf $\hat{\FF}$ is also scheme-theoretically supported on $Z$.

\noindent
(c) The Berezinian of the map of $\OO_S$-modules
$$s_1\cdot ?: \hat{\FF}\to \hat{\GG}$$
is equal to $ft$, where $f$ is an invertible function.
\end{lemma}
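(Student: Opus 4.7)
The plan is to reduce everything to the case $s = s_1+s_2$. By Lemma \ref{choice-of-section-lem}, after the change of variables achieving $u=1$, I can write $s = v(s_1+s_2)$ with $v$ a unit in $\hat\OO_{X,q}$; since $v$ is a unit, $\hat\OO_X \cdot s = \hat\OO_X \cdot (s_1+s_2)$ inside $\widehat{\Pi\om}_q$ and $s \cdot \Pi\hat\om = (s_1+s_2) \cdot \Pi\hat\om$ inside $\hat\om^2$, while the multiplication $s_1\cdot(\cdot)$ in (c) does not involve $v$ at all. So I may replace $s$ by $s_1+s_2$ throughout. I will use the presentation of $\hat\om$ by $s_1,s_2,s_0$ and the relations from Lemma \ref{NS-om-lem}, the generator $e$ of $\hat\om^2$ from \eqref{e-sec-eq}, and the local formulas on the branch $U_1$, where $s_1 = x$, $s_2 = (t/z_1)x$, $s_0 = (\th_1/z_1)x$ and $e = z_1^{-1}(x\ot x)$ with $x = [dz_1|d\th_1]$.

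For part (b), I first compute the three products $s_i \cdot (s_1+s_2)$ in $\hat\om^2$ directly from the local formulas, with careful attention to the super sign rule for the tensor product of odd elements; this gives
\begin{equation*}
s_1 \cdot s = (z_1+t)e, \qquad s_2 \cdot s = -(z_2-t)e, \qquad s_0 \cdot s = (\th_1+\th_2)e.
\end{equation*}
Hence the image of $(\cdot)s: \Pi\hat\om \to \hat\om^2 = \hat\OO_X \cdot e$ is the submodule $I\cdot e$, where $I = (z_1+t, z_2-t, \th_1+\th_2)\sub \hat\OO_X$. This identifies $\hat\GG$ with $\hat\OO_X/I \cdot e \simeq \OO_Z$, and a direct check shows that modulo $I$ the defining relations \eqref{NS-node-def-relations} become tautological (e.g.\ $z_1z_2 = -t^2$ becomes $(-t)(t) = -t^2$), so $\OO_Z \simeq \OO_S[\th_2]$ is free of rank $1|1$ over $\OO_S$ with basis $(1,\th_2)$, giving the $\OO_S$-basis $(e, \th_1 e)$ of $\hat\GG$ (using $\th_1 \equiv -\th_2$ on $Z$).

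For part (a), the key step is to split off the ``regular part'' via the exact sequence
\begin{equation*}
0 \to \hat\om^{\reg} \to \hat\om \to q_*\OO_{S_0} \to 0
\end{equation*}
from Lemma \ref{choice-of-section-lem}, where the quotient is identified with $q_*\OO_{S_0}$ via the image of $s_0$ (using $ts_0 = \th_1 s_2 \in \hat\om^{\reg}$, $z_i s_0 = \pm \th_i s_i \in \hat\om^{\reg}$, $\th_i s_0 = 0$). Since $s \in \hat\om^{\reg}$, the snake lemma gives $0 \to \FF^{\reg} \to \hat\FF \to q_*\OO_{S_0} \to 0$ with $\FF^{\reg} := \hat\om^{\reg}/\hat\OO_X s$. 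Now $\hat\om^{\reg}$ is generated by $s_1, s_2$, and after setting $s_2 \equiv -s_1$ the defining relations $z_1 s_2 = ts_1$, $z_2 s_1 = -ts_2$, $\th_1 s_2 = \th_2 s_1$ become $(z_1+t)s_1 = (z_2-t)s_1 = (\th_1+\th_2)s_1 = 0$, identifying $\FF^{\reg}$ with $\OO_Z \cdot s_1$ (free of rank $1|1$ over $\OO_S$ with basis $(s_1, \th_2 s_1)$). Lifting the generator of $q_*\OO_{S_0}$ to $s_0 \in \hat\FF$, the relation $ts_0 = \th_2 s_1$ then yields $\hat\FF = \OO_S s_1 + \OO_S s_0$; freeness follows because any $\OO_S$-relation $a s_1 + b s_0 = 0$ forces $b \equiv 0 \pmod t$ upon projecting to $q_*\OO_{S_0}$, so $b = tb'$, and $(a+b'\th_2) s_1 = 0$ in the free module $\OO_Z s_1$ gives $a = b' = 0$. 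A direct check that $I$ annihilates both $s_1$ (as above) and $s_0$ (using $z_1 s_0 = \th_1 s_1 = -ts_0$, $z_2 s_0 = -\th_2 s_2 = ts_0$, $\th_i s_0 = 0$) completes the scheme-theoretic support claim in (b).

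For part (c), in the bases $(s_1, s_0)$ of $\hat\FF$ and $(e, \th_1 e)$ of $\hat\GG$ (even vector first, odd second), the map $s_1 \cdot (\cdot)$ sends $s_1 \mapsto s_1\ot s_1 = z_1 e \equiv -te$ modulo $Ie$, and $s_0 \mapsto s_1\ot s_0 = -\th_1 e$ from the local computation on $U_1$ (with the sign coming from the super rule $x\ot (\th_1 r) = -\th_1(x\ot r)$ for $x$ odd and $r$ carrying an odd factor). The matrix is therefore the diagonal supermatrix $\mathrm{diag}(-t, -1)$, whose Berezinian is $(-t)/(-1) = t$; reinstating the unit $v$ used in the initial reduction multiplies this by an invertible function, yielding the asserted form $ft$. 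The main obstacle lies in part (a): because the subquotient $q_*\OO_{S_0}$ is not $\OO_S$-flat, one cannot simply add its rank to that of $\FF^{\reg}$, and careful use of the relation $ts_0 = \th_2 s_1$ is what converts the extension into a genuinely free rank $1|1$ module.
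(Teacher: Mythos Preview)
Your proof is correct and tracks the paper closely for parts (b) and (c); the one genuine organizational difference is in part (a). The paper proves (a) by exhibiting an explicit topological $\OO_S$-basis of the submodule $\hat\OO_X\cdot(s_1+s_2)\subset\hat\omega$ (namely $s_1+s_2$, $z_1^n(z_1+t)s_1$, $z_2^n(z_2-t)s_2$, $\th_1s_1+ts_0$, $z_1^n(z_1+t)\th_1s_1$, $\th_2s_2+ts_0$, $z_2^n(z_2-t)\th_2s_2$) and reading off the quotient. You instead filter through the exact sequence $0\to\FF^{\reg}\to\hat\FF\to q_*\OO_{S_0}\to 0$ coming from $\hat\omega^{\reg}\hookrightarrow\hat\omega$, identify $\FF^{\reg}\simeq\OO_Z\cdot s_1$, and then lift $s_0$. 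Your route is arguably more conceptual and reappears later in the paper (Step 3 of Lemma \ref{main-splitting-lem} uses exactly the isomorphism $\Pi\OO_Z\simeq\hat\FF^{\reg}$), while the paper's direct basis computation is more self-contained at this point. One small gap in your version: you assert that $z_1s_2=ts_1$, $z_2s_1=-ts_2$, $\th_1s_2=\th_2s_1$ are the \emph{defining} relations of $\hat\omega^{\reg}$, which needs a line of justification (e.g., a rank count against the $\OO_S$-basis of $\hat\omega^{\reg}$).

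In (c) you get $s_1\cdot s_0=-\th_1e$ via the Koszul sign, whereas the paper records $\th_1e$; your sign computation is the careful one, but either way the Berezinian is $\pm t$, so the statement is unaffected.
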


\begin{proof}
(a) Lemma \ref{NS-om-lem} immediately shows that the map 
$$\OO_S\to \Pi\om_{X/S}: f\mapsto f\cdot (s_1+s_2)$$
is injective.
Furthermore, the submodule $\OO_X(s_1+s_2)$ has the following topological basis over $\OO_S$:
\begin{align*}
&s_1+s_2, \ \ z_1^{n+1}(s_1+s_2)=z_1^n(z_1+t)s_1, \ \
z_2^n(z_2-t)s_2, \ \ \th_1(s_1+s_2)=\th_1s_1+ts_0, \\ 
&z_1^{n+1}\th_1(s_1+s_2)=z_1^n(z_1+t)\th_1s_1, \ \
\th_2(s_1+s_2)=\th_2s_2+ts_0, \ \ z_2^n(z_2-t)\th_2s_2,
\end{align*}
where $n\ge 0$.
Hence, the quotient by this submodule has the images of $s_0, s_1$ as a basis over $\OO_S$.

\noindent
(b) Near the node, $\om_{X/S}^2$ is a free $\OO_X$-module with one even generator $e$. Furthermore, it
is easy to check that with respect to the map $\om_{X/S}^{\ot 2}\to \om_{X/S}^2$ one has
$$s_1^2=z_1e, \ \ s_2^2=-z_2e, \ \ s_1s_2=te, \ \ s_0s_i=\th_ie,$$
for $i=1,2$.
Thus, the image of the multiplication by $s_1+s_2$ is the $\OO_X$-submodule generated
by 
$$s_1(s_1+s_2)=(z_1+t)e, \ \ s_2(s_1+s_2)=(t-z_2)e, \ \ s_0(s_1+s_2)=(\th_1+\th_2)e.$$
In other words, we have an identification
$$\hat{\GG}\simeq \OO_X/(z_1+t, z_2-t, \th_1+\th_2)=\OO_Z\simeq \OO_S[\th_1],$$
as claimed.

\noindent
(c) The operator of multiplication by $s_1$ acts on the bases of $\hat{\FF}$ and $\hat{\GG}$ as
$$s_1\mapsto s_1^2=-te, \ \ s_0\mapsto s_1s_0=\th_1e,$$
Hence, the Berezinian is equal to $-t$ (recall that $s_1$ is an even generator of $\hat{\FF}$, while $s_0$ is an odd generator).
\end{proof}



Let us consider the following resolutions for the derived push-forwards under $\pi$:
\begin{align*}
&R\pi_*(\OO_X): & [\pi_*(\Pi \om_{X/S}(D)\to \pi_*\FF], \\
&R\pi_*(\Pi \om_{X/S}): & [\pi_*(\om_{X/S}^2(D))\to \pi_*\GG], \\
&R\pi_*(\Pi \om_{X/S}): & [\pi_*(\Pi \om_{X/S}(D))\to \pi_*(\Pi \om_{X/S}(D)|_D)], \\
&R\pi_*(\om_{X/S}^2): & [\pi_*(\om_{X/S}^2(D))\to \pi_*(\om_{X/S}^2(D)|_D)].
\end{align*}
These resolutions give a canonical isomorphism over $S$,
\begin{equation}\label{Ber-om2-NS-node-eq}
\BB(\om_{X/S}^2)\ot \BB(\OO_X)\simeq\Ber(R\pi_*(\Pi\om_{X/S}))^{\ot 2}\ot \Ber(\pi_*\GG)\ot \Ber(\pi_*\FF)^{-1}.
\end{equation}
Now the super Mumford isomorphism for the induced smooth family 
$X_{S'}\to S'$ is obtained by choosing an isomorphism of 
$$\FF|_{X_{S'}}\simeq \GG|_{X_{S'}}$$ 
(where both sheaves are supported on the zero divisor of $s$ in $X_{S'}$)
and using the Berezinian of the induced map on $R\pi_*$ to get an isomorphism of $\Ber(\pi_*\FF)|_{S'}$ with $\Ber(\pi_*\GG)|_{S'}$ (see \eqref{Ber-L-D-eq}).
For this we are going to construct a morphism
$$\a:\FF\to \GG,$$
which is an isomorphism away from the node, and use its restriction to $X_{S'}$.

Note that both $\FF$ and $\GG$ are supported on $Z\sqcup E$, 
where $E$ is a relative divisor supported in the smooth locus of $\pi$, so
we have decompositions into subsheaves supported on $Z$ and on $E$,
$$\FF\simeq \FF_Z\oplus \FF_{E}, \ \ \GG\simeq \GG_Z\oplus \GG_{E}.$$
We can choose separately morphisms $\FF_{E}\to \GG_{E}$ and $\FF_Z\to \GG_Z$. 

We have
$$\FF_{E}\simeq \pi_*(\Pi \om_{X/S}(D)|_{E}), \ \ \GG_{E}\simeq \pi_*(\om_{X/S}^2(D)|_{E}),$$
so both are isomorphic to $\OO_{E}$, and we choose any isomorphism between them as a morphism $\a_{E}:\FF_{E}\to \GG_{E}$.
On the other hand, we have
$$\FF_Z\simeq \hat{F}, \ \ \GG_Z\simeq \hat{\GG},$$
Hence, we can use the morphism of $\OO_X$-modules,
$$\mu_{s_1}:\hat{\FF}\to \hat{\GG},$$ 
given by the multiplication with $s_1$, as our morphism $\FF_Z\to \GG_Z$.
By Lemma \ref{F-G-completions-lem}, it restricts to an isomorphism over $t\neq 0$.
Thus, we get the desired morphism
$$\a=(\mu_{s'},\a_{E}):\FF\to \GG.$$

By definition, the super Mumford isomorphism for $X_{S'}\to S'$ is obtained by restricting the isomorphism of ilne bundles
\eqref{Ber-om2-NS-node-eq} to $S'$ and multiplying it with $\ber R\pi_*(\a|_{X_{S'}})^{-1}\in \BB(\FF)\ot \BB(\GG)^{-1}$.
Since $\ber R\pi_*(\a_{E'})$ is invertible on $S$, we see that
$$\ber R\pi_*(\a|_{X_{S'}})=u\cdot \ber \pi_*(\mu_{s_1}),$$
where $u$ is an invertible function on $S$.
Using Lemma \ref{F-G-completions-lem}(iii), we deduce that
$$\ber R\pi_*(\a|_{X_{S'}})^{-1}=u'\cdot t^{-1},$$ 
for some invertible function $u'$ on $S$, with respect to some
bases of $\BB(\FF)$ and $\BB(\GG)$ on $S$.

Thus, we obtained an isomorphism
\begin{equation}\label{partial-Mumford-isom}
\BB(\om_{X/S}^2)\rTo{\sim} \BB(\OO_X)^{-1}\ot \Ber(R\pi_*(\Pi\om_{X/S}))^{\ot 2}(\De),
\end{equation}
where $\De$ is the divisor given by $t=0$. Combining this with isomorphism
\eqref{BB-om3-X-Mum-eq}, we get the following local result.

\begin{prop}\label{super-Mum-prop}
In the above situation the super Mumford isomorphism
$$\BB(\om_{X'/S'}^{-2})\simeq \BB(\om_{X'/S'}^3)\simeq \BB(\OO_{X'})^5$$
extends to an isomorphism
$$\BB(\om_{X/S}^{-2})\simeq \BB(\om_{X/S}^3)\simeq \BB(\OO_X)^5(-\De).$$
\end{prop}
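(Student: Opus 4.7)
The plan is to combine the isomorphism \eqref{partial-Mumford-isom} just constructed with the algebraic relation \eqref{BB-om3-X-Mum-eq} and with standard Grothendieck-Serre duality. The principal computation, which produces the simple pole along $\De$, has already been done in Lemma \ref{F-G-completions-lem}; what remains is essentially formal bookkeeping among the various Berezinian line bundles.

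First I would simplify the factor $\Ber(R\pi_*(\Pi\om_{X/S}))^{\ot 2}$ on the right-hand side of \eqref{partial-Mumford-isom}. Parity reversal of the flat coherent sheaf $\om_{X/S}$ reverses the parity of the perfect complex $R\pi_*(\Pi\om_{X/S})$ and hence inverts its Berezinian, so $\Ber(R\pi_*(\Pi\om_{X/S}))\simeq \BB(\om_{X/S})^{-1}$. Combined with the Grothendieck-Serre identification $\BB(\om_{X/S})\simeq \BB(\OO_X)$ recorded in Sec.\ \ref{super-Mum-NS-sec}, substituting into \eqref{partial-Mumford-isom} yields
$$\BB(\om_{X/S}^2)\simeq \BB(\OO_X)^{-3}(\De).$$
Inserting this into \eqref{BB-om3-X-Mum-eq} gives
$$\BB(\om_{X/S}^3)\simeq \BB(\om_{X/S}^2)^{-1}\ot \BB(\OO_X)^2\simeq \BB(\OO_X)^3(-\De)\ot \BB(\OO_X)^2\simeq \BB(\OO_X)^5(-\De),$$
which is the second of the required isomorphisms.

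For the first isomorphism $\BB(\om_{X/S}^{-2})\simeq \BB(\om_{X/S}^3)$, I would apply Grothendieck-Serre duality directly to the line bundle $\om_{X/S}^{-2}$ (locally free by Theorem \ref{can-square-thm}). Since $(\om_{X/S}^{-2})^\vee\ot \om_{X/S}\simeq \om_{X/S}^{3}$ and $\pi^!\OO_S=\om_{X/S}[1]$, duality identifies $R\pi_*(\om_{X/S}^3)[1]$ with $(R\pi_*\om_{X/S}^{-2})^\vee$; taking Berezinians and noting that the shift $[1]$ and dualization each invert the Berezinian, the two inversions cancel and one gets $\BB(\om_{X/S}^3)\simeq \BB(\om_{X/S}^{-2})$.

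It remains to verify that the resulting composite isomorphism restricts on $S'=S\setminus S_0$ to the smooth super Mumford isomorphism of \cite{Voronov,RSV}, and this is automatic from the construction: the recipe for \eqref{partial-Mumford-isom} in Sec.\ \ref{super-Mum-NS-sec} is the same Koszul-style manipulation used in Sec.\ \ref{super-Deligne-sec} to build the smooth super Mumford isomorphism, with the only deviation from an isomorphism of line bundles on all of $S$ being the contribution $\ber R\pi_*(\mu_{s_1})=-t\cdot(\text{unit})$ from Lemma \ref{F-G-completions-lem}(c), which becomes a unit on $S'$. The expected main obstacle, namely establishing \eqref{partial-Mumford-isom} with its exact vanishing order along $\De$, has already been overcome via the careful choice of section $s=v(s_1+s_2)$ in Lemma \ref{choice-of-section-lem} and the explicit basis computation in Lemma \ref{F-G-completions-lem}, so the present proposition reduces to assembling these ingredients.
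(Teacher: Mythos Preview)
Your proposal is correct and follows exactly the paper's approach: the paper's proof is literally the phrase ``Combining this with isomorphism \eqref{BB-om3-X-Mum-eq}'', and you have simply spelled out that combination---using $\BB(\Pi\om_{X/S})=\BB(\om_{X/S})^{-1}$, Grothendieck--Serre duality $\BB(\om_{X/S})\simeq\BB(\OO_X)$ and $\BB(\om_{X/S}^{-2})\simeq\BB(\om_{X/S}^3)$, and substitution into \eqref{partial-Mumford-isom}---in the expected way.
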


Combining this with Corollary \ref{KS-super-NS-node-cor} and with Proposition \ref{KS-super-R-prop} we obtain the following result.

\begin{theorem}\label{canonical-class-no-punct-thm} 
Let $\ov{\SS}_g$ denote the moduli stack of stable supercurves of genus $g$, and 
let $\De_{NS}\sub \ov{\SS}_g$ (resp., $\De_R\sub \ov{\SS}_g$) denote the boundary divisor supported on the locus where a supercurve
acquires an NS node (resp., a Ramond node), defined in Sec.\ \ref{boundary-equation-sec}.
Then one has an isomorphism of line bundles 
\begin{equation}\label{can-class-smooth-moduli-eq}
K_{\ov{\SS}_g}\simeq \Ber_1^5(-2\De_{NS}-\De_R).
\end{equation}
\end{theorem}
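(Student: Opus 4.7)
The plan is to combine the two global isomorphisms already established in the previous sections: the Kodaira--Spencer identification of $K_{\ov{\SS}_g}$ with a Berezinian line bundle twisted by $-\De$, and the extended super Mumford isomorphism identifying that Berezinian with $\Ber_1^5$ twisted by $-\De_{NS}$. The arithmetic of the twists then yields $-2\De_{NS}-\De_R$.

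First, I would apply Proposition \ref{stable-can-KS-formula-prop} in the no-punctures case $I=J=\emptyset$, so that $\LL(X,P_\bullet,R_\bullet)=\om_{X/\ov{\SS}_g}^{-2}$, which is the line bundle of Theorem \ref{can-square-thm}. This gives a canonical isomorphism
$$K_{\ov{\SS}_g}\simeq \Ber\bigl(R\pi_*\om_{X/\ov{\SS}_g}^{-2}\bigr)\bigl(-\De_{NS}-\De_R\bigr),$$
since $\De=\De_{NS}+\De_R$ as Cartier divisors by the construction in Section \ref{De-NS-R-sec}. The factor of $\OO(-\De)$ records the fact, proved via Corollary \ref{KS-super-NS-node-cor} and Proposition \ref{KS-super-R-prop}, that the Berezinian of the Kodaira--Spencer map acquires a simple pole along each generic point of the nodal locus.

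Next, I would identify $\BB(\om_{X/\ov{\SS}_g}^{-2})$ with $\Ber_1^5$ twisted by $-\De_{NS}$. Away from the boundary $\om_{X/\ov{\SS}_g}$ is a line bundle of rank $1|0$, so the super Mumford isomorphism \eqref{super-Mumford-eq} gives $\BB(\om_{X/\ov{\SS}_g}^{-2})\simeq \BB(\om_{X/\ov{\SS}_g})^{-5}\simeq\BB(\OO_X)^5=\Ber_1^5$ on $\SS_g$ (using Grothendieck--Serre duality). Near the NS boundary, Proposition \ref{super-Mum-prop} shows that this isomorphism extends to all of $\ov{\SS}_g$ only after a twist by $\OO(-\De_{NS})$. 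Near a Ramond node $\om_{X/\ov{\SS}_g}$ is already a line bundle on the total space of the universal family, so the Mumford isomorphism extends with no additional twist; this is the content of the remark at the start of Section \ref{super-Mum-NS-sec}. Since $\ov{\SS}_g$ is smooth and the complement of the generic points of $\De$ has codimension $\ge 2$ (in the even direction), these local identifications glue to a global isomorphism
$$\BB(\om_{X/\ov{\SS}_g}^{-2})\simeq \Ber_1^5(-\De_{NS}).$$

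Finally, substituting into the Kodaira--Spencer formula gives
$$K_{\ov{\SS}_g}\simeq \Ber_1^5(-\De_{NS})(-\De_{NS}-\De_R)=\Ber_1^5(-2\De_{NS}-\De_R),$$
which is the claimed isomorphism \eqref{can-class-smooth-moduli-eq}. The only nontrivial step is verifying that the local super Mumford isomorphisms computed in Proposition \ref{super-Mum-prop} really do glue to a global isomorphism with the correct divisor twist, but this is automatic since both sides are line bundles on a smooth Deligne--Mumford stack and agree over the open set obtained by removing the codimension $\ge 2|0$ locus of supercurves with more than one node.
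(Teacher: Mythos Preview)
Your proof is correct and follows exactly the same two-step approach as the paper: first the Kodaira--Spencer identification $K_{\ov{\SS}_g}\simeq\BB(\om_{X/\ov{\SS}_g}^{-2})(-\De)$ from Proposition~\ref{stable-can-KS-formula-prop} (equivalently Corollary~\ref{KS-super-NS-node-cor} and Proposition~\ref{KS-super-R-prop}), then the extended super Mumford isomorphism $\BB(\om_{X/\ov{\SS}_g}^{-2})\simeq\Ber_1^5(-\De_{NS})$ from Proposition~\ref{super-Mum-prop}. One small slip: the intermediate chain $\BB(\om^{-2})\simeq\BB(\om)^{-5}\simeq\BB(\OO_X)^5$ has the exponents inverted in both steps (Serre duality gives $\BB(\om)\simeq\BB(\OO_X)$, and super Mumford gives $\BB(\om^{-2})\simeq\BB(\om^3)\simeq\BB(\om)^5$), though the final identification with $\Ber_1^5$ is of course correct.
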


\begin{proof}
Let $\pi:X\to\ov{\SS}_g$ denote the universal supercurve. 
First, we have an isomorphism
$$K_{\ov{\SS}_g}\simeq \Ber^{-1} R^1\pi_*\om_{X/\ov{\SS}_g}^{-2}(-\De_{NS}-\De_R)\simeq \BB(\om_{X/\ov{\SS}_g}^{-2})(-\De_{NS}-\De_R)$$
(see Corollary \ref{KS-super-NS-node-cor} and Proposition \ref{KS-super-R-prop}).
Next, by Proposition \ref{super-Mum-prop}, we have
$$\BB(\om_{X/\ov{\SS}_g}^{-2})\simeq \BB(\om_{X/\ov{\SS}_g}^3)\simeq \Ber_1^5(-\De_{NS})$$
(recall that the super-Mumford isomorphism \eqref{super-Mumford-eq} extends over the Ramond type boundary divisor $\De_R$).
Combining this with the previous isomorphism we get the result.
\end{proof}


\subsection{Canonical line bundle on the moduli space of stable supercurves with punctures}\label{can-line-bun-punctures-sec}

Let $\pi:X\to S$ be a stable supercurve with NS punctures $P_1,\ldots,P_m$ and Ramond divisors $R_1,\ldots,R_n$.
We denote by 
$D_i\sub X$ the divisors associated with $P_i$ as in Sec.\ \ref{NS-correspondence-sec}.

For every $i=1,\ldots,m$, let us define a line bundle of rank $0|1$ on $S$,
$$\Psi_i:=P_i^*\om_{X/S}.$$

\begin{lemma}\label{psi-lemma} 
One has natural isomorphisms of line bundles of rank $0|1$ on $S$,
$$\Psi_i\simeq \pi_*\OO_{D_i}/\OO_S,$$
$$\Psi_i^{-1}\simeq P_i^*\om_{X/S}(D_i)\simeq P_i^*\om_{D_i/S}\simeq \Ber(\pi_*\OO_{D_i}).$$
Hence, for any line bundle $L$ of rank $1|0$ on $X$, one has
$$\Ber(\pi_*L|_{D_i})\simeq \Psi_i^{-1},$$
while for a line bundle $M$ of rank $0|1$, one has
$$\Ber(\pi_*M|_{D_i})\simeq \Psi_i.$$
\end{lemma}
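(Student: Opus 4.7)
The plan is to reduce all assertions to the first isomorphism $\Psi_i\simeq \pi_*\OO_{D_i}/\OO_S$ — which is the only step where the superconformal structure plays an essential role — and then to propagate through adjunction and Berezinian identities. I work in the standard coordinates $(z,\theta)$ of Lemma~\ref{NS-div-corr-lem}: $P_i$ is defined by $(z,\theta-a)$ for an odd section $a\in\OO_S$, and $D_i$ by $f:=z+a\theta$, so that $f|_{P_i}=a^2=0$.

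For $\Psi_i\simeq \pi_*\OO_{D_i}/\OO_S$: the splitting \eqref{NS-div-splitting-eq} identifies $\pi_*\OO_{D_i}/\OO_S$ with the ideal $I_{P_i}/I_{D_i}$ of $P_i$ in $D_i$. I define the map to $\Psi_i$ by restricting the structure derivation $\delta:\OO_X\to\om_{X/S}$: for $g\in I_{P_i}$, the element $\delta(g)|_{P_i}\in\Psi_i$ depends only on $g$ modulo $I_{D_i}+I_{P_i}^2$. Indeed, $\delta(I_{P_i}^2)|_{P_i}=0$ by Leibniz, and $\delta(fh)|_{P_i}=A(f)|_{P_i}\cdot h|_{P_i}\cdot [dz|d\theta]|_{P_i}+f|_{P_i}\cdot \delta(h)|_{P_i}$ vanishes because $A(f)=\theta-a$ and $f$ both restrict to $0$ on $P_i$. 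Locally $I_{P_i}^2\subset I_{D_i}$ since $A(f)^2=0$, so I obtain $\bar\delta:I_{P_i}/I_{D_i}\to\Psi_i$, which in the standard coordinates sends $\overline{A(f)}\mapsto [dz|d\theta]|_{P_i}$ and is therefore an isomorphism.

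For the chain $\Psi_i^{-1}\simeq P_i^*\om_{X/S}(D_i)\simeq P_i^*\om_{D_i/S}\simeq \Ber(\pi_*\OO_{D_i})$: the middle isomorphism is the adjunction $\om_{X/S}(D_i)|_{D_i}\simeq\om_{D_i/S}$ (valid since $D_i$ is an effective Cartier divisor, hence Gorenstein over $S$) pulled back along $P_i\hookrightarrow D_i$. Since $D_i\to S$ is smooth of relative dimension $0|1$, both $\Om_{D_i/S}$ and $\om_{D_i/S}=\Ber(\Om_{D_i/S})$ are rank $0|1$ line bundles, and for any such $M$ one has the canonical identification $\Ber(M)\simeq M^{-1}$ coming from the evaluation pairing. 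Pulling back to $P_i$ and using the conormal identification $P_i^*\Om_{D_i/S}=I_{P_i}/I_{P_i}^2=I_{P_i}/I_{D_i}$ (as $I_{P_i}^2=0$ in $\OO_{D_i}$, its generator being odd) together with the first isomorphism, this yields $P_i^*\om_{D_i/S}\simeq\Psi_i^{-1}$. For the identification with $\Ber(\pi_*\OO_{D_i})$, I apply the Berezinian to the split short exact sequence
$$0\to I_{P_i}/I_{D_i}\to \pi_*\OO_{D_i}\to\OO_S\to 0$$
on $S$ and use $\Ber(\OO_S)=\OO_S$ together with $\Ber(\Psi_i)=\Psi_i^{-1}$.

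The last two identities come from tensoring the short exact sequence $0\to I_{P_i}/I_{P_i}^2\to \OO_{D_i}\to\OO_{P_i}\to 0$ of $\OO_{D_i}$-modules with $L$ (resp.\ $M$) and pushing forward to $S$. Since the left term is annihilated by $I_{P_i}$, tensoring with $L$ over $\OO_{D_i}$ amounts to tensoring with $L|_{P_i}$ over $\OO_S$, giving
$$0\to\Psi_i\otimes L|_{P_i}\to \pi_*(L|_{D_i})\to L|_{P_i}\to 0.$$
Taking Berezinians and using the parity rules $\Ber(\text{rank }1|0)=\text{identity}$ and $\Ber(\text{rank }0|1)=\text{inverse}$, the contributions from $L|_{P_i}$ cancel in both parities: for $L$ of rank $1|0$ one obtains $\Psi_i^{-1}$, and for $M$ of rank $0|1$ one obtains $\Psi_i$. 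The main step, and the only place the super-geometry is used essentially, is the construction of $\bar\delta$ in the first isomorphism; everything else is formal manipulation with adjunction, short exact sequences, and the rank $0|1$ identity $\Ber(M)=M^{-1}$.
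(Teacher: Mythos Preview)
Your proof is correct. The main point of departure from the paper is the identification $\Psi_i\simeq I_{P_i}/I_{D_i}$: you construct this directly by $g\mapsto\delta(g)|_{P_i}$ and verify in standard coordinates that $\overline{A(f)}\mapsto[dz|d\theta]|_{P_i}$ is an isomorphism, whereas the paper argues indirectly, first setting $\widetilde\Psi:=\pi_*\OO_{D_i}/\OO_S$, computing $\Ber(\pi_*\omega_{X/S}^{-2}|_{D_i})\simeq\widetilde\Psi^{-1}$ via the general Berezinian triviality \eqref{Ber-L-D-eq}, and then invoking the isomorphism $\pi_*(\omega_{X/S}^{-2}|_{D_i})\simeq\TT_{X/S}|_{P_i}$ from Lemma~\ref{NS-div-corr-lem} to obtain $\widetilde\Psi^{-1}\simeq\Ber(\TT_{X/S}|_{P_i})=\Psi_i^{-1}$. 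Your route is more elementary and makes the use of the superconformal structure completely explicit (it is precisely the restriction of $\delta$), while the paper's route makes the naturality of the map transparent by packaging it inside the already-established isomorphism $\beta$ of Lemma~\ref{NS-div-corr-lem}. The remaining steps---adjunction $\omega_{X/S}(D_i)|_{D_i}\simeq\omega_{D_i/S}$, the conormal identification, and the Berezinian of the split filtration of $\pi_*(L|_{D_i})$---are the same in substance as the paper's, with the paper phrasing the last two identities via the canonical isomorphism \eqref{Ber-L-D-eq} rather than the explicit short exact sequence you wrote down.
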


\begin{proof} 
Let us set $\Psi=\Psi_i$ and
$$\wt{\Psi}:=\pi_*\OO_D/\OO_S,$$
where $D=D_i$ and $P=P_i$.
Then $\wt{\Psi}$ is a line bundle of rank $0|1$, and we have
$$\Ber(\pi_*\OO_D)\simeq \wt{\Psi}^{-1}.$$
Hence, from \eqref{Ber-L-D-eq} we get the last two isomorphisms with $\Psi_i$ replaced by $\wt{\Psi}$.

Applying this to the line bundle $\om_{X/S}^{-2}$ of rank $1|0$, we get an isomorphism
$$\Ber(\pi_*\om_{X/S}^{-2}|_D)\simeq \wt{\Psi}^{-1}.$$
But from Lemma \ref{NS-div-corr-lem}, we have an isomorphism over $S$,
$$\pi_*(\om_{X/S}^{-2}|_D)\simeq \TT_{X/S}|_P.$$
Hence, passing to the Berezinians we get
$$\wt{\Psi}^{-1}\simeq \om_{X/S}^{-1}|_P,$$
i.e., $\wt{\Psi}\simeq \Psi$.

Next, let $I_D\sub I_P\sub \OO_X$ be the ideal sheaves of $D=D_i$ and $P=P_i$. Since the projection to $S$ induces an isomorphism $P\simeq S$,
we have a decomposition
$$\pi_*\OO_D=\OO_S\oplus \pi_*(I_P/I_D),$$
so 
$$\Psi\simeq \wt{\Psi}\simeq \pi_*(I_P/I_D).$$ 
Note also that $I_P^2\sub I_D$, so $I_P/I_D$ can be identified with the conormal sheaf to $P$ in $D$.
Now, since $\om_{P/S}=\OO_S$, the 
exact sequence
$$0\to \Psi \to \Om_{D/S}|_P\to \OO_P\to 0$$
gives an isomorphism
$$P^*\om_{D/S}\simeq \Ber(\Psi)=\Psi^{-1}.$$
\end{proof}

\begin{cor}\label{square-NS-triv-cor} 
The line bundle $P_i^*\om_{X/S}^2(D_i)$ on $S$ is canonically trivialized.
\end{cor}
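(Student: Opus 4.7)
The plan is to exhibit $P_i^*\om_{X/S}^2(D_i)$ as the tensor product $\Psi_i^{\otimes 2}\otimes \Psi_i^{-2}$ and trivialize it via the canonical evaluation pairing of $\Psi_i$ with its dual. First I would use the fact that each NS puncture $P_i$ lies in the smooth locus $U\sub X$, so in a neighborhood of $P_i$ the line bundle $\om_{X/S}^2=j_*\om_{U/S}^{\otimes 2}$ (from Theorem \ref{can-square-thm}) agrees with $\om_{X/S}^{\otimes 2}$. Hence
$$P_i^*\bigl(\om_{X/S}^2(D_i)\bigr)\simeq (P_i^*\om_{X/S})^{\otimes 2}\otimes P_i^*\OO(D_i)\simeq \Psi_i^{\otimes 2}\otimes P_i^*\OO(D_i).$$

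Next I would identify $P_i^*\OO(D_i)$ with $\Psi_i^{-2}$. Lemma \ref{psi-lemma} provides a canonical isomorphism $P_i^*\om_{X/S}(D_i)\simeq \Psi_i^{-1}$; on the other hand, splitting the pullback of the tensor product gives $P_i^*\om_{X/S}(D_i)\simeq \Psi_i\otimes P_i^*\OO(D_i)$. Comparing the two expressions yields a canonical isomorphism $P_i^*\OO(D_i)\simeq \Psi_i^{-2}$.

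Combining these two steps, we obtain a canonical isomorphism
$$P_i^*\bigl(\om_{X/S}^2(D_i)\bigr)\simeq \Psi_i^{\otimes 2}\otimes \Psi_i^{-2},$$
and the right-hand side is canonically trivialized by the evaluation pairing. There is no real obstacle here; the only subtlety to check is that the parities line up consistently—since $\Psi_i$ has rank $0|1$, both $\Psi_i^{\otimes 2}$ and $\Psi_i^{-2}$ are of rank $1|0$, so the evaluation gives a trivialization of an ordinary even line bundle, which matches the rank $1|0$ nature of $\om_{X/S}^2(D_i)$ on $S$.
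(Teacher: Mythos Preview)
Your proof is correct and follows the same route as the paper, which leaves the corollary unproved as an immediate consequence of Lemma~\ref{psi-lemma}. The only difference is that you detour through $P_i^*\OO(D_i)\simeq\Psi_i^{-2}$, whereas the direct reading of the lemma gives the one-line argument
\[
P_i^*\om_{X/S}^2(D_i)\simeq P_i^*\om_{X/S}\otimes P_i^*\om_{X/S}(D_i)\simeq \Psi_i\otimes\Psi_i^{-1}\simeq\OO_S,
\]
using the definition $\Psi_i=P_i^*\om_{X/S}$ together with the isomorphism $\Psi_i^{-1}\simeq P_i^*\om_{X/S}(D_i)$ from the lemma.
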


\begin{example}
Suppose $X\to S$, $(P_i)$, is a family of supercurves with NS-punctures over an {\it even} base $S$. 
Then $\OO_X=\OO_C\oplus L$, where $(C,L)$ is the underlying family of curves with spin-structures, and
$\II_{P_i}=\II_{p_i}\oplus L$, where $p_i\sub C$ are marked points on $C$. We also have $\II_D=\II_{p_i}\ot_{\OO_C} \OO_X$.
Thus, 
$$\Psi_i^{-1}=P_i^*(\om_{X/S}(D_i))=P_i^*(L(p_i)\oplus \om_{C/S}(p_i))\simeq p_i^*L(p_i)\simeq p_i^*L^{-1},$$
where the last isomorphisms is induced by the trivialization of $L^2(p_i)|_{p_i}\simeq \om_{C/S}(p_i)|_{p_i}$.
\end{example}

\begin{remark}\label{Res-rem}
Note that for an NS-puncture $P_i$ and the corresponding divisor $D_i$, the decomposition
\eqref{NS-div-splitting-eq} can be rewritten as
$$\pi_*\OO_{D_i}=\OO_S\oplus \Psi_i,$$
where $\Psi_i$ is a square-zero ideal.
For any line bundle $L$ over $D_i$ we have an exact sequence
$$0\to P_{i_*}(\Psi_i\ot P_i^*L)\to L\to P_{i*}P_i^*L\to 0$$
In the case $L=\om_{X/S}(D_i)|_{D_i}=\om_{D_i/S}$, the induced exact sequence of push-forwards to $S$ has a splitting
$$\pi_*(\om_{X/S}(D_i)/\om_{X/S})\to \OO_S$$
given by the residue map for the $0|1$-dimensional superscheme $D_i/S$, so we have a decomposition
$$\pi_*(\om_{D_i/S})=\Psi_i^{-1}\oplus \OO_S.$$
\end{remark}

For every Ramond divisor $R_j\sub X$ let us consider the line bundle on the base,
$$\Phi_j:=\Ber(\pi_*\OO_{R_j})^{-1}\simeq \Ber(\pi_*\om_{R_j}).$$
Note that $\Phi_j=\Phi_{R_j}$ (see \eqref{PhiR-def}), so the line bundles $\Phi_j^2$ are canonically trivialized.

Let us consider the line bundle of rank $1|0$ over $X$,
$$\LL(X,P_\bullet,R_\bullet):=\om_{X/S}^{-2}(-\sum_i D_i-2\sum_j R_j).$$

\begin{lemma}\label{Psi-Phi-contribution-lem}
One has a natural isomorphism
$$\BB(\LL(X,P_\bullet,R_\bullet))\simeq \BB(\om_{X/S}^3)\ot\bigotimes_i\Psi_i$$
\end{lemma}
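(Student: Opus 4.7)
The plan is to decompose $\LL$ as a twist of $\om_{X/S}^{-2}$ by a divisor supported at the punctures, compute the Berezinian contribution of each puncture separately, and then invoke the super-Mumford identification from Proposition \ref{super-Mum-prop} to pass from $\BB(\om_{X/S}^{-2})$ to $\BB(\om_{X/S}^3)$. Since $\om_{X/S}^{-2}$ is a genuine line bundle by Theorem \ref{can-square-thm} and $E := \sum_i D_i + 2\sum_j R_j$ is an effective Cartier divisor disjoint from the nodal locus, the short exact sequence $0 \to \LL \to \om_{X/S}^{-2} \to \om_{X/S}^{-2}|_E \to 0$ gives
$$\BB(\LL) \simeq \BB(\om_{X/S}^{-2}) \ot \Ber(\pi_*(\om_{X/S}^{-2}|_E))^{-1}.$$
As $D_i$ and $R_j$ have pairwise disjoint supports, this correction factors as a tensor product of individual contributions from each puncture.

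For each NS puncture, Lemma \ref{psi-lemma} applied to the rank $1|0$ line bundle $\om_{X/S}^{-2}$ immediately gives $\Ber(\pi_*(\om_{X/S}^{-2}|_{D_i})) \simeq \Psi_i^{-1}$, contributing a factor $\Psi_i$ to $\BB(\LL)$. For each Ramond puncture I will extend the canonical isomorphism \eqref{Ber-L-D-eq} to the thickening $2R_j$: for any rank $1|0$ line bundle $L$ on $2R_j$, the pushforward $\pi_*L$ has balanced rank $2|2$ over $\OO_S$, so the Berezinian of any scalar automorphism is $1$, and this furnishes a canonical isomorphism $\Ber(\pi_*(\om_{X/S}^{-2}|_{2R_j})) \simeq \Ber(\pi_*\OO_{2R_j})$. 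Applying the same principle to the filtration
$$0 \to \OO_{R_j}(-R_j) \to \OO_{2R_j} \to \OO_{R_j} \to 0,$$
in which both $\OO_{R_j}(-R_j)$ and $\OO_{R_j}$ are rank $1|0$ line bundles on $R_j$, reduces this to $\Ber(\pi_*\OO_{R_j})^{\ot 2} = \Phi_j^{-2}$, which is canonically trivialized via the trivialization of $\Phi_j^2$ supplied by Lemma \ref{R-puncture-structure-lem}.

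Putting these pieces together yields $\BB(\LL) \simeq \BB(\om_{X/S}^{-2}) \ot \bigotimes_i \Psi_i$, and the isomorphism $\BB(\om_{X/S}^{-2}) \simeq \BB(\om_{X/S}^3)$ from Proposition \ref{super-Mum-prop} (which holds globally on $S$, with no boundary correction needed for this pair) completes the identification. The main subtlety will be verifying that the trivialization of $\Phi_j^{-2}$ extracted from the filtration of $\OO_{2R_j}$ agrees with the canonical trivialization of $\Phi_j^2$ coming from $(d\th)^2$ in Lemma \ref{R-puncture-structure-lem}; I expect this to reduce to a direct local check in standard coordinates $(z,\th)$ near a Ramond puncture, matching the induced Berezinian generator with the one built from the residual structure $\ov{\de}_{R_j}$.
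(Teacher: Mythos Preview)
Your proposal is correct and uses the same ingredients as the paper's proof --- Grothendieck--Serre duality, peeling off the puncture divisors via short exact sequences, and the isomorphism \eqref{Ber-L-D-eq} --- but in the reverse order. The paper first applies Serre duality directly to $\LL$, obtaining $\BB(\LL)\simeq\BB(\om_{X/S}^3(D+2R))$, and then removes $D$ and $2R$ on the $\om^3$-side via the filtration with subquotients $\om_{X/S}^3(2R)|_R$ and $\om_{X/S}^3(R)|_R$; you remove $D$ and $2R$ on the $\om^{-2}$-side first and then dualize. The two routes are equivalent.

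Two minor points. First, the isomorphism $\BB(\om_{X/S}^{-2})\simeq\BB(\om_{X/S}^3)$ you cite is just Grothendieck--Serre duality (the map called $SD_{X/\ov{\SS}}$ later in the paper), not something extracted from Proposition \ref{super-Mum-prop}; that proposition is a local statement about how the super Mumford isomorphism behaves near an NS boundary point and is not what you need here. Second, your ``main subtlety'' about matching the two trivializations of $\Phi_j^{\pm 2}$ is not actually an obstruction: the paper's proof, like yours, simply produces the factor $\Phi_j^2$ from the filtration and then invokes the canonical trivialization from Lemma \ref{R-puncture-structure-lem} without any further compatibility check. The lemma only claims a natural isomorphism, and that trivialization is part of what makes it natural.
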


\begin{proof}
Let us set $D=\sum_i D_i$, $R=\sum_j R_j$.
First, the Grothendieck duality gives an isomorphism
$$\BB(\LL(X,P_\bullet,R_\bullet))\simeq \BB(\om_{X/S}^3(D+2R)).$$
Now using the exact sequence
\begin{equation}\label{om-3-sh-ex-seq}
0\to \om_{X/S}^3(2R)\to \om_{X/S}^3(2R)(D)\to \om_{X/S}^3(2R)(D)|_D\to 0,
\end{equation}
we get using \eqref{Ber-L-D-eq} and Lemma \ref{psi-lemma},
$$\BB(\om_{X/S}^3(D+2R)) \simeq \BB(\om_{X/S}^3(2R))\ot \Ber(\pi_*\om_{X/\MM}^3(D)|_D)\simeq
\BB(\om_{X/M}^3(2R))\ot\bigotimes_i\Psi_i.$$
Similarly, using the fitration of $\om_{X/S}^3(2R)/\om_{X/S}^3$ with subquotients
$\om_{X/S}^3(2R)|_R$ and $\om_{X/S}^3(R)|_R$, we get an isomorphism
$$\BB(\om_{X/S}^3(2R))\simeq \BB(\om_{X/S}^3)\ot\bigotimes_j\Phi_j^2.$$
It remains to we use the trivializations of $\Phi_j^2$.
\end{proof}

Now the same argument as in Theorem \ref{canonical-class-no-punct-thm} gives the formula for the canonical line bundle on the moduli stack of stable supercurves with punctures.


\begin{proof}[Proof of Theorem B]
Let $\ov{\SS}=\ov{\SS}_{g,n_{NS},n_R}$. We combine the isomorphism
$$K_{\ov{\SS}}\simeq \Ber^{-1} R^1\pi_*\LL(X,P_\bullet,R_\bullet)(-\De_{NS}-\De_R)\simeq \BB(\LL(X,P_\bullet,R_\bullet))(-\De_{NS}-\De_R)$$
(see Sec.\ \ref{Kodaira-punct-sec}) with Lemma \ref{Psi-Phi-contribution-lem} and the isomophism
$$\BB(\om_{X/\ov{\SS}}^3)\simeq \Ber_1^5(-\De_{NS}).$$
This gives the required isomorphism
\begin{equation}\label{canonical-class-eq}
K_{\ov{\SS}}\simeq \Ber_1^5\otimes \bigotimes_{i=1}^m\Psi_i(-2\De_{NS}-\De_R),
\end{equation}
\end{proof}

\section{Splitting at the boundary divisor}
\label{Splitting-sec}

Now we are going to study the restriction of the isomorphism of Theorem B to the boundary divisor. Using our presentation of the line bundle corresponding
to the boundary divisor as a Berezinian (see Sec.\ \ref{boundary-equation-sec}) we find a natural identification of the normal line bundle to the boundary divisor.
Then we give a proof of Theorem C concerning the NS boundary component. We also give a conjectural statement for the Ramond boundary component.

\subsection{NS boundary components}\label{NS-splitting-sec}

Let $\iota:B\to \ov{\SS}$ be the standard gluing map covering an NS type boundary component, i.e., one of the maps \eqref{NS-sep-node-morphism} or 
\eqref{NS-nonsep-node-morphism}, restricted to the locus of smooth supercurves.
Let $X_B\to B$ denote the universal stable supercurve, which is obtained by identifying two NS punctures $P_1,P_2$ on a smooth supercurve
$\wt{X}\to B$ into a node $Q\sub X_B$. Let $D_1,D_2\sub \wt{X}$ be the corresponding divisors. Note that we have a finite morphism $\rho:\wt{X}\to X_B$, and an exact sequence
on $X_B$,
$$0\to \OO_{X_B}\to \rho_*\OO_{\wt{X}}\to \OO_Q\to 0.$$
In particular, the Berezinian line bundle $\BB(\OO_{X_B})$ for the family $X_B\to B$ is naturally identified with $\BB(\OO_{\wt{X}})$ defined for the family $\wt{X}\to B$.

Note that in the case of a separating node, where $B=\SS_1\times \SS_2$, the line bundle $\BB(\OO_{\wt{X}})$ is the exterior product of two similar line bundles on the factors
$\SS_1$ and $\SS_2$.

We can rewrite \eqref{canonical-class-eq} near $B$ as an isomorphism
$$K_{\ov{\SS}}(2\De_{NS})\simeq \BB(\OO_X)^5.$$
Thus, pulling it back to $B$ leads to an isomorphism
$$K_B\ot N_B\simeq \BB(\OO_{X_B})^5,$$
where $N_B$ is the normal bundle defined as the pull-back of $\OO(\De_{NS})$ to $B$.
Note that the universal supercurve over $B$ is equipped with two NS punctures $P_1,P_2$ and the isomorphism \eqref{canonical-class-eq}
in smaller genus gives
$$K_B\simeq \BB(\OO_{\wt{X}})^5\ot\Psi_1\ot\Psi_2.$$
Comparing with the previous isomorphism we get an isomorphism
$$N_B\simeq \Psi_1^{-1}\ot \Psi_2^{-1}.$$
Below we will define such a canonical isomorphism independently and then will check its compatibility with two above isomorphisms.


First, recall that we have a line bundle $\om_{X/S}^2$ defined on the universal curve of $\ov{\SS}$ by extending from the smooth locus (see Theorem \ref{can-square-thm}).

\begin{lemma}\label{splitting-ex-seq-lem}
(i) For any integer $m$ let us set 
$$\om_{X_B/B}^{2m}:=\om_{X/\ov{\SS}}^{2m}|_B.$$
Then one has an exact sequence on $X_B$,
\begin{equation}\label{om-2m-res-ex-seq}
0\to \om_{X_B/B}^{2m}\to \rho_*\om_{\wt{X}/B}^{2m}(mD_1+mD_2)\to \OO_Q\to 0
\end{equation}
where $Q\sub X_B$ is the relative node obtained by gluing $P_1\simeq B\simeq P_2$.
Here we use the canonical trivializations of the restrictions of $\om_{\wt{X}/B}^2(D_1+D_2)$ to $P_1$ and $P_2$
(see Corollary \ref{square-NS-triv-cor}).
Hence, we have a natural isomorphism 
\begin{equation}\label{om-2m-res-NS-eq}
\BB(\om_{X/\ov{\SS}}^{2m})|_B\simeq \BB(\om_{\wt{X}/B}^{2m}(mD_1+mD_2)).
\end{equation}

\noindent
(ii) Let us set $\om_{X_B/B}^{2m+1}:=\om_{X_B/B}^{2m}\ot \om_{X_B/B}$. Then one has an exact sequence on $X_B$,
\begin{equation}\label{om-res-simple-seq}
0\to \rho_*\om^{2m+1}_{\wt{X}/B}(mD_1+mD_2)\to \om_{X_B/B}^{2m+1}\to Q_*\OO_S\to 0.
\end{equation}
Hence, we have a natural isomorphism
\begin{equation}\label{om-2m+1-res-NS-eq}
\BB(\om_{X/\ov{\SS}}^{2m+1})|_B
\simeq\BB(\om_{\wt{X}/B}^{2m+1}(mD_1+mD_2)).
\end{equation}

In addition, we have a natural exact sequence
$$0\to \om_{X_B/B}^{2m+1}\to \rho_*\om_{\wt{X}/B}^{2m+1}((m+1)D_1+(m+1)D_2)\to \CC\to 0,$$
where $\CC$ is a sheaf supported on the node fitting into an exact sequence
\begin{equation}\label{Q-C-Psi-seq}
0\to Q_*\OO_S\to \CC\to Q_*(\Psi_1^{-1}\oplus \Psi_2^{-1})\to 0.
\end{equation}

\noindent
(iii) One has an exact sequence on $X_B$,
$$0\to \KK\to \Om_{X_B/B}\to \rho_*\Om_{\wt{X}/B}\to 0,$$
where the sheaf $\KK$ is supported on the node, and has a filtration with the subfactors
$$Q_*(\Psi_1^2\Psi_2^2), \ Q_*(\Psi_1^2\Psi_2), \ Q_*(\Psi_1\Psi_2^2), \ Q_*(\Psi_1\Psi_2).$$
In particular, $\Ber \pi_*\KK$ is canonically trivial,
so
$$\Ber R\pi_*(\Om_{X_B/B})\simeq\Ber R\pi_*\Om_{\wt{X}/B}.$$

\noindent
(iv) One has an exact sequence over the smooth locus of $B$,
$$0\to j_*\Om_{U/\ov{\SS}}|_{X_B}\to \Om_{\wt{X}/B}(D_1+D_2)\to \wt{\CC}\to 0,$$
where $U\sub X$ is the smooth locus of $X\to \ov{\SS}$,
and the sheaf $\wt{\CC}$ is supported on the node and has a filtration with the subfactors
$$\OO_Q,  \ \OO_Q, \  Q_*(\Psi_1^{-1}\oplus \Psi_2^{-1}).$$
\end{lemma}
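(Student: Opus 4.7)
The plan is to reduce all four parts to explicit local computations at the NS node $Q$ of the gluing. Fix standard coordinates $z_1, z_2, \th_1, \th_2$ on $X_B$ near $Q$ satisfying the $t=0$ specialization of \eqref{NS-node-def-relations}, so that the normalization $\rho:\wt{X}\to X_B$ separates into branches with coordinates $(z_1,\th_1)$ and $(z_2,\th_2)$, chosen as standard coordinates at $P_i$ with $D_i=(z_i)$. Throughout I will use the $t=0$ specialization of Lemma \ref{NS-om-lem} (local generators $s_1,s_2,s_0$ of $\om_{X_B/B}$) and of Lemma \ref{differential-generators-lem} (local description of $j_*\Om_{U/\ov{\SS}}$ and of the kernel/cokernel of $\Om_{X_B/B}\to j_*\Om_{U/\ov{\SS}}|_{X_B}$), together with Corollary \ref{square-NS-triv-cor} and Lemma \ref{psi-lemma} to pin down canonical identifications with $\Psi_i$-bundles.

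For part (i), Theorem \ref{can-square-thm} makes $\om_{X_B/B}^{2m}$ a line bundle (formation of $\om^{2m}$ commutes with base change to $B$), locally generated by $e^m$. The assignment $e^m\mapsto (z_1^{-m}[dz_1|d\th_1]^{2m}, z_2^{-m}[dz_2|d\th_2]^{2m})$ defines an injection into $\rho_*\om_{\wt{X}/B}^{2m}(mD_1+mD_2)$ that is an isomorphism away from $Q$; a direct computation in the completed $\OO_S$-basis at $Q$ shows the cokernel is killed by the node ideal and is canonically $\OO_Q$ via the trivializations of $\om^2(D_i)|_{P_i}$ in Corollary \ref{square-NS-triv-cor}. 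For part (ii), $\om_{X_B/B}^{2m+1}$ has local generators $e^m s_1, e^m s_2, e^m s_0$; the map sending $z_i^{-m}[dz_i|d\th_i]^{2m+1}$ on branch $i$ to $e^m s_i$ is an isomorphism off $Q$ with cokernel generated by the class of $e^m s_0$, which is annihilated by the node ideal, giving $Q_*\OO_S$ and hence \eqref{om-res-simple-seq}. Enlarging the twist to $(m+1)(D_1+D_2)$ then provides enough room for $e^m s_0$ to lift on each branch, and the cokernel $\CC$ of $\om^{2m+1}_{X_B/B}\to \rho_*\om^{2m+1}_{\wt{X}/B}((m+1)D_1+(m+1)D_2)$ inherits the filtration \eqref{Q-C-Psi-seq}: the $Q_*\OO_S$ subsheaf is the class of the even generator $z_i^{-1}e^m s_i$ (mod image), and the quotient $Q_*(\Psi_1^{-1}\oplus\Psi_2^{-1})$ comes from the two new odd sections $z_i^{-(m+1)}\th_i[dz_i|d\th_i]^{2m+1}$, identified with generators of $\Psi_i^{-1}$ via Lemma \ref{psi-lemma}.

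For part (iii), specialize Lemma \ref{differential-generators-lem}(ii) at $t=0$: this directly gives the exact sequence together with the $\OO_{S_0}$-basis $z_1 dz_2, z_1 d\th_2, z_2 d\th_1, \th_1 d\th_2$ of $\KK$, each annihilated by the node ideal. To identify the four line-bundle subfactors canonically I use $\th_i\leftrightarrow$ generator of $\Psi_i\simeq \pi_*(I_{P_i}/I_{D_i})$ (Lemma \ref{psi-lemma}) and $z_i\leftrightarrow$ generator of $\Psi_i^2\simeq P_i^*\OO(-D_i)$ (from Corollary \ref{square-NS-triv-cor}), together with the parallel reading of $dz_j, d\th_j$ via the normalization; this produces the filtration with graded pieces $Q_*(\Psi_1^2\Psi_2^2), Q_*(\Psi_1^2\Psi_2), Q_*(\Psi_1\Psi_2^2), Q_*(\Psi_1\Psi_2)$, whose signed Berezinian product is canonically $\OO_S$ (the $\Psi_i$-exponents cancel), giving triviality of $\Ber\pi_*\KK$ and hence the claimed isomorphism $\Ber R\pi_*\Om_{X_B/B}\simeq \Ber R\pi_*\Om_{\wt{X}/B}$. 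Part (iv) proceeds in parallel: the $t=0$ form of Lemma \ref{differential-generators-lem}(i) gives local generators $e, d\th_1, d\th_2, f$ of $j_*\Om_{U/\ov{\SS}}|_{X_B}$, which embed naturally (via pullback to the normalization with poles controlled by $D_1+D_2$) into $\Om_{\wt{X}/B}(D_1+D_2)$, and the cokernel $\wt{\CC}$ carries a three-step filtration whose two even $\OO_Q$-pieces come from the extra even sections $z_i^{-1}dz_i$ (one per branch, with the gluing trivialisation determined by the difference of residues) and whose odd quotient $Q_*(\Psi_1^{-1}\oplus\Psi_2^{-1})$ comes from the odd sections $z_i^{-1}d\th_i$, identified via $\Psi_i^{-1}\simeq P_i^*\om_{D_i/S}$ from Lemma \ref{psi-lemma}.

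The main obstacle will be ensuring that the filtrations in parts (iii) and (iv), and the identifications of their graded pieces with tensor powers of $\Psi_i$, are intrinsic and not artefacts of the chosen coordinates. This requires systematically using the canonical structures of Corollary \ref{square-NS-triv-cor} and Lemma \ref{psi-lemma} to pin down each graded piece, and checking that the canonical cancellation of the total Berezinian of the graded pieces is compatible with the gluing trivialisations of $\om^2(D_i)|_{P_i}$ used in parts (i) and (ii).
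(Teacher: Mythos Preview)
Your overall strategy---reduce to the formal neighbourhood of the NS node and compute with the explicit $t=0$ generators from Lemmas \ref{NS-om-lem} and \ref{differential-generators-lem}---is the same as the paper's, and parts (i) and (ii) go through essentially as you say. One slip in (ii): in the filtration of $\CC$ you have swapped the two pieces. The sections $\frac{\th_i}{z_i}[dz_i|d\th_i]^{2m+1}$ are \emph{even} and give the $Q_*\OO_S$ subsheaf (the image of $s_0$ lands in the antidiagonal of $\OO_{P_1}\oplus\OO_{P_2}$, so the quotient is a single $\OO_Q$), while the \emph{odd} sections $\frac{1}{z_i}[dz_i|d\th_i]^{2m+1}$ generate the $\Psi_i^{-1}$ quotient. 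The paper organizes this via Remark \ref{Res-rem}, which makes the residue splitting and the identification with $\Psi_i^{-1}\simeq P_i^*\om_{\wt X/B}(D_i)$ transparent.

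For (iii) your worry about intrinsicness is well-founded, and the paper resolves it by a device you did not use: the bilinear map
\[
\kappa:\; I_{P_1}/I_{P_1}^2 \ot I_{P_2}/I_{P_2}^2 \longrightarrow \KK,\qquad a_1\ot a_2\ \longmapsto\ (a_1,0)\cdot d(0,a_2),
\]
which is well-defined because the ideals $\rho_*(I_{P_1}\oplus 0)$ and $\rho_*(0\oplus I_{P_2})$ in $\OO_{X_B}$ multiply to zero. One checks in coordinates (via your basis \eqref{KK-basis-eq}) that $\kappa$ is an isomorphism; the filtration on $\KK$ is then the tensor product of the two-step filtrations $0\to\Psi_i^2\to I_{P_i}/I_{P_i}^2\to\Psi_i\to 0$, which is manifestly coordinate-free and gives exactly the four subfactors with the claimed cancellation in the Berezinian. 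For (iv) the paper takes a shortcut: rather than a fresh local computation, it applies the snake lemma to the map between the exact sequence \eqref{j-Om-ex-seq} restricted to $X_B$ and its analogue for $\Om_{\wt X/B}(D_1+D_2)$, so that $\wt\CC$ is an extension of the $\CC$ of part (ii) by the $\OO_Q$ of part (i), and the filtration is inherited automatically.
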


\begin{proof}
(i) We just have to identify the pull-back of the line bundle $\om_{X_B/B}^2$ to $\wt{X}$, $\rho^*\om_{X_B/B}^2$ with  
$\om_{\wt{X}/B}^2(D_1+D_2)$. Note that we have a natural identification of these line bundles over the smooth locus of $\wt{X}$.
Thus, we need to check that it extends to an isomorphism over the node. For this, it is enough to study these line bundles in an \'etale
neighborhood of the node. Thus, we can place ourselves in the framework of Sec.\ \ref{super-KS-NS-sec} and use a generator
$e$ of $\om_{X/S}^2$ (see \eqref{e-sec-eq}). Since $\rho^*e$ is a generator of $\om_{\wt{X}/B}^2(D_1+D_2)$, the assertion follows.

\noindent
(ii) First, let us consider the case $m=0$.
Using Lemma \ref{NS-om-lem} we easily see that there is an injective map $\om_{X_B/B}\to \rho_*\om_{\wt{X}/B}(D_1+D_2)$, 
and that $\om_{X_B/B}$ contains $\rho_*\om_{\wt{X}/B}$. Thus, we have 
$$\CC:=\coker(\om_{X_B/B}\to \rho_*\om_{\wt{X}/B}(D_1+D_2))=\coker(\om_{X_B/B}/\om_{\wt{X}/B}\to \rho_*(\om_{\wt{X}/B}(D_1+D_2)/\om_{\wt{X}/B})).$$

Note that since $\Psi_i^{-1}\simeq P_i^*\om_{\wt{X}/B}(D_i)$, 
by Remark \ref{Res-rem}, we have an exact sequence
$$0\to \OO_{P_1}\oplus \OO_{P_2}\to \om_{\wt{X}/B}(D_1+D_2)/\om_{\wt{X}/B}\to P_{1*}\Psi_1^{-1}\oplus P_{2*}\Psi_2^{-1}\to 0$$ 
In local coordinates, the projection to $P_{i*}\Psi_i^{-1}$ sends $\frac{1}{z_i}[dz_i|d\th_i]$ to a generator, and sends $\frac{\th_i}{z_i}[dz_i|d\th_i]$ to zero.
In particular, this projection vanishes on the image of $\om_{X_B/B}/\om_{\wt{X}/B}$.
Thus, we have an embedding
$$\om_{X_B/B}/\om_{\wt{X}/B}\hra \OO_Q\oplus \OO_Q.$$
We claim that $\om_{X_B/B}/\om_{\wt{X}/B}$ coincides with the kernel of
the addition map $\OO_Q\oplus \OO_Q\to \OO_Q$.
Indeed, this is a local statement. In local coordinates the generators of two summands $\OO_Q$ are $\frac{\th_i}{z_i}[dz_i|d\th_i]$,
and the generator $s_0$ of $\om_{X_B/B}/\om_{\wt{X}/B}$ is mapped to their difference.

From this we also see that $\CC$ fits into the exact sequence \eqref{Q-C-Psi-seq}.



To derive the case of arbitrary $m$ from that of $m=0$, we tensor the sequence \eqref{Q-C-Psi-seq} with the line bundle
$\om^{2m}_{X_B/B}$. By the triviality of $Q^*(\om_{X_B/B}^2$, we get the sequence of the required form.

\noindent
(iii) First, we note that $\KK$ is supported on the node, so it is enough to prove the assertion after replacing $X_B$ with the formal neighborhood of the node.
Then $\wt{X}$ becomes the union of two branches $X_1\sqcup X_2$, so that $P_1\in X_1$, $P_2\in X_2$.
Now we have inclusions of ideals
$$\rho_*(I_{P_1}\oplus 0), \rho_*(0\oplus I_{P_2})\sub \OO_{X_B}\sub \rho_*\OO_{\wt{X}}.$$
Furthermore, the product of these ideals is zero. This implies that for $a_1\in I_{P_1}$, $a_2\in I_{P_2}$, one has
$$(0,a_2)\cdot d(a_1,0)=\pm (a_1,0)\cdot d(0,a_2).$$
Hence, we have well defined map
$$\kappa:I_{P_1}/I_{P_1}^2\ot I_{P_2}/I_{P_2}^2\to \KK\sub\Om_{X_B/B}: a_1\ot a_2)\mapsto (a_1,0)\cdot d(0,a_2).$$

Note that $I_{P_1}/I_{P_1}^2$ fits into an exact sequence
$$0\to I_{D_1}/I_{D_1}I_{P_1}\to I_{P_1}/I_{P_1}^2\to I_{P_1}/I_{D_1}\to 0$$
with $I_{D_1}/I_{D_1}I_{P_1}\simeq \Psi_1^2$ and $I_{P_1}/I_{D_1}\simeq \Psi_1$.
Using Lemma \ref{differential-generators-lem}(ii), one checks that 
$\kappa$ is an isomorphism, and the assertion follows.

\noindent
(iv) This follows from parts (i), (ii) and from the exact sequence \eqref{j-Om-ex-seq} (and a similar exact sequence for $\Om_{\wt{X}/B}$ which
holds over the smooth locus).
\end{proof}



We can use the definition of the NS boundary line bundle \eqref{boundary-line-bundle} to compute the normal bundle
over the smooth part of $B$.
From the above Lemma we get
$$\Ber R\pi_*(j_*\Om_{U/S})|_B\simeq \Ber R\pi_*(\Om_{\wt{X}/B}(D_1+D_2))\ot \Psi_1^{-1}\Psi_2^{-1},$$
$$\Ber R\pi_*(\Om_{X/S}|_B)\simeq \Ber R\pi_* \Om_{\wt{X}/B}.$$
Finally the exact sequence
$$0\to \om_{\wt{X}/B}^2\to \Om_{\wt{X}/B}\to \om_{\wt{X}/B}\to 0$$
near $D_1,D_2$, together with Lemma \ref{psi-lemma}, show that
$$\Ber R\pi_* \Om_{\wt{X}/B}(D_1+D_2)\simeq \Ber R\pi_*\Om_{\wt{X}/B}.$$
Hence, we deduce an isomorphism
\begin{equation}\label{NS-normal-bundle-formula}
\OO(\De)|_B\simeq \Ber R\pi_*(j_*\Om_{U/S})|_B\ot \Ber^{-1} R\pi_*(\Om_{X/S}|_B)\simeq \Psi_1^{-1}\Psi_2^{-1}.
\end{equation}

Now we can state a compatibility result between the super Mumford isomorphisms over $\ov{\SS}$ and over the NS boundary component $B$.
Recall that we have the super Mumford isomorphism
$$\mu_{\ov{\SS}}:\BB(\om_{X/\ov{\SS}}^{-2})\rTo \BB(\OO_X)^5(-\De)$$
near $B$ (see Prop.\ \ref{super-Mum-prop}). We also have the super Mumford isomorphism for the family $\wt{X}/B$, over the smooth locus:
$$\mu_B:\BB(\om_{\wt{X}/B}^{-2}(-D_1-D_2))\rTo{\sim} \BB(\OO_{\wt{X}})^5\ot \Psi_1\ot\Psi_2$$
(see Lemma \ref{Psi-Phi-contribution-lem}).

\begin{theorem}\label{splitting-thm} 
The following diagram of isomorphisms of line bundles on $B$ is commutative up to a sign
\begin{diagram}
\BB(\om_{X/\ov{\SS}}^{-2})|_B&\rTo{\mu_{\ov{\SS}}|_B}&\BB(\OO_X)^5(-\De)|_B\\
\dTo{\eqref{om-2m-res-NS-eq}}&&\dTo{\eqref{NS-normal-bundle-formula}}\\
\BB(\om_{\wt{X}/B}^{-2}(-D_1-D_2))&\rTo{\mu_B}&\BB(\OO_{\wt{X}})^5\ot \Psi_1\ot\Psi_2
\end{diagram}
\end{theorem}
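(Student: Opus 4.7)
The strategy is to unpack both super Mumford isomorphisms via the explicit resolution-based construction of Section \ref{super-Mum-NS-sec} and Lemma \ref{super-Deligne-nonfree-lem}, using compatible auxiliary data for both constructions so that the two sides of the diagram can be matched term by term. The key is that $\mu_{\ov{\SS}}$ near $B$ was built from a specific choice of positive divisor $D$ and even section $s$ of $\Pi\om_{X/\ov{\SS}}(D)$, together with the multiplication map $\mu_{s_1}\colon \hat{\FF}\to\hat{\GG}$ whose Berezinian was computed in Lemma \ref{F-G-completions-lem}(c) to be $ft$ with $f$ invertible.

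First I would choose a relative divisor $D$ on $X$ whose support lies in the smooth locus away from the punctures and from the node locus, so that its restriction to $B$ gives a positive divisor in $\wt{X}$ disjoint from $D_1\cup D_2$. By Lemma \ref{choice-of-section-lem} I pick an even section $s$ of $\Pi\om^{\reg}_{X/\ov{\SS}}(D)$ congruent to $s_1+s_2$ near $q$ (the change of variables is chosen so $u=v=1$). Under $\rho\colon \wt{X}\to X_B$, this section pulls back to a section $\wt{s}$ of $\Pi\rho_*\om_{\wt{X}/B}(D_1+D_2)(D)$ which near $P_i$ equals $[dz_i|d\th_i]/z_i$, and is precisely the shape of section required to apply the construction of \eqref{super-Deligne-sheaf-eq} to build $\mu_B$ for $\om_{\wt{X}/B}^{-2}(-D_1-D_2)$ on the smooth family $\wt{X}/B$.

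Second I would compare the four resolutions used in the construction of $\mu_{\ov{\SS}}$ to the corresponding four resolutions on $\wt{X}/B$. Restricting modulo $t$, the sheaves $\hat{\FF}$ and $\hat{\GG}$ supported on the subscheme $Z$ (the image of $P_1\sqcup P_2$) decompose along the two branches, and the normalization $\rho$ identifies them with the analogous sheaves supported on $D_1\cup D_2\subset \wt{X}$ used in Lemma \ref{super-Deligne-nonfree-lem} for $\mu_B$. The non-invertibility of the map $\mu_{s_1}$ at $t=0$ accounts exactly for the pole along $\De$: its Berezinian is $ft$, so $f^{-1}t^{-1}\mu_{s_1}$ is the regular extension whose restriction to $\{t=0\}$ yields the product of the natural isomorphisms between the boundary components of $\hat{\FF}|_B$ and $\hat{\GG}|_B$, matching the two trivializations $P_i^*\om_{\wt{X}/B}^2(D_i)\simeq\OO_S$ of Corollary \ref{square-NS-triv-cor} that produce the $\Psi_i$ factors in $\mu_B$ via Lemma \ref{Psi-Phi-contribution-lem}. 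Combining this with the identification of $\OO(\De)|_B$ with $\Psi_1^{-1}\otimes\Psi_2^{-1}$ from \eqref{NS-normal-bundle-formula} gives the commutativity of the diagram.

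The main obstacle is keeping track of the Berezinian computations at the node: one must verify that the $\OO_S$-bases $(s_1,s_0)$ of $\hat{\FF}$ and $(e,\th_1 e)$ of $\hat{\GG}$, viewed modulo $t$, match the direct-sum bases coming from the branches $X_1$, $X_2$ of $\wt{X}$ under the identification \eqref{om-2m-res-NS-eq}, in a way compatible with the trivializations from Corollary \ref{square-NS-triv-cor}. The asymmetry introduced by choosing $\mu_{s_1}$ rather than, say, $\mu_{s_2}$ or $\mu_{s_1+s_2}$ is the source of the sign ambiguity, which accounts for the qualifier ``up to a sign'' in the statement; this ambiguity could in principle be rigidified by a more symmetric choice, but it is harmless for the applications.
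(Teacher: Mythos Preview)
Your general strategy—compare the two Mumford isomorphisms by choosing compatible auxiliary data $(D,s)$ and matching the resolutions term by term—is exactly what the paper does.  However, the paper has to break the comparison into three separate squares (one for Grothendieck--Serre duality, one for the isomorphism $\BB(\om^3)\simeq\BB(\om^2)^{-1}\BB(\OO)^2$, and one for $\BB(\om^2)\simeq\BB(\om)^{-2}\BB(\OO)^{-1}(\De)$), and each requires its own argument (Lemmas \ref{split-SD-lem}, \ref{resolution-Ber-compatibility-lem}, \ref{main-splitting-lem}).  Your sketch addresses only the third piece and skips the Serre duality compatibility entirely; that step is not formal, because the vertical maps \eqref{om-2m-res-NS-eq} and \eqref{om-2m+1-res-NS-eq} come from \emph{different} exact sequences (one has $\OO_Q$ as a quotient, the other as a sub), and one must check these are intertwined by Serre duality on $X_B$ and on $\wt X$.

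There is also a concrete error in your second paragraph: the restriction of $s$ to $X_B$ lands in $\Pi\om_{\wt X/B}(D)$, not in $\Pi\om_{\wt X/B}(D_1+D_2)(D)$, and near $P_i$ it is $s_i=[dz_i|d\th_i]$, with no pole.  More seriously, your claim that $\hat\FF|_B$ and $\hat\GG|_B$ ``decompose along the two branches'' is not correct: at $t=0$ the support $Z_B$ is the single $0|1$-dimensional subscheme $(z_1,z_2,\th_1+\th_2)$ sitting at the node, not a disjoint union over $P_1,P_2$.  The paper handles this by introducing the sheaf $\HH=\om^2_{X_B/B}/\om^2_{\wt X/B}$ and the exact sequences \eqref{Freg-om-divres-eq}--\eqref{HH-ex-seq}, then comparing $\ber(\pi_*\a^{\reg})$ with the trivialization of $\BB(\HH/\Pi\OO_Q)$ coming from \eqref{HH-ex-seq}.

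Finally, your proposal omits the most delicate point (Step 1 of Lemma \ref{main-splitting-lem}): the right vertical arrow \eqref{NS-normal-bundle-formula} is defined through several Berezinians of exact sequences in Lemma \ref{splitting-ex-seq-lem}, and one must verify that under this isomorphism the trivialization of $\OO(\De)|_B$ given by $t$ matches the trivialization of $\Psi_1^{-1}\Psi_2^{-1}$ given by $\th_1\otimes\th_2$.  This is a genuine computation with the bases of $\KK$, $\CC_0$ and $\wt\CC$ from Lemmas \ref{differential-generators-lem} and \ref{splitting-ex-seq-lem}, and without it you cannot conclude that the pole coming from $\ber(\mu_{s_1})=ft$ lines up with the $\Psi_1\Psi_2$ factor in $\mu_B$.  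The sign ambiguity, incidentally, arises from these basis comparisons rather than from the choice of $s_1$ versus $s_2$.
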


Recall that to get $\mu_{\ov{\SS}}$ we used three isomorphisms: the Grothendieck-Serre duality isomoprhism
$$SD_{X/\ov{\SS}}:\BB(\om_{X/\ov{\SS}}^{-2})\rTo{\sim}\BB(\om_{X/\ov{\SS}}^3),$$
the isomorphism 
$$\BB(\om_{X/\ov{\SS}}^3)\simeq \BB(\om_{X/\ov{\SS}}^2)^{-1}\ot \BB(\OO_X)^2,$$
and the isomorphism
$$\BB(\om_{X/\ov{\SS}}^2)\simeq \BB(\om_{X/\ov{\SS}})^{-2}\ot \BB(\OO_X)^{-1}(\De)$$
(see Sec.\ \ref{super-Mum-NS-sec}). Similarly, $\mu_B$ is a composition of similar three isomorphisms. So we can reduce the proof of Theorem \ref{splitting-thm}
to separate compatibilities involving each of these three isomorphisms. We deal with this compatibilities in the next three lemmas.

\begin{lemma}\label{split-SD-lem} 
For any $m\in\Z$, the following diagram of isomorphisms between line bundles on $B$ is commutative:
\begin{diagram}
\BB(\om_{X/\ov{\SS}}^{-2m})|_B&\rTo{SD_{X/\ov{\SS}}|_B}&\BB(\om_{X/\ov{\SS}}^{2m+1})|_B\\
\dTo{\eqref{om-2m-res-NS-eq}}&&\dTo{\eqref{om-2m+1-res-NS-eq}}\\
\BB(\om_{\wt{X}/B}^{-2m}(-mD_1-mD_2))&\rTo{SD_{\wt{X}/B}}&\BB(\om_{\wt{X}/B}^{2m+1}(mD_1+mD_2))
\end{diagram} 
where the horizontal arrows are given by Grothendieck-Serre duality. 
\end{lemma}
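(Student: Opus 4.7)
We intend to use base change for Grothendieck--Serre duality combined with Grothendieck duality for the finite morphism $\rho:\wt{X}\to X_B$, reducing the claim to the assertion that the two short exact sequences of Lemma \ref{splitting-ex-seq-lem}(i),(ii) defining the vertical arrows are Serre dual to each other on $X_B$.

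First, by base change, the top arrow $SD_{X/\ov{\SS}}|_B$ will agree with $SD_{X_B/B}$ applied to $\om^{-2m}_{X_B/B}=\om^{-2m}_{X/\ov{\SS}}|_B$. Next, by Grothendieck duality for the composition $\pi_{\wt{X}}=\pi_{X_B}\circ\rho$, the Serre duality $SD_{\wt{X}/B}$ on $F:=\om^{-2m}_{\wt{X}/B}(-mD_1-mD_2)$ will agree with $SD_{X_B/B}$ applied to $\rho_*F=\rho_*\om^{-2m}_{\wt{X}/B}(-mD_1-mD_2)$, via the equality $R\pi_{X_B*}\rho_*=R\pi_{\wt{X}*}$. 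Since Lemma \ref{splitting-ex-seq-lem}(i) gives $F=\rho^*\om^{-2m}_{X_B/B}$ and since $\rho^!\om_{X_B/B}=\om_{\wt{X}/B}$ yields $F^\vee\ot\om_{\wt{X}/B}=\om^{2m+1}_{\wt{X}/B}(mD_1+mD_2)$, the diagram will reduce to the commutativity of two instances of $SD_{X_B/B}$, one applied to $\om^{-2m}_{X_B/B}$ and one to $\rho_*F$, linked by the embedding $\phi:\om^{-2m}_{X_B/B}\hra\rho_*F$ from sequence (A) of Lemma \ref{splitting-ex-seq-lem}(i).

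By functoriality of Grothendieck--Serre duality, this commutativity will hold provided the Serre dual $R\HH om(\phi,\om_{X_B/B})$ coincides with the embedding of sequence (B) in Lemma \ref{splitting-ex-seq-lem}(ii). Thus the problem reduces to showing that applying the dualizing functor $R\HH om_{\OO_{X_B}}(-,\om_{X_B/B})$ to the short exact sequence
\[
0\to\om^{-2m}_{X_B/B}\to\rho_*\om^{-2m}_{\wt{X}/B}(-mD_1-mD_2)\to\OO_Q\to 0
\]
yields the short exact sequence
\[
0\to\rho_*\om^{2m+1}_{\wt{X}/B}(mD_1+mD_2)\to\om^{2m+1}_{X_B/B}\to Q_*\OO_B\to 0.
\]
The first two terms will match via Grothendieck duality for the finite morphism $\rho$ (using $\rho^!\om_{X_B/B}=\om_{\wt{X}/B}$ and $R\rho_*=\rho_*$); the third requires the local computation $R\HH om(\OO_Q,\om_{X_B/B})\simeq Q_*\OO_B[-1]$ at the node.

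The hard part will be this last local computation in the super setting. The node $Q$ has relative superdimension $0|0$ in $X_B$ (of relative superdimension $1|1$), and the local ring at $Q$ is the non-regular superring $\OO_S[\![z_1,z_2]\!][\th_1,\th_2]/(z_1z_2,z_1\th_2,z_2\th_1,\th_1\th_2)$. We plan to handle it by using the explicit description of $\om_{X_B/B}$ in terms of the generators $s_0,s_1,s_2$ from Lemma \ref{NS-om-lem} to build a concrete resolution of $\OO_Q$ and compute the Ext groups by hand. The Berezinian parity bookkeeping and the identification of the connecting morphism in the resulting distinguished triangle will also need to be verified to confirm that it recovers sequence (B) as an honest short exact sequence in the heart of the standard $t$-structure.
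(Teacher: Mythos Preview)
Your proposal is correct and follows essentially the same route as the paper: reduce $SD_{X/\ov{\SS}}|_B$ to $SD_{X_B/B}$ by base change, identify $SD_{\wt{X}/B}$ with $SD_{X_B/B}$ on $\rho_*F$ via Grothendieck duality for the composition $\pi_{\wt{X}}=\pi_{X_B}\circ\rho$, and then invoke functoriality of $SD_{X_B/B}$ with respect to the map $\om^{-2m}_{X_B/B}\hra\rho_*F$ coming from sequence \eqref{om-2m-res-ex-seq}.

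The one difference is your ``hard part.'' You propose an explicit local computation of $R\und{\Hom}_{\OO_{X_B}}(\OO_Q,\om_{X_B/B})$ at the NS node using the generators $s_0,s_1,s_2$. This is unnecessary: the paper obtains it immediately from the same compatibility-under-composition principle, applied now to $f=\pi\circ\rho$ and $g=Q$. Since $\pi\circ Q=\id_B$, one has $Q^!\pi^!=\id$, hence
\[
R\und{\Hom}_{\OO_{X_B}}(Q_*\OO_B,\om_{X_B/B}[1])=R\und{\Hom}_{\OO_{X_B}}(Q_*\OO_B,\pi^!\OO_B)\simeq Q_*\bigl(Q^!\pi^!\OO_B\bigr)=Q_*\OO_B,
\]
giving $R\und{\Hom}(\OO_Q,\om_{X_B/B})\simeq Q_*\OO_B[-1]$ with no local work. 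The same compatibility then identifies the connecting map, so the dual of sequence \eqref{om-2m-res-ex-seq} is precisely \eqref{om-res-simple-seq}. Your plan would work, but it reproves by hand what coherent duality for compositions already gives you.
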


\begin{proof}
To begin with we can replace the upper horizontal arrow with the one induced by the Grothendieck-Serre duality for $X_B$ over $B$,
$$\BB(\om^{-2m}_{X_B/B})\rTo{SD_{X_B/B}}\BB(\om^{2m+1}_{X_B/B}).$$
Let us set for brevity 
$$L:=\om^{-2m}_{X_B/B}, \ \ \wt{L}:=\om_{\wt{X}/B}^{-2m}(-mD_1-mD_2).$$

Recall that for a sufficiently nice morphism $f:X\to Y$ and a perfect complex $F$ on $X$, the Grothendieck Serre duality gives an isomorphism
$$SD_f:Rf_*(F)\rTo{\sim} Rf_*(R\und{\Hom}(F,\om_f[\dim f]))^\vee.$$

Then we claim that there is an isomorphism of the exact triangles induced by the exact sequences \eqref{om-2m-res-ex-seq} and \eqref{om-res-simple-seq}
\begin{equation}\label{morphism-SD-triangles-eq}
\begin{diagram}
R\pi_*(L)&\rTo{}&R\pi_*(\rho_*\wt{L})&\rTo{}&\OO_S&\rTo{}\ldots\\
\dTo{SD_{X_B/B}}&&\dTo{SD_{\wt{X}/B}}&&\dTo{\id}\\
R\pi_*(L^{-1}\ot\om_{X_B/B}[1])^\vee&\rTo{}&R\pi_*(\rho_*(\wt{L}^{-1}\ot\om_{\wt{X}/B}[1]))^\vee&\rTo{}&\OO_S&\rTo{}\ldots
\end{diagram}
\end{equation}
Clearly this would imply the claimed commutativity. 

Now we claim that commutativity of both squares in \eqref{morphism-SD-triangles-eq} follows from the
general property of Grothendieck-Serre duality for a pair of morphisms $X\rTo{g}Y\rTo{f}X$,
\begin{diagram}
Rf_*(Rg_*(F))&\rTo{\sim}&R(f\circ g)_*(F)\\
\dTo{SD_f}&&\dTo{SD_g}\\
Rf_*(R\und{\Hom}(Rg_*(F),\om_f[\dim f]))^\vee&\rTo{\sim}&R(f\circ g)_*(R\und{\Hom}(F,\om_{f\circ g}[\dim (f\circ g)]))^\vee
\end{diagram}
where the lower horizontal arrow is induced by the Grothendieck-Serre duality isomorphism
$$R\und{\Hom}(Rg_*(F),\om_f[\dim f])\simeq Rg_*R\und{\Hom}(F,\om_{f\circ g}[\dim (f\circ g)]).$$

Indeed, applying this to $f=\pi$, $g=\rho$ and $F=\wt{L}$ allows us to identify the middle vertical arrow in \eqref{morphism-SD-triangles-eq} with
the map
$$R\pi_*(\rho_*\wt{L})\to R\pi_*R\und{\Hom}(\rho_*\wt{L},\om_{X_B/B}[1])^\vee$$
given by the Grothendieck-Serre duality for $X_B/B$. Taking this into account, commutativity of the left square in \eqref{morphism-SD-triangles-eq}
becomes a basic functoriality of $SD_{\pi}$. On the other hand, commutativity of the right square in \eqref{morphism-SD-triangles-eq} similarly follows 
from the functoriality of $SD_{\pi\circ \rho}$ applied to the natural morphism $\wt{L}\to Q_*\OO_S$ and 
from the above compatibility for $f=\pi\circ \rho$, $g=Q$ and $F=\OO_S$.
\end{proof}

\begin{lemma}\label{resolution-Ber-compatibility-lem}
Let $\rho:Y\to X$ be a morphism of families of stable supercurves over $B$, which is a fiberwise resolution of the node $Q:B\to X$ (so it is an isomorphism
away from $X\setminus Q(B)$). For any coherent sheaf $\FF$ on $Y$ which is locally free of rank $1|0$ over the smooth locus, and any
line bundle $L$ on $X$ with a trivialization of $Q^*L$, we have a commutative diagram of isomorphisms of line bundles on $B$,
\begin{diagram}
\BB(\rho_*\FF\ot L)&\rTo{\eqref{super-Deligne-sheaf-eq}}&\BB(\rho_*\FF)\ot \BB(L)\ot \BB(\OO_X)^{-1}\\
\dTo{}&&\dTo{}\\
\BB(\FF\ot \rho^*L)&\rTo{\eqref{super-Deligne-sheaf-eq}}&\BB(\FF)\ot \BB(\rho^*L)\ot \BB(\OO_Y)^{-1}
\end{diagram}
where in the right vertical arrow we use isomorphisms $\BB(\OO_X)\simeq \BB(\OO_Y)$ and $\BB(L)\simeq \BB(\rho^*L)$
coming from the exact sequences
$$0\to \OO_X\to \rho_*\OO_Y\to \OO_Q\to 0$$
$$0\to L\to L\ot \rho_*\OO_Y\to L\ot \OO_Q\to 0$$
and the trivialization of $Q^*L$.
\end{lemma}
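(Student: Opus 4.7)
The strategy is to carry out the construction of both horizontal isomorphisms in the diagram using a single choice of auxiliary data that avoids the node, exploiting the fact that $\rho$ is an isomorphism over the complement of $Q(B)$. Working locally over $B$, I would choose a relative positive Cartier divisor $D\sub X$ contained in the smooth locus of $\pi$ and disjoint from $Q(B)$, together with an even section $s\in H^0(X,L(D))$ whose vanishing divisor $E$ is also in the smooth locus and disjoint from $Q(B)$; enlarging $D$ if needed, I may assume that the four sheaves $\rho_*\FF\ot L(D)$, $L(D)$, $\FF\ot \rho^*L(\rho^*D)$ and $\rho^*L(\rho^*D)$ have vanishing $R^1\pi_*$. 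Then $\wt D:=\rho^{-1}(D)$ and $\wt E:=\rho^{-1}(E)$ are Cartier divisors in $Y$ lying in the smooth locus of $\pi\circ\rho$, and $\rho$ induces canonical identifications of the restrictions of all the relevant sheaves from $D,E$ to $\wt D,\wt E$.

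Next, I would build the top horizontal arrow from the four two-term resolutions of $R\pi_*(-)$ applied to $\OO_X$, $L$, $\rho_*\FF$ and $\rho_*\FF\ot L$ associated with $(D,E,s)$, as in the proof of Lemma \ref{super-Deligne-nonfree-lem}, and the bottom horizontal arrow analogously from the four resolutions for $\OO_Y$, $\rho^*L$, $\FF$ and $\FF\ot \rho^*L$ associated with $(\wt D,\wt E,\rho^*s)$ on $Y$. The projection formula $\rho_*\FF\ot L(D)\simeq \rho_*(\FF\ot \rho^*L(\rho^*D))$, together with finiteness of $\rho$ (so that $R\pi_*\rho_*=R(\pi\rho)_*$), canonically identifies each of the four $X$-side resolutions with the corresponding $Y$-side resolution. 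In particular the swaps $\a^{-1}_D\leftrightarrow \a^{-1}_E$ from \eqref{Ber-L-D-eq} that define the two horizontal arrows literally coincide under these identifications.

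The last step is to check that the three canonical identifications $\BB(\rho_*\FF)\simeq \BB(\FF)$, $\BB(L)\simeq \BB(\rho^*L)$ and $\BB(\OO_X)\simeq \BB(\OO_Y)$ assembling the right-hand vertical arrow of the diagram agree with the identifications that arise automatically from the previous step. The first is immediate from $R\pi_*\rho_*=R(\pi\rho)_*$. The other two follow from the short exact sequences in the statement combined with the canonical triviality of $\BB(\OO_Q)=\Ber(\pi_*Q_*\OO_B)$ and of $\BB(L\ot \OO_Q)=\Ber(\pi_*Q_*Q^*L)\simeq \Ber(Q^*L)$ (the second via the given trivialization of $Q^*L$); these trivializations precisely account for the discrepancy between the $X$-side and $Y$-side resolution Berezinians. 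The main obstacle is keeping track of several canonical identifications in parallel; once one verifies that the universal isomorphism $\a$ of \eqref{Ber-L-D-eq} is compatible with the finite pushforward $\rho_*$, the statement follows by a direct diagram chase.
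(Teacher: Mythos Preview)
Your proposal is correct and follows essentially the same approach as the paper: work locally over $B$, choose $D$ and $s$ (hence $E$) in the smooth locus of $\pi$ away from $Q(B)$, compute both horizontal arrows using the resolutions associated with $(D,E,s)$ on $X$ and $(\rho^{-1}D,\rho^{-1}E,\rho^*s)$ on $Y$, and use that $\rho$ is an isomorphism over $D$ and $E$ so that the restriction terms and the swap maps $\a_D,\a_E$ literally coincide. The paper records the endgame as the commutativity of two small squares comparing $\pi_*(L(D)|_E)\to\pi_*(\rho_*\FF\ot L(D)|_E)$ with $\pi'_*(\rho^*L(D')|_{E'})\to\pi'_*(\FF\ot\rho^*L(D')|_{E'})$ (and similarly with $D$ in place of $E$); your third paragraph, tracking how the exact sequences with $\OO_Q$ and the trivialization of $Q^*L$ account for the difference between $\BB(L(D))$ and $\BB(\rho^*L(\wt D))$, makes explicit a point the paper leaves implicit.
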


\begin{proof}
The question is local in the base, so we can assume that we can 
choose a relative divisor $D\sub X$ supported in the smooth locus of $\pi:X\to B$, and a global section $s$ of $L(D)$ such that the 
$E=\div(s)$ is also supported in the smooth locus. Now we compute both horizontal arrows using the section $s$ on $X$ and its pull-back $\rho^*s$ on $Y$.
Thus, for the top horizontal arrow we use resolutions
\begin{align*}
&R\pi_*(\OO_X): & [\pi_*L(D)\to \pi_*(L(D)|_E)], \\
&R\pi_*(\rho_*\FF): & [\pi_*(\rho_*\FF\ot L(D))\to \pi_*(\rho_*\FF\ot L(D)|_E)], \\
&R\pi_*(L): & [\pi_*(L(D))\to \pi_*(L(D)|_D)], \\
&R\pi_*(\rho_*\FF\ot L): & [\pi_*(\rho_*\FF\ot L(D))\to \pi_*(\rho_*\FF \ot L(D)|_D)],
\end{align*}
while for the bottom horizontal arrow we use similar resolutions on $Y$ that use the section $\rho^*s$ of $\rho^*L(D')$, e.g.,
$$R\pi_*(\OO_Y): [\pi'_*(\rho^*L(D'))\to \pi'_*\rho^*L(D)|_{E'}],$$
where $\pi':Y\to B$ is the projection, $D'=\rho^{-1}(D)$, $E'=\rho^{-1}(E)$.
Note that $D'\to D$ and $E'\to E$ are isomorphisms, and the assertion follows from the commutativity of the squares
\begin{diagram}
\pi_*(L(D)|_E)&\rTo{\sim}&\pi_*(\rho_*\FF\ot L(D)|_E) \\
\dTo{}&&\dTo{}\\
\pi'_*(\rho^*L(D')|_{E'})&\rTo{\sim}&\pi'_*(\FF\ot\rho^*L(D')|_{E'})
\end{diagram}
\begin{diagram}
\pi_*(L(D)|_D)&\rTo{\sim}&\pi_*(\rho_*\FF\ot L(D)|_D) \\
\dTo{}&&\dTo{}\\
\pi'_*(\rho^*L(D')|_{D'})&\rTo{\sim}&\pi'_*(\FF\ot\rho^*L(D')|_{D'})
\end{diagram}
\end{proof}

\begin{lemma}\label{main-splitting-lem} The following diagram is commutative up to a sign
\begin{equation}\label{main-splitting-diagram}
\begin{diagram}
\BB(\om_{X/\ov{\SS}}^2)|_B&\rTo{\sim}& \BB(\om_{X/\ov{\SS}})^{-2}|_B\ot \BB(\OO_X)^{-1}(\De)|_B\\
\dTo{\sim}&&\dTo{\sim}\\
\BB(\om_{\wt{X}/B}^2(D_1+D_2))&\rTo{\sim}&\BB(\om_{\wt{X}/B})^{-2}\ot \BB(\OO_{\wt{X}})^{-1}\ot \Psi_1^{-1}\Psi_2^{-1}
\end{diagram} 
\end{equation}
\end{lemma}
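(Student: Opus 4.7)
The plan is to establish commutativity by computing both horizontal arrows of \eqref{main-splitting-diagram} using a single common input, in parallel with the construction in the proof of Proposition \ref{super-Mum-prop}. Specifically, I would choose a section $s$ of $\Pi\om_{X/\ov{\SS}}(D)$ in an \'etale neighborhood of $B$ satisfying Lemma \ref{choice-of-section-lem}, so that $s \equiv s_1 + s_2$ near the node (after the normalization $u=1$). Via the inclusion $\om_{X_B/B} \hookrightarrow \rho_*\om_{\wt{X}/B}(D_1+D_2)$ of Lemma \ref{splitting-ex-seq-lem}(ii), this $s$ corresponds by adjunction to a section $\wt{s}$ of $\Pi\om_{\wt{X}/B}(D+D_1+D_2)$ on the normalization, with vanishing divisor $E + D_1 + D_2$ (where $E = \div(s)$ lies in the smooth locus).

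For the top arrow I use the construction of Sec.\ \ref{super-Mum-NS-sec}: the sheaves $\FF, \GG$ on $X$ defined by the exact sequences involving $s$, and the morphism $\a : \FF \to \GG$ equal to $\mu_{s_1}$ on the formal neighborhood of the node. The factor $(\De)$ in the isomorphism \eqref{partial-Mumford-isom} arises from the Berezinian $-t$ of $\mu_{s_1} : \hat{\FF} \to \hat{\GG}$ computed in Lemma \ref{F-G-completions-lem}(c). For the bottom arrow I apply the analogous recipe to the smooth family $\wt{X}/B$, using $\wt{s}$ as the distinguished section and the twisted bundle $\om_{\wt{X}/B}^2(D_1+D_2)$ in place of $\om^2(D)$. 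This yields sheaves $\wt{\FF}, \wt{\GG}$ supported on $E + D_1 + D_2$, with $\wt{\a} = \mu_{\wt{s}}$ an honest isomorphism on the $E$-part and, on each $D_i$-part, with local Berezinian producing the $\Psi_i^{-1}$ contribution via Lemma \ref{psi-lemma}.

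The comparison then has two components. First, the left vertical arrow \eqref{om-2m-res-NS-eq} (together with the analogous identifications $\BB(\om_{X/\ov{\SS}})|_B \simeq \BB(\om_{\wt{X}/B})$ and $\BB(\OO_X)|_B \simeq \BB(\OO_{\wt{X}})$ obtained from the exact sequences of Lemma \ref{splitting-ex-seq-lem}(ii) with $m=0$) are, by construction, compatible with the passage $\FF \leftrightarrow \wt{\FF}$ and $\GG \leftrightarrow \wt{\GG}$; verifying this is a bookkeeping exercise in the functoriality of Berezinians along the resolution $\rho$, essentially a variant of Lemma \ref{resolution-Ber-compatibility-lem} applied to the four sheaves appearing in the resolutions of Sec.\ \ref{super-Mum-NS-sec}. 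Second, the right vertical arrow uses the identification \eqref{NS-normal-bundle-formula} of $\OO(\De)|_B$ with $\Psi_1^{-1}\Psi_2^{-1}$. Commutativity up to sign then reduces to the statement that the local Berezinian $-t$ of $\mu_{s_1} : \hat{\FF} \to \hat{\GG}$ equals the product of the local Berezinians of $\wt{\a}$ at $D_1$ and $D_2$ under the isomorphism \eqref{NS-normal-bundle-formula}.

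The main obstacle is this last local identity. I would verify it by a direct computation in the coordinates $(z_1, z_2, \th_1, \th_2)$ of Sec.\ \ref{super-KS-NS-sec}, using the generators of $\om_{X/S}$ from Lemma \ref{NS-om-lem}, the explicit local generator $e$ from \eqref{e-sec-eq}, and the $\OO_S$-bases of $\hat{\FF}, \hat{\GG}$ from Lemma \ref{F-G-completions-lem}. The key point is that the passage $X_B \leadsto \wt{X}$ splits the single nodal factor $-t$ into two puncture factors, one at each branch; the sign ambiguity in the statement of the lemma absorbs the swap $D_1 \leftrightarrow D_2$ and the independent conventional choices of trivializations of $\om_{\wt{X}/B}^2(D_i)|_{D_i}$ underlying Corollary \ref{square-NS-triv-cor} and the identification $\Psi_i^{-1} \simeq \Ber(\pi_*\OO_{D_i})$ of Lemma \ref{psi-lemma}.
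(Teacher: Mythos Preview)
Your high-level strategy---compute both horizontal arrows from a common section $s$ and its avatar $\wt{s}$ on the normalization, then compare---is the same as the paper's. But the proposal has a genuine gap at exactly the point you flag as ``the main obstacle.''

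The isomorphism \eqref{NS-normal-bundle-formula} that feeds into the right vertical arrow is \emph{not} defined in terms of $\om_{X/S}$, $\hat{\FF}$, $\hat{\GG}$, or Lemma \ref{F-G-completions-lem}: it is defined via the Berezinian of the complex $[\Om_{X/S}\to j_*\Om_{U/S}]$ of K\"ahler differentials (Proposition \ref{Ber-boundary-prop}), then unwound through Lemma \ref{splitting-ex-seq-lem}(iii)(iv). So the statement ``the local Berezinian $-t$ of $\mu_{s_1}$ equals the product of the local Berezinians of $\wt{\a}$ at $D_1,D_2$ under \eqref{NS-normal-bundle-formula}'' is not a single coordinate identity; it requires tracking the canonical section $c/t|_B$ of $\OO(\De)|_B$ through the bases \eqref{KK-basis-eq}, \eqref{CC-basis-eq} of $\KK$ and $\CC_0$ (Lemma \ref{differential-generators-lem}(ii)), the exact sequence connecting $\CC_0$ to $\Om_{\wt{X}/B}(D_1+D_2)|_{D_1+D_2}$ and $\wt{\CC}$, and finally the filtration of $\wt{\CC}$ from Lemma \ref{splitting-ex-seq-lem}(iv). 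The paper spends its entire Step~1 on precisely this, and the ingredients you list (Lemma \ref{NS-om-lem}, \eqref{e-sec-eq}, Lemma \ref{F-G-completions-lem}) do not touch the $\Om$-side at all. Without this, you have no way to match the $(\De)$ factor on the top with the $\Psi_1^{-1}\Psi_2^{-1}$ on the bottom.

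Two smaller points. First, invoking Lemma \ref{resolution-Ber-compatibility-lem} for the ``bookkeeping'' is misplaced: that lemma handles tensoring a sheaf by a line bundle $L$ with trivialized $Q^*L$, whereas here you must compare resolutions built from $\om_{X_B/B}$ (not locally free) against resolutions on $\wt{X}$; the paper does this (Step~5) by writing explicit exact triangles relating the two sets of resolutions, not by that lemma. Second, the paper replaces $\om_{X/S}$ by the subsheaf $\om^{\reg}_{X/S}$ (Lemma \ref{choice-of-section-lem}) and works with $\FF^{\reg}$ and $\a^{\reg}=(\mu_{s_1}/t,\a_E)$ so that the map is already an isomorphism over $S$; this device lets one restrict cleanly to $B$ and is what makes the comparison in Steps~3--5 tractable. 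Your outline skips this and would run into the problem that $\a|_{X_B}$ itself is not an isomorphism.
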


\begin{proof} Note that it is enough to prove the commutativity of this diagram working in
an \'etale neighborhood $S$ of a stable supercurve $X_s$ with one NS node. We also choose standard presentation in an \'etale neighborhood of the node
on $X_s$, and use constructions of Sec.\ \ref{super-Mum-NS-sec}.

\medskip

\noindent
{\bf Step 1}. Under isomorphism \eqref{NS-normal-bundle-formula}, the trivialization of $N_B$ induced by the equation $t=0$ of the boundary divisor, corresponds to the trivialization of 
$\Psi_1^{-1}\ot \Psi_2^{-1}$ given by $\th_1^{-1}\ot \th_2^{-1}|_B$ (where we identify $\Psi_i$ with $\pi_*\OO_{D_i}/\OO_S$).
Below we will use the notation from Sec.\ \ref{boundary-str-sec}.

First, let us consider the canonical section $c$ of $\Ber\pi_*[\Om_{X/S}\to j_*\Om_{U/S}]$ (where $j_*\Om_{U/S}$ is placed in degree $0$).
The proof of Proposition \ref{Ber-boundary-prop}(i) shows that the complex $\pi_*[\Om_{X/S}\rTo{\iota} j_*\Om_{U/S}]$ can be represented by a morphism of trivial bundles with
bases $(b_i)$, $(c_i)$ such that $b_1,c_1,b_4,c_4$ are even; $b_2,c_2,b_3,c_3$ are odd; and the differential $\iota$ is given by 
$$\iota(b_1)=t^2c_1,\ \ \iota(b_i)=tc_i, \text{ for } i=2,3,4; \ \ \iota(b_i)=c_i, \ \text{ for } i>4.$$
In addition, over $B=S_0$, the elements $b_1,\ldots,b_4$ (resp., $c_1,\ldots,c_4$) induce the standard bases of the sheaves
$\pi_*\KK$ and $\pi_*\CC_0$ from Lemma \ref{differential-generators-lem}.
This shows that the restriction of $c/t$ to $B=S_0$ corresponds to the trivialization of
$$\Ber\pi_*[\Om_{X/S}\to j_*\Om_{U/S}]|_B\simeq \Ber\pi_*\CC_0\ot (\Ber\pi_*\KK)^{-1}$$
induced by the standard bases of $\pi_*\KK$ and $\pi_*\CC_0$.

Note that in the commutative square 
\begin{diagram}
\Om_{X_B/B}&\rTo{}&j_*\Om_{U/S}|_{X_B}\\
\dTo{}&&\dTo{}\\
\rho_*\Om_{\wt{X}/B}&\rTo{}&\rho_*\Om_{\wt{X}/B}(D_1+D_2)
\end{diagram}
the bottom horizontal arrow and the right vertical arrows are injective.
Hence, the subsheaves $\KK\sub \Om_{X_B/B}$ in Lemma \ref{differential-generators-lem} and in Lemma \ref{splitting-ex-seq-lem}(iii) are the same.
This also means that we have an exact sequence
$$0\to \rho_*\Om_{\wt{X}/B}\to j_*\Om_{U/S}|_{X_B}\to \CC_0\to 0$$
and hence, an exact sequence
\begin{equation}\label{CC0-wtCC-seq}
0\to \CC_0\to \Om_{\wt{X}/B}(D_1+D_2)|_{D_1+D_2}\to \wt{\CC}\to 0
\end{equation}
where $\wt{\CC}$ is the cokernel of the right vertical arrow in the above diagram (see Lemma \ref{splitting-ex-seq-lem}(iv)).

Recall that the trivialization of $\pi_*\KK$ used in Lemma \ref{splitting-ex-seq-lem}(iii) comes from the filtration of $\KK$ with the subfactors
$Q_*(\Psi_1^2\Psi_2^2)$, $Q_*(\Psi_1^2\Psi_2)$, $Q_*(\Psi_1\Psi_2^2)$ and $Q_*(\Psi_1\Psi_2)$. It is easy to check that this filtration coincides with the
filtration coming from the basis \eqref{KK-basis-eq} of $\KK$.
Let us consider the following basis of $\pi_*\Om_{\wt{X}/B}(D_1+D_2)|_{D_1+D_2}$:
$$e_i:=\frac{dz_i-\th_id\th_i}{z_i}, \ \ \frac{\th_idz_i}{z_i}, \ \ \frac{d\th_i}{z_i}, \ \ \frac{\th_id\th_i}{z_i}, \ \ i=1,2.$$
Note that it is compatible (up to a sign) with the canonical trivialization of $\Ber(\pi_*\Om_{\wt{X}/B}(D_1+D_2)|_{D_1+D_2}$.
The filtration of $\wt{\CC}$ considered in Lemma \ref{splitting-ex-seq-lem}(iv) is compatible with this basis:
the subsheaf $\OO_Q$ corresponds to the image of $e_1$ (or equivalently of $e_2$); the next subfactor $\OO_Q$ is given by the image
of $\th_1d\th_1/z_1$ (or of $\th_2d\th_2/z_2$); and the quotient $Q_*(\Psi_1^{-1}\oplus\Psi_2^{-1}$ is given by the image of $(d\th_1/z_1,d\th_2/z_2)$.
Hence, the trivialization of $\Ber\pi_*\wt{\CC}$ coming from this basis is compatible with the isomorphism
$$\Ber\pi_*\wt{\CC}\simeq (\Psi_1^{-1}\Psi_2^{-1})^{-1}$$
and the trivializations of $\Psi_i^{-1}$ given by the image of $d\th_i/z_i$, i.e., by the generator $\th_i$ under the identification $\Psi_i^{-1}\simeq \pi_*\OO_{D_i}/\OO_S$.

On the other hand, the images in $\CC_0$ of the basis vectors 
$$e_1-e_2, \frac{\th_1d\th_1}{z_1}-\frac{\th_2d\th_2}{z_2}, \frac{\th_1dz_1}{z_1}, \frac{\th_2dz_2}{z_2}$$
are given by $e-f, f, \th_1e, \th_2e$ in terms of the basis \eqref{CC-basis-eq} of $\CC_0$.
Hence, the trivialization of $\Ber\pi_*\CC_0$ coming from the latter basis coincides up to a sign with its trivialization induced by 
the isomorphism
$$\Ber\pi_*\CC_0\simeq \Ber^{-1}\pi_*\wt{\CC}\simeq \Psi_1^{-1}\Psi_2^{-1},$$
coming from the exact sequence \eqref{CC0-wtCC-seq}, and by the trivialization of $\Psi_1^{-1}\Psi_2^{-1}$ given by $\th_1\ot\th_2$.

Combining all the above steps we see that the trivialization $c/t|_B$ of $\Ber\pi_*[\Om_{X/S}\to j_*\Om_{U/S}]|_B$ coincides up to a sign with the trivialization
coming from the isomorphism
$$\Ber\pi_*[\Om_{X/S}\to j_*\Om_{U/S}]|_B\simeq \Psi_1^{-1}\Psi_2^{-1}$$
and the trivialization $\th_1\ot\th_2$ of $\Psi_1^{-1}\Psi_2^{-1}$.

\medskip

\noindent
{\bf Step 2}. Let  
$$\phi:\BB(\om_{X/\ov{\SS}}^2)\to\BB(\om_{X/\ov{\SS}})^{-2}\ot \BB(\OO_X)^{-1}(\De)$$
be isomorphism \eqref{partial-Mumford-isom}. Using the equation $(t=0)$ of $\De$, we get
an isomorphism 
\begin{equation}\label{t-phi-eq}
t\phi:\BB(\om_{X/\ov{\SS}}^2)\to \BB(\om_{X/\ov{\SS}})^{-2}\ot \BB(\OO_X)^{-1}.
\end{equation}
Our goal in this step is to compute it.

We start by recalling the exact sequence (which depends on a choice of coordinates)
\begin{equation}\label{om-reg-om-OQ-ex-seq}
0\to \om^{\reg}_{X/S}\to \om_{X/S}\to \OO_Q\to 0
\end{equation}
(see Lemma \ref{choice-of-section-lem}). On the other hand,
by Lemma \ref{splitting-ex-seq-lem}(ii), we have a natural exact sequence
\begin{equation}\label{om-wtX-om-XB-OQ-ex-seq}
0\to \om_{\wt{X}/B}\to \om_{X_B/B}\to \OO_Q\to 0.
\end{equation}
It is easy to see that the restriction of the embedding $\om^{\reg}_{X/S}\to \om_{X/S}$ to $B$
gives a morphism
$$\om^{\reg}_{X/S}|_{X_B}\to \om_{X_B/B}$$ 
with the image $\om_{\wt{X}/B}$, so that the restriction of \eqref{om-reg-om-OQ-ex-seq}
is compatible with \eqref{om-wtX-om-XB-OQ-ex-seq}.

Recall that we choose a sufficiently positive effective divisor $D\sub X$ with support in the smooth locus and a global section $s$ of $\om^{\reg}_{X/S}(D)$
as in Lemma \ref{choice-of-section-lem}.
Let us set
$$\FF^{\reg}:=\om^{\reg}_{X/S}(D)/(s).$$
We can modify the derivation of the isomorphism \eqref{Ber-om2-NS-node-eq} by replacing the resolutions for $R\pi_*(\OO_X)$ with
$$[\pi_*(\Pi \om^{\reg}_{X/S}(D))\to \pi_*\FF^{\reg}].$$
This leads to an isomorphism
\begin{equation}\label{Ber-om2-NS-node-modified-eq} 
\BB(\om_{X/S}^2)\simeq \BB(\OO_X)^{-1}\ot \BB(\Pi\om^{\reg}_{X/S})\ot \BB(\Pi\om_{X/S})\ot \BB(\GG)\ot \BB(\FF^{\reg})^{-1}.
\end{equation}
In addition, exact sequence \eqref{om-reg-om-OQ-ex-seq} gives an isomorphism 
\begin{equation}\label{Ber-om-reg-om-isom}
\BB(\om^{\reg}_{X/S})\simeq \BB(\om_{X/S}).
\end{equation}
On the other hand, we have an isomorphism
$$\a^{\reg}=(\frac{\mu_{s_1}}{t},\a_E):\FF^{\reg}\rTo{\sim}\GG$$
induced by some isomorphism $\a_E$ of the parts supported on $E$ and by $\mu_{s_1}/t$ on the parts supported on $Z$. 
Namely, near $Z$, $\FF^{\reg}$ has an $\OO_S$-basis $s_1,\th_1s_1$ and $\mu_{s_1}/t$ sends this basis to the basis $e,\th_1e$ of $\GG$.

Hence, we get the induced isomorphism
\begin{equation}\label{Ber-F-reg-G-isom}
\ber(\pi_*\a^{\reg}):\BB(\FF^{\reg})\rTo{\sim} \BB(\GG)
\end{equation}
Now we obtain that $t\phi$ is the isomorphism induced by \eqref{Ber-om2-NS-node-modified-eq}, together with \eqref{Ber-om-reg-om-isom} and \eqref{Ber-F-reg-G-isom}.

\medskip

\noindent
{\bf Step 3}. Let us set 
$$\HH:=\om^2_{X_B/B}/\om^2_{\wt{X}/B}.$$
Let also $\wt{s}$ be the global section of $\om_{\wt{X}}(D)$ induced by $s|_{X_B}$, with the zero divisor $\wt{E}\sub \wt{X}$ (which is disjoint from the preimage of the node). 
Let us set $Z_B=Z\cap X_B\sub X_B$. Note that $Z_B$ is supported on the node and the completion of its ideal is generated by $z_1$, $z_2$ and $\th_1+\th_2$,
so $Z_B$ is smooth of dimension $0|1$ over $B$.
We will construct exact sequences
\begin{equation}\label{Freg-om-divres-eq}
0\to \Pi\OO_{Z_B}\to \FF^{\reg}|_{X_B}\to \Pi\om_{\wt{X}/B}(D)|_{\wt{E}}\to 0,
\end{equation}
\begin{equation}\label{G-om2-divres-eq}
0\to \om^2_{\wt{X}/B}(D)|_{\wt{E}}\to \GG|_{X_B}\to \HH/\Pi\OO_Q\to 0,
\end{equation}
\begin{equation}\label{HH-ex-seq}
0\to \HH\to \om^2_{\wt{X}/B}(D_1)|_{D_1}\oplus \om^2_{\wt{X}/B}(D_1)|_{D_1}\to \OO_Q\to 0.
\end{equation}
such that the following diagram is commutative up to a sign:
\begin{diagram}
\BB(\FF^{\reg}|_{X_B})&\rTo{\ber(\pi_*\a^{\reg})}&\BB(\GG|_{X_B})\\
\dTo{}&&\dTo{}\\
\BB(\Pi\om_{\wt{X}/B}(D)|_{\wt{E}})&\rTo{(\ber(\pi_*(\a_E)|_B,\tau)}&\BB(\om^2_{\wt{X}/B}(D)|_{\wt{E}})\ot \BB(\HH/\Pi\OO_Q)
\end{diagram}
where the vertical arrows come from the exact sequences 
\eqref{Freg-om-divres-eq} and \eqref{G-om2-divres-eq}, and $\tau$ is the trivialization of $\BB(\HH/\Pi\OO_Q)$ coming from \eqref{HH-ex-seq}
and the standard bases $(\bb_i,\th_i\bb_i)$ of $\om^2_{\wt{X}/B}(D_i)|_{D_i}$.

First, we have a decomposition of $\FF^{\reg}$ into the parts supported on $E$ and on $Z$. The latter part is isomorphic to the completion
$\hat{\FF}^{\reg}$ of $\FF^{\reg}$. It is easy to see that the section $s_1$ induces an isomorphism
$$\Pi\OO_Z\rTo{\sim} \hat{\FF}^{\reg},$$ 
so we obtain a split exact sequence \eqref{Freg-om-divres-eq}.

Next, we have an injective morphism of exact sequences
\begin{diagram}
0&\rTo{}&\Pi\om_{\wt{X}/B}&\rTo{}&\Pi\om_{X_B/B}&\rTo{}&\Pi\OO_Q&\rTo{}& 0\\
&&\dTo{\wt{s}}&&\dTo{s|_{X_B}}&&\dTo{s_0(s_1+s_2)}\\
0&\rTo{}&\om^2_{\wt{X}/B}(D)&\rTo{}&\om^2_{X_B/B}(D)&\rTo{}&\HH&\rTo{}&0
\end{diagram}
Passing to the quotients we get \eqref{G-om2-divres-eq}.

Exact sequence \eqref{HH-ex-seq} is immediately obtained from Lemma \ref{splitting-ex-seq-lem}. This sequence shows that
$\HH$ has an $\OO_B$-basis $(\ov{e}, \th_1\ov{e}, \th_2\ov{e})$, where $\ov{e}$ is the image of the local generator $e$ of $\om^2_{X_B/B}$.
The embedding $\Pi\OO_Q\to \HH$ is given by $s_0(s_1+s_2)=(\th_1-\th_2)\ov{e}$ 
It follows that $\ov{e}, \th_1\ov{e}$ is a basis of $\HH/\Pi\OO_Q$. Now we observe that
all three isomorphisms
$$\Pi\OO_{Z_B}\rTo{\sim} \FF^{\reg}_Z|_{X_B}\rTo{\mu_{s_1}/t}\GG|_{X_B}\rTo{\sim} \HH/\Pi\OO_Q$$
send standard $\OO_S$-bases to each other.
This gives the desired commutative diagram.

\medskip

\noindent
{\bf Step 4}. From sequences \eqref{Freg-om-divres-eq}, \eqref{G-om2-divres-eq} and \eqref{HH-ex-seq},
together with the identifications 
$$\BB(\om^2_{\wt{X}/B}(D_i)|_{D_i})\simeq\Psi_i^{-1}$$
(see Lemma \ref{psi-lemma}),
we get isomorphisms
\begin{equation}\label{Ber-F-reg-G-isom-coord-free}
\BB(\GG|_{X_B})\ot\BB(\FF^{\reg}|_{X_B})^{-1}\simeq \BB(\HH)\simeq \Psi_1^{-1}\Psi_2^{-1}.
\end{equation}
We claim that the top horizontal arrow in diagram \eqref{main-splitting-diagram} composed with the isomorphism $\OO(\De)|_B\simeq \Psi_1^{-1}\Psi_2^{-1}$
gets identified with the composition of \eqref{Ber-om2-NS-node-modified-eq} with \eqref{Ber-F-reg-G-isom-coord-free} (we also take into account \eqref{Ber-om-reg-om-isom}).

Indeed, if we use the standard trivialization of $\OO_S(\De)|_B$, then the top horizontal arrow in \eqref{main-splitting-diagram} is precisely the isomorphism $(t\phi)|_B$.
By Step 2, it is obtained as the composition of isomorphisms \eqref{Ber-om2-NS-node-modified-eq} and \eqref{Ber-F-reg-G-isom}, restricted to $B$.
By Step 3, we can replace \eqref{Ber-F-reg-G-isom} by \eqref{Ber-F-reg-G-isom-coord-free} together with the standard trivialization of $\Psi_1^{-1}\Psi_2^{-1}$.
By Step 1, the latter trivialization corresponds to the standard trivialization of $\OO_S(\De)|_B\simeq \Psi_1^{-1}\Psi_2^{-1}$, so our claim follows.

\medskip

\noindent
{\bf Step 5}. We see that the restriction of isomorphism \eqref{Ber-om2-NS-node-modified-eq} to $B$ coincides with the isomorphism
\begin{equation}\label{B-mixed-resolutions-Mumford-isom}
\BB(\om_{X_B/B}^2)\ot \BB(\Pi\om_{\wt{X}/B})^{-1}\ot\BB(\Pi\om_{X_B/B})^{-1}\ot\BB(\OO_{X_B})\to\BB(\GG|_{X_B})\ot \BB(\FF^{\reg}|_{X_B})^{-1}
\end{equation}
obtained by using resolutions 
\begin{eqnarray}
R\pi_*(\OO_{X_B}): & [\pi_*\Pi\om^{\reg}_{X/S}(D)|_{X_B}\to \pi_*\FF^{\reg}|_{X_B}], \label{O-XB-sq-Mum-isom-1} \\
R\pi_*(\Pi \om_{X_B/B}): & [\pi_*\om_{X_B/B}^2(D))\to \pi_*\GG|_{X_B}], \\ \label{O-XB-sq-Mum-isom-2}
R\pi_*(\Pi \om_{\wt{X}/B}): & [\pi_*\Pi \om_{\wt{X}/B}(D)\to \pi_*\Pi \om_{\wt{X}/B}(D)|_D], \label{O-XB-sq-Mum-isom-3}\\
R\pi_*(\om_{X_B/B}^2): & [\pi_*\om_{X_B/B}^2(D)\to \pi_*\om_{X_B/B}^2(D)|_D], \label{O-XB-sq-Mum-isom-4}
\end{eqnarray}
and using the isomorphism $\Ber(\pi_*\Pi \om_{\wt{X}/B}(D)|_D)\simeq \Ber(\pi_*\om_{X_B/B}^2(D)|_D)$.

To prove the commutativity of \eqref{main-splitting-diagram} we need to compare \eqref{B-mixed-resolutions-Mumford-isom} with the similar isomorphism where 
$\OO_{X_B}$ (resp., $\om_{X_B/B}$ and $\om_{X_B/B}^2$) is replaced by $\OO_{\wt{X}}$ 
(resp., $\om_{\wt{X}}$ and $\om_{\wt{X}/B}^2$).
For this we will modify resolutions \eqref{O-XB-sq-Mum-isom-1}, \eqref{O-XB-sq-Mum-isom-2} and \eqref{O-XB-sq-Mum-isom-4} in a controlled manner.

We start by noticing that the exact sequence
$$0\to \OO_{X_B}\to \OO_{\wt{X}}\to \OO_Q\to 0$$
is represented by the exact triangle 
$$[\Pi\OO_Q\to \Pi\OO_{Z_B}]\to [\Pi \om^{\reg}_{X/S}(D)|_{X_B}\to \FF^{\reg}|_{X_B}]\to [\Pi \om_{\wt{X}/B}(D)\to \Pi \om_{\wt{X}/B}(D)|_{\wt{E}}]\to \ldots,$$
which measures the difference between resolution \eqref{O-XB-sq-Mum-isom-1} and the corresponding resolution of $R\pi_*(\OO_{\wt{X}})$.

Next, the exact sequence
$$0\to \Pi\om_{\wt{X}/B}\to \Pi\om_{X_B/B} \to \Pi\OO_Q\to 0$$
is realized by the exact triangle of resolutions
$$[\om^2_{\wt{X}/B}(D)\to\om^2_{\wt{X}/B}(D)|_{\wt{E}}]\to [\om^2_{X_B/B}(D)\to \GG|_{X_B}]\to [\HH\to \HH/\Pi\OO_Q].$$
This gives the modification of resolution \eqref{O-XB-sq-Mum-isom-2}.

Finally, the exact sequence
$$0\to \om^2_{\wt{X}/B}\to \om^2_{X_B/B}\to \HH\to 0$$
is realized by the exact triangle of resolutions
$$[\om^2_{\wt{X}/B}(D)\to \om^2_{\wt{X}/B}(D)|_D]\to [\om^2_{X_B/B}(D)\to \om^2_{X_B/B}(D)|_D]\to [\HH\to 0],$$
which gives the modification of resolution \eqref{O-XB-sq-Mum-isom-4}.

The modified resolutions,
\begin{align*}
&R\pi_*(\OO_{\wt{X}_B}): & [\pi_*\Pi \om_{\wt{X}/B}(D)\to \pi_*\Pi \om_{\wt{X}/B}(D)|_{\wt{E}}], \\
&R\pi_*(\Pi \om_{\wt{X}/B}): & [\pi_*\om_{\wt{X}/B}^2(D)\to \pi_*\om_{\wt{X}/B}^2(D)|_{\wt{E}}], \\
&R\pi_*(\Pi \om_{\wt{X}/B}): & [\pi_*(\Pi \om_{\wt{X}/B}(D))\to \pi_*\Pi \om_{\wt{X}/B}(D)|_D], \\
&R\pi_*(\om_{\wt{X}/B}^2): & [\pi_*\om_{\wt{X}/B}^2(D))\to \pi_*\om_{\wt{X}/B}^2(D)|_D],
\end{align*}
give an isomorphism
\begin{equation}\label{super-Mum-wtX-B-aux-eq}
\BB(\om_{\wt{X}/B}^2)\ot\BB(\Pi\om_{\wt{X}/B})^{-1}\ot\BB(\Pi\om_{\wt{X}/B})^{-1}\ot \BB(\OO_{\wt{X}})\rTo{\sim}
\BB(\om^2_{\wt{X}/B}(D)|_{\wt{E}})\ot\BB(\Pi\om_{\wt{X}/B}(D)|_{\wt{E}})^{-1}
\end{equation}
Note that the composition of this isomorphism with the natural trivialization of
$\BB(\om^2_{\wt{X}/B}(D)|_{\wt{E}})\ot\BB(\Pi\om_{\wt{X}/B}(D)|_{\wt{E}})^{-1}$ is precisely the super Mumford isomorphism for $\wt{X}/B$,
\begin{equation}\label{super-Mum-wt-X-B-eq}
\BB(\om_{\wt{X}/B}^2)\ot\BB(\Pi\om_{\wt{X}/B})^{-1}\ot\BB(\Pi\om_{\wt{X}/B})^{-1}\ot \BB(\OO_{\wt{X}})\rTo{\sim}\OO_B.
\end{equation}

From the exact triangles connecting the resolutions above we get the following commutative square of isomorphisms:
\begin{diagram}
\BB(\om_{X_B/B}^2)\ot \BB(\Pi\om_{\wt{X}/B})^{-1}\ot\BB(\Pi\om_{X_B/B})^{-1}\ot\BB(\OO_{X_B})&\rTo{\eqref{B-mixed-resolutions-Mumford-isom}}
&\BB(\GG|_{X_B})\ot \BB(\FF^{\reg}|_{X_B})^{-1}\\
\dTo{}&&\dTo{\eqref{Ber-F-reg-G-isom-coord-free}}\\
\BB(\HH)\ot\BB(\om_{\wt{X}/B}^2)\ot\BB(\Pi\om_{\wt{X}/B})^{-1}\ot\BB(\Pi\om_{\wt{X}/B})^{-1}\ot \BB(\OO_{\wt{X}})&\rTo{\eqref{super-Mum-wtX-B-aux-eq}}
&\BB(\HH)\ot\BB(\om^2_{\wt{X}/B}(D)|_{\wt{E}})\ot\BB(\Pi\om_{\wt{X}/B}(D)|_{\wt{E}})^{-1}
\end{diagram}
where the left vertical arrow comes from the exact sequences mentioned above. 
By Step 4, the composition of the top horizontal and right vertical arrows corresponds to the same composition in diagram
\eqref{main-splitting-diagram}.

Finally, it is easy to check that the following diagram of isomorphisms
\begin{diagram}
\BB(\om^2_{X_B/B})&\rTo{}&\BB(\om^2_{\wt{X}/B})\ot \BB(\HH)\\
\dTo{}&&\dTo{}\\
\BB(\om^2_{\wt{X}/B}(D_1+D_2))&\rTo{}&\BB(\om^2_{\wt{X}/B})\ot\Psi_1^{-1}\ot\Psi_2^{-1}
\end{diagram} 
is commutative. This allows us to replace the composition of the left vertical and bottom horizontal arrows in the previous diagram with the same composition
in diagram \eqref{main-splitting-diagram}, thus, finishing the proof.
\end{proof}

\begin{proof}[Proof of Theorem \ref{splitting-thm}]
Let us consider the diagram
\begin{diagram}
\BB(\om_X^{-2})|_B&\rTo{}&\BB(\om_X^3)|_B&\rTo{}&\BB(\om_X^2)^{-1}\BB(\OO_X)^2|_B&\rTo{}&\BB(\om_X)^2\BB(\OO_X)^3(-B)|_B\\
\dTo{}&&\dTo{}&&\dTo{}&&\dTo{}\\
\BB(\om_{\wt{X}}^{-2}(-D_1-D_2))&\rTo{}&\BB(\om_{\wt{X}}^3(D_1+D_2))&\rTo{}&\BB(\om_{\wt{X}}^2(D_1+D_2))^{-1}\BB(\OO_{\wt{X}})^2&\rTo{}&
\BB(\om_{\wt{X}})^2\BB(\OO_{\wt{X}})^3\Psi_1\Psi_2
\end{diagram}
We claim that each square in this diagram is commutative.
Indeed, the left square is commutative by Lemma \ref{split-SD-lem}, while the right square is commutative by Lemma \ref{main-splitting-lem}. 
The commutativity of the middle square follows from the commutative diagram
\begin{diagram}
\BB(\om_{X_B/B}\ot \om_{X_B/B}^2)&\rTo{\eqref{super-Deligne-sheaf-eq}}&\BB(\om_{X_B/B})\BB(\om_{X_B/B}^2)^{-1}\BB(\OO_{X_B})\\
\dTo{}&&\dTo{}\\
\BB(\rho_*(\om_{\wt{X}/B})\ot \om_{X_B/B}^2)&\rTo{\eqref{super-Deligne-sheaf-eq}}&\BB(\rho_*(\om_{\wt{X}/B}))\BB(\om_{X_B/B}^2)^{-1}\BB(\OO_{X_B})\\
\dTo{}&&\dTo{}\\
\BB(\om_{\wt{X}/B}^3(D_1+D_2))&\rTo{\eqref{super-Deligne-sheaf-eq}}&\BB(\om_{\wt{X}/B})\BB(\om_{\wt{X}/B}^2(D_1+D_2))^{-1}\BB(\OO_{\wt{X}})
\end{diagram}
Here the lower square is commutative by Lemma \ref{resolution-Ber-compatibility-lem} applied to the morphism $\rho:\wt{X}\to X_B$, the line bundle
$\om_{X_B/B}^2$ and the coherent sheaf $\om_{\wt{X}/B}$.
The commutativity of the upper square can be checked using the compatibility of the isomorphism \eqref{super-Deligne-sheaf-eq}
with the resolution $[\om_{X_B/B}\to \OO_Q]$ for $\rho_*\om_{\wt{X}/B}$.
\end{proof}

\subsection{Splitting of the Kodaira-Spencer map at an NS boundary component}\label{KS-splitting-sec}

\begin{lemma}\label{KS-splitting-lem}
There is an isomorphism of exact sequences on $B$,
\begin{equation}\label{KS-splitting-diagram}
\begin{diagram}
0&\rTo{}&\OO_B&\rTo{}&T_{\ov{\SS},B}|_B&\rTo{}& T_B&\rTo{}&0\\
&&\dTo{\id}&&\dTo{KS_{\ov{\SS}}|_B}&&\dTo{KS_B}\\
0&\rTo{}&\OO_B&\rTo{}&R^1\pi_*(\om_{X_B/B}^{-2})&\rTo{}& R^1\pi_*(\om_{\wt{X}/B}^{-2}(-D_1-D_2))&\rTo{}&0
\end{diagram}
\end{equation}
where the lower exact sequence is induced by \eqref{om-2m-res-ex-seq}.
\end{lemma}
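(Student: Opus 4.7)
The plan is to realize the required commutative diagram as the long exact sequence obtained by applying $R\pi_*$ to a morphism of short exact sequences of coherent sheaves on $X_B$. For the top row, since $B \subset \ov{\SS}$ is a smooth Cartier divisor open in $\De_{NS}$ and locally cut out by an even equation $t$, the canonical $\OO_B$-linear action of $T_{\ov{\SS}, B}|_B$ on the conormal bundle yields an exact sequence $0 \to \OO_B \to T_{\ov{\SS},B}|_B \to T_B \to 0$ in which the inclusion sends $1$ to the canonical section $t\partial_t|_B$ (independent of the choice of $t$). For the bottom row I will apply $R\pi_*$ to the exact sequence of Lemma \ref{splitting-ex-seq-lem}(i) with $m=-1$, using the vanishings $\pi_*\om_{X_B/B}^{-2}=\pi_*\om_{\wt X/B}^{-2}(-D_1-D_2)=0$ (from stability and fiberwise vanishing of $H^0$ of sufficiently negative line bundles on stable supercurves), $\pi_*\OO_Q=\OO_B$, and $R^1\pi_*\OO_Q=0$.

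Next I will construct a morphism of short exact sequences on $X_B$
\begin{diagram}
0 & \rTo{} & \om_{X_B/B}^{-2} & \rTo{} & \AA_{X,X_B}|_{X_B} & \rTo{} & \pi^{-1}T_{\ov{\SS},B}|_{X_B} & \rTo{} & 0 \\
  &        & \dTo{\id}       &        & \dTo{\alpha}        &        & \dTo{\beta}                    &        &   \\
0 & \rTo{} & \om_{X_B/B}^{-2} & \rTo{} & \rho_*\om_{\wt X/B}^{-2}(-D_1-D_2) & \rTo{} & \OO_Q & \rTo{} & 0
\end{diagram}
where the top row is the restriction to $X_B$ of the sequence \eqref{tangent-super-ex-seq} for the family $X/\ov{\SS}$ near $B$; by base change (using $\pi_*\om^{-2}_{X_B/B}=0$), the connecting homomorphism of the top row under $R\pi_*$ is identified with $KS_{\ov{\SS}}|_B$. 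The right vertical $\beta$ is the pullback to $X_B$ of the canonical map $T_{\ov{\SS},B}|_B \to \OO_B$ (the conormal action, sending $t\partial_t|_B \mapsto 1$) followed by restriction to $Q$; in particular $\beta(t\partial_t|_B)=1 \in \OO_Q$. For the middle vertical $\alpha$, away from the node $\rho$ is an isomorphism and both sheaves restrict to $\om_{U/B}^{-2}$ via Theorem \ref{inf-aut-thm}, so $\alpha$ is the natural projection. Near the node, I will use the explicit lift of $t\partial_t$ to $\AA_{X,X_B}$ given in the proof of Lemma \ref{KS-super-NS-lem}(i) by $v(z_i)=z_i,\ v(\th_i)=\tfrac{1}{2}\th_i,\ v(t)=t$; a short commutator check with the distribution generator $\partial_{\th_i}+\th_i\partial_{z_i}$ shows that the restriction of $v$ to $X_B$ pulls back under $\rho$ to a superconformal vertical derivation $\tilde v_i = z_i\partial_{z_i}+\tfrac{1}{2}\th_i\partial_{\th_i}$ on each branch of $\wt X$ preserving the puncture $P_i$, giving the required local form of $\alpha$. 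Once this morphism is in place, passing to the long exact sequence of $R\pi_*$ yields the required commutative diagram: the left square reduces to the identity $\beta(t\partial_t|_B)=1$, and the right square is automatic from naturality of the connecting homomorphism.

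The main obstacle will be the construction and well-definedness of $\alpha$ near the node. One must verify that the local lift described above glues across the two branches to a globally well-defined map of sheaves on $X_B$, with image landing in $\rho_*\om_{\wt X/B}^{-2}(-D_1-D_2)$ and compatible with both $\id$ on the kernel and $\beta$ on the quotient. This amounts to comparing the explicit local generators of $\AA_{X,X_B}|_{X_B}$ (from Lemma~\ref{KS-super-NS-lem}(ii)) with those of $\rho_*\om_{\wt X/B}^{-2}(-D_1-D_2)$ relative to $\om_{X_B/B}^{-2}$ (from Lemma~\ref{differential-generators-lem} and the proof of Lemma~\ref{splitting-ex-seq-lem}(i)), and checking that the image of the horizontal lift of $t\partial_t$ under the induced map on quotients is precisely the generator $1 \in \OO_Q$ — an equivalent formulation of the content of the left square.
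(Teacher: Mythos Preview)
Your proposed approach has a genuine gap: the map $\alpha:\AA_{X,X_B}|_{X_B}\to \rho_*\om_{\wt X/B}^{-2}(-D_1-D_2)$ that you need does not exist as a morphism of sheaves. The sheaf $\AA_{X,X_B}|_{X_B}$ is an extension of $\pi^{-1}\TT_{\ov{\SS},B}|_B$ by $\om_{X_B/B}^{-2}$; it is \emph{not} a sheaf of purely vertical vector fields. Away from the node the target $\rho_*\om_{\wt X/B}^{-2}(-D_1-D_2)$ coincides with $\om_{X_B/B}^{-2}$, while $\OO_Q$ is zero there, so for your diagram to commute, $\alpha$ would have to give a retraction of the extension $0\to\om_{X_B/B}^{-2}\to\AA_{X,X_B}|_{X_B}\to\pi^{-1}\TT_{\ov{\SS},B}|_B\to 0$ over $X_B\setminus Q$. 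But that extension is precisely the one whose connecting map is the Kodaira--Spencer class, and it does not split. Your claim that ``both sheaves restrict to $\om_{U/B}^{-2}$'' is false for the middle term of the top row.

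The paper avoids this by using a different bottom row. It maps the sequence $0\to\AA_{X/\ov{\SS}}|_{X_B}\to\AA_{X,X_B}|_{X_B}\to\pi^{-1}\TT_{\ov{\SS},B}|_B\to 0$ to the analogous sequence $0\to\AA_{X_B/B}\to\AA_{X_B}\to\pi^{-1}\TT_B\to 0$ for the family $X_B/B$, which is a genuine morphism of sequences because the middle vertical is just restriction of derivations. The left vertical of \emph{that} diagram is identified with the inclusion $\om_{X_B/B}^{-2}\hookrightarrow\rho_*\om_{\wt X/B}^{-2}(-D_1-D_2)$ (via $\AA_{X_B/B}\simeq\rho_*\AA_{(\wt X,P_1,P_2)/B}$), and the right vertical is $\pi^{-1}\TT_{\ov{\SS},B}|_B\to\pi^{-1}\TT_B$. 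Now the right square of the lemma comes from $R\pi_*$ applied to this morphism, while the left square comes from the snake-lemma coboundary $\ker(\text{right vertical})\to\coker(\text{left vertical})$, i.e.\ a map $\pi^{-1}\OO_B\to\OO_Q$; the explicit lift $v$ of $t\partial_t$ that you quote is used exactly here, to check that this coboundary is the natural map. Your local computation of $\tilde v_i$ is correct and is the key step, but it belongs in this snake-lemma framework, not as the definition of a nonexistent sheaf map $\alpha$.
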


\begin{proof}
We have a natural morphism of exact sequences
\begin{equation}\label{KS-B-diagram}
\begin{diagram}
0&\rTo{}&\AA_{X/\ov{\SS}}|_B&\rTo{}&\AA_{X,X_B}|_B&\rTo{}&\pi^{-1}\TT_{\ov{\SS},B}|_B&\rTo{}&0\\
&&\dTo{}&&\dTo{}&&\dTo{}\\
0&\rTo{}&\AA_{X_B/B}&\rTo{}&\AA_{X_B}&\rTo{}&\pi^{-1}\TT_B&\rTo{}&0\\
\end{diagram}
\end{equation}
Furthermore, the lower sequence can be identified with
$$0\to \rho_*\om^{-2}_{\wt{X}}(-D_1-D_2)\to \rho_*\AA_{\wt{X},P_1,P_2}\to \pi^{-1}\TT_B\to 0$$
Thus, the left vertical arrow in \eqref{KS-B-diagram} can be identified with the natural map
$$\om_{X/\ov{\SS}}^{-2}\to \rho_*\om^{-2}_{\wt{X}}(-D_1-D_2)$$
which has cokernel $\OO_Q$ (by \eqref{om-2m-res-ex-seq}).
On the other hand, the morphism $\TT_{\ov{\SS},B}|_B\to \TT_B$ has the kernel $\OO_B$.

Applying the functor $R\pi_*$ to \eqref{KS-B-diagram} we immediately derive commutativity of the right square in \eqref{KS-splitting-diagram}.

Next, we claim that the coboundary morphism
$$\ker(\pi^{-1}\TT_{\ov{\SS},B}|_B\to \pi^{-1}\TT_B)\to \coker(\om_{X/\ov{\SS}}^{-2}\to \rho_*\om^{-2}_{\wt{X}}(-D_1-D_2))$$
associated with \eqref{KS-B-diagram} gets identified with the natural map $\pi^{-1}\OO_B\to \OO_Q$.
This is a local statement, so we can use coordinates as in Sec.\ \ref{super-KS-NS-sec}. Note that the section $1$ of $\pi^{-1}\OO_B$ is represented by the vector field
$t\partial_t$ in $\pi^{-1}\TT_{\ov{\SS},B}|_B$. It lifts to a vector field $v$ in $\AA_{X,X_B}$ given by 
$$v(z_i)=z_i, \ \ v(\th_i)=\frac{1}{2}\th_i, \ \ v(t)=t$$
(see Lemma \ref{KS-super-NS-lem}(i)).
The restriction of $v$ to $X_B$ corresponds to the vector field 
$$z_1\pa_{z_1}+\frac{1}{2}\th_1\pa_{\th_1}+z_2\pa_{z_2}+\frac{1}{2}\th_2\pa_{\th_2}$$
which lives in $\AA_{\wt{X},P_1,P_2}$.
The isomorphism 
$$\om_{\wt{X}/B}^{-2}(-D_1-D_2)\rTo{\sim} \AA_{\wt{X},P_1,P_2}$$
is given by
$$z_i[dz_i|d\th]^{-2}\mapsto z_i\pa_{z_i}+\frac{1}{2}(\pa_{\th_i}+\th_i\pa_{z_i})(z_i)\cdot (\pa_{\th_i}+\th_i\pa_{z_i})=z_i\pa_{z_i}+\frac{1}{2}\pa_{\th_i}$$
(see the proof of Lemma \ref{NS-div-corr-lem}),
which immediately implies our claim.

From this we deduce that the following square commutes
\begin{diagram}
\pi^{-1}\OO_B&\rTo{}& \pi^{-1}\TT_{\ov{\SS},B}|_B\\
\dTo{}&&\dTo{}\\
\OO_Q&\rTo{}& \om_{X/\ov{\SS}}^{-2}[1]
\end{diagram}
Applying $R\pi_*$ we get the commutativity of the left square in \eqref{KS-splitting-diagram}.
\end{proof}

\begin{cor}
We have a commutative diagram
\begin{diagram}
K_{\ov{\SS}}(\De)|_B&\rTo{\ber(KS_{\ov{\SS}})^{-1}|_B}&\BB(\om_{X/\ov{\SS}}^{-2})|_B\\
\dTo{}&&\dTo{}\\
K_B&\rTo{\ber(KS_B)^{-1}}&\BB(\om_{\wt{X}/B}^{-2}(-D_1-D_2))
\end{diagram}
where the horizontal arrows are induced by the Kodaira-Spencer isomorphisms
(see Proposition \ref{stable-can-KS-formula-prop}),
and the right vertical arrow is given by \eqref{om-2m-res-NS-eq}.
\end{cor}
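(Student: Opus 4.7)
The plan is to derive the corollary directly by applying $\Ber^{-1}$ to the morphism of short exact sequences in Lemma \ref{KS-splitting-lem}. First, I would identify how the two Kodaira-Spencer isomorphisms arise from this diagram. The isomorphism $\ber(KS_{\ov{\SS}})^{-1}|_B$ of Proposition \ref{stable-can-KS-formula-prop} is, by construction (Sec.\ \ref{super-KS-NS-sec} and Corollary \ref{KS-super-NS-node-cor}), the Berezinian of the connecting map in the middle column of \eqref{KS-splitting-diagram}, after identifying $\Ber^{-1}(\TT_{\ov{\SS},B})|_B \simeq K_{\ov{\SS}}(\De)|_B$ via the exact sequence $0 \to \TT_{\ov{\SS},B} \to \TT_{\ov{\SS}} \to N_B \to 0$ together with $N_B \simeq \OO(\De)|_B$; similarly $\ber(KS_B)^{-1}$ is the Berezinian of the right column.

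Next I would identify the two vertical arrows of the target diagram with Berezinians of the horizontal rows of \eqref{KS-splitting-diagram}. The left vertical is the adjunction isomorphism $K_{\ov{\SS}}(\De)|_B \simeq K_B$: applying $\Ber^{-1}$ to the upper row $0 \to \OO_B \to \TT_{\ov{\SS},B}|_B \to \TT_B \to 0$, together with the canonical triviality of $\Ber(\OO_B)$, gives $\Ber^{-1}(\TT_{\ov{\SS},B}|_B) \simeq \Ber^{-1}(\TT_B) = K_B$, which matches adjunction after the identification above. The right vertical, stated to be \eqref{om-2m-res-NS-eq}, equals $\Ber^{-1}$ of the lower row of \eqref{KS-splitting-diagram}: the lower row is the long exact sequence obtained by applying $R\pi_*$ to \eqref{om-2m-res-ex-seq} with $m=-1$ (the $\pi_*$ of both $\om^{-2}$ sheaves vanishing by negative fiber degree, and $R^1\pi_*\OO_Q = 0$), and its $\Ber^{-1}$ gives exactly the isomorphism $\BB(\om^{-2}_{X/\ov{\SS}})|_B \simeq \BB(\om^{-2}_{\wt{X}/B}(-D_1-D_2))$.

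With these identifications in place, applying $\Ber^{-1}$ to the full commutative diagram of Lemma \ref{KS-splitting-lem} (using multiplicativity of $\Ber$ on short exact sequences) yields the claimed commutative square. There is no substantive obstacle: the main point to verify is the first identification above, namely that Proposition \ref{stable-can-KS-formula-prop}'s isomorphism is exactly the Berezinian of the middle column of \eqref{KS-splitting-diagram} (and not some additional twist), which is the content of Sec.\ \ref{Kodaira-punct-sec}.
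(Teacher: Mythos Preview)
Your proposal is correct and is exactly the argument the paper intends: the corollary is stated immediately after Lemma \ref{KS-splitting-lem} with no separate proof, and it follows by applying $\Ber^{-1}$ to the morphism of short exact sequences \eqref{KS-splitting-diagram}, using multiplicativity of the Berezinian and the identifications $\Ber^{-1}(\TT_{\ov{\SS},B})\simeq K_{\ov{\SS}}(\De)$ and $\Ber^{-1}(R^1\pi_*)\simeq\BB$. One small wording issue: the middle vertical arrow $KS_{\ov{\SS}}|_B$ is just the Kodaira--Spencer map itself, not a ``connecting map''; otherwise your identifications of the rows with \eqref{om-2m-res-NS-eq} and with adjunction are precisely what is needed.
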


Combining the above Corollary with Theorem \ref{splitting-thm}, we get the statement of Theorem C.

\subsection{Ramond boundary components}\label{R-splitting-sec}

Now let $\iota:B\to\ov{\SS}$ be a gluing map for the Ramond boundary component, i.e., one of the maps \eqref{NS-nonsep-node-morphism},
restricted to the locus of smooth supercurves. Recall that we have
a smooth map of relative dimension $0|1$, 
$$p:B\to \ov{B}$$
to a moduli space $\ov{B}$ of smooth supercurves of smaller genus, where the universal smooth supercurve over $\ov{B}$ is equipped with a pair of Ramond punctures $R_1,R_2$.
Furthermore, $p$ has a structure of the principal bundle over the group scheme $\Aut(\PP_{R_1})$ which is an extension of $\Z/2$ by
$\Phi_1^{-1}$. Thus, we have a natural isomorphism
$$K_B\simeq p^*(K_{\ov{B}}\ot \Phi_1).$$
On the other hand, we have an exact sequence on the universal stable supercurve $X_B$ over $B$,
$$0\to \OO_{X_B}\to \OO_{\wt{X}}\to \OO_{R_1}\to 0,$$
where $\wt{X}$ is induced by the universal curve $\ov{X}$ over $\ov{B}$. Taking push-forwards to $B$ and considering the Berezinians,
we get
$$\iota^*\Ber_1\simeq p^*(\Ber^{\ov{B}}_1\ot \Phi_1).$$
Note that since $\Phi_1^2$ is canonically trivial, this leads to an isomorphism
$$\iota^*\Ber_1^5\simeq p^*((\Ber^{\ov{B}}_1)^5\ot \Phi_1).$$
We can rewrite \eqref{canonical-class-eq} near $B$ as an isomorphism
$$K_{\ov{\SS}}(\De)\simeq \Ber_1^5.$$
Thus, the restriction to $B$ gives an isomorphism
$$K_B\simeq \iota^*\Ber_1^5.$$
We conjecture that the following diagram is commutative (up to a sign)
\begin{diagram}
\iota^* (K_{\ov{\SS}}(\De))&\rTo{}& \iota^*\Ber_1^5\\
\dTo{}&&\dTo{}\\
K_B&\rTo{}&p^*((\Ber^{\ov{B}}_1)^5\ot \Phi_1)
\end{diagram}





\appendix

\section{Relative ampleness criterion}\label{ample-sec}


Recall that for every superscheme $X$ let $\NN_X\sub \OO_X$ denote the (nilpotent) ideal locally generated by odd functions,
and denote by $X_{\bos}$ the usual scheme such that
$$\OO_{X_{\bos}}=\OO_X/\NN_X.$$

\begin{lemma}\label{proj-embedding-lem} 
Let $f:X\to S$ be a morphism of superschemes, where $S$ is purely even. Assume that $f$ factors through
a morphism $\phi:X\to \P(\EE^\vee)$, where $\EE=\EE^+\oplus \EE^-$ is a supervector bundle on $S$. 
Then $\phi$ is a closed embedding if and only if $\phi|_{X_{\bos}}$ is a closed embedding (that necessarily factors through $\P((\EE^+)^\vee)$) and the morphism
\begin{equation}\label{E-odd-to-N-map}
f^*\EE^-|_{X_{\bos}}\to \NN_X/\NN_X^2\ot \phi^*\OO(1)|_{X_{\bos}},
\end{equation}
induced by $\phi$, is surjective.
\end{lemma}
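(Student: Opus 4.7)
The plan is as follows. I would first dispatch the forward direction, which is essentially formal: if $\phi$ is a closed embedding, then the restriction $\phi|_{X_{\bos}}$ is a closed embedding, and it factors through $\P(\EE^\vee)_{\bos}=\P((\EE^+)^\vee)$ since $X_{\bos}$ is even. The tautological surjection $\pi^*\EE\twoheadrightarrow\OO(1)$ on $\P(\EE^\vee)$ pulls back to a surjection $f^*\EE\twoheadrightarrow\phi^*\OO(1)$; its restriction to the odd summand is a map $f^*\EE^-\to\phi^*\OO(1)$ with image inside the submodule $\NN_X\cdot\phi^*\OO(1)$, and surjectivity of $\phi^{\#}$ forces the induced map \eqref{E-odd-to-N-map} to be surjective.

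For the converse, the topological closed-embedding property of $\phi$ is equivalent to that of $\phi|_{X_{\bos}}$ (since $|X|=|X_{\bos}|$), so it remains to verify surjectivity of $\phi^{\#}:\OO_Y\to\phi_*\OO_X$ stalkwise. The question is local on $Y=\P(\EE^\vee)$, so I would trivialize $\EE$ on an affine chart $Y'=\Spec\OO_S[a_1,\ldots,a_n,\alpha_1,\ldots,\alpha_m]$ (with $a_i$ even and $\alpha_j$ odd), and trivialize $\phi^*\OO(1)$ on the preimage in $X$. Fixing $x\in X$ with $y=\phi(x)\in Y'$, let $I\sub\OO_{X,x}$ denote the image of $\OO_{Y',y}\to\OO_{X,x}$ and write $\NN:=\NN_{X,x}$; the goal reduces to $I=\OO_{X,x}$.

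The heart of the proof is a Nakayama induction with respect to $\NN$, which is nilpotent: over $\C$ each odd local generator squares to zero, so $\NN^{m+1}=0$ for some $m$. The first hypothesis gives $I+\NN=\OO_{X,x}$, while the surjectivity of \eqref{E-odd-to-N-map}, after the chosen trivializations, amounts to the statement that $\phi^*\alpha_j$ generate $\NN/\NN^2$ as an $\OO_{X_{\bos},x}$-module. Combining these, any $\eta\in\NN$ can be written as $\eta=\rho+w$ with $\rho=\sum_j r'_j\phi^*\alpha_j\in I$ (where the $r'_j\in I$ are obtained by lifting representatives in $\OO_{X_{\bos},x}$ via $I+\NN=\OO_{X,x}$) and $w\in\NN^2$. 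I would then induct on $k$ to show $\NN^k\sub I+\NN^{k+1}$: for a generating product $\eta_1\cdots\eta_k$ of $\NN^k$, write $\eta_k=\rho+w$ as above and decompose $\eta_1\cdots\eta_{k-1}=i'+n'$ with $i'\in I$, $n'\in\NN^k$ using the inductive hypothesis; then
$$\eta_1\cdots\eta_k = i'\rho + n'\rho + \eta_1\cdots\eta_{k-1}w \in I+\NN^{k+1}.$$
Nilpotency of $\NN$ terminates the induction at $I=\OO_{X,x}$. I do not foresee any serious obstacle; the only delicacy is aligning the local trivialization of $\phi^*\OO(1)$ with the Nakayama filtration so that the odd generators $\phi^*\alpha_j$ can be used directly.
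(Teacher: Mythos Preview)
Your proposal is correct and follows essentially the same approach as the paper: both reduce to surjectivity of the ring map and check it via the filtration by powers of $\NN_X$. The paper phrases this structurally, using that $\OO_{\P(\EE^\vee)}=\bigwedge^\bullet_{\OO_{\P((\EE^+)^\vee)}}(p^*\EE^-(-1))$ and that $\phi^\sharp$ respects the filtrations $\bigwedge^{\ge i}$ and $\phi_*\NN^i$, so surjectivity on $\gr^1$ (which is exactly \eqref{E-odd-to-N-map}) forces surjectivity on all graded pieces; your stalkwise Nakayama induction is the local-coordinate unpacking of precisely this step.
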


\begin{proof} The ``only if" part is clear, so let us prove the ``if" part.
Let us set $\NN=\NN_X$ for brevity.
The underlying map of topological spaces for $\phi$ is the same as $\phi_{X_{\bos}}:X_{\bos}\to \P((\EE^+)^\vee)$, so
we just need to show the surjectivity of the homomorphism of sheaves of rings
$$\phi^\sharp:{\bigwedge}^\bullet_{\OO_{\P((\EE^+)^\vee)}}(p^*\EE^-(-1))=\OO_{\P(\EE^\vee)}\to \phi_*\OO_X,$$
where $p:\P(\EE^\vee)\to S$ is the projection.
Both sides have a natural filtration by powers of an ideal: on $\phi_*\OO_X$ we take the filtration $(\phi_*\NN^i)$, while 
on the exterior algebra the filtration ${\bigwedge}^{\ge i}(\cdot)$.
Since the above map is compatible with the surjective map
$$\OO_{\P((\EE^+)^\vee)}\to \phi_*(\OO_X/\NN)=\phi_*\OO_{X_{\bos}},$$
we see that $\phi^\sharp$ is compatible with the filtrations. Hence, it is enough to check surjectivity on the
consecutive quotients, which follows from the surjectivity of the induced map 
$$p^*\EE^-\to \phi_*(\NN/\NN^2)\ot \OO(1)$$
or equivalently, of \eqref{E-odd-to-N-map}.
\end{proof}

The following criterion generalizes a similar result for supermanifolds in \cite{LPW}.

Let $f_{\bos}:X_{\bos}\to S_{\bos}$ be a morphism of usual schemes induced by $f$.

\begin{prop}\label{ample-crit-prop}
Let $f:X\to S$ be a flat morphism of superschemes. 
If a line bundle $L$ on $X$ is such that $L_{\bos}:=L|_{X_{\bos}}$ is strongly relatively ample over $S_{\bos}$, then $L$ is strongly relatively ample over $S$. 
\end{prop}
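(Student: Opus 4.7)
The plan is to adapt the standard projective embedding recipe to the super setting. For $m$ sufficiently large, set $\EE^\vee := f_*(L^m)$ and consider the natural morphism $\phi:X\to \P(\EE)$ over $S$ induced by the adjunction $f^*\EE^\vee\to L^m$. I will show that for $m\gg 0$, $\EE$ is a supervector bundle on $S$, $\phi$ is a closed embedding, and $L^m\simeq \phi^*\OO(1)$, which delivers strong relative ampleness of $L$. The argument proceeds by d\'evissage on the nilpotent filtration $\OO_X\supset \NN_X\supset \NN_X^2\supset\cdots$, which has finite length since $X$ is Noetherian (properness of $f$ is purely topological and descends from $f_{\bos}$, which is proper by strong relative ampleness of $L_{\bos}$). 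Each graded piece $\mathrm{gr}^i:=\NN_X^i/\NN_X^{i+1}$ is a coherent $\OO_{X_{\bos}}$-module, flat over $S_{\bos}$ by flatness of $f$; by strong relative ampleness of $L_{\bos}$, for $m\gg 0$ each $L_{\bos}^m\otimes\mathrm{gr}^i$ has vanishing higher direct images and locally free push-forward on $S_{\bos}$. D\'evissage then yields $R^j f_*(L^m)=0$ for $j\geq 1$, local freeness of $\EE^\vee$ on $S$, and the base-change isomorphism $\EE^\vee|_{S_{\bos}}\simeq f_{\bos*}(L_{\bos}^m)$. Moreover, the counit $f^*\EE^\vee\to L^m$ reduces modulo $\NN_X$ to the classical surjection that defines the embedding of $L_{\bos}$, and Nakayama's lemma for the nilpotent ideal $\NN_X$ extends this to a surjection on all of $X$.

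It remains to show that $\phi:X\to \P(\EE)$ is a closed embedding. I first reduce to $S$ purely even by induction on the powers of $\NN_S$: for any nilpotent thickening $S'\hookrightarrow S$, the morphism $\phi$ is a closed embedding iff its base-change along $S'\hookrightarrow S$ is, which again follows from a Nakayama argument applied to $\phi_*\OO_X$ viewed as a coherent sheaf on $\P(\EE)$. With $S$ purely even, Lemma~\ref{proj-embedding-lem} applies, and closedness reduces to (a) $\phi|_{X_{\bos}}$ being a closed embedding --- which holds for $m\gg 0$ by strong relative ampleness of $L_{\bos}$ --- and (b) the surjectivity of the map~\eqref{E-odd-to-N-map} from $f^*\EE^-|_{X_{\bos}}$ onto $\NN_X/\NN_X^2\otimes L_{\bos}^m|_{X_{\bos}}$. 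The latter is again obtained by the earlier d\'evissage applied to $0\to \NN_X^2\to \NN_X\to \NN_X/\NN_X^2\to 0$ twisted by $L^m$: the odd summand of $\EE^\vee=f_*(L^m)$ consists precisely of odd sections of $L^m$ modulo $\NN_X^2 L^m$, and for $m$ large these surject onto $f_{\bos*}(\NN_X/\NN_X^2\otimes L_{\bos}^m)$, which after applying the counit on $X_{\bos}$ gives the required surjection.

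The main obstacle will be the careful parity bookkeeping in the last step: verifying that the odd summand of $\EE^\vee$ is exactly what is needed to generate $\NN_X/\NN_X^2\otimes L_{\bos}^m$ through the counit, and choosing a single $m$ that simultaneously handles all ``$m\gg 0$'' conditions (strong ampleness of $L_{\bos}$, vanishing and local freeness across all graded pieces $L_{\bos}^m\otimes\mathrm{gr}^i$, and the odd surjectivity onto $\NN_X/\NN_X^2\otimes L_{\bos}^m$). A secondary technical point is that Lemma~\ref{proj-embedding-lem} is stated only for $S$ purely even, so the reduction via nilpotent thickenings of $\NN_S$ must be made precise before the lemma can be invoked; this is a routine Nakayama argument but requires care.
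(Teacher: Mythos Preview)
Your strategy coincides with the paper's: d\'evissage along the $\NN_X$-adic filtration to obtain vanishing of higher direct images and global generation of $L^m$, local freeness of $\EE^\vee=f_*(L^m)$ from flatness of $f$, reduction to $S$ even via the nilpotent thickening $S_{\bos}\hookrightarrow S$, and then Lemma~\ref{proj-embedding-lem}. Two statements in your write-up should be corrected, though neither is fatal to the argument.

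First, the assertion that each $\mathrm{gr}^i=\NN_X^i/\NN_X^{i+1}$ is flat over $S_{\bos}$ does not follow from flatness of $f$ and is not used anywhere. What you actually need is only that $R^{>0}f_*(\mathrm{gr}^i\otimes L^m)=0$ for $m\gg 0$, which holds simply because $\mathrm{gr}^i$ is a coherent $\OO_{X_{\bos}}$-module and $L_{\bos}$ is strongly relatively ample; no flatness of $\mathrm{gr}^i$ is required. Local freeness of $\EE^\vee$ then follows directly from flatness of $L^m$ over $S$ together with $R^1f_*(L^m)=0$, exactly as in the paper.

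Second, the sentence ``the odd summand of $\EE^\vee$ consists precisely of odd sections of $L^m$ modulo $\NN_X^2 L^m$'' is not correct as written: $\EE^-$ is literally $f_*((L^m)^-)$. The cleaner argument (which is the paper's) is: since $L$ has rank $1|0$ and $S$ is even, $(L^m)^-\subset \NN_X L^m$, hence $\EE^-\subset f_*(\NN_X\otimes L^m)$; now apply your global-generation result to $\FF=\NN_X$ to get surjectivity of $f^*f_*(\NN_X\otimes L^m)\to \NN_X\otimes L^m$, restrict to $X_{\bos}$, and use that $\NN_X|_{X_{\bos}}$ is purely odd (it is generated by odd elements over the even ring $\OO_{X_{\bos}}$) so only the odd part of $f_*(\NN_X\otimes L^m)$ contributes. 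Since $f_*(\NN_X\otimes L^m)^-\subset \EE^-$, the required surjectivity \eqref{E-odd-to-N-map} follows.
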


\begin{proof}
First, let us show that for every coherent sheaf $\FF$ on $X$, for $n\gg 0$ one has $R^{>0}f_*(\FF\ot L^n)=0$ and
the natural map
\begin{equation}\label{F-Ln-global-generation-map}
f^*f_*(\FF\ot L^n)\to \FF\ot L^n
\end{equation}
is surjective. Let us consider the commutative diagram
\begin{diagram} 
X_{\bos} &\rTo{i}& X\\
\dTo{f_{\bos}}&&\dTo{f}\\
S_{\bos} &\rTo{j}& S
\end{diagram}
We know that the above assertion is true with $(X,f)$ replaced by $(X_{\bos},f_{\bos})$.
Since each $\FF$ has a finite filtration with subsequent quotients that are scheme-theoretically supported on $X_{\bos}$,
we can assume that $\FF=i_*\FF'$ with $\FF'$ a coherent sheaf on $X_{\bos}$. We have
$$R^pf_*(i_*(\FF')\ot L^n)=j_*R^pf_{\bos *}(\FF'\ot L_{\bos}^n),$$
which immediately implies the vanishing of higher direct images. Furthermore,
the restriction of the map \eqref{F-Ln-global-generation-map} to $X_{\bos}$ is the map
$$i^*f^*f_*i_*(\FF'\ot L_{\bos}^n)=f_{\bos}^*f_{\bos *}(\FF'\ot L_{\bos}^n)\to (\FF'\ot L_{\bos}^n)$$
which is surjective. Hence, \eqref{F-Ln-global-generation-map} is also surjective.


Applying the above statement for $\FF=\OO_X$ we see that for $n\gg 0$,
$R^{>0}f_*(L^n)=0$ and $f^*f_*(L^n)\to L^n$ is surjective. In particular, in this case
$f_*(L^n)$ is locally free (here we use flatness of $f$) and the map
\begin{equation}\label{X-to-P-f-Ln-map}
X\to \P(f_*(L^n)^\vee)
\end{equation}
of superschemes over $S$ is well defined. We want to check that \eqref{X-to-P-f-Ln-map} is a closed embedding. 
Let us consider the cartesian square
\begin{diagram} 
X_0 &\rTo{}& X\\
\dTo{f_0}&&\dTo{f}\\
S_0 &\rTo{j}& S
\end{diagram}
with $S_0:=S_{\bos}$ (however, $X_0\neq X_{\bos}$ in general). For $n\gg 0$ the above assertions also hold for $(X_0,f_0,L^n_0)$, where $L_0=L|_{X_0}$, and
the base change map
$$f_*(L^n)|_{S_0}\to f_{0 *}(L_0^n)$$
is an isomorphism. Thus, the map $X_0\to \P(f_{0*}(L^n)^\vee)$ is obtained by the base change $S_0\to S$ from the map
\eqref{X-to-P-f-Ln-map}. Since $S_0$ is defined by a nilpotent ideal in $S$, it is enough to check that the map
$X_0\to \P(f_{0*}(L^n)^\vee)$ is a closed embedding for $n\gg 0$.

Thus, we can assume that $S=S_0$. Then we have a decomposition of the supervector bundle $\EE:=f_*(L^n)$ into
even and odd components,
$$\EE=\EE^+\oplus \EE^-.$$
First, we claim that for $n\gg 0$, the map $X_{\bos}\to \P((\EE^+)^\vee)$, corresponding to the surjection
$$f^*f_*(L^n)|_{X_{\bos}}\to L^n|_{X_{\bos}},$$
is an embedding. Indeed, for $n\gg 0$, we have a surjection
$$\EE=f_*(L^n)\to f_*(i_*\OO_{X_{\bos}}\ot L^n)=f_{\bos *}(L_{\bos}^n),$$
which factors through a surjection $\EE^+\to f_{\bos *}(L_{\bos}^n)$.
Since $L_{\bos}$ is relatively ample, we get a composition of two closed embeddings
$$X_{\bos}\hra \Proj(S^\bullet(f_{\bos *}(L_{\bos}^n)))\hra \P((\EE^+)^\vee),$$
and our claim follows.

By Lemma \ref{proj-embedding-lem}, it remains to show that the map
$$f^*\EE^-|_{X_{\bos}}\to \NN_X\ot L^n|_{X_{\bos}}$$
is surjective.
We know that for $n\gg 0$, the map 
$$f^*f_*(\NN_X\ot L^n)\to \NN_X\ot L^n$$ 
is surjective. Hence, its restriction to $X_{\bos}$ is still surjective.
Since $\NN_X$ is generated by odd functions, we have
$$\NN_X|_{X_{\bos}}=(\NN_X|_{X_{\bos}})^-.$$
Hence, we get the surjectivity of the map
$$f^*(f_*(\NN_X\ot L^n)^-)|_{X_{\bos}}\to \NN_X\ot L^n|_{X_{\bos}}.$$
But $f_*(\NN_X\ot L^n)^-$ is a subsheaf in $f_*(L^n)^-=\EE^-$, so we are done.
\end{proof}

\end{document}